\documentclass[12pt]{amsart}
\usepackage{amsfonts,amssymb, amscd, latexsym, graphicx, psfrag, color,float}
\usepackage{bbm}
\usepackage{ifthen}
\usepackage[all]{xy}

\usepackage{tikz-cd}

\usepackage{comment}
\usepackage[dvipsnames]{xcolor}

\usepackage{booktabs,enumerate}
\usepackage{multirow}
\usepackage{mathrsfs}

\usepackage{subcaption}
\usepackage{mathtools}

\usepackage{dsfont}

\usepackage[margin=1in,marginparwidth=0.8in, marginparsep=0.1in]{geometry}

\usepackage[bookmarks=true, bookmarksopen=true,%
bookmarksdepth=3,bookmarksopenlevel=2,%
colorlinks=true,%
linkcolor=blue,%
citecolor=blue,%
filecolor=blue,%
menucolor=blue,%
urlcolor=blue]{hyperref}

\DeclareMathOperator{\Loc}{Loc}

\DeclareMathOperator{\fib}{fib}

\newcommand*{\hrlen}{10}
\newcommand*{\hramp}{3}

\usepackage{tikz, tikz-3dplot}
\usetikzlibrary{matrix,shapes,arrows,arrows.meta,calc,topaths,intersections,hobby,positioning,decorations.pathreplacing,decorations.pathmorphing,fit,patterns}
\usetikzlibrary{patterns,patterns.meta}

\tikzset{
asdstyle/.style={blue,thick},
righthairs/.style={postaction={decorate,draw,decoration={border,amplitude=\hramp,segment length=\hrlen,angle=-90,pre=moveto,pre length=\hrlen/2}}},
lefthairs/.style={postaction={decorate,draw,decoration={border,amplitude=\hramp,segment length=\hrlen,angle=90,pre=moveto,pre length=\hrlen/2}}},
righthairsnogap/.style={postaction={decorate,draw,decoration={border,amplitude=\hramp,segment length=\hrlen,angle=-90}}},
lefthairsnogap/.style={postaction={decorate,draw,decoration={border,amplitude=\hramp,segment length=\hrlen,angle=90}}},
graphstyle/.style={thick},
arrowstyle/.style={thick,decorate,decoration={snake,amplitude=1.7,segment length=10pt,post length=.5mm,pre length=0}},
genmapstyle/.style={thick,-stealth'},
arrhdstyle/.style={thick},
exceptarcstyle/.style={red, ultra thick},
dualquiverstyle/.style={thick,->},
|/.tip={Bar[width=.8ex]}
}

\newtheorem{dummy}{dummy}[section]
\newtheorem{lemma}[dummy]{Lemma}
\newtheorem{theorem}[dummy]{Theorem}

\newtheorem{conjecture}[dummy]{Conjecture}
\newtheorem{corollary}[dummy]{Corollary}
\newtheorem{proposition}[dummy]{Proposition}
\theoremstyle{definition}
\newtheorem{definition}[dummy]{Definition}

\newtheorem{example}[dummy]{Example}
\newtheorem{remark}[dummy]{Remark}

\newcommand{\bA}{\mathbb{A}}
\newcommand{\bC}{\mathbb{C}}

\newcommand{\bN}{\mathbb{N}}
\newcommand{\bP}{\mathbb{P}}

\newcommand{\bR}{\mathbb{R}}
\newcommand{\bZ}{\mathbb{Z}}

\newcommand{\bbL}{\mathbb{L}}

\newcommand{\cA}{\mathcal{A}}
\newcommand{\cB}{\mathcal{B}}

\newcommand{\cD}{\mathcal{D}}
\newcommand{\cE}{\mathcal{E}}
\newcommand{\cF}{\mathcal{F}}
\newcommand{\cG}{\mathcal{G}}
\newcommand{\cH}{\mathcal{H}}

\newcommand{\cK}{\mathcal{K}}
\newcommand{\cL}{\mathcal{L}}
\newcommand{\cM}{\mathscr{M}}
\newcommand{\cN}{\mathcal{N}}
\newcommand{\cP}{\mathcal{P}}

\newcommand{\cO}{\mathcal{O}}
\newcommand{\cS}{\mathcal{S}}
\newcommand{\cT}{\mathcal{T}}

\newcommand{\cX}{\mathcal {X}}

\newcommand{\calM}{\mathcal{M}}

\newcommand{\fR}{\mathfrak{R}}

\newcommand{\dsk}{\mathds{k}}

\newcommand{\moduli}[1]{\calM \left(#1 \right)}

\newcommand{\op}{\operatorname}

\newcommand{\N}{N}

\newcommand{\T}{\mathbf{T}}

\newcommand{\conv}{{\op{conv}}}

\newcommand{\Ext}{\mathrm{Ext}}

\newcommand{\frakt}{\mathfrak{t}}

\newcommand{\Hom}{\mathrm{Hom}}

\renewcommand{\log}{{\op{log}}}

\newcommand{\Perf}{\mathcal{P}\mathrm{erf}}

\DeclareMathOperator{\Sh}{Sh}

\newcommand{\Spec}{\mathrm{Spec}\,}
\renewcommand{\SS}{\mathit{SS}}

\newcommand{\Coh}{\mathrm{Coh}}

\newcommand{\Gm}{\mathbb{G}_{\mathrm{m}}}
\newcommand{\Ga}{\mathbb{G}_{\mathrm{a}}}

\definecolor{sayMH}{RGB}{39, 117, 227}
\newcommand{\sayMH}[1]{{\color{sayMH}#1}}

\DeclareMathOperator{\End}{End}
\DeclareMathOperator{\Aut}{Aut}

 \numberwithin{equation}{section}
\numberwithin{figure}{section}

\newcommand{\red}[1]{{\color{red}#1}}
\newcommand{\blue}[1]{{\color{blue}#1}}

\DeclareMathOperator{\colim}{colim}

\usepackage{tikz}

\newcommand{\wtd}{\widetilde}

\tikzset{
  top aligned/.style={
    baseline={([yshift=-\ht\strutbox]current bounding box.north)}
  }
}

\title[Counting Curves]{Mirror Counts of Spectral Curves}

\author[Tom Graber, Mingyuan Hu and Eric Zaslow]{Tom Graber$^1$, Mingyuan Hu$^2$
and Eric Zaslow$^3$\\
\\
{\tiny ${}^1$  Department of Mathematics, California Institute of Technology}\\
{\tiny ${}^{2}$  Centre for Quantum Mathematcs, University of Southern Denmark}\\
{\tiny ${}^{3}$  Department of Mathematics, Northwestern University}}

\begin{document}

\begin{abstract}

Given a convex lattice polygon $\Delta\subset \bR^2,$
let $N_\Delta$ be the count of rational, nodal curves
in the linear system of the ample line
bundle $L_\Delta$ on the toric variety $\bP_\Delta$
defined by $\Delta$, having fixed intersection with the
toric boundary.
We show that the relative Jacobian of the linear system defines an
integrable system that has a mirror dual in the language
of constructible sheaves on a two-torus microsupported
on a Legendrian link.
In this setting, we define the dual
counting problem using an analogue of rulings
of Legendrian links in three-space, then prove equivalence
with $N_\Delta$.
We perform calculations of $N_\Delta$
in several examples using constructible methods, 
tropical curve counting, and localization in logarithmic
Gromov-Witten theory to 
demonstrate the equality of the different approaches.

Moreover, the rulings give rise to a stratification of the moduli of constructible sheaves.  We conjecture that this ruling decomposition recovers the refined tropical invariants of Block-G\"ottsche, and hence encodes higher-genus logarithmic Gromov--Witten invariants, by a theorem of Bousseau.




\end{abstract}

\maketitle


\setcounter{tocdepth}{1}
\tableofcontents

\section{Introduction and Summary}
\label{sec:intro}




Let $\bP_\Delta$ be a toric surface with ample line bundle $L_\Delta$ defined by a convex lattice polygon $\Delta \subset \bR^2$.  In this setting, one can define a host of 
enumerative problems.  Kontsevich \cite{Kontsevich1995}
counted nodal curves incident to generic points.  Mikhalkin \cite{Mikhalkin2005Enumerative} gave an interpretation and equivalent count in
tropical geometry.  Nishinou-Siebert \cite{NS} and Mandel-Ruddat \cite{MR}
consider incidence conditions with the toric
boundary from the tropical and logarithmic
Gromov-Witten \cite{GS} perspective.
Welschinger \cite{Welschinger:2005:RealSymplecticInvariants}
defined
invariants of real curves with incidence
conditions given by real points as well as
possibly conjugate
pairs, and these were computed tropically
by Mikhalkin \cite{Mikhalkin2005Enumerative}
and Shustin \cite{Shustin:2006:TropicalWelschinger},
respectively.
G\"ottsche-Shende defined refinements
of the Severi degrees, i.e.~of the counts of curves
of fixed geometric genus in the linear system 
\cite{GottscheShende:2014:RefinedCurveCounting}.
Block-G\"ottsche \cite{BlockGottsche:2016:RefinedCurveCounting}
interpreted these refined
invariants tropically, and they
were proved to be invariant
by Itenberg-Mikhalkin \cite{ItenbergMikhalkin:2013:BlockGottsche}.
Bousseau \cite[Theorems 1 and 6]{Bousseau:2019:TropicalRefined}
proved that counting tropical curves with refined
multiplicities recovers the Block-G\"ottsche invariants.
Finally, Blomme \cite{Blomme} found a
computationally effective recursion relation satisfied
by these refined counts.

We recall that when the ambient surface is not
a toric surface but rather K3, the count
of nodal curves was performed in \cite{YZ} by interpreting the
relative Jacobian as a Beauville integrable system
describing a moduli of branes, then exploiting string duality
to count branes on the heterotic side.  We know of no
nonperturbative string duality in the present setting, but we do have mirror symmetry, and can use
it to find a dual integrable
system, namely the Goncharov-Kenyon \cite{GK} system, as explained in \cite{TWZ}.  From this perspective,
we are counting spectral curves.

We use this mirror perspective to define combinatorial analogues of some of the above enumerative
questions, using the language of constructible sheaves.
We then prove equivalence of combinatorial and
algebraic definitions,
in the unrefined case of genus-zero curves.  We also add localization
in logarithmic Gromov-Witten theory to the list of
calculational methods, and perform detailed
computations in several examples.
We conjecture an equivalence in higher genus.

Here is a sketch.
A toric surface $\bP_\Delta$
with ample line bundle $L_\Delta$
is determined by a lattice polygon $\Delta\subset \bR^2.$
The genus of a generic curve $C$ in the linear system $|L_\Delta|$ is
equal to the number of interior lattice points, while
the number of exterior lattice points equals the
number of points of $C$ along the toric divisor at
infinity.  Fixing this intersection to be a finite set
of points, the
reduced (non-compact) linear system is a $g$-dimensional family of
genus-$g$ curves, and we want to count the number that are
rational and nodal.  (Later we will consider
some variants, but the bulk of this paper focuses on this,
the simplest case.)  To make this concrete,
consider the following simple example.

\begin{example}
\label{ex:p2ex}
    When $\Delta = \mathrm{conv}((0,0),(3,0),(0,3)) =$
        \begin{tikzpicture}[scale=.2]
  \fill[blue!40] (0,0) -- (3,0) -- (3,3) -- cycle;

  \draw[very thick] (0,0) -- (3,0) -- (3,3) -- cycle;

  \draw[gray, thick] (-0.3,0) -- (3.3,0);
  \draw[gray, thick] (-0.3,1) -- (3.3,1);
  \draw[gray, thick] (-0.3,2) -- (3.3,2);
  \draw[gray, thick] (-0.3,3) -- (3.3,3);

  \draw[gray, thick] (0,-0.3) -- (0,3.3);
  \draw[gray, thick] (1,-0.3) -- (1,3.3);
  \draw[gray, thick] (2,-0.3) -- (2,3.3);
  \draw[gray, thick] (3,-0.3) -- (3,3.3);

  \foreach \x in {0,1,2,3} {
    \foreach \y in {0,1,2,3} {
      \fill (\x,\y) circle (3pt);
    }
  }

\end{tikzpicture},
    $\bP_\Delta = \bP^2,$ $L_\Delta = \cO_{\bP^2}(3),$ $n(\Delta)=9$,
    so we are counting geometric genus-zero
    plane curves of degree $3$ meeting the coordinate
    lines precisely at precisely $9$ points:  $8$ of which are chosen, the ninth being determined by having to solve a cubic equation.  This is the same as the
    number of singular cubic curves defined by homogeneous
    polynomials of degree three in $X, Y, Z,$ with all terms but $aXYZ$ fixed.  The answer is the degree in $a$ of the discriminant:  $9.$
\end{example}

Contrary to the above example,
it is typically too difficult to perform
exact computations in commutative algebra
to find the number and type of singularities of
a family of curves, so one employs alternate but equivalent
definitions of $N_\Delta.$  
The following are reviewed in Section \ref{sec:defs}.

\begin{itemize}
    \item $N^{\mathrm{Sev}}_\Delta$ is the number of points in (i.e., the degree of) the Severi variety of geometric genus-zero curves in the restricted linear system --- see, e.g., \cite{CaporasoHarris:1998:CountingPlaneCurves,Tyomkin24}.
    \item $N^{\mathrm{GW}}_\Delta$ counts genus-zero curves with fixed intersection with the toric divisor via logarithmic Gromov-Witten theory --- see, e.g., \cite[Section 2.6] {Bousseau:2019:TropicalRefined}.
    \item $N^{\mathrm{trop}}_\Delta$ is a corresponding count via the enumeration of tropical curves as in \cite{Mikhalkin2005Enumerative} and \cite{MR}.
    \item $N^{\mathrm{Coh}}_\Delta$ is the Euler characteristic of the relative Jacobian of the restricted linear system, equivalent because the Euler characteristic of the Jacobian of a curve is only nonzero if it has geometric genus zero --- see, e.g. \cite{Beauville1999} and Appendix \ref{sec:beauville}. Line bundles on a curve pushforward to coherent sheaves on $\bP_\Delta$, so this is a family of coherent sheaves, hence the name.
    \item $N^{\mathrm{Sh}}_\Delta$ is the Euler characteristic of a mirror dual family of constructible sheaves.
    \item Given $\Delta,$ we define the combinatorial notion of a \emph{rational ruling}.  $N^{\mathrm{rul}}_\Delta$ is the number of rational rulings.
\end{itemize}

Our main result is that the ruling and constructible
sheaf definitions are
equivalent to all the previous ones:

\vskip0.1in
\noindent {\bf Main Theorem.} (See Theorem
\ref{thm:mainthm}.)
The first two equalities in the following are true:
$$N^{\mathrm{rul}}_\Delta = N_\Delta^{\mathrm{Sh}} 
=N_\Delta^{\mathrm{trop}}=N_\Delta^{\mathrm{GW}} =N_\Delta^{\mathrm{Coh}}=N^{\mathrm{Sev}}_\Delta.$$
(The other equalities are previously known -- see
the discussion around Theorem \ref{thm:mainthm}.)

As discussed earlier, these enumerative problems have a history in the tropical and algebraic settings.
We believe our constructible approach is novel.  
Another novelty here is that we perform several calculations in logarithmic Gromov-Witten theory using the localization theorem in this setting, which is work in progress of the first-named author.
We give considerable background in Section \ref{sec:notation},
but here is the story in brief.

 The relative Jacobian of the restricted linear system is related by the spectral transform of Goncharov-Kenyon \cite{GK} to the integrable system of a dimer model of a bipartite graph $\Gamma$ a two-torus --- or equivalently \cite{TWZ} to a mirror family of constructible sheaves with singular support defined by zigzag curves of $\Gamma$.   Using techniques adapted from the study of Legendrian knots as in \cite{STZ}, we define the notion of a \emph{ruling} for such sheaves and associated \emph{ruling surface}.  We show that a constructible sheaf in the family gives rise to a ruling.  The ruling surface plays the role of the underlying curve in the mirror linear system.  The moduli space $\cM_\triangle^\mathrm{Sh}$
 then decomposes into rulings --- each one $\rho$ with
 its genus $g(\rho)$ and number of returns $r(\rho)$ --- in a way analogous to the stratification of a linear system of curves by genus:

\vskip0.1in
 \noindent{\bf Theorem.}  (See Theorem \ref{thm:ruling_decomposition}.)
\[
\cM_\triangle^\mathrm{Sh} \cong \coprod_{\rho \in \mathfrak R} (\bC^*)^{2g(\rho)} \times \bC^{r(\rho)}.
\]
 This decomposition underlies many of the results
 that follow.  Then $N^{\mathrm{Sh}}_\Delta$ is the count of rational (genus-zero)
rulings $N^\mathrm{rul}_\Delta$ via a mirror version of the equality
$N^{\mathrm{Coh}}_\Delta = N^{\mathrm{Sev}}_\Delta.$ 
This is the content of Theorem \ref{thm:rulings}, subsumed in the main theorem.

As mentioned above, each ruling
has a corresponding topological surface, analogous to the
conjugate Lagrangians of \cite{STWZ}.  The rational rulings are the genus-zero case,
but we can also count rulings of genus $g$.  Experimental evidence suggests that these
counts recover the Block-G\"ottsche refinement \cite{BlockGottsche:2016:RefinedCurveCounting} obtained by counting tropical curves with $q$-multiplicity.
We conjecture the following.

\vskip0.1in
\noindent {\bf Conjecture.} (See Conjecture \ref{conj:Ruling_refined}.)
Let $N_{\triangle,g}$ be the number of genus-$g$ rulings and define the ruling polynomial $R_\triangle(z) =
\sum_g N_{\triangle,g}z^{2g}$.  Then setting $z = q^{\frac{1}{2}}-q^{-\frac{1}{2}},$ we have
\[R_{\triangle}(z) = N_{\triangle}(q),\]
where $N_{\triangle}(q)$ are the refined tropical/log GW counts of \cite{BlockGottsche:2016:RefinedCurveCounting}/\cite{Bousseau:2019:TropicalRefined}.
We also make a conjecture about the
leading behavior of the $N_{\triangle,g}$
in the co-genus, somewhat analogous
to the conifold gap conjecture in topological
strings --- see
Conjecture \ref{conj:gap}.

Further, the decomposition of the mirror moduli space $\cM^{\mathrm{Sh}}_\triangle$ of constructible sheaves by genus of the ruling surface allows us to
express the Hirzebruch genus $\chi_{-y}$ in terms of $R_\triangle(z)$:
we have $$\chi_{-q}(\cM^{\mathrm{Sh}}_\triangle) = q^{g_\triangle} R_\triangle(z),$$
where $g_\triangle$ is the number of interior
lattice points of $\triangle$
--- see Proposition \ref{prop:chi=R}. Therefore, the conjecture above can be viewed as a variant of~\cite[Conjecture 1.1]{Nicaise-Payne-Schroeter}. 


\begin{remark}
    We think of the stratification
    of rulings by genus as being
    dual to the Severi stratification of
    curves in the restricted linear system by
    genus, though we have not proven this.  In fact, one is tempted to think of
    the holomorphic curves and the mirror conjugate Lagrangians as being ``the same,'' related by hyper-K\"ahler rotation on the complement of the toric anticanonical divisor --- see
    Examples \ref{ex:coameba} and \ref{ex:2c}.
\end{remark}

\begin{remark}
    The simplicity of the description of the coefficients $N_{\triangle,g}$ suggests
    to us that the integrable system may
    be the ``right'' perspective on these enumerative problems.
\end{remark}

The story behind the various set-ups is told in Section \ref{sec:notation}, where
we focus on the generally less familiar relation to constructible sheaves.  Section \ref{sec:defs} briefly reviews the different definitions and sketches their equivalence.  Section \ref{sec:logGW} discusses
the log GW problem, focusing on localization.  Section \ref{sec:tropical} reviews the
tropical approach and re-proves
invariance with a $q$-refined Pl\"ucker relation. 
Section \ref{sec:constructible} develops the
combinatorial approach by constructible sheaves,
defines the notions of ruling and ruling surface,
proves our main theorem, and states the conjecture.
In Section \ref{sec:examples}, we calculate
numbers by three methods for several examples
in some detail. 
The appendices contains more calcluations.

\subsection*{Acknowledgements}

We are very grateful to Helge Ruddat, who was involved in this project in its early stages,
contributed key ideas, 
and explained to us many aspects
of the tropical and algebraic approaches.
We would also like to thank many others for sharing their thoughts and perspectives on this problem, in particular
Jim Bryan,
Roger Casals, Bohan Fang, Paul Hacking, Wenyuan Li, Davesh Maulik, Justin Sawon, Vivek Shende, David Treumann, and Harold Williams.
{\bf Generative AI} has been used at several points, all noted in the text, to assist with computer programming, with computer searches and in making numerical conjectures.  The proofs are our own.
The work of EZ has been supported in part by NSF grants DMS-2104087 and DMS-2505652. The work of MH has been supported by the Villum Fonden grant Villum Investigator 37814.

\section{Set-up, Background, Narrative}
\label{sec:notation}

Let us describe the geometric set-up and recall some
background material to establish notation and construct the narrative of the paper.

\subsection{Toric Geometry}
We recall the usual toric package:  $N$ is a lattice, $M = M^\vee$ the dual lattice, $N_\bR := N\otimes_\bZ \bR$ and
$M_\bR := M\otimes_\bZ \bR$ are the corresponding real
vector spaces, $\Delta\subset M_\bR$ is a lattice
polytope and $\Sigma = \Sigma_\Delta \subset N_\bR$
the corresponding normal fan, and
$\bP := \bP_\Sigma$ the associated toric.

We sometimes discuss toric
Deligne-Mumford stacks and will use
the following notion of stacky fan \cite{BorisovChenSmith2005}.
Let $N$ a finitely-generated abelian group,
not necessarily free, and $\Sigma \subset N_\bR$ be a fan, where
$N_\bR := N\otimes_\bZ \bR$.  
We assume the one-dimensional cones
$\{\rho_1,...,\rho_r\}$ span $N_\bR$.
Let $\{b_1,...,b_r\}$
be a set of points in $N$ and suppose given
a group homomorphism $\beta: \bZ^r \to N_\bR$ 
such that the image $\widetilde{b}_i$ of the $i$th
direction vector spans $\rho_i$.
The tuple ${\bf \Sigma} := (N,\Sigma,\beta)$ is a stacky fan.

Note that given a free abelian group $N$ and a 
(standard) fan $\Sigma$, we can define a stacky fan ${\bf \Sigma}$ after choosing a set of not necesssarily primitive vectors $b_i$ spanning $\rho_i$, respectively.
For instance, when $N = \bZ^2$
and we are given a lattice polytope $\triangle = 
\mathrm{conv}\{p_1,...,p_r\},$
in $\bR^2$, with $p_1,...,p_r$
oriented counterclockwise around $\triangle,$
the normal fan $\Sigma(\triangle)$ has rays
defined by $\rho_i = \mathrm{Span}J(p_{i+1}-p_i),$
where $J$ is the complex structure on
$\bR^2\cong \bC.$
We can then define a stacky fan $\bf\Sigma (\triangle)$
by taking $b_i = J(p_{i+1}-p_i)$.
Thus any non-primitve edge gives rise
to stackiness.

\subsection{Coherent-Constructible Correspondence}
\label{subsec:CCC}
The algebraic geometry
of toric varieties can be related
to the combinatorial
geometry on a real torus $T:= M_\bR/M$
through the category
of constructible
sheaves on a real torus.
The geometry of the fan defines
a singular support condition for
constructible sheaves 
Let ${\bf \Sigma} = (N,\Sigma,\beta)$
be a stacky fan in the special case
where $N$ is free, as when ${\bf \Sigma} = 
{\bf \Sigma}(\triangle)$ described above.

The theorem of \cite{Ku}, following
\cite{Bondal2006,FLTZ}, gives
an equivalence $Coh(\bP_\triangle)\cong
Sh_{\underline{\Lambda}_{{\bf \Sigma}(\triangle)}}(T^2)$,
where $\underline{\Lambda}_{{\bf \Sigma}(\triangle)} \subset T^*T^2$ is defined as follows.
For each cone $\sigma = \mathrm{Span}_\bR(\{b_1,...,b_k) \in \Sigma$,
define $N_\sigma := \mathrm{Span}_\bZ(\{b_1,...,b_k) $
and $M_\sigma = \Hom(N_\sigma,\bZ)\subset M_\bR.$
Then
\begin{equation}
\label{eq;toricstack}
    \underline{\Lambda}_{{\bf \Sigma}(\triangle)} :=
\sum_{\sigma\subset \Sigma}(\sigma^\perp + M_\sigma)
\times (-\sigma) \subset (M_\bR/M) \times N_\bR 
\cong T^*T.
\end{equation}
When we want to refer to the associated Legendrian in the cocircle bundle $T^\infty T = \partial^\infty T^*T,$ we remove the underline $\underline{\Lambda}\rightsquigarrow \Lambda.$
We denote by ${\bf \Sigma}^\circ$
the fan obtained by keeping only $0$- and $1$-
cones (which deletes the toric fixed points)
and $\Lambda_{\bf \Sigma}(\triangle)^\circ$
for the associated Legendrian.

\subsection{Bipartite Graphs and The Dimer Model}

We now assume $N$ is of rank two,
so $T \cong T^2$ is
the standard two-torus, which we equip
with the standard orientation.  Let $\Gamma\subset T$ be a bipartite graph with
vertex set $V = B\sqcup W$ and edge
set $E.$
Goncharov-Kenyon \cite{GK}
define from this data a dimer statistical
system, as well as an algebraic
torus chart $T_\Gamma$
in a cluster
Poisson integrable system.
The dimer model counts dimer covers $c$,
i.e.~subsets $c\subset E(\Gamma)$ of edges
meeting each vertex exactly once, each with a given
weight, defined as follows.
Choose a set of edge weights ${\bf w}:E\to \bC^*$,
then define the total
weight ${\bf w}(c) = \prod_{e\in c}{\bf w}(e)$.
Since edges are canonically oriented from
black to white vertices,
we can think of $c$ as a one-chain, then
the dimer condition says $\partial(c)=
\sum_{w\in W}w-\sum_{b\in B}b.$
Let $c_0$ be a reference dimer.  Then
$c-c_0$ is a one-cycle.
After
a universal shift of weight by $1/w(c_0)$, we can weight
$c$ by ${\bf w}(c)/{\bf w}(c_0)$.  
The partition function for this dimer system
is $\cP = \sum_c {\bf w}(c).$
By interpreting
edge weights as parallel transports for a 
local system on $\Gamma$, the weighting
is by monodromy of the one-cycle $c-c_0.$
Note the gauge group $(\bC^*)^V(\Gamma)$
acts trivially on the weights of cycles,
and the space of dimer models is then
the space of edge weights modulo gauge
is $(\bC^*)^E/(\bC^*)^V\cong
\mathrm{Loc}_1(\Gamma)$, the algebraic torus $T_\Gamma$
of rank-one
local systems on $\Gamma.$
The space of edge weights comes equipped
with a Poisson structure defined by an
 intersection form, not on $T^2$
but on a conjugate surface that we now describe.

\subsection{Zigzags and the conjugate surface}

A zigzag path $\gamma$ is a smoothing in $T^2$ of an
oriented loop $\eta$ on $\Gamma\subset T^2$ which minimally turns left at black vertices and right at white ones,
such that $\gamma$ intersects $\eta$ once, transversely,
in each of its edges.
Each undirected edge of $\Gamma$ spawns two oriented zigzags, and thus the sum of all zigzag paths is zero both as a chain and homology class.  Thus the collection of $[\gamma]\in H_1(T)\cong \bZ^2$ defines a polygon $\Delta_\Gamma$ defined by starting at the origin
and traversing the vectors $[\gamma]$ in counterclockwise order.  

Then the complement $T^2\setminus \bigcup \gamma$ of the union of zigzags is a disjoint collection of regions:  ``null'' regions, which contain no vertex, as well as one region for each vertex of $\Gamma$.  The regions surrounding vertices have consistent orientation
of their boundaries --- opposite for the regions around white (resp.)~black vertices ---
while the orientations of segments around null regions alternate.
The conjugate surface
$S_\Gamma$ is the oriented topological
surface with boundary obtained by
gluing these regions along their corners with a twist ---
see Figure \ref{fig:zz}.

    
\begin{figure}[ht]
\begin{tikzpicture}[rotate=180]

\pgfmathsetmacro{\eps}{0.15}
\pgfmathsetmacro{\a}{0.5}


\newcommand{\Ax}[1]{#1}
\newcommand{\Ay}[1]{#1-sin(pi*#1/2 r)/2}

\newcommand{\Bx}[1]{#1-sin(pi*#1/2 r)/2}
\newcommand{\By}[1]{#1}


\fill[red!40]
plot[domain=0:1, samples=120, variable=\t]
({\Ax{\t}}, {\Ay{\t}})
-- (1,1)
-- plot[domain=1:0, samples=120, variable=\t]
({\Bx{\t}}, {\By{\t}})
-- cycle;


\fill[blue!40]

plot[domain=-1:0, samples=120, variable=\t]
({\Ax{\t}}, {\Ay{\t}})

-- (-1,-1) 

-- plot[domain=-1:0, samples=120, variable=\t]
({\Bx{\t}}, {\By{\t}})

-- (-1,-1) 

-- cycle;


\draw[very thick, black, ->]
plot[domain=1:-1, samples=120, variable=\t]
({\Ax{\t}}, {\Ay{\t}});

\draw[very thick, black, ->]
plot[domain=-1:1, samples=120, variable=\t]
({\Bx{\t}}, {\By{\t}});


\draw[thick]
(-\a,-\a) -- (\a,\a);


\filldraw[fill=black, draw=black, line width=0.6pt]
(-\a,-\a) circle (3pt);

\filldraw[fill=white, draw=black, line width=0.6pt]
(\a,\a) circle (3pt);

\end{tikzpicture}
\qquad\qquad
  \begin{tikzpicture}[scale=.9]
  \pgfmathsetmacro{\A}{1}
  \pgfmathsetmacro{\B}{1}
  \pgfmathsetmacro{\a}{2*\A-1}
  \pgfmathsetmacro{\b}{2*\B-1}
\foreach \x in {-1,0,...,\a}{  
\draw (\x+1/2,-1/2) -- (\x + 1/2,\b + 1/2);}
\foreach \y in {-1,0,...,\b}{  
\draw (-1/2,\y+1/2) -- (\a + 1/2,\y + 1/2);}
\foreach \x in {0,1,...,\a}{
\foreach \y in {0,1,...,\b}{
\pgfmathparse{mod(\x,2)} \let\xm\pgfmathresult 
\pgfmathparse{mod(\y,2)} \let\ym\pgfmathresult 
\ifthenelse{\equal{\xm}{1.0} \AND \equal{\ym}{1.0}}
{
\draw[draw=blue,fill=blue,opacity=.4]  (\x - 1/2,\y -1/2) rectangle (\x + 1/2,\y + 1/2);
\draw (\x-1/2,\y-1/2) -- (\x+1/2,\y+1/2);\draw (\x-1/2,\y+1/2) -- (\x+1/2,\y-1/2);
\filldraw (\x,\y) circle (3pt);  
\draw[->] (\x-1/2,\y-1/2) -- (\x,\y-1/2);
\draw[->] (\x+1/2,\y-1/2) -- (\x+1/2,\y);
\draw[->] (\x+1/2,\y+1/2) -- (\x,\y+1/2);
\draw[->] (\x-1/2,\y+1/2) -- (\x-1/2,\y);
}{};
\ifthenelse{\equal{\xm}{0.0} \AND \equal{\ym}{0.0}}
{
\draw[draw=red,fill=red,opacity=.4]  (\x - 1/2,\y -1/2) rectangle (\x + 1/2,\y + 1/2);
\draw (\x-1/2,\y-1/2) -- (\x+1/2,\y+1/2);\draw (\x-1/2,\y+1/2) -- (\x+1/2,\y-1/2);
\draw[draw=black,fill=white] (\x,\y) circle (3pt); 
\draw[->] (\x-1/2,\y-1/2) -- (\x-1/2,\y);
\draw[->] (\x-1/2,\y+1/2) -- (\x,\y+1/2);
\draw[->] (\x+1/2,\y+1/2) -- (\x+1/2,\y);
\draw[->] (\x+1/2,\y-1/2) -- (\x,\y-1/2);
}{};

}
\draw[->] (-1/2,1/2)--(-1/2,1);
\draw[->] (3/2,-1/2)--(1,-1/2);
\draw[->] (3/2,-1/2)--(3/2,0);
\draw[->] (1/2,3/2)--(0,3/2);
}
   \end{tikzpicture}
\caption{Zigzag paths and conjugate surface near an edge (left) and for a torus graph $\Gamma$ (right) with 
$\Lambda_\Gamma = \Lambda_{{\bf \Sigma}(\Delta)^\circ}$ where
$\Delta = \square$ and 
$\bP_{\Delta} = \bP^1\times\bP^1$.  }
\label{fig:zz}
\end{figure}
Then $S_\Gamma$ is homeomorphic to a thickening of $\Gamma$
with a twist in each ribbon edge.
The intersection form $\langle\;,\;\rangle$ on $S_\Gamma$ defines the Poisson
structure on $\mathrm{Loc}_1(S_\Gamma)\cong \mathrm{Loc}_1(\Gamma)\cong \Hom(H_1(\Gamma;\bZ);\bC^*) \cong (\bC^*)^{b_1(\Gamma)}$:  let $a,b\in H_1(\Gamma;\bZ)$ and let $z^a,z^b \in \cO(\mathrm{Loc}_1(\Gamma)).$  Then $\{z^a,z^b\}
=\langle a, b\rangle z^{a+b}.$
The boundary curves of $S_\Gamma$ are precisely the zigzags $\gamma$, and these span the kernel of the intersection form, 
so $z^\gamma$ are the Casimirs of the Poisson structure.
The symplectic leaves are thus specified by fixing
the monodromies along zigzags.

\subsection{The Goncharov-Kenyon Integrable System}

Goncharov-Kenyon \cite{GK} 
show that given this Poisson structure,
there is an integrable system structure
on the space of edge weights whose
Hamiltonians are the partition functions
for the dimer models over dimers $c$ such
that $[c-c_0] \in H_1(\Gamma)$ is fixed.
Further, they show that if you
alter $\Gamma\rightsquigarrow \Gamma'$
by a square move and adjust edge weights by
a birational cluster coordinate transformation,
then the dimer models are equivalent on the
overlap of torus charts.  In this way the
charts glue to a cluster Poisson 
integrable system.  See Appendix \ref{sec:beauville} for details
on the complete integrability of this system.

\subsection{Embedding the Dimer Model
into Constructible Sheaves}

Using the standard orientation and complex structure
on $T^2,$ we can rotate the unit tangents along the orientation 
$\begin{tikzpicture}
    \draw[->,thick,blue](0,0)--(.55,0);
    \draw[thick,blue] (.55,0)--(1,0);
\end{tikzpicture}$
of a zigzag $\gamma$ to the right by 90 degrees
$\begin{tikzpicture}
    \draw[thick,blue](0,0)--(1,0);
    \draw[thick,blue] (1/4,0)--(1/4,-.15);
    \draw[thick,blue] (2/4,0)--(2/4,-.15);
    \draw[thick,blue] (3/4,0)--(3/4,-.15);
\end{tikzpicture}$
to define a zigzag Legendrian $\Lambda_\Gamma \subset T^\infty T^2,$ and define
$\Lambda_{\Gamma}:= \bigcup_\gamma \Lambda_\gamma$ as
the union over all zigzags Legendrians.
A main construction in \cite{STWZ} used in \cite{TWZ}
is the definition of a constructible sheaf on $T^2$
with singular support in $\Lambda_{\Gamma}$.
Let $B$ (resp.~$W$) be the union of black (resp.~white) regions, and let $i$ be the inclusion map to $T^2$.  
Let $i_*\bC_B$ (resp.~$i_!\bC_W$) be the
corresponding standard (resp.~costandard) constructible
sheaf.  The singular support of this sheaf includes
$\Lambda_{\Gamma}$, but also contains the
inward (resp.~outward) normal
vectors at every corner.  However, by taking an extension of
these sheaves we can cancel the excess singular support
and construct an object in $\mathrm{Sh}_{\Lambda_\Gamma}(T^2).$
The key observation is that
there is a locally defined extension class at each corner
(a 2d version of the fact from 1d that $\mathrm{Hom}^1(\bC_{x\geq 0},\bC_{x< 0})\cong \bC$).
The corners are in one-to-one correspondence with edges of
$\Gamma,$ so we can reinterpret the collection of edge
weights $w$ as defining a non-trivial extension,
thus defining a sheaf
$F({\bf w})\in \mathrm{Sh}^1_{\Lambda_\Gamma}(T^2)$ fitting into the exact
sequence $i_*\bC_B\to F({\bf w}) \to i_!\bC_W[1].$
Recall there is a microlocal monodromy map $\mathrm{Sh}_\Lambda(T)\to \mathrm{Loc}(\Lambda)$.
Because the ranks of $F({\bf w})$ by one, the image under
this map lands in $\mathrm{Loc}_1(\Lambda_\Gamma),$
with the
microlocal monodromy is defined by the zigzag weights
under the restriction $\mathrm{Loc}_1(\Gamma)\cong
\mathrm{Loc}_1(S_\Gamma)\to \mathrm{Loc}_1(\partial S_\Gamma)
\cong \mathrm{Loc}_1(\Lambda_\Gamma).$
We write $\mathrm{Sh}^1_{\Lambda_\Gamma}(T^2)$ for
sheaves with rank one microlocal monodromy.

This construction via weights ${\bf w}$ thus defines
an open embedding $\mathrm{Loc}_1(\Gamma)\to \mathrm{Sh}^1_{\Lambda_\Gamma}(T^2).$
Further, square moves $\Gamma \rightsquigarrow \Gamma'$
induce \emph{isotopies} $\Lambda_\Gamma \cong \Lambda_{\Gamma'},$
which by \cite{GKS} define equivalences $\mathrm{Sh}_{\Lambda_\Gamma}\cong \mathrm{Sh}_{\Lambda_{\Gamma'}}$.
In this way the charts of the Goncharov-Kenyon
cluster integrable system glue together to the single
space of objects in $\mathrm{Sh}^1_{\Lambda_\Gamma}(T^2)$
(at least up to the usual issue of
codimension-two loci that sit outside
all cluster charts).

\subsection{Spectral Transform, Beauville Integrable System
and Mirror Symmetry}

From the data of a bipartite graph $\Gamma \subset T^2,$
Goncharov and Kenyon also construct \cite{GK} a dual integrable
system, which we now describe.
The edge weights ${\bf w}$, together with $T^2$ monodromies
$x$ and $y$ and some discrete choices, can be assembled into a
Kasteleyn matrix $K(x,y;{\bf w}): \bC^{|B|}\to \bC^{|W|}$
This matrix generically has full rank
so $\det(K(x,y)=0$ defines a curve $C_0\subset (\bC^*)^2$
depending on ${\bf w},$
together with a line bundle $L_0\to C_0$
whose with fiber $\ker K(x,y)$ at the point $(x,y)\in C_0.$

Now recall the zigzag paths define a polygon $\triangle$ and toric variety $\bP_\triangle \supset (\bC^*)^2$, so $C_0$ has a compactification $i:C\hookrightarrow \bP_\triangle$,
and the line bundle $L_0$ compactifies to $L\to C$,  giving a family of coherent
sheaves $i_*L \in Coh(\bP_\triangle)$ 
Further, $C$ lies in the linear system $|L_\triangle|$, and the map $i_*L\mapsto C$
defines a torus fibration from the family of
coherent sheaves to a sub-linear system, with
fibers a Picard variety of line bundles of fixed degree.

\begin{remark}[Comments on stackiness]
\label{rmk:coh-stack}
Suppose $\partial \Delta$ is a union of edges of length $d_i$ and choose a labeling $p_{i,j},$ $j\in \{0,...d_i-1\}$ for each
edge.  Identifying the set $\{0,...,d_i-1\}$ with
the characters of $\bZ/d_i\bZ$ --- write $\chi_k$ for the
character defined by $k$ --- we arrive at a lift of
our sheaves to the toric stack.  
To see this, first write $\pi:\bP_{\bf{\Sigma}(\Delta)}\to\bP_\Delta$ for the morphism from the
toric stack to its coarse moduli space.
All the stackiness of $\bP_{\bf{\Sigma}(\Delta)}$
is along the boundary divisors $D_i$, which have
generic stabilizers
$\bZ/d_i\bZ.$
Then
for each $i_*L\in \mathrm{Coh}(\bP_\Delta)$ there is
a lift $i_*\widetilde{L}\in\mathrm{\Coh}(\bP_{\bf{\Sigma}(\Delta)})$ to the toric stack, defined as follows.
First assign the character $\chi_{p_{i,j}}$ to the
point $p_{i,j}\in C\cap \partial \bP_\Delta.$
The data of $L\to C$ together with a character for $\bZ/d_i\bZ$ at each point of $C$ along the boundary divisor $D_i$ defines
a lift of $L\to C$ to a line bundle $\widetilde{L}\to \widetilde{C}$
over the stacky preimage $\widetilde{C}$
of $C$. 
Then $i_*\widetilde{L}$ is our
desired lift.  Conversely,
given $\widetilde{L}\to \widetilde{C}$,
the pushforward $\pi_*(i_*\widetilde{L})$
defines a line bundle on $C$, with the relevant
character data on the boundary.  
The lift is therfore
a section of a morphism $\pi_*$ of categories.
The morphism $\pi_*$ induces
a morphism of moduli spaces of sheaves
of the form $i_*L$ over curves $C$ --- since it is
a bijection of objects, it is an isomorphism.
If we need to distinguish these spaces, we refer to the
moduli space of sheaves on the stack as $\cM^{\mathrm{Stack}}$.
This discussion shows $\cM^{\mathrm{Coh}}\cong \cM^{\mathrm{Stack}}.$
\end{remark}

In \cite{TWZ} it was shown that, provided $\Gamma$
satisfies a technical condition of \emph{consistency}, this transform is 
an instance of mirror symmetry in the form
of the coherent-constructible correspondence.
Specifically, one uses the orientation
on the torus to convert oriented zigzag paths
$\begin{tikzpicture}
    \draw[->,thick,blue](0,0)--(.55,0);
    \draw[thick,blue] (.55,0)--(1,0);
\end{tikzpicture}$
into co-oriented paths
$\begin{tikzpicture}
    \draw[thick,blue](0,0)--(1,0);
    \draw[thick,blue] (1/4,0)--(1/4,-.15);
    \draw[thick,blue] (2/4,0)--(2/4,-.15);
    \draw[thick,blue] (3/4,0)--(3/4,-.15);
    
\end{tikzpicture}$
or equivalently
Legendrian curves in the
cocircle bundle $T^\infty T^2.$
One result of \cite{TWZ} is that when
$\Gamma$ is consistent, there is an
isotopy of the zigzag Legendrian $\Lambda_\Gamma$
to the 1-cone $\Lambda_{{\bf \Sigma}(\triangle)^\circ}$ of the Legendrian
of the toric stack associated to $\triangle.$
By \cite{GKS} this results in an equivalence
of constructible sheaf categories for the
two Legendrians. 

Then under the mirror duality $\Coh(\bP_{\bf{\Sigma}(\triangle)})\cong
\Sh_{\Lambda_{{\bf \Sigma}(\triangle)}}(T^2)$,  
the condition that a coherent sheaf avoids fixed points restricts its support to be a curve or points, and a key result of \cite{TWZ} is to
characterize the constructible-sheaf image
of the pure one-dimensional
sheaves arising from the spectral
transform.

Recall again that on general grounds, we have
a map $Sh_\Lambda \to Loc(\Lambda)$
and that 
we focus on the subcategory of constructible
sheaves which are ``microlocal rank one,''
i.e.~land in $Loc_1(\Lambda)$ (in cohomological
degree zero).
It is proven in \cite{TWZ}
that the image
of pure one-dimensional
sheaves under the spectral transform
is 
$Sh^1_{\Lambda_{{\bf \Sigma}(\triangle)^\circ}}(T^2)$,
where the superscript indicates microlocal rank one.
Further, the monodromies of the zigzag paths
are the Casimirs of the
Poisson structure, becuase
the zigzags represent the boundary cycles of the
conjugate surface, which span the kernel
of the intersection form.
Thus fixing an image in $Loc_1(\Lambda)$
determines a symplectic leaf, and
as shown in \cite{GK} and \cite{TWZ}, this is dual
to fixing the intersection of $C$ with the
toric boundary.

\section{Counting Nodal
Spectral Curves}
\label{sec:defs}

We have arrived at the central question of the
paper.  Having described in Section \ref{sec:notation}
a complete integrable
system of curves $C$ and their Jacobians ---
equivalently sheaves on $\bP_\triangle$
pushed forward from line bundles
on curves --- as well as the mirror
dual system of constructible sheaves,
we ask:  how many curves are rational and nodal?
Call the answer $N_\Delta$.
We offer several equivalent definitions of $N_\Delta.$ 
On the algebraic side,
we begin with a classical definition
in terms of Severi varieties,
then use logarithmic Gromov-Witten theory,
then pass to an equivalent
formulation via tropical geometry.
On the constructible side, we employ techniques
from the relation between constructible
sheaves and Legendrian knots (similarly
to \cite[Sections 5.2 and 6.5]{STZ}) to give a more
combinatorial definition.  

These methods are
explained in more detail in the subsequent
sections, but briefly:
\begin{itemize}
\item Let $\Delta$ be a convex polygon with $g$ interior
points and $L_\Delta \to \bP_\Delta$ the corresponding
line bundle.  Let $|L'_\Delta|$ denote the (non-closed) $g$-dimensional
sub-linear system
of curves $i:C\subset \bP_\Delta$ of genus $g$
in $|L_\Delta|$ with fixed
generic intersection on the toric boundary.  Let $\cS_0\subset |L_\Delta'|$
denote the compact, zero-dimensional Severi subvariety consisting of curves of geometric genus zero.  Define $$N^{\mathrm{Sev}}_\Delta := \#\cS_0.$$

\item Write $\cM^{\mathrm{Coh}}$ for the moduli of sheaves of the form $i_*L,$ where $C$ is a curve in the reduced linear system $|L_\Delta'|$ defined above, and $L\to C$ is a line bundle of fixed degree.  $\cM^{\mathrm{Coh}}$ is isomorphic
to the compactified relative Jacobian of $|L_\Delta'|$ ---
see e.g.~\cite{Beauville1999} and Appendix \ref{sec:beauville}.  Then  \begin{equation*}
    \label{eq:NDelta} N_\Delta^{\mathrm{Coh}} := \chi(\cM^{\mathrm{Coh}}).
\end{equation*}
It is known that generically, all singularities are nodal ---see \cite{Tyomkin24,ChristHeTyomkin2020,KleimanShende2011,Harris1986}. Since Jacobians of all curves $C$ of non-zero geometric genus have vanishing Euler characteristic, only the Jacobians of curves in the Severi variety $\cS_0$ contribute, and as a result $N^{\mathrm{Coh}}_\Delta$ contribute, and each count for $1$.  Thus $N^{\mathrm{Coh}}_\Delta = N^{\mathrm{Sev}}_\Delta.$
\item Write $\overline{\cM}_{0,n}^{\mathrm log}(\bP_\Delta,\beta)_d$ for the moduli space of log stable
maps of degree $\beta = \beta(\Delta)$ Poincar\'e dual to $c_1(L_\Delta),$
and with ``type'' $d$ defined by $n$ points on the toric boundary. 
Here $n$ is the lattice length
of the perimeter $\partial \triangle$,
and $d$ encodes the log structure
defined as follows:
if the $i$th boundary component of $\partial \triangle$ has lattice length $d_i$, we send $d_i$
marked points to the
corresponding toric divisor $D_i$.
Write $ev_p^*\phi_\rho(p))$ for the pullback of the
point class on the divisor $\rho(p)$ to which $p$ is sent.
See Section \ref{sec:logGW} for details.
Then
$$N_\Delta^{\mathrm{GW}} := \int_{\overline{\cM}_{0,n}^{\mathrm{log}}(\bP,\beta)_d}\prod_{i=1}^{n-1}ev^*_{p_i}\phi(\rho(p_i)).$$
It follows from \cite[Theorem 1.2]{MR} that
this question is enumerative, the answer
being the number of immersed nodal rational curves
of degree $\beta,$ i.e.~$N^{\mathrm{GW}}_\Delta = N^{\mathrm{Sev}}_\Delta.$
\item Write $\cT_\triangle$ for the set of tropical maps
from a genus-zero tropical tree into $\bR^2$, with the following conditions.
As above, put $d_i$ for the lattice length
of the $i$th edge of $\partial\triangle$,
with $n=\sum_i d_i.$  We require
$d_i$ external edges to be parallel
to the $i$th ray of the fan $\Sigma(\triangle),$ with $n-1$ of
them fixed at infinity generically.
Let $m(\tau)$ be
the multiplicity of a tropical tree.  Then
$$N_\Delta^{\mathrm{trop}} := \sum_{\tau\in\cT_\Delta}m(\tau)$$
It follows from Mikhalkin's theorem
\cite[Theorem 1]{Mikhalkin2005Enumerative}, together
with the work of \cite{MR} incorporating incidence conditions,
that $N^{\mathrm{trop}}_\Delta=
N^{\mathrm{GW}}_\Delta.$
\item We can identify $\cM^{\mathrm{Coh}}$ with $\cM^{\mathrm{Stack}}$ as in
Remark \ref{rmk:coh-stack}.
Then use the coherent constructible correspondence
for toric stacks to find an isomorphic
mirror moduli space.  Specifically,
write $\cM^{\mathrm{Sh}}\cong \cM^{\mathrm{Coh}}$ for the
moduli space of constructible sheaves
mirror to coherent sheaves in $\cM^{\mathrm{Stack}}$ under the equivalence of \cite{TWZ}. 
Then
$$N_\Delta^{\mathrm{Sh}} := \chi(\cM^{\mathrm{Sh}}).$$
Then the equivalence of categories \cite{TWZ} establishes
isomorphism $\cM^{\mathrm{Sh}}\cong \cM^{\mathrm{Stack}},$ which in
turn is isomorphic to $\cM^{\mathrm{Coh}},$ whence the equality $N^{\mathrm{Coh}}_\Delta = N^{\mathrm{Sh}}_\Delta$.
\item Given $\Delta$, we define in
Section \ref{sec:rulings} (see Definition
\ref{def:rulings on T}) the notion of a \emph{ruling}, a combinatorial object that captures the underlying geometry of constructible sheaves in $\cM^{\mathrm{Sh}}$.  We then focus on \emph{rational} rulings, and define
$$N^{\mathrm{rul}}_\Delta := \text{the number of rational rulings.}$$   
Analogously to the stratification of a linear system of
curves by genus, we have
$\cM^{\mathrm{Sh}}_\Delta = \bigsqcup_\rho \cM^{\mathrm{Sh},\rho}_\Delta$, with 
$\cM^{\mathrm{Sh},\rho}_\Delta \cong (\bC^*)^{2g(\rho)}\times \bC^{r(\rho)}$, where $g(\rho)$ is the genus of a
\emph{ruling surface}, so rational rulings having
$g(\rho)=0.$
The equality $N^{\mathrm{Sh}}_\Delta = N^{\mathrm{rul}}_\Delta$ 
then follows from additivity of the Euler characteristic,
analogous to $N^{\mathrm{Coh}}_\Delta=N^{\mathrm{Sev}}_\Delta.$
See Theorem \ref{thm:rulings}.
\end{itemize}

These equivalences give the following equalities, the first
two of which are new (see Theorem \ref{thm:rulings}).
\begin{theorem}
\label{thm:mainthm}
$$N^{\mathrm{rul}}_\Delta = N_\Delta^{\mathrm{Sh}} 
=N_\Delta^{\mathrm{trop}}=N_\Delta^{\mathrm{GW}} =N_\Delta^{\mathrm{Coh}}=N^{\mathrm{Sev}}_\Delta.$$
\end{theorem}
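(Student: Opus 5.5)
The plan is to assemble the chain of equalities from right to left, using the bulleted equivalences above, and then to supply the two new links. The last three equalities, $N^{\mathrm{trop}}_\Delta = N^{\mathrm{GW}}_\Delta = N^{\mathrm{Coh}}_\Delta = N^{\mathrm{Sev}}_\Delta$, are quoted from the literature.
\begin{itemize}
\item $N^{\mathrm{Coh}}_\Delta=N^{\mathrm{Sev}}_\Delta$: stratify the compactified relative Jacobian $\cM^{\mathrm{Coh}}$ over $|L'_\Delta|$. Fibers over curves of positive geometric genus have Euler characteristic zero, since they carry a free action of the torus or abelian part of the generalized Jacobian. By genericity of the boundary data (\cite{Tyomkin24,KleimanShende2011}), the curves in $\cS_0$ are nodal rational curves. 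Their compactified Jacobians have Euler characteristic $1$, as in Appendix \ref{sec:beauville} and \cite{Beauville1999}.
\item $N^{\mathrm{GW}}_\Delta=N^{\mathrm{Sev}}_\Delta$: this follows from \cite[Theorem 1.2]{MR}.
\item $N^{\mathrm{trop}}_\Delta=N^{\mathrm{GW}}_\Delta$: this follows from Mikhalkin's correspondence with the boundary-incidence modifications of \cite{MR}.
\end{itemize}
No new work is needed here beyond checking that the hypotheses (generic boundary points, $n-1$ fixed) match.

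The first new equality is $N^{\mathrm{Sh}}_\Delta = N^{\mathrm{Coh}}_\Delta$. By Remark \ref{rmk:coh-stack}, $\cM^{\mathrm{Coh}}\cong\cM^{\mathrm{Stack}}$. The coherent-constructible correspondence for the toric stack, $\Coh(\bP_{{\bf\Sigma}(\Delta)})\cong \Sh_{\Lambda_{{\bf\Sigma}(\Delta)}}(T^2)$, restricted as in \cite{TWZ} to pure one-dimensional sheaves avoiding fixed points, identifies $\cM^{\mathrm{Stack}}$ with the microlocal rank-one moduli $\cM^{\mathrm{Sh}}$ of $\Sh^1_{\Lambda_{{\bf\Sigma}(\Delta)^\circ}}(T^2)$. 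Fixing the boundary intersection of $C$ corresponds to fixing the Casimirs, i.e.\ the zigzag monodromies in $\mathrm{Loc}_1(\Lambda)$. So the symplectic leaf on one side matches the one on the other, and the isomorphism of moduli spaces gives equal Euler characteristics. I would check that this is an isomorphism of varieties, or at least a constructible bijection, so that $\chi$ is preserved. I would also check that the leaves correspond exactly and not just generically.

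The second new equality is $N^{\mathrm{rul}}_\Delta=N^{\mathrm{Sh}}_\Delta$, via the ruling decomposition (Theorem \ref{thm:ruling_decomposition}): $\cM^{\mathrm{Sh}}_\Delta\cong\coprod_\rho(\bC^*)^{2g(\rho)}\times\bC^{r(\rho)}$. To prove that decomposition I would imitate \cite{STZ}. First, fix a front projection of the isotoped Legendrian $\Lambda_{{\bf\Sigma}(\Delta)^\circ}$ on $T^2$ with respect to a generic direction. Then put a microlocal rank-one sheaf into normal form by a canonical filtration (Morse complex) along each slice. At each crossing, record whether the normal form is a switch, a return or a departure; this defines the ruling $\rho$ of the sheaf. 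Next, show that the sheaves with a given ruling are parametrized freely by one $\bC$ per return and by the monodromies of the rank-one local systems on the ruling surface. Fixing boundary monodromies leaves $2g(\rho)$ free $\bC^*$ coordinates. Given the decomposition, additivity of compactly supported Euler characteristic finishes the argument: $\chi(\bC)=1$ and $\chi((\bC^*)^{2g})=0$ for $g>0$. Hence $\chi(\cM^{\mathrm{Sh}})=\#\{\rho: g(\rho)=0\}=N^{\mathrm{rul}}_\Delta$.

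The main obstacle is the ruling decomposition itself. Two parts are delicate. The first is constructing the normal form on a torus, where slices are circles rather than lines, so the filtration must be made global and the boundedness used in \cite{STZ} is absent. The second is proving that the strata are exactly $(\bC^*)^{2g}\times\bC^{r}$ with the Casimir constraint cutting out precisely the boundary monodromies. I would first attempt it by cutting $T^2$ along a generic circle to reduce to the annulus or strip setting of \cite{STZ}, and then account for the gluing monodromy. That gluing monodromy is where the extra $\bC^*$ factors, and hence the genus of the ruling surface, should arise.
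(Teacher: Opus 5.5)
Your assembly of the chain matches the paper's. The last three equalities are quoted from the literature, $N^{\mathrm{Sh}}_\Delta=N^{\mathrm{Coh}}_\Delta$ comes from Remark \ref{rmk:coh-stack} and the CCC of \cite{TWZ}, and $N^{\mathrm{rul}}_\Delta=N^{\mathrm{Sh}}_\Delta$ comes from Theorem \ref{thm:ruling_decomposition} and additivity of $\chi$.

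The gap is in your plan for that decomposition, which you set out to prove rather than cite. Your normal-form step fails on circle slices as stated. By Proposition \ref{prop:ruling in 1d}, a sheaf on $S^1$ is a sum of pushforwards of interval sheaves \emph{plus} a local-system summand, and microlocal data cannot see the latter. For example, take $\cF\oplus\cL[2]\oplus\cL'[1]$ with $\cL,\cL'$ rank-one local systems on $\T$. It has the same microstalks, the same microlocal monodromy and the same $\chi$ of the stalk at $\mathfrak{t}$ as $\cF$. Yet its slices contain summands that are not eyes, and its mirror $E\oplus\bC_x\oplus\bC_y[-1]$ is not a pure sheaf. The missing idea is to use the full characterization of the mirror subcategory in Lemma \ref{lem:characterization of Sh_0 by Hom with Loc}. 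There, Hom-vanishing against rank-one local systems on $\T$ (i.e.\ purity) is equivalent to having no local-system summand on any vertical circle (Lemma \ref{lem:characterization of Sh_0 via restriction on S1}); the proof uses non-characteristic deformation and gluing of local systems over the strips. Only then is every slice a sum of eyes $e_*\dsk_U[2]$ and $e_*\dsk_Z[1]$, and condition (2) becomes the balancing condition. Proposition \ref{prop:indecomposable}, applied on the universal cover of each one-crossing annulus, then reads off the ruling. Cutting $T^2$ along a single circle does not replace this: a vertical cut still leaves circle slices, and a horizontal cut only moves the local-system ambiguity into the gluing.

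Your account of where $(\bC^*)^{2g(\rho)}$ comes from is also off. The genus of $\Sigma(\rho)$ comes from loops through switches: after removing type B switches, $\chi(\Sigma(\rho))=-V(\rho)$, as in the proof of Proposition \ref{prop:g_plus_r}. Such loops typically lie inside one fundamental annulus, not at a cut. The same is true in the planar setting of \cite{STZ}, and here the $\Gm$'s appear strip by strip at the vertical and type B switches (Lemma \ref{lem:fiber_seq_of_Moduli}). The paper does not try to locate these factors. It builds $f_\rho\colon\calM^\rho_{\triangle,\eta}\to\calM(\Loc^1(\Sigma(\rho))_\eta)\simeq(\bC^*)^{2g(\rho)}\times B\Gm$ from the same strip decomposition on both sides; genericity of $\eta$ ensures only connected $\Sigma(\rho)$ contribute. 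It then shows the fiber of $f_\rho$ is $\bC^{r(\rho)}$. By Corollary \ref{cor:fiber_of_Ki}, each strip adds a $\Ga$ at a return and nothing elsewhere. Closing up the cycle of strips adds only the unipotent group $G_n^\epsilon$, which the framing absorbs.

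A minor point on $N^{\mathrm{Coh}}_\Delta=N^{\mathrm{Sev}}_\Delta$: the torus part of the generalized Jacobian does not act freely on compactified Jacobians, since rational nodal curves have $\chi=1$. The vanishing for positive geometric genus comes from the abelian part, as in \cite{Beauville1999}.
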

We sometimes write $N_\Delta$ for any of these quantities.

The following two sections, respectively on the log GW and tropical approaches, briefly highlight aspects that are relevant for our calculations.  Then, in Section \ref{sec:constructible} we develop the constructible-sheaf side of the story
in detail.

\section{Definition
via Logarithmic Gromov-Witten Theory}
\label{sec:logGW}

Let $N$ be a rank-two lattice, $N_\bR := N\otimes_\bZ \bR;$ let $M=\Hom(N,\bZ)$ be the dual lattice and $M_\bR:=M\otimes_\bZ \bR.$
Let $\Delta\subset M_\bR$ be a convex lattice polygon and let $L_\Delta \to \bP$ be the corresponding equivariant ample
line bundle over the toric Fano surface $\bP = \bP_\Sigma$ defined by the normal fan $\Sigma\subset N_\bR$ of $\Delta.$

The data defines a logarithmic Gromov-Witten problem.  Recall that Gross-Siebert \cite{GS} construct
a moduli space $\overline{\cM}^{\rm log}(C,X)$
of basic log stable maps from a domain log scheme $C$ over base log scheme $W$ to a projective variety $X$ over a base log scheme $S$, and in some cases show this family is proper over $S$. To wit, $X=\bP_\Delta$ is such a case.  
The space of log stable maps carries a perfect obstruction theory,
relative to a universal log curve.
This family then has a virtual
fundamental class, with which log Gromov-Witten invariants 
can be defined.

An important novelty in the logarithmic case is that components of $\cM^\log(C,X)$ can be labeled not only by the standard decorations --- the arithmetic genus $g$ of the domain curve, number of marked points $n$, and homology $\beta$ of the image --- but also by \emph{type}, which includes incidence data of the marked points, as we soon describe.

First, let us fix the usual decorations.  
We require $g=0$.
We require $n= \#(\partial\Delta \cap M)$ to be the number of
boundary points, equal also to the lattice perimeter of $\Delta$. 
More specifically, we fix an isomorphism between marked points
$p$ and unit intervals of the perimeter (unimodular line segments of $\partial \Delta)$, and therefore
also boundary divisors $D_{\rho(p)}$ corresponding to rays $\rho(p)\in \Sigma(1).$
Then a given ray $\rho$ will be assigned to as many marked points as the lattice length of the corresponding edge of $\Delta$.
Finally,
$\beta = \beta(\Delta)$ is defined to be the homology class uniquely determined by its intersection with the toric divisors, as follows:
$\beta\cdot D_\rho$ is the lattice length of $D_\rho = \deg(L_\Delta\vert_{D_\rho}),$ or in other words $\beta\in H_2(\bP,\bZ)$ is Poincar\'e dual to $c_1(L_\Delta).$

We now use the \emph{type} to further specify a component of $\cM^{\log}(\bP).$
We recall that at a point $q$ of the domain curve, the log structure determines the stalk of the ghost sheaf of $C$.  
To remind, let $Q$ be the (universal, basic)
monoid of the log point $W$ --- see \cite[Construction 1.16]{GS}.  Then the ghost
sheaf of $C$ at $q$ is either
$Q$ (if $q$ is generic), $Q\oplus \bN$ (if $q$ is
a marked point) or $Q\oplus_\bN (\bN\oplus \bN)$ (if $q$ is a node), where in the last case the map from $\bN\to \bN\oplus \bN$ is diagonal and the image of $1$ in $Q$ is denoted $\rho_q.$ 
At the image $p := f(q)\in X$ we have the stalk $P$ of the ghost sheaf of $X$.  The log morphism determines a map $P$ to the ghost
sheaf of $C$ at $q$.

In particular, for each marked point,
a log morphism gives a map from $P$ to $\bN,$ so an element of the dual cone of $P$. 
In our toric case, this proceeds as follows.
Let $\sigma := \sigma_q$ be the smallest cone of $\Sigma$ such that $p\in U_\sigma = \mathrm{Spec}(\sigma^\vee \cap M).$
Then $P = \sigma^\vee \cap M$ and the log morphism determines an element $v_q$ of $\sigma \cap N.$  The collection of data $\sigma_q$,
where $q$ is a marked point, is part of the type.  We require
$\sigma_q$ to be precisely $\rho(p),$ the ray assigned to $p$ above, and require $v_p$ to be
a generator.  This fixes a simple incidence
condition along the corresponding divisor.  (Non-generators would encode higher tangencies.)
Let us call this topological type $d = d(\Delta).$  We will indicate the corresponding component of the moduli of maps with a subscript $d(\Delta).$\footnote{In the language
of \cite[Definition 1.10]{GS}, above we have
fixed the data of the ``type'' at marked
points, but the type also includes
the data $\{u_q\}$ at the nodes $q$ of $C$.  We
do not fix this in our logarithmic Gromov-Witten
problem, meaning we sum over types which
include any such choice of
data at nodes.}

Thus far we have constructed
$$\overline{\cM}^\log_{0,n(\Delta)}(\bP_{\Sigma(\Delta)},\beta(\Delta)_{d(\Delta)}.$$  Our notation
emphasizes that all decorations depend only on $\Delta,$ though from now on we will omit the $\Delta.$
This space comes equipped with evaluation maps, but note that from the type $d$ we have automatically that
$ev_p(f) = f(p) \in D_{\rho(p)}$.  As a result, we can pull back a point class $\phi_{\rho(p)}$ generating $H^2(D_{\rho(p)},\bZ)$.
This imposes the incidence condition that the marked point $p$ be mapped to a specific point on the boundary divisor $D_{\rho(p)}$.
Finally, then, we can define the logarithmic Gromov-Witten invariant
\begin{equation}
    N^{\mathrm{GW}}_\Delta := \int_{\overline{\cM}_{0,n}^\log(\bP,\beta)_d}\prod_{i=1}^{n-1} ev_i^* \phi_{\rho(p_i)}.
\end{equation}
Note the $n$th point is not constrained ---
similar to the $3d-1$ point conditions (not $3d$)
in Kontsevich's original work on $\bP^2$ --- as the image of that point
is constrained so that the image of all points together represent $L_\Delta \vert_{\partial\bP}.$

As a reality check, note the virtual dimension
is equal to the dimension of the linear system of the pullback of $L_\triangle$ to a genus-zero curve, i.e.~$n-1$, and the number of codimension-one point conditions is indeed also $n-1$.

\subsection{Localization}

\subsubsection{The Localization Theorem}
Our computations via localization follow the work
in progress of the first-named author.  As a result,
we do not include proofs of the localization formula, but
instead give details on how the localization is carried out.\footnote{The interested reader may view \href{https://www.youtube.com/watch?v=7WPrQbu4tBg}{this talk} on the topic.}

Let us recall the localization formula for torus equivariant integrals in the algebraic setting.
We have a torus $T = (\mathbb{G}_m)^n$ acting on a smooth projective variety $X$ with fixed
locus $X^T$ and normal bundle $N_{X^T/X}$.  Now in the equivariant
Chow ring, we have
$A_T^*(\mathrm{pt}) = \mathbb{Q}[\lambda_1,...,\lambda_n]$ and
$A_T(X^T)=A(X^T)\otimes \mathbb{Q}[\lambda_1,...,\lambda_n]$.  Further,
if we write $i:X^T\to X$ for the inclusion, then
\begin{equation}
    \label{eq:loc}
    i_*: A^T_*(X^t)_\lambda \to A^T_*(X)_\lambda
\end{equation} is an isomorphism (after inverting the $\lambda_i$), and then 
the Bott residue formula reducing integration over $X$ to integration over $X^T$ can be deduced from the self-intersection formula
\begin{equation}
\label{eq:ab}
    i^*i_*- = e(N_{X^T/X})\cap -.
\end{equation}
as follows.  Since $e(N_{X^T/X})$ is invertible in $A_T^*(X^T)$ (because all
torus weights in normal directions are nonzero), we can set
$\alpha = \frac{1}{e(N_{X^T/X})}$
and write this as $i^* i_* \alpha = [X^T],$ which
implies $i_* \alpha = [X].$  As a result,
we have the localization formula $\int_X p = p\cap i_*\alpha = i^*p \cap \alpha,$ where $\alpha = e(N_{X^T/X})^{-1}$. When $X$ is not smooth, we don't have the self intersection formula at our disposal.  Moreover, in Gromov-Witten theory one wants to compute integrals not against an ordinary fundamental class, but against a virtual fundamental class.  It is still true that by 
Equation \eqref{eq:loc} there must be a unique class $\alpha$ on the fixed locus whose push forward is the virtual fundamental class.  In the setting of logarithmic stable maps, it is more difficult to give a simple description of this class, but we will compute it by toric methods in some simple examples later.

\subsubsection{The fixed locus}
\label{sec:fixedlocus}

In ordinary Gromov-Witten theory, the connected components of the space of torus fixed maps to a toric variety are indexed by some decorated graphs.  In the case of logarithmic maps, the combinatorial data is more interesting, it is given by certain types of tropical curves in the toric fan of the target variety.  These tropical curves arise from a basic construction in log GW theory.  The tropicalization of a log stable map over a log point is constructed from the morphisms of stalks of ghost sheaves.  
It assigns to each such map a
tropical curve as follows.  The underlying graph of the tropical curve is the dual graph of the source curve (with an unbounded edge for each marked point).
A log morphism $f:C\to X$ over a log point $*_\log$ (left, below) determines at $q\in C$
morphisms of ghost sheaves:
$$\xymatrix{C\ar[r]^f\ar[d]&X\\ \ast_\log}
\qquad\xymatrix{G&\ar[l] P\\ Q\ar[u]}$$
where the monoid $G$ at $q$ can be $Q \oplus 0$ (if $q$ is a generic point), $Q\oplus \bN$ (marked point), or $Q \oplus_\bN (\bN\oplus \bN$ (node).  Here $P$ is the stalk
of the ghost sheaf
at $P=f(Q)$ and we have
$P = \sigma^\vee \cap M$ where the cone $\sigma$
labels the largest torus-invariant stratum
containing $p$ (or the smallest cone such that $p\in U_\sigma$).  If we take $Q = \bN$, then for each irreducible component of $C$, at the generic point we get a morphism $P\to \bN$ which is exactly a point in the dual monoid to $P$, which is simply a lattice point in the dual cone.  At each marked point, we will get an additional map $P \to \bN$ giving another vector in the dual cone which we interpret as a ray emanating from the vertex and heading to infinity in a direction contained within the same cone.  At a node, we will get two equal and opposite slopes.  Putting together the marked points for each component and drawing in the line segments for the nodes and the rays for the marked point, we arrive at a balanced tropical curve whose vertices are all in maximal or codimension one faces of the toric fan.  (This last condition is because a torus fixed map sends every irreducible component either to a torus fixed point of the target or to a one dimensional torus orbit. At a nontoric fixed point we would simply not have this last condition, but the rest of the discussion would be identical.) A given point of the moduli space will correspond not to a single tropical curve, but rather to the monoid of all tropical curves where we fix the data of the dual graph of the curve, which cone in the toric fan each vertex maps to, and what the slopes of all the edges and rays are.  
We can think of the data as a family of
tropical curves parametrized by the set of
monoid homomorphisms $Q\to \bN$ or as a single generalized tropical curve where the $\N$ is replaced by $Q$.
 The balanced condition on the tropical curve is proved in \cite[Prop.~1.14]{GS}. where this tropicalization construction is discussed in more detail and more generality.

The monoid $Q$ determines part of the formal local geometry of the moduli space near the corresponding point.  Up to some finite groups, there is a natural morphism from the space of log stable curves to the quotient of the affine toric variety defined by $Q$ by its associated torus.  In the case of genus zero log stable maps to a toric variety, this morphism is smooth, so the local geometry of the moduli space is mostly determined by this monoid, and the determination of the contribution of the fixed locus in the setting of localization becomes essentially a toric computation.
 The
procedure is perhaps best explained by the
examples of Section \ref{sec:examples}, in
particular that of Section \ref{sec:poly2b} which
is worked out in great detail.

\section{Definition and Computation 
via Tropical Geometry}
\label{sec:tropical}

After the seminal work of Mikhalkin \cite{Mikhalkin2005Enumerative}, the counting of holomorphic curves in toric surfaces can be approached from a tropical perspective. 
On the open set we have a hypersurface in an algebraic torus, and we can
consider for the moment that problem in any dimension. So
fix a Laurent polynomial $f = \sum_{m\in \bZ^n} a_m z^m$ and
set $A = \{m\in \bZ^n \mid a_m \neq 0\}.$  Given a function $v:A\to \bR$ we define the family $f_t = \sum a_m t^{-v(m)}z^m$,
the hypersurfaces $X_f = f_t^{-1}(0)$ and amebas $\mathcal A_t = {\rm Log}(X_t)\subset \bR^n.$
Then as $t\to \infty$ the scaled amebas $\frac{1}{\log t}\mathcal A_t$ converge in the Gromov-Hausdorff topology to the \emph{tropical ameba} $\mathcal A^{trop}$ \cite[Theorem 4.6]{Mikhalkin-amoeba}, defined by the tropical semi-ring as follows.

Basically, the tropical semi-ring only considers valuations (leading degrees) of Puiseaux series $\bC\{t\}$.  The valuation of the variable $z$ is written as the real variable $x$.
In this way, products become sums and sums become ``max,'' because that's how leading powers work.  The tropicalization ${\rm trop}(f_t)$ is thus ${\rm max}_{m\in A}(v(m) + m\cdot x).$   The ``tropical hypersurface'' or ``tropical
ameba'' is the critical locus of ${\rm trop}(f_t),$ namely the locus where
the maximum is achieved by more than one term.

\begin{example}
Take $f_t = z_1 + z_2 + 1$ and $v \equiv 0.$  Then $f_t = x_1 + x_2 + 1$ and ${\rm trop}(f_t) = {\rm max}\{x_1,x_2,0\}.$  The tropical ameba is therefore $\{x_1 = x_2 \geq 0\}\cup \{x_1 = 0, x_2 \leq 0\}\cup \{x_1 \leq 0,x_2=0\}$:
\begin{tikzpicture}[scale=.2]
\draw[very thick] (0,0) -- (1.414,1.414);
\draw[very thick] (0,0) -- (0,-2);
\draw[very thick] (0,0) -- (-2,0);
\end{tikzpicture}
\end{example}

Tropical amebas have this graph structure --- generically trivalent --- with a balancing condition:  the sum of vectors along rays emanating from each vertex must vanish. (The vectors include the data of multiplicities of primitive vectors.)  If $\mathcal A$ is a tropical ameba of a genus-$g$ curve, then $g = {\rm dim}H_1(\mathcal A).$

Though the picture is appealing, holomorphic curves are not in one-to-one correspondence with tropical curves.  To convert a counting problem from curves to tropical curves, one must study multiplicites:  how many algebraic curves impinge upon the same tropical curve?  In particular, tropical Gromov-Witten theory determines the appropriate multiplicity of tropical curves satisfying the tropicalization of the algebraic problem --- see, e.g., \cite{MandelRuddat2023}.
So we are counting genus-zero tropical curves $\cT$ with certain multiplicities --- specifically, $\mathrm{mult}(\cT) = \prod_v \mathrm{m_v},$ where the vertex multiplicity is $m_v = |\det(v_1\vert v_2)|$ where $v_1$ and $v_2$ are two of the three vectors emanating from vertex $v$. 

The condition of fixed toric boundary amounts to fixing the external rays at infinity,
 an enumerative
condition of the kind considered in \cite{MR}.  For example, in the case of curves of bi-degree $(2,2)$ in $\bP^1\times \bP^1,$ the tropical amebas have eight external rays or spokes, two in each of the four compass directions:
\begin{tikzpicture}[scale=.4]
    \draw (.35,.25)--(.55,.25)--(.55,.65)--(.35,.65)--cycle;
    \draw (.35,.66)--(.2,.8);
    \draw (-.3,.8)--(.2,.8)--(.2,1.3);
    \draw (.55,.65)--(.8,.9);
    \draw (.8,1.3)--(.8,.9)--(1.3,.9);
    \draw (.3,.2)--(.35,.25);
    \draw (-.3,.2)--(.3,.2)--(.3,-.3);
    \draw (.55,.25)--(.7,.1);
    \draw (.7,-.3)--(.7,.1)--(1.3,.1);
\end{tikzpicture}.
We can independently fix seven of these spokes at infinity, i.e.~outside a compact set.  In the general case,
we fix all but one external
leg at infinity.  

Let us recall Mikhalkin's theorem.  
Let $\Delta\subset \bR^2$ be a convex lattice polytope, let $\Sigma = \Sigma(\Delta)$ be the corresponding normal fan and let $\bP = \bP_\Sigma$ the associated toric Fano surface
with ample line bundle $\cL\to \bP.$
Let $\beta = c_1(\cL)^\vee \in H_2(\bP)$
be a curve class,
let $n:= |\partial \Delta \cap \bZ^2|-1$ and let $Q = \{q_1,...,q_n\}$ be
a collection of $n$ points of $\bR^2$
in general
position.  Let
$K^\mathrm{GW}_\Delta = \int_{\cM_{0,n}(\bP,\beta)} \prod_{i=1}^n \mathrm{ev}_i^*(P^\vee),$ where $P^\vee \in H_4(\bP;\bZ)$
is the fundamental/point class, i.e.~Poincar\'e dual to a point $P$ in $\bP$.
Let $K^\mathrm{trop}_\Delta$ be the sum
over tropical cures passing through $Q,$
counted with multiplicity.
Then Mikhalkin shows \cite[Theorem 1]{Mikhalkin2005Enumerative}:
$$K^\mathrm{GW}_\Delta = K^\mathrm{trop}_\Delta.$$
In other words, the incidence conditions
$\mathrm{ev}^*_i(P^\vee)$ can be realized
tropically by the condition that the tropical
curve passes through the $q_i$.

We want a variant of this theorem
allowing for incidence conditions for points
mapping to the toric boundary $\partial \bP.$
Logarithmic Gromow-Witten theory
is the proper setting.
Specifically, since the \emph{type} of a log map includes
the data of a map of monoids, the types encode the
information of the monoids at the point to which the
marked points are mapped.  In other words, the type
determines which boundary (toric) divisor the marked point
gets mapped to:  so $ev_i$ maps to a chosen boundary
divisor, and we can impose boundary incidence conditions
as usual,
by pulling back a point class from the corresponding boundary divisor.

Then according to
\cite{vGGR,MR}, the tropical count
equals the log Gromov-Witten count,
as stated in Section \ref{sec:defs}.
We now explain
in some more detail
how to impose these boundary
conditions tropically.

\subsection{Tropical Counts}
\label{sec:tropcounts}

We are interested in tropical curves $\cT$ with fixed external rays at infinity.  To formulate this, suppose that
$\Delta = \conv(v_1,...,v_s)$, with $v_i, i\in \bZ/s\bZ$ in counterclockwise order around $\partial \Delta$ and put $w_i = J(v_i-v_{i+1})$
the outward normals vectors, where $J$ is the
complex structure on $\bR^2$.  Now write $\hat{w_i}$ for the primitive vector in the direction
of $w_i,$ with $w_i = d_i \hat{w}_i,$ where $d_i$ is thus the lattice length of the $i$th edge of $\Delta.$  So $\sum_i d_i = n+1.$ 
Define the ray vectors 
$r_{i,j}, i = 1,...,s$ and $j = 1,...,d_i$
by $r_{i,j} = \hat{w}_i.$
Now 
recall that the external $n + 1 = \sum_i d_i$ rays of a tropical
curve $\cT\subset \bR^2$ corresponding to a holomorphic curve in class $\beta$ are required to be in the directions $r_{i,j}$.  We put $n$
boundary conditions on $\cT$ as follows.

First for a pair $(k,l),1\leq l\leq d_k,$ 
define $\bR_{(k,l)}:= \bR.$
Now fix a pair $(i,j), 1\leq j\leq d_i,$
then put $\widetilde{H} = 
\bigoplus\limits_{(k,l)\neq (i,j)} \bR_{(k,l)}$
and write $H\subset \widetilde{H}$ for
the subset \emph{excluding}, for all $(k,l)\neq (k,l')$, any points
whose projection 
$(x,y)$ to the $\bR_{(k,l)}\times \bR_{(k,l')}$
factor has $x=y.$  (For counts with higher tangencies,
one would relax this exclusion.)  See Figure \ref{fig:bc}.
\begin{figure}[ht]
\begin{tikzpicture}
    \pgfmathsetmacro{\a}{sqrt(5)}
    \draw[thick,blue] (0,0)--(1,0)--(1,2)--(0,0);
    \draw (0,-1)--(1,-1);
    \draw (2,0)--(2,2);
    \draw (-2/\a,1/\a)--(1-2/\a,2+1/\a);
    \fill[blue] (0,0) circle (.06cm);
    \fill[blue] (1,0) circle (.06cm);
    \fill[blue] (1,1) circle (.06cm);
    \fill[blue] (1,2) circle (.06cm);
    \draw[ultra thick, red] (.8,-.8)--(.8,.2)--(1.8,.2);
    \draw[ultra thick, red] (.8,.2)--(.5,.5)--(1.8,.5);
    \draw[ultra thick, red] (.5,.5)--(.5-2/\a,.5+1/\a);
    \fill[black] (2,.2) circle (.04cm);
    \fill[black] (2,.5) circle (.04cm);
    \fill[black] (.8,-1) circle (.04cm);
    \node[right] at (2,.1) {$h_{2,1}$};
    \node[right] at (2,.6) {$h_{2,2}$};
    \node[below] at (.8,-1) {$h_{1,1}$};

\end{tikzpicture}
\caption{Here $\Delta = \conv\{(0,0),(1,0),(1,2)\}$, so $\hat{w}_1 = (0,-1), \hat{w}_2 = (1,0), \hat{w}_3 = (-2,1).$  Note $d_1 = 1, d_2 = 2, d_3 = 1.$  Tropical curve shown in red for boundary conditions given by $(i,j) = (3,1)$ and
$h_{1,1} = .8, h_{2,1} = .2,
h_{2,2} =.5.$  Note the conditions
exclude the case $h_{2,1}=h_{2,2}$ of coincident right rays.
This unique tropical curve
has intercept with the third edge
determined to be $-0.3$. 
}
\label{fig:bc}
\end{figure}

Now let $h = (h_{(k,l)})\in H.$
We require of our tropical curves $\cT$
that the $(k,l)$ external ray coincides with the ray $\gamma_{(k,l)}(t) = h_{(k,l)} J\hat{w}_k + t\hat{w}_k, t>0$, outside a compact set.  Thus the $h_{(k,l)}$ are the ``heights'' or ``intercepts'' of these external rays, parametrizing their intersecions with the polygon ``at infinity.''

Fixing $h\in H,$
let $N_\Delta^{\mathrm{trop},h}$ be the count of tropical curves $\cT$ (having internal vertices of degree greater than two) with boundary condition described by $h\in H$ --- i.e., having ray $(k,l)$ coinciding with $\gamma_{(k,l)}$ outside a compact set --- counted with
multiplicity.
Let $N_\triangle^{\mathrm{trop},h}(q)$
be the counts with $q$-multiplicity,
where the local vertex contribution
at a trivalent vertex $v$ is as follows.
call the (not necessarily primitive) edge vectors from the vertex $v_1,v_2,v_3$, oriented counterclockwise
and summing to zero.
Then $m_v(q) = [\det(v_1,v_2)]_q$, where
$[n]_q := \frac{q^{\frac{n}{2}} - q^{-\frac{n}{2}}}{q^\frac{1}{2}-q^{-\frac{1}{2}}}.$  Then $\mathrm{mult}(\cT)(q)=\prod_{v} m_v(q),$ and so setting $q=1$ recovers the standard multipliity:  $N_{\triangle}^{\mathrm{trop},h}(q=1) = N_\triangle^{\mathrm{trop},h}.$

Note that as the tropical
image of a holomorphic map --- not necessarily
embedded curve --- $\cT$ need only
be an immersed tropical curve. 
In particular, while the domain of $\cT$
is a tree, there may be loops in our
diagrams, which illustrate the image in $\bR^2.$  In particular, overcrossings
appear as four-valencies with opposite
pairs of edges.

\subsection{Invariance}
\label{sec:invariance}

We want to show that the tropical counts
defined with these constraints
are independent of the choice of boundary conditions at infinity.
In fact, the same
statement is true when we count
with $q$-multiplicities.
As we shall see, this ultimately is
established by the Pl\"ucker relation
and its $q$-analogue.
Let us emphasize that
this proof is essentially the same as
that of Itenberg-Mikhalkin \cite[Section 3]{ItenbergMikhalkin:2013:BlockGottsche}, 
\cite[Section 8.2]{Bousseau:2019:TropicalRefined} and Blomme \cite[Section 3.2]{Blomme}.  
We include it for completeness.
If $q=1,$
the following proposition also follows generally from \cite{MR}.
Write $N_\triangle^{\mathrm{trop},h}(q)$ for the tropical
count with $q$-multiplicities.

\begin{proposition}    $N_\Delta^{\mathrm{trop},h}(q)$ is independent of generic $h$, and of the initial choice $(i,j).$
\end{proposition}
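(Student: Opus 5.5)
We argue by the standard wall-crossing approach for tropical invariance, following Itenberg--Mikhalkin and Blomme. The parameter space $H$ is an open subset of $\bR^{n}$. The set of non-generic $h$ is a finite union of codimension-one polyhedral walls, together with the diagonals $h_{(k,l)}=h_{(k,l')}$ already removed. Here $h$ is generic if every tropical curve with boundary condition $h$ is simple: trivalent at internal vertices except for transverse crossings, with no edge of length zero.

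\textbf{Step 1: finiteness and openness.} For generic $h$ we first show there are finitely many tropical curves, with combinatorial types ranging over a finite set. The dimension count runs as follows. A trivalent genus-zero tropical curve with $n+1$ unbounded ends, and with directions fixed by $\Delta$, has a moduli space of dimension $(n+1)-1 = n$: two parameters for translation plus $n-2$ internal edge lengths. The $n$ height conditions $h$ cut this down to a finite set. For each combinatorial type, the map from curves to heights is linear, with determinant equal (up to sign) to the product of vertex multiplicities. So on each connected component of the generic locus the curves deform uniquely and their multiplicities are constant. Hence $N^{\mathrm{trop},h}_\Delta(q)$ is locally constant on the generic locus.

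\textbf{Step 2: codimension-one degenerations.} Next we cross a wall along a generic path. In genus zero, and away from the excluded diagonals, only a few degenerations can occur. (a) An internal edge shrinks to length zero, producing a four-valent vertex with no parallel pairs. (b) An edge shrinks while a crossing passes through a vertex, which changes nothing in the count. (c) Two parallel edges merge; for ends this is excluded by the definition of $H$, and for internal edges we expect it to be ruled out by genericity. For (a), the four-valent vertex with outgoing vectors $a,b,c,d$ has three trivalent resolutions. Those appearing on each side of the wall are determined by the sign of the relevant area. The required identity is the $q$-Plücker/Block--Göttsche relation
\[
[\det(a,b)]_q[\det(c,d)]_q \pm [\det(a,c)]_q[\det(b,d)]_q = [\det(a,d)]_q[\det(b,c)]_q ,
\]
with the correct orientation signs. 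It follows from the $q$-number identity $[x]_q[y]_q = \sum$ of $q$-shifts, or directly from expanding $(q^{1/2}-q^{-1/2})^2[x]_q[y]_q$ into exponentials. This is exactly the relation used by Itenberg--Mikhalkin. We expect the main obstacle to be the careful sign and side bookkeeping: deciding which two or three resolutions occur on each side of the wall, and possibly the case where the four-valent vertex comes from a crossing of two independent pieces. This step is where we will follow the argument of \cite{ItenbergMikhalkin:2013:BlockGottsche} most closely.

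\textbf{Step 3: connectivity and the choice of $(i,j)$.} The generic locus of $H$ is the complement of a codimension-one set. Any two of its points can be joined by a path crossing walls transversally at generic points of the walls. Note that $H$ minus the diagonals is disconnected: the orderings of parallel ends differ. However, permuting the heights of parallel ends $(k,l),(k,l')$ simply relabels the curves, so the count is unchanged, and the components are identified. Independence of $(i,j)$ then follows by a symmetric argument. Given generic full data $(h_{(k,l)})$ for all $n+1$ ends realized by a generic tropical curve configuration, forgetting end $(i,j)$ or end $(i',j')$ yields two generic points. For each choice we count the curves whose remaining heights match. We then compare on the common $(n+1)$-dimensional space: the set of all height vectors satisfying the single linear "tropical Menelaus" relation forced by balancing. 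There the two projections are local isomorphisms at generic points, so both counts agree with a single locally constant function that is constant by Step 2.
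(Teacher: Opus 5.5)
Your treatment of $h$-independence is essentially the paper's. You reduce to paths inside one component of $H$, using the permutation action to identify components. You use finiteness of combinatorial types to get finitely many walls. At a four-valent vertex you invoke the $q$-Pl\"ucker identity $[i]_q[j+k]_q+[j]_q[k-i]_q=[i+j]_q[k]_q$, which is valid because the four edge vectors sum to zero.

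The step you leave to Itenberg--Mikhalkin, deciding which resolutions lie on which side of the wall, is exactly the one the paper makes explicit. When only $h_a$ varies and $b$ is the free end, only the path $P\subset\cT$ from $a$ to $b$ moves (the ``fishbone'' deformation). So at the four-valent vertex exactly two edges move and two stay rigid. A case check (moving edges consecutive or not) then puts two resolutions on one side and one on the other, balanced by $[12][34]+[13][42]=[41][23]$. You should build in this two-moving/two-fixed structure rather than appeal to ``the sign of the relevant area.''

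Two small slips. First, walls can carry four-valent vertices with parallel pairs. For $\bP^1\times\bP^1$, the wall where an east and a west end have equal height gives edges $(\pm1,0),(0,\pm1)$. This is harmless, since the signed identity holds with vanishing terms. Second, your Menelaus locus is a hyperplane in $\bR^{n+1}$, so it is $n$-dimensional, not $(n+1)$-dimensional.

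Independence of the choice $(i,j)$ is where you genuinely differ from the paper. The paper disposes of it in one sentence (``interchanging the roles of $a$ and $b$''). You instead observe that balancing forces the tropical Menelaus relation $\sum_{\mathrm{ends}}\det(p_e,u_e)=0$. In the paper's height conventions this reads $\sum_e|\hat w_{k(e)}|^2h_e=0$, with every coefficient nonzero. So the free end's height is determined, and each count equals the number of curves whose $n+1$ heights are prescribed by a point of this hyperplane $V$. Forgetting any single coordinate is a linear isomorphism $V\to\bR^n$. This makes $(i,j)$-independence a formal corollary of $h$-independence. It is more transparent than the paper's remark, and it is the tropical counterpart of requiring the product of the boundary coordinates to be $1$.
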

\begin{proof}
We first fix $(i,j)$, $1\leq i\leq s$ and $1\leq j \leq d_i.$
\begin{enumerate}[Step 1)]
\item First note that while $\cH$ is not connected,
the product of permutation groups $G = S_{d_i-1}\times\prod_{k\neq i}S_{d_k}$ acts transitively
on the set of connected components, and the invariants
are unchanged under the action of $G$ composed with a
simultaneous relabeling of endpoints.  So we can
prove invariance by proving invariance under continuous
paths $h:I\to H,$ i.e.~$N^{\mathrm{trop},h(0)}_\Delta(q)
= N^{\mathrm{trop},h(1)}(q).$
\item Next we note that the number and degrees of
$\cT$ are bounded by Lemma \ref{lem:comblemma}, so
there are finitely many possible topological types
of $\cT.$  It follows that since all the constraints
on $\cT$ are continuous, there are open chambers
within $H$
consisting of trivalent trees of
fixed topological type,
and that in any one-parameter family
of boundary conditions $h(t),$
$t\in [0,1],$
there are a finite number of
critical moments $t_1,...,t_m$ 
separating intervals in which the corresponding trees are trivalent
and isotopic to one another.
With generic boundary conditions,
trees at critical moments (walls) will have
a single four-valency.
Trees on opposite sides of the wall are
not necessarily in bijection with
each other.
\item 
By concatenating paths, we may assume that our family has a 
single critical moment, and by openness of the chambers
that the only one $h_{(k,l)}$ is not constant.
We have reduced the problem to varying the boundary
condition of a single external ray and a single
moment where the collection of tropical curves
changes.  
\item Let us call the varying ray $a$ and the unconstrained
ray $b$.  Since $\cT$ is a tree, there is a unique path
$P\subset \cT$ from $a$ to $b$.  The neighborhood of $P$
in $\cT$ looks like a fishbone, pictured with a blue crooked spine
and black vertebrae/spokes in Figure \ref{fig:fishbone}, left.
\begin{figure}
    \centering
\begin{tikzpicture}
    \draw[ultra thick,blue](1,-1)--(0,0)--(0,1)--(-1,2)--(-2,2.5);
    \draw[thick,black](0,1)--(1,1);
    \draw[thick,black](-1,2)--(-1,3);
    \draw[thick,black](0,0)--(-1,0);
\end{tikzpicture}
\qquad\qquad
\begin{tikzpicture}
    \draw[ultra thick,blue](1,-1)--(0,0)--(0,1)--(-1,2)--(-2,2.5);
    \draw[thick,black](0,1)--(1,1);
    \draw[thick,black](-1,2)--(-1,3);
    \draw[thick,black](0,0)--(-1,0);
    \draw[thick,black,dashed](0,0)--(.5,0);
    \foreach \i in {.1,.2,.3,.4,.5}
    {\draw[blue](1+\i,-1)--(\i,0)--(\i,1)--(-1,2+\i)--(-2,2.5+\i);}
\end{tikzpicture}
    \caption{Portion of a path $P\subset \cT$ (left, in blue) and its deformation (right).}
    \label{fig:fishbone}
\end{figure}

\item Now as we vary $a,$ the path $P$ moves in its
``normal bundle.''  
The generic
tree will be trivalent.
Specifically, the local
neighborhood of each vertex of $P$ is 
three rays $\bR_+u_1,\bR_+u_2\bR_+u_3$
emanating from the origin
with ray vectors $u_1,u_2,u_3$ summing
to zero.  Say $\bR_+u_1\cup \bR_+u_2$ is the
intersection of $P$ with the neighborhood, with $\bR_+u_3$
the black spoke.  The family $\left(t u_3 + \bR_+u_1\right)\cup (t u_3 + \bR_+ u_2)$,
with $t\in (-\epsilon,\epsilon),$
deforms the blue spine $P$, with black spoke $t u_3 + \bR_+u_3.$
See Figure \ref{fig:fishbone}, right.
Adjacent neighborhoods can be deformed similarly, with
no obstructions since $P$ is a path and $\cT$ is a tree.

\item This deformation extends until the lifted
path of $P$ meets another vertex or an edge of
$P$ shrinks to zero.  Thus the critical moment happens
when an edge (either of $P$ or a spoke) shrinks to zero along a lifted path.  Since this obstruction is local,
we can analyze a neighborhood of a shrinking edge,
so near a four-valent vertex.
\item So let $u_1,...,u_4 \in \bZ^2$ obey
$\sum u_i = 0$ and consider $\cT = \bigcup_i \bR_+u_i$ with $P$ the union of rays $u_a$ and $u_b$, i.e.~$b$ is fixed and we vary the
intercept at infinity of the initial ray $a$.
Note by Lemma \ref{lem:comblemma}
there are only three possible abstract
trivalent trees with four leaves, labeled
by the three possible pairings of the leaves.

Now there are two
cases:  $a$ and $b$ are consecutive in the cyclic
order or not.  
\begin{itemize}
\item
If they are consecutive, then so are the other two rays, and up to $SL_2(\bZ),$
we may assume the other rays to be 
in the directions $(1,0)$ and $(0,1)$ and
we label them $1$ and $2$.
Note we make no assumptions about whether $u_1$ and $u_2$ are primitive.  Up to a reflection,
we may further assume that $a$ is labeled $4$ and $b$ is $3$.
Let us consider the case that $u_4$ is in
the fourth quadrant and $u_3$ is in the second
quadrant,
and let us parametrize the family by
varying the fourth ray as $tu_1 + \bR_+u_4$.
If $t<0$ then
two of the three possible abstract trees
have a planar immersion with these boundary conditions.  If $t>0$ only one  possibility
is realized.  See Figure \ref{fig:trees}.
\begin{figure}
    \centering
\begin{tikzpicture}
\draw (1,0)--(0,0)--(0,1);
\draw (1,-2)--(0,0)--(-2,1);
\draw[ultra thick,blue] (.7,-2)--(-.2,-.2)--(-2,.7);
\draw[ultra thick,black] (-.2,-.2)--(0,0);
\draw[ultra thick,black] (0,0)--(1,0);
\draw[ultra thick,black] (0,0)--(0,1);
\node[right] at (1,0) {$1$};
\node[above] at (0,1) {$2$};
\node[left] at (-2,.7) {$3$};
\node[below] at (.7,-2) {$4$};

\end{tikzpicture}
\quad
\begin{tikzpicture}
\draw (1,0)--(0,0)--(0,1);
\draw (1,-2)--(0,0)--(-2,1);
\node[right] at (1,0) {$1$};
\node[above] at (0,1) {$2$};
\node[left] at (-2,.7) {$3$};
\node[below] at (.7,-2) {$4$};

\draw[ultra thick,blue] (.7,-2)--(0,-.6);
\draw[ultra thick,blue] (-.6,0)--(-2,.7);
\draw[ultra thick,black] (-.6,0)--(1,0);
\draw[ultra thick,black] (0,-.6)--(0,1);
\draw[ultra thick,blue] (0,-.6)--(-.6,0);

\end{tikzpicture}
\qquad\qquad
\begin{tikzpicture}
\draw (1,0)--(0,0)--(0,1);
\draw (1,-2)--(0,0)--(-2,1);
\node[right] at (1,0) {$1$};
\node[above] at (0,1) {$2$};
\node[left] at (-2,.7) {$3$};
\node[below] at (.7,-2) {$4$};

\draw[ultra thick,blue] (1.3,-2)--(.3,0);
\draw[ultra thick, black] (.3,0)--(1,0);
\draw[ultra thick,blue] (.3,0)--(0,.3);
\draw[ultra thick,blue] (0,.3)--(-2,1.3);
\draw[ultra thick,black] (0,.3)--(0,1);
\end{tikzpicture}

    \caption{Trees (thick) appearing in the deformation of a four-valent vertex (black, thin), with path $P$ (blue) involving consecutive rays.  Two trees (one immersed) appear on one side of the deformation (left), while one appears on the other (right).  Invariance is due to an equality of determinants provided by the
    Pl\"ucker relation: $[12][34] + [13][42] = [41][23].$}
    \label{fig:trees}
\end{figure}
Now for the main point:  summing the contributions
when $t<0$ and $t>0$.  Let us write $[ij]$ for $\det(u_i|u_j),$ and we will be careful only to
take $u_i$ and $u_j$ in order so that the determinants we write are all positive.
When $t<0$ the multiplicities of tropical curves in Figure \ref{fig:trees} are
$[12][34]$ and $[13][42]$,
while for $t>0$ we have $[41][23].$
But by the Pl\"ucker relation for
the Grassmannian $\mathrm{Gr}(2,4)$
we have
precisely
$$[12][34]+[13][42]=[41][23].$$
For the $q$-analogue, the
same relation holds:  $[12]_q[34]_q + [13]_q[42]_q = [41]_q[23]_q$.  This is
not true generally as in the Pl\"ucker
case but holds for four integer vectors
\emph{whose
sum is zero}.
(For vectors in the stated quadrants as specified, we have written the relation so that all terms are positive.)
The general
$q$-relation is $$[i]_q[j+k]_q + [j]_q[k-i]_q = [i+j]_q[k]_q.$$  
To see that the six determinants
satisfy the linear equalities allowing
them to be expressed as above
in terms of 
$i,$ $j$ and $k$, consider the polygon
whose (counterclockwise) edge vectors
are rotations of the ray vectors by 90 degrees (so the areas remain the same):
$$
\begin{tikzpicture}[scale=.5]
    \coordinate (A) at (4,5);
    \coordinate (B) at (4,2);
    \coordinate (AB) at (4,7/2);
    \coordinate (BC) at (2,1);
    \coordinate (CD) at (1,5/2);
    \coordinate (DA) at (3,5);
    \coordinate (C) at (0,0);
    \coordinate (D) at (2,5);
    \coordinate (E) at (2,2);
    \coordinate (X) at (2,0);
    \coordinate (Y) at (0,3);
    \node[right] at (AB) {$1$};
    \node[below right] at (BC) {$4$};
    \node[above left] at (CD) {$3$};
    \node[above] at (DA) {$2$};
    \filldraw[draw=black,->,thick, fill=blue!70,fill opacity = .50] (A)--(B)--(D)--cycle; 
\filldraw[draw=black,thick,fill=blue!70,fill opacity = .50] (B)--(C)--(D)--cycle; 
 \node at (4,-2) {$[12]+[34]=[23]+[41]$};
 \begin{scope}[xshift=5cm]
    \coordinate (AB) at (4,7/2);
    \coordinate (BC) at (2,1);
    \coordinate (CD) at (1,5/2);
    \coordinate (DA) at (3,5);
    \node[right] at (AB) {$1$};
    \node[below right] at (BC) {$4$};
    \node[above left] at (CD) {$3$};
    \node[above] at (DA) {$2$};
     \coordinate (A) at (4,5);
    \coordinate (B) at (4,2);
    \coordinate (C) at (0,0);
    \coordinate (D) at (2,5);
    \coordinate (E) at (2,2);
    \coordinate (X) at (2,0);
    \coordinate (Y) at (0,3);
    \filldraw[draw=black,thick, fill=red!70,fill opacity = .50] (A)--(D)--(C)--cycle; 
 \filldraw[draw=black,thick, fill=red!70,fill opacity = .50] (A)--(C)--(B)--cycle; 
 \end{scope}
 \end{tikzpicture}
 \qquad
\begin{tikzpicture}[scale=.5]
    \coordinate (AB) at (4,7/2);
    \coordinate (BC) at (2,1);
    \coordinate (CD) at (1,5/2);
    \coordinate (DA) at (3,5);
    \node[right] at (AB) {$1$};
    \node[below right] at (BC) {$4$};
    \node[above left] at (CD) {$3$};
    \node[above] at (DA) {$2$};
    \coordinate (A) at (4,5);
    \coordinate (B) at (4,2);
    \coordinate (C) at (0,0);
    \coordinate (D) at (2,5);
    \coordinate (E) at (2,2);
    \coordinate (X) at (2,0);
    \coordinate (Y) at (0,3);
    \filldraw[draw=black,thick, fill=blue!70,fill opacity = .50] (B)--(D)--(C)--cycle; 
 \draw[black,thick] (D)--(A)--(B)--cycle; 
 \node at (4,-2) {$[34]=[12]+[42]+[13]$};
 \begin{scope}[xshift=5cm]
    \coordinate (AB) at (4,7/2);
    \coordinate (BC) at (2,1);
    \coordinate (CD) at (1,5/2);
    \coordinate (DA) at (3,5);
    \node[right] at (AB) {$1$};
    \node[below right] at (BC) {$4$};
    \node[above left] at (CD) {$3$};
    \node[above] at (DA) {$2$};
    \coordinate (A) at (4,5);
    \coordinate (B) at (4,2);
    \coordinate (C) at (0,0);
    \coordinate (D) at (2,5);
    \coordinate (E) at (2,2);
    \coordinate (X) at (2,0);
    \coordinate (Y) at (0,3);
    \filldraw[draw=black,thick, fill=red!70,fill opacity = .50] (B)--(D)--(C)--cycle; 
     \draw[black,thick] (D)--(A)--(B)--cycle; 
     \draw[black,thick] (D)--(E)--(B);
     \draw[black,thick] (C)--(E);
     \end{scope}
 \end{tikzpicture}
 \quad
\begin{tikzpicture}[scale=.5]
    \coordinate (AB) at (4,7/2);
    \coordinate (BC) at (2,1);
    \coordinate (CD) at (1,5/2);
    \coordinate (DA) at (3,5);
        \coordinate (CX) at (1,0);
    \coordinate (CY) at (0,3/2);
    \node[below] at (CX) {$2$};
    \node[left] at (CY) {$1$};
    \node[right] at (AB) {$1$};
    \node[below right = -2mm] at (BC) {$4$};
    \node[above left = -1.5mm] at (CD) {$3$};
    \node[above] at (DA) {$2$};
    \coordinate (A) at (4,5);
    \coordinate (B) at (4,2);
    \coordinate (C) at (0,0);
    \coordinate (D) at (2,5);
    \coordinate (E) at (2,2);
    \coordinate (X) at (2,0);
    \coordinate (Y) at (0,3);
    \filldraw[draw=black,thick, fill=blue!70,fill opacity = .50] (A)--(C)--(X)--(B)--cycle; 
        \filldraw[draw=black,thick, fill=red!70,fill opacity = .50] (A)--(D)--(Y)--(C)--cycle; 
 \draw[black,thick] (A)--(D)--(Y)--(C)--(X)--(B)--cycle;
 \draw[black,thick] (C)--(E)--(B);
 \draw[black,thick] (E)--(D);
 \draw[black,thick] (B)--(C)--(D);
 \node at (2,-2) {$[42]+[41]=[13]+[23]$};
 \end{tikzpicture}
$$

This concludes the proof of invariance of counts with $q$-multiplicities, in this case.

A similar enumeration of possibilities can
be made if $u_4$ is in the fourth quadrant
and $u_3$ is in the third.  These are essentially
all the cases, after some relabeling of diagrams
and/or rotating the figures.
\item
If $a$ and $b$ are not consecutive, then
again the same figures contribute, only
different parts are black and blue.
\end{itemize}

\end{enumerate}

Finally, by interchanging the roles of $a$ and $b$ in the above argument, the same proof works when $a$ is the free and $b$ the constrained (but varying) ray.

\end{proof}

\begin{remark}
    Due to independence of $h\in H$ (and of the pair $(i,j)$), we hereafter drop $h$ from the supercript and simply write $N_\Delta^{\mathrm{trop}}$ instead of $N_\Delta^{\mathrm{trop},h}.$
\end{remark}
Mikhalkin's theorem, together with \cite{vGGR,MR}, then gives
$$N^{\mathrm{GW}}_\Delta = N_\Delta^{\mathrm{trop}}.$$
See Section \ref{sec:examples} for examples.

\section{Definitions via Constructible Sheaves}
\label{sec:constructible}


In this section, all categories are dg-derived unless otherwise specified.
 We write $\underline{\Hom}$ for the dg-enhanced hom complex, $\End(X)$ for $H^0\underline \Hom(X, X)$, and $\Aut(X)$ for the automorphism group consisting of invertible elements in $\End(X)$. We work over the ground field $\dsk = \bC$.

 For background on microlocal sheaf theory we refer to \cite{KS90} and \cite{STZ}. We use $\SS(\cF)$ to denote the singular support (at infinity) of a sheaf $\cF$ on a manifold $X$. Since we only consider constructible sheaves, $\SS(\cF)$ is a (possibly singular) Legendrian in the cosphere bundle $S^*X$. 
 For a locally closed subset $Z$, by $\dsk_Z$ we mean the lower shriek of the constant sheaf on $Z$. We say $\dsk_Z$ is standard (resp. costandard) if $Z$ is open (resp. closed).

\subsection{The toric homological mirror symmetry}

We recall the coherent-constructible correspondence (CCC) of toric stacks~\cite{Bondal2006, FLTZ, FLTZ2, Ku}. 

Let \(\triangle\) be a Newton polygon, and $\T$ be the $2$ dimensional real torus. Fix the following notations:
\begin{itemize}
    \item Let \(\cX_\triangle\) be the corresponding (noncompact) toric stack, given by the stacky fan with only $0$ and $1$ dimensional cones, as defined in Section~\ref{subsec:CCC}. Let \(\cD\) be its boundary divisor. 
    \item Let \(\Lambda \subset S^*\T\) be the FLTZ skeleton corresponding to \(\cX_\triangle\), which is a union of Legendrian $S^1$. See Section~\ref{subsec:CCC}. 
\end{itemize}

By the CCC, we have the following commutative diagram:
\begin{equation}
    \begin{tikzcd}
        \Perf_\mathrm{prop} (\mathcal{X}_{\triangle}) \arrow[r, "CCC"] \arrow[d, "i^*"'] & \Sh^b(\T, \Lambda) \arrow[d, "\mu"] \\
        \Perf_\mathrm{prop} (\cD) \arrow[r, "\sim"] & \Loc (\Lambda)  
    \end{tikzcd}
\end{equation}
Here \( \Perf_\mathrm{prop}\) is the category of perfect complex with proper support, and \(\Sh^b (\T, \Lambda)\) is the category of (stalkwise perfect) constructible sheaves on $\T$ with singular supports contained in \( \Lambda\).

We have
\[
\cD  = \coprod_i {D}_i \times B \bZ/ k_i.
\]
where each \({D}_i\) is canonically isomorphic to \(\bC^*\), and \(k_i + 1\) is the number of integral points on the corresponding edge of \(\triangle\) . For each character \(\chi_j\) of \(\bZ/ k_i\), we choose a point \(p_{ij}\in {D}_i\). This defines an object \(\xi = \bigoplus k_{p_{ij}}^{\chi_j}  \in \Perf (\cD)\). We also ask the product of the coordinates of all \(p_{ij}\) to be \(1\).

Let \( \mathcal J_{\triangle,\xi}\) be the moduli stack of \(1\)-dimensional pure sheaves \(E \in \Perf(\cX_\triangle)\) with a fixed degree, such that \(i^*E \simeq \xi\). Since for a generic choice of $\xi$, such a sheaf $E$ is supported on a reduced and irreducible curve, $\End(E) \simeq \bC$. Hence $\mathcal J_{\triangle,\xi}$ is a $\Gm$-gerbe over its coarse moduli space $|\mathcal J_{\triangle,\xi}|$ (denoted by  $\cM^\mathrm{Coh}$ in Section~\ref{sec:defs}). As discussed in Section~\ref{sec:defs}, we have
\begin{equation}
     N_\triangle^\mathrm{Coh} = \chi \left( |\mathcal J_{\triangle,\xi}| \right).
\end{equation}


We want to characterize \(\mathcal J_{\triangle,\xi}\) on the A-side. The choice of \(\xi\) is equivalent to choosing a rank-\(1\) local system concentrated in degree \(-1\), which we denote by \(\eta \in \Loc (\Lambda)\). Let \(\Sh(\T, \Lambda)^\eta_0\) be the full subcategory of \(\Sh^b(\T, \Lambda)\) consisting of objects corresponding to  points in \(\mathcal J_{\triangle,\xi}\) under the CCC.

    Denote $q: S^*\T \rightarrow \T$, and choose a point $\mathfrak{t} \in \T \backslash q(\Lambda)$. Pick an object $\cF_0 \in \Sh(\T, \Lambda)_0^\eta$, and assume that \( \chi( (\cF_0)_\mathfrak{t}))  = d\). Here $\chi$ is means the Euler number. Note that in many cases we would assume that $\Lambda$ is the zig-zag path of a dimer model, as in \cite{TWZ}. The we can choose $\mathfrak{t}$ to be a point in an alternating region, and $d = 0 $.

\begin{lemma}\label{lem:characterization of Sh_0 by Hom with Loc}
A sheaf \(\cF \in \Sh^b(\T,\Lambda)\) lies in \(\Sh(\T,\Lambda)^\eta_0\) if and only if the following conditions hold:
\begin{enumerate}
    \item \label{lemfirst} \(\mu(\cF)=\eta\);
    \item \label{lemsecond} \(\chi(\cF_\frakt)\) = d;
    \item  \label{lemthird} for every rank-\(1\) local system \(\cL \in \Loc (\T)\) concentrated in degree \(0\), we have 
    \begin{equation}\label{eq:conditions on HiHom}
        \begin{cases}
            H^i \underline{\Hom} (\cL, \cF) = 0 & \text{for all \(i \le -2\);} \\
            H^i \underline{\Hom} (\cF, \cL    ) = 0 & \text{for all \(i \le 1\)}. 
        \end{cases}
    \end{equation}. 
    \end{enumerate}
\end{lemma}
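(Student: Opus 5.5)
Under the CCC, conditions (1)–(3) become conditions on $E \in \Perf_\mathrm{prop}(\cX_\triangle)$, and we will show those cut out exactly the pure one-dimensional sheaves with $i^*E\simeq \xi$ and fixed degree. Write $E$ for the coherent complex corresponding to $\cF$. By the commutative square, condition (1) is equivalent to $i^*E \simeq \xi$, with $\xi$ concentrated in the degree fixed by $\eta$ sitting in degree $-1$. Skyscrapers of torus points $\cO_x$, $x\in(\bC^*)^2\subset \cX_\triangle$, correspond to rank-one local systems $\cL_x$ on $\T$ in degree $0$; this is the standard fact that the CCC sends $\cO_x$ to the local system with monodromy $x$. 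Condition (2) follows from the Euler characteristic of the stalk at $\frakt$. This stalk equals $\chi\underline{\Hom}$ against a fixed object, for instance the costandard sheaf at $\frakt$, which corresponds to a line bundle or structure sheaf. It is therefore a numerical invariant of $E$, namely rank plus a degree correction. With $\chi(\cF_0)_\frakt = d$ normalizing it, (2) fixes the degree and, together with (1), the class in $K$-theory.

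For condition (3) I translate via $\underline{\Hom}(\cL_x,\cF)=\underline{\Hom}(\cO_x,E)$ and $\underline{\Hom}(\cF,\cL_x)=\underline{\Hom}(E,\cO_x)$. The second group computes the derived fiber of $E$ at $x$ (dualized). The first group, by Serre duality on the smooth surface near $x$ with $\omega$ trivial on the torus, equals $\underline{\Hom}(E,\cO_x)^\vee[-2]$. So both vanishing ranges are statements about the Tor-amplitude of $E$ at every point of the open torus. We need to check that ``$H^i\underline{\Hom}(E,\cO_x)=0$ for $i\le 1$'' together with the dual range force $E|_{(\bC^*)^2}$ to be a sheaf in degree $0$ of homological dimension $\le 1$ at every point. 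The indexing (shifts) has to be matched carefully to the degree convention in which $\cF_0$ lives. Conversely, a pure one-dimensional sheaf on a smooth surface has projective dimension exactly $1$, so local $\mathrm{Ext}^j(E,\cO_x)$ lives only in degrees $1,2$, giving the vanishing. Thus the forward direction ($\cF\in \Sh(\T,\Lambda)^\eta_0\Rightarrow$ (1)–(3)) is a direct check.

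The converse is the main obstacle. Here the fiberwise vanishing gives that $E$ restricted to the torus is a sheaf with no zero-dimensional subsheaf (no embedded points, since then $\Hom(\cO_x,E)\ne0$ in the forbidden degree) and of pure dimension $1$. The latter follows because a 2-dimensional support would give nonzero $\Hom^0(E,\cO_x)$, i.e.\ rank is excluded by the $\chi$ condition (2). Near the boundary $\cD$, (1) says the derived restriction $i^*E$ is $\xi$, a sum of shifted skyscrapers. Using Nakayama along $\cD$, I would conclude that $E$ is a sheaf near $\cD$ meeting the boundary transversally at the $p_{ij}$, with no components inside $\cD$. Purity near $\cD$ follows since a zero-dimensional subsheaf on $\cD$ would make $i^*E$ have an extra cohomology sheaf. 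Properness of support combined with these gives a pure one-dimensional sheaf with the right boundary data and degree, i.e.\ a point of $\mathcal J_{\triangle,\xi}$.
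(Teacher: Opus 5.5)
\textbf{The shift in your skyscraper--local system correspondence is wrong, and the content of (3) lives in that shift.} Your route is the paper's: push (1)--(3) through the CCC and read (3) as $\Ext$-vanishing against skyscrapers of torus points. But you postpone the step ``the indexing has to be matched carefully,'' and the identification you do write down is off by two. In the conventions fixed here ($\eta$ in degree $-1$, stalks of $\cF$ in degrees $-1,-2$), the mirror of $\bC_x$ is $\cL[2]$, a local system placed in degree $-2$, not $0$. Hence
\[
H^i\underline{\Hom}(\cL,\cF)\cong\Ext^{i+2}(\bC_x,E),\qquad H^i\underline{\Hom}(\cF,\cL)\cong\Ext^{i-2}(E,\bC_x),
\]
and (3) becomes $\Ext^{j}(\bC_x,E)=0$ for $j\le 0$ and $\Ext^{j}(E,\bC_x)=0$ for $j<0$. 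With your unshifted identification, the stated ranges would force $E$ to be concentrated in degree $-2$. Your forward ``direct check'' is also false as written. For a pure one-dimensional $E$ and $x$ in its support, $\Ext^j(E,\bC_x)\neq 0$ for $j=0,1$ (already $\Hom(E,\bC_x)\neq 0$ by Nakayama), so it cannot vanish for $j\le 1$. It is $\Ext^j(\bC_x,E)$ that lives in degrees $1,2$, by depth one plus the local Serre duality you quote; you have swapped the two groups.

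\textbf{The converse also rests on properness of support, which you attribute to the wrong condition.} Once the shift is fixed, argue via the extreme cohomology sheaves $\cH^a(E)$ and $\cH^b(E)$. At a torus point of $\operatorname{supp}\cH^b(E)$, $\Ext^{-b}(E,\bC_x)\cong\Hom(\cH^b(E),\bC_x)\neq 0$, which gives $b\le 0$. For the lower bound you need that every $\cH^j(E)$ has support of dimension at most one. This holds because $E$ has proper support on the non-proper stack $\cX_\triangle$. It does not follow from (2), and it is not detected by $\Hom^0(E,\bC_x)\neq 0$, which is nonzero at support points of pure one-dimensional sheaves too. Hence $\delta=\operatorname{depth}_x\cH^a(E)\le 1$, and $\Ext^{\delta}(\bC_x,\cH^a(E))$ survives the spectral sequence into $\Ext^{a+\delta}(\bC_x,E)$. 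Vanishing for $j\le 0$ then forces $a\ge 0$ and $\delta\ge 1$. So $E$ is a sheaf in degree $0$ with no zero-dimensional subsheaves in the torus. Properness is genuinely needed: $\cO[1]$ on $(\bC^*)^2$ satisfies both vanishing ranges. Points of $\cD$, which (3) does not test, are then handled by (1), as in your last paragraph.
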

\begin{proof}
    Let the mirror of \(\cF\) be \(E \in \Perf_\text{prop}(\cX)\). Condition (1) is equivalent to \(E|_\cD = \xi\). By \cite{Zhou19}, the stalk at $\mathfrak{t}$ is corepesented by a the mirror of a line bundle. Hence condition (2) is equivalent to \(\chi(L, E) = d\) for some line bundle $E$, which fixes the degree of $E$. Note that for a point \(x\in (\bC^*)^2 \subset \cX\), the mirror of the skyscraper sheaf \(\bC_x\) is a rank-\(1\) local system \( \cL[2] \in \Loc(\T)\) placed in degree \(-2\). Thus (3) is equivalent to 
    \begin{equation}\label{eq:homs between E and k_x}
    \begin{cases}
        \Ext^i(\bC_x, E) = 0 & \text{for all \(i \le 0\) and all \(x\in (\bC^*)^2\);}  \\
        \Ext^i(E, \bC_x) =0 & \text{for all \(i < 0\) and all \(x\in (\bC^*)^2\).}
    \end{cases}
    \end{equation}
    which is also equivalent to \(E\) being an ordinary coherent sheaf and pure.
\end{proof}

We use \(\Sh^1(\T, \Lambda)_0\) to denote the subcategory of \(\Sh^b(\T, \Lambda)\) consisting of sheaves of microlocal stalk being $k[1]$, and satisfying the conditions (2) and (3) in Lemma~\ref{lem:characterization of Sh_0 by Hom with Loc}. Under CCC, constructible sheaves in \(\Sh^1(\T, \Lambda)_0\) are mirror to the pure 1d coherent sheaves in \(\cX_\triangle\) with fixed degree.

Following \cite[\S 2.4]{STWZ} and \cite{TV07}, we consider the derived 
stack of microlocal rank \(1\) sheaves \( \bR \calM_1 (\T, \Lambda)\). Furthermore, we can consider the substack corresponding to objects in \(\Sh(\T, \Lambda )_0^\eta\), which we denote by \(\calM_{\triangle, \eta}\). By CCC, we have 
\[
\mathcal J_{\triangle,\xi} \cong \calM_{\triangle, \eta},
\]
which implies that 
\[
N_\triangle^\mathrm{Coh} = \chi \left( | \calM_{\triangle, \eta} | \right).
\]


Analogous to the Legendrian case, we define \emph{rulings} on the torus and show that there exists a ruling decomposition:
\[
|\calM_{\triangle, \eta}| = \coprod_{\rho \in \fR } \bC^{r(\rho)} \times (\bC^*)^{2g(\rho)}
\]
Thus, we can compute \(N_\triangle^\mathrm{Coh}\) using combinatorial data on the A-side.

\subsection{Rulings}
\label{sec:rulings}
Let us define rulings on \(T^2\). We can do this by lifting to \(\bR^2\). The difference from the Legendrian link case is that there are  no cusps. However, the singular support could point in any direction.

Let us denote 
\[
\begin{tikzcd}
S^*\bR^2 \arrow[r, "\widetilde{p}"] \arrow[d, "\widetilde{q}"] & S^*\T \arrow[d, "q"] \\
\bR^2 \arrow[r, "p"] & \T
\end{tikzcd}
\]
and \(\underline{\Lambda} = q (\Lambda)\),  \(\widetilde{\Lambda} = \widetilde p^{-1}(\Lambda)\), \( \widetilde{\underline{\Lambda}} = p^{-1} (\underline{\Lambda})\). 


Pick a direction \( v \in \bZ^2\), such that $v$ is not tangent to any point in $ \underline{\Lambda}$. We rotate the diagram if necessary, such that $v$ is pointing upward. In particular, a point on \(\Lambda\) (and \(\widetilde{\Lambda}\)) is either pointing ``upward'' or ``downward''. After a small GKS perturbation, we  can assume that each vertical \(S^1\) is not tangent to $ \underline \Lambda$ and  contains at most one crossing. 


\begin{remark}
    In many examples, it is convenient to isotope $\Lambda$ to the zig-zag of some dimer model, as in \cite{TWZ, GK}. However, we do not know if we can always do this isotopy and find the direction $v$ simultaneously. In any case, our results do not require \(\Lambda\) to arise from the zig-zag paths of a dimer model.
\end{remark}

\begin{definition}\label{def:rulings on T}
	A \emph{ruling} (with respect to \(v\) and $\mathfrak{t}$) is a \(\bZ^2\)-equivariant decomposition of \(\widetilde{{\Lambda}}\) into pairs of paths.  Each pair of paths is colored either red or blue, and bounds a region of the same color, satisfying: 
    \begin{enumerate}
  
     \item    Locally at a smooth point of the edge, only one of the following could happen:
    
    \begin{tikzpicture}
    
        \filldraw[blue, opacity=.4] (0, 0) rectangle (2, -1); 

        \draw[blue, line width = 2pt] (0, 0) -- (2, 0); 
        \draw (0.5, 0) -- (.5, -.2);
        \draw (1, 0) -- (1, -.2);
        \draw (1.5, 0) -- (1.5, -.2);

        \begin{scope}[shift = { ( 3, 0)} ]
        
            \filldraw[blue, opacity=.4] (0, 0) rectangle (2, -1); 
            \draw[blue, line width = 2pt] (0, -1) -- (2, -1); 
            \draw (0.5, -1) -- (.5, -.8);
            \draw (1, -1) -- (1, -.8);
            \draw (1.5, -1) -- (1.5, -.8);

        \end{scope}
        
        \begin{scope}[shift = { ( 6, 0)} ]
        \filldraw[red, opacity=.4] (0, 0) rectangle (2, -1);
        \draw[red, line width = 2pt] (0, 0) -- (2, 0); 
        \draw (0.5, 0) -- (.5, .2);
        \draw (1, 0) -- (1, .2);
        \draw (1.5, 0) -- (1.5, .2);

        \end{scope}

        \begin{scope}[shift = { ( 9, 0)} ]
        \filldraw[red, opacity=.4] (0, 0) rectangle (2, -1); 
        \draw[red, line width = 2pt] (0, -1) -- (2, -1); 
        \draw (0.5, -1) -- (.5, -1.2);
        \draw (1, -1) -- (1, -1.2);
        \draw (1.5, -1) -- (1.5, -1.2);
        \end{scope}
    \end{tikzpicture}\\
    In other words, the codirection induced by \(\widetilde{\Lambda}\) points inward along the boundary of blue regions and outward along the boundary of red regions. This also indicates that a downward pointing path can only pair with an upward pointing path.

    \item Locally at a crossing, the ruling configuration looks like one of the following:

    \begin{enumerate}[(i)] \label{enumerate:ruling at a crossing}

    \item \emph{Overlaps} 
    
    \begin{tikzpicture}[yscale = .4, xscale = 1.3]
        \draw[line width = 2pt, blue] (-1, 1) -- (1, -1);
        \draw[line width = 2pt, red] (-1, -1) -- (1, 1); 
        \fill[red, opacity = .4] (-1, -1) -- (-1, -3) -- (1, -3) -- (1, 1); 
        \fill[pattern={Lines[angle=90, distance=1.2mm, line width=0.5pt]}, pattern color=blue] (-1, -3) -- (-1, 1) -- (1, -1) -- (1, -3);
    \end{tikzpicture}
    \qquad
    \begin{tikzpicture}[yscale = .4, xscale = -1.3]
        \draw[line width = 2pt, blue] (-1, 1) -- (1, -1);
        \draw[line width = 2pt, red] (-1, -1) -- (1, 1); 
        \fill[red, opacity = .4] (-1, -1) -- (-1, -3) -- (1, -3) -- (1, 1); 
        \fill[pattern={Lines[angle=90, distance=1.2mm, line width=0.5pt]}, pattern color=blue] (-1, -3) -- (-1, 1) -- (1, -1) -- (1, -3);
    \end{tikzpicture}
    \qquad
    \begin{tikzpicture}[yscale = -.4, xscale = 1.3]
        \draw[line width = 2pt, blue] (-1, 1) -- (1, -1);
        \draw[line width = 2pt, red] (-1, -1) -- (1, 1); 
        \fill[red, opacity = .4] (-1, -1) -- (-1, -3) -- (1, -3) -- (1, 1); 
        \fill[pattern={Lines[angle=90, distance=1.2mm, line width=0.5pt]}, pattern color=blue] (-1, -3) -- (-1, 1) -- (1, -1) -- (1, -3);
    \end{tikzpicture}
    \qquad
    \begin{tikzpicture}[yscale = -.4, xscale = -1.3]
        \draw[line width = 2pt, blue] (-1, 1) -- (1, -1);
        \draw[line width = 2pt, red] (-1, -1) -- (1, 1); 
        \fill[red, opacity = .4] (-1, -1) -- (-1, -3) -- (1, -3) -- (1, 1); 
        \fill[pattern={Lines[angle=90, distance=1.2mm, line width=0.5pt]}, pattern color=blue] (-1, -3) -- (-1, 1) -- (1, -1) -- (1, -3);
    \end{tikzpicture}

    \vspace{1em}

    \begin{tikzpicture}[yscale = -.4, xscale = -1.3]
        \draw[line width = 2pt, Blue] (-1, 1) -- (1, -1);
        \draw[line width = 2pt, blue] (-1, -1) -- (1, 1); 
        \fill[pattern={Lines[angle=90,distance=1mm,line width=0.3pt]}, pattern color=blue] (-1, -1) -- (-1, 2) -- (1, 2) -- (1, 1); 
        \fill[Blue, opacity = .4] (-1, -2) -- (-1, 1) -- (1, -1) -- (1, -2); 
    \end{tikzpicture}
    \qquad
    \begin{tikzpicture}[yscale = -.4, xscale = -1.3]
        \draw[line width = 2pt, red!70!black] (-1, 1) -- (1, -1);
        \draw[line width = 2pt, red] (-1, -1) -- (1, 1); 
        \fill[pattern={Lines[angle=90,distance=1mm,line width=0.3pt]}, pattern color=red] (-1, -1) -- (-1, 2) -- (1, 2) -- (1, 1); 
        \fill[red!70!black, opacity = .4] (-1, -2) -- (-1, 1) -- (1, -1) -- (1, -2); 
    \end{tikzpicture}

    \item \emph{Departures}
    
    \begin{tikzpicture}[yscale = .4, xscale = 1.3]
        \draw[line width = 2pt, Blue] (-1, 1) -- (1, -1);
        \draw[line width = 2pt, blue] (-1, -1) -- (1, 1); 
        \draw[line width = 2pt, Blue] (-1, -2) -- (1, -2); 
        \draw[line width = 2pt, blue] (-1, -3) -- (1, -3); 
        \fill[pattern={Lines[angle=90,distance=1mm,line width=0.3pt]}, pattern color=blue] (-1, -1) -- (-1, -3) -- (1, -3) -- (1, 1); 
        \fill[Blue, opacity = .4] (-1, -2) -- (-1, 1) -- (1, -1) -- (1, -2); 
    \end{tikzpicture}
    \qquad
    \begin{tikzpicture}[yscale = -.4, xscale = 1.3]
        \draw[line width = 2pt, Blue] (-1, 1) -- (1, -1);
        \draw[line width = 2pt, blue] (-1, -1) -- (1, 1); 
        \draw[line width = 2pt, Blue] (-1, -2) -- (1, -2); 
        \draw[line width = 2pt, blue] (-1, -3) -- (1, -3); 
        \fill[pattern={Lines[angle=90,distance=1mm,line width=0.3pt]}, pattern color=blue] (-1, -1) -- (-1, -3) -- (1, -3) -- (1, 1); 
        \fill[Blue, opacity = .4] (-1, -2) -- (-1, 1) -- (1, -1) -- (1, -2); 
    \end{tikzpicture}
    \qquad
    \begin{tikzpicture}[yscale = .4, xscale = 1.3]
        \draw[line width = 2pt, red!70!black] (-1, 1) -- (1, -1);
        \draw[line width = 2pt, red] (-1, -1) -- (1, 1); 
        \draw[line width = 2pt, red!70!black] (-1, -2) -- (1, -2); 
        \draw[line width = 2pt, red] (-1, -3) -- (1, -3); 
        \fill[pattern={Lines[angle=90,distance=1mm,line width=0.3pt]}, pattern color=red] (-1, -1) -- (-1, -3) -- (1, -3) -- (1, 1); 
        \fill[red!70!black, opacity = .4] (-1, -2) -- (-1, 1) -- (1, -1) -- (1, -2); 
    \end{tikzpicture}
    \qquad
    \begin{tikzpicture}[yscale = -.4, xscale = 1.3]
        \draw[line width = 2pt, red!70!black] (-1, 1) -- (1, -1);
        \draw[line width = 2pt, red] (-1, -1) -- (1, 1); 
        \draw[line width = 2pt, red!70!black] (-1, -2) -- (1, -2); 
        \draw[line width = 2pt, red] (-1, -3) -- (1, -3); 
        \fill[pattern={Lines[angle=90,distance=1mm,line width=0.3pt]}, pattern color=red] (-1, -1) -- (-1, -3) -- (1, -3) -- (1, 1); 
        \fill[red!70!black, opacity = .4] (-1, -2) -- (-1, 1) -- (1, -1) -- (1, -2); 
    \end{tikzpicture}
    
    \vspace{1em}

    \begin{tikzpicture}[yscale = .4, xscale = -1.3]
        \draw[line width = 2pt, red] (-1, 1) -- (1, -1);
        \draw[line width = 2pt, blue] (-1, -1) -- (1, 1); 
        \fill[blue, opacity = .4] (-1, -1) -- (-1, 2) -- (1, 2) -- (1, 1); 
        \fill[red, opacity = .4] (-1, -2) -- (-1, 1) -- (1, -1) -- (1, -2); 
    \end{tikzpicture}
    \qquad
    \begin{tikzpicture}[yscale = -.4, xscale = -1.3]
        \draw[line width = 2pt, red] (-1, 1) -- (1, -1);
        \draw[line width = 2pt, blue] (-1, -1) -- (1, 1); 
        \fill[blue, opacity = .4] (-1, -1) -- (-1, 2) -- (1, 2) -- (1, 1); 
        \fill[red, opacity = .4] (-1, -2) -- (-1, 1) -- (1, -1) -- (1, -2); 
    \end{tikzpicture}
    \qquad

    \item \emph{Returns}:
   
    \begin{tikzpicture}[yscale = .4, xscale = -1.3]
        \draw[line width = 2pt, blue] (-1, 1) -- (1, -1);
        \draw[line width = 2pt, Blue] (-1, -1) -- (1, 1); 
        \draw[line width = 2pt, blue] (-1, -2) -- (1, -2); 
        \draw[line width = 2pt, Blue] (-1, -3) -- (1, -3); 
        \fill[Blue, opacity = .4] (-1, -1) -- (-1, -3) -- (1, -3) -- (1, 1); 
        \fill[pattern={Lines[angle=90,distance=1mm,line width=0.3pt]}, pattern color=Blue] (-1, -2) -- (-1, 1) -- (1, -1) -- (1, -2); 
    \end{tikzpicture}
    \qquad
    \begin{tikzpicture}[yscale = -.4, xscale = -1.3]
        \draw[line width = 2pt, blue] (-1, 1) -- (1, -1);
        \draw[line width = 2pt, Blue] (-1, -1) -- (1, 1); 
        \draw[line width = 2pt, blue] (-1, -2) -- (1, -2); 
        \draw[line width = 2pt, Blue] (-1, -3) -- (1, -3); 
        \fill[Blue, opacity = .4] (-1, -1) -- (-1, -3) -- (1, -3) -- (1, 1); 
        \fill[pattern={Lines[angle=90,distance=1mm,line width=0.3pt]}, pattern color=Blue] (-1, -2) -- (-1, 1) -- (1, -1) -- (1, -2); 
    \end{tikzpicture}
    \qquad
    \begin{tikzpicture}[yscale = .4, xscale = -1.3]
        \draw[line width = 2pt, red] (-1, 1) -- (1, -1);
        \draw[line width = 2pt, red!70!black] (-1, -1) -- (1, 1); 
        \draw[line width = 2pt, red] (-1, -2) -- (1, -2); 
        \draw[line width = 2pt, red!70!black] (-1, -3) -- (1, -3); 
        \fill[red!70!black, opacity = .4](-1, -1) -- (-1, -3) -- (1, -3) -- (1, 1); 
        \fill[pattern={Lines[angle=90,distance=1mm,line width=0.3pt]}, pattern color=red](-1, -2) -- (-1, 1) -- (1, -1) -- (1, -2); 
    \end{tikzpicture}
    \qquad
    \begin{tikzpicture}[yscale = -.4, xscale = -1.3]
        \draw[line width = 2pt, red] (-1, 1) -- (1, -1);
        \draw[line width = 2pt, red!70!black] (-1, -1) -- (1, 1); 
        \draw[line width = 2pt, red] (-1, -2) -- (1, -2); 
        \draw[line width = 2pt, red!70!black] (-1, -3) -- (1, -3); 
        \fill[red!70!black, opacity = .4](-1, -1) -- (-1, -3) -- (1, -3) -- (1, 1); 
        \fill[pattern={Lines[angle=90,distance=1mm,line width=0.3pt]}, pattern color=red](-1, -2) -- (-1, 1) -- (1, -1) -- (1, -2); 
    \end{tikzpicture}

    \vspace{1em}

    \begin{tikzpicture}[yscale = .4, xscale = 1.3]
        \draw[line width = 2pt, red] (-1, 1) -- (1, -1);
        \draw[line width = 2pt, blue] (-1, -1) -- (1, 1); 
        \fill[blue, opacity = .4] (-1, -1) -- (-1, 2) -- (1, 2) -- (1, 1); 
        \fill[red, opacity = .4] (-1, -2) -- (-1, 1) -- (1, -1) -- (1, -2); 
    \end{tikzpicture}
    \qquad
    \begin{tikzpicture}[yscale = -.4, xscale = 1.3]
        \draw[line width = 2pt, red] (-1, 1) -- (1, -1);
        \draw[line width = 2pt, blue] (-1, -1) -- (1, 1); 
        \fill[blue, opacity = .4] (-1, -1) -- (-1, 2) -- (1, 2) -- (1, 1); 
        \fill[red, opacity = .4] (-1, -2) -- (-1, 1) -- (1, -1) -- (1, -2); 
    \end{tikzpicture}

        \item Switches of type A:

        \label{item:crossing_b_meets_r} 
        
        \begin{tikzpicture}
        \begin{scope}[rotate = 45]
            \draw[line width = 2pt, red] (-1, 0) -- (0, 0) -- (0, -1);
            \draw[line width = 2pt, blue] (0, 1) -- (0, 0) -- (1, 0); 
            \draw (.5, 0) -- (.5, .2);
            \draw (-.5, 0) -- (-.5, .2);
            \draw (0, .5) -- (.2, .5);
            \draw (0, -.5) -- (.2, -.5);
            \filldraw[blue, opacity=.4] (0, 0) -- (1, 0) -- (0, 1); 
            \filldraw[red, opacity=.4] (0, 0) --(-1, 0) -- (0, -1);
        \end{scope}
        \begin{scope}[shift = {(3, 0)}, rotate = 225]
            \draw[line width = 2pt, red] (-1, 0) -- (0, 0) -- (0, -1);
            \draw[line width = 2pt, blue] (0, 1) -- (0, 0) -- (1, 0);
            \draw (.5, 0) -- (.5, .2);
            \draw (-.5, 0) -- (-.5, .2);
            \draw (0, .5) -- (.2, .5);
            \draw (0, -.5) -- (.2, -.5);
            \filldraw[blue, opacity=.4] (0, 0) -- (1, 0) -- (0,1); 
            \filldraw[red, opacity=.4] (0, 0) -- (-1, 0) -- (0, -1);
        \end{scope}
        \begin{scope}[shift = {(6, 0)}, rotate = 135]
            \draw[line width = 2pt, red] (-1, 0) -- (0, 0) -- (0, -1);
            \draw[line width = 2pt, blue] (0, 1) -- (0, 0) -- (1, 0);
            \draw (.5, 0) -- (.5, .2);
            \draw (-.5, 0) -- (-.5, .2);
            \draw (0, .5) -- (.2, .5);
            \draw (0, -.5) -- (.2, -.5);
            \filldraw[blue, opacity=.4] (0, 0) -- (1, 0) -- (0,1); 
            \filldraw[red, opacity=.4] (0, 0) -- (-1, 0) -- (0, -1);
        \end{scope}
        \begin{scope}[shift = {(9, 0)}, rotate = -45]
            \draw[line width = 2pt, red] (-1, 0) -- (0, 0) -- (0, -1);
            \draw[line width = 2pt, blue] (0, 1) -- (0, 0) -- (1, 0);
            \draw (.5, 0) -- (.5, .2);
            \draw (-.5, 0) -- (-.5, .2);
            \draw (0, .5) -- (.2, .5);
            \draw (0, -.5) -- (.2, -.5);
            \filldraw[blue, opacity=.4] (0, 0) -- (1, 0) -- (0,1); 
            \filldraw[red, opacity=.4] (0, 0) -- (-1, 0) -- (0, -1);
        \end{scope}
        \end{tikzpicture}.
        
        We call the first two switches \emph{vertical}, and the last two \emph{horizontal}. 
        \item\label{item:switch}
        
        \vspace{1em}
        Switches of type B:

\begin{tikzpicture}[xscale = .7, yscale = .4]
    \draw[blue, line width = 2pt] (-2, -1) -- (0, 0) -- (2, -1); 
    \draw[blue, line width = 2pt] (-2, -3) -- (2, -3); 
    \fill[pattern={Lines[angle=90,distance=1mm,line width=0.3pt]}, pattern color = blue] (-2, -1) -- (-2, -3) -- (2, -3) -- (2, -1) -- (0, 0); 
    \draw[Blue, line width = 2pt] (-2, 1) -- (0, 0) -- (2,1); 
    \draw[Blue, line width = 2pt] (-2, -2) -- (2, -2); 
    \fill[Blue, opacity = .4] (-2, 1) -- (-2, -2) -- (2, -2) -- (2, 1) -- (0, 0);
\end{tikzpicture}
\qquad
\begin{tikzpicture}[xscale = .7, yscale = .4]

    \begin{scope}[shift = {(6, -2)}, rotate = 180]
    \draw[blue, line width = 2pt] (-2, -1) -- (0, 0) -- (2, -1); 
    \draw[blue, line width = 2pt] (-2, -3) -- (2, -3); 
    \fill[pattern={Lines[angle=90,distance=1mm,line width=0.3pt]}, pattern color = blue] (-2, -1) -- (-2, -3) -- (2, -3) -- (2, -1) -- (0, 0); 
    \draw[Blue, line width = 2pt] (-2, 1) -- (0, 0) -- (2,1); 
    \draw[Blue, line width = 2pt] (-2, -2) -- (2, -2); 
    \fill[Blue, opacity = .4] (-2, 1) -- (-2, -2) -- (2, -2) -- (2, 1) -- (0, 0);
    \end{scope}
\end{tikzpicture}
\qquad
\begin{tikzpicture}[xscale = .7, yscale = .4]
    \draw[red, line width = 2pt] (-2, -1) -- (0, 0) -- (2, -1); 
    \draw[red, line width = 2pt] (-2, -3) -- (2, -3); 
    \fill[pattern={Lines[angle=90,distance=1mm,line width=0.3pt]}, pattern color = red] (-2, -1) -- (-2, -3) -- (2, -3) -- (2, -1) -- (0, 0); 
    \draw[red!70!black, line width = 2pt] (-2, 1) -- (0, 0) -- (2,1); 
    \draw[red!70!black, line width = 2pt] (-2, -2) -- (2, -2); 
    \fill[red!70!black, opacity = .4] (-2, 1) -- (-2, -2) -- (2, -2) -- (2, 1) -- (0, 0);
\end{tikzpicture}
\qquad
\begin{tikzpicture}[xscale = .7, yscale = -.4]
    \draw[red, line width = 2pt] (-2, -1) -- (0, 0) -- (2, -1); 
    \draw[red, line width = 2pt] (-2, -3) -- (2, -3); 
    \fill[pattern={Lines[angle=90,distance=1mm,line width=0.3pt]}, pattern color = red] (-2, -1) -- (-2, -3) -- (2, -3) -- (2, -1) -- (0, 0); 
    \draw[red!70!black, line width = 2pt] (-2, 1) -- (0, 0) -- (2,1); 
    \draw[red!70!black, line width = 2pt] (-2, -2) -- (2, -2); 
    \fill[red!70!black, opacity = .4] (-2, 1) -- (-2, -2) -- (2, -2) -- (2, 1) -- (0, 0);
\end{tikzpicture}

    \end{enumerate}

    \item Every path points either upward or downward throughout.  Assume all paths go from left to right. 
    We call the region bounded by a pair of paths an \emph{eye}\footnote{We borrow this name from the rulings of Legendrian links, though in this case such a region does not necessarily look like an eye.}.
    Then each eye is either
    \begin{enumerate}
        \item 
    unbounded and retracts to a horizontal line, or 
    \item 
\[   
    \begin{cases}
    \text{starts with }
\begin{tikzpicture}[baseline={(current bounding box.center)}]
        \begin{scope}[ rotate = -45]
            \draw[line width = 2pt, red] (-.5, 0) -- (0, 0) -- (0, -.5);
            \draw[line width = 2pt, blue] (0, 1) -- (0, 0) -- (1, 0);
            
            \filldraw[blue, opacity=.4] (0, 0) -- (1, 0) -- (0,1); 
            \filldraw[red, opacity=.4] (0, 0) -- (-.5, 0) -- (0, -.5);
        \end{scope}
        \end{tikzpicture}
        \text{ and ends with }
        \begin{tikzpicture}[baseline={(current bounding box.center)}]
            \begin{scope}[rotate = 135]
            \draw[line width = 2pt, red] (-.5, 0) -- (0, 0) -- (0, -.5);
            \draw[line width = 2pt, blue] (0, 1) -- (0, 0) -- (1, 0);
            
            \filldraw[blue, opacity=.4] (0, 0) -- (1, 0) -- (0,1); 
            \filldraw[red, opacity=.4] (0, 0) -- (-.5, 0) -- (0, -.5);
        \end{scope}
        \end{tikzpicture}, 
        & \text{if it is blue;}\\[30pt]
\text{starts with }  
        \begin{tikzpicture}[baseline={(current bounding box.center)}]
            \begin{scope}[rotate = 135]
            \draw[line width = 2pt, red] (-1, 0) -- (0, 0) -- (0, -1);
            \draw[line width = 2pt, blue] (0, .5) -- (0, 0) -- (.5, 0);
            
            \filldraw[blue, opacity=.4] (0, 0) -- (.5, 0) -- (0, .5); 
            \filldraw[red, opacity=.4] (0, 0) -- (-1, 0) -- (0, -1);
        \end{scope}
        \end{tikzpicture} 
        \text { and ends with } 
        \begin{tikzpicture}[baseline={(current bounding box.center)}]
        \begin{scope}[ rotate = -45]
            \draw[line width = 2pt, red] (-1, 0) -- (0, 0) -- (0, -1);
            \draw[line width = 2pt, blue] (0, .5) -- (0, 0) -- (.5, 0);
            
            \filldraw[blue, opacity=.4] (0, 0) -- (.5, 0) -- (0, .5); 
            \filldraw[red, opacity=.4] (0, 0) -- (-1, 0) -- (0, -1);
        \end{scope}
        \end{tikzpicture}, 
     & \text{if it is red.}
\end{cases}
\]
 \end{enumerate}
 In the case (a) we call the eye an \emph{annular eye}, and in case (b) we call the eye a \emph{disk eye}.

\item  (The balancing condition) Over the point $\mathfrak{t}$, the number of blue eyes over it is $d$ plus the number of red eyes over it. 

\end{enumerate}

\begin{remark}\label{rmk:crossing_types}
    The crossings of $\Lambda$ can be classified into two types by the codirection:
    \begin{enumerate}
        \item crossings of same codirection:
        \begin{tikzpicture}[xscale = .6, yscale = .3]
            \draw[thick] (-1, -1) -- (1, 1);
            \draw[thick] (-1, 1) -- (1, -1);
            \draw (-.5, -.5) -- (-.4, -.9);
            \draw (.5, -.5) -- (.4, -.9);
            \draw (.5, .5) -- (.6, .1);
            \draw (-.5, .5) -- (-.6, .1);
        \end{tikzpicture}
        \quad or \quad
        \begin{tikzpicture}[xscale = .6, yscale = -.3]
            \draw[thick] (-1, -1) -- (1, 1);
            \draw[thick] (-1, 1) -- (1, -1);
            \draw (-.5, -.5) -- (-.4, -.9);
            \draw (.5, -.5) -- (.4, -.9);
            \draw (.5, .5) -- (.6, .1);
            \draw (-.5, .5) -- (-.6, .1);
        \end{tikzpicture}.
        \vspace{1em}
        \item crossings of opposite codirections:
        \begin{tikzpicture}[xscale = .6, yscale = .3]
            \draw[thick] (-1, -1) -- (1, 1);
            \draw[thick] (-1, 1) -- (1, -1);
            \draw (-.6, -.6) -- (-.5, -1);
            \draw (-.6, .6) -- (-.5, 1);
            
            \draw (.6, -.6) -- (.7, -.2);
            \draw (.6, .6) -- (.7, .2);
        \end{tikzpicture}
        \quad or \quad
        \begin{tikzpicture}[xscale = -.6, yscale = .3]
            \draw[thick] (-1, -1) -- (1, 1);
            \draw[thick] (-1, 1) -- (1, -1);
            \draw (-.6, -.6) -- (-.5, -1);
            \draw (-.6, .6) -- (-.5, 1);
            
            \draw (.6, -.6) -- (.7, -.2);
            \draw (.6, .6) -- (.7, .2);
        \end{tikzpicture}.
    \end{enumerate}
    One observes that, at a crossing of the same codirection, a ruling can be a departure, a reture, a vertical switch of type A, or a switch of type B;  at an crossing of opposite codirections, a ruling can be an overlap, or a horizontal switch of type A. 
    
\end{remark}

\begin{remark}
Readers familiar with rulings of Legendrian links may notice that our definitions of returns, departures, and the normality condition for type B switches are slightly different from their counterparts in the Legendrian-link setting. This is because, in our setting, the codirections of the singular support may point both upward and downward.
\end{remark}

\end{definition}

\begin{definition}[The ruling surface and its skeleton] Let \(\rho\) be a ruling of \(\Lambda\). Following \cite{Kal08, STZ}, we associate an (orientable) topological surface \(\widetilde{\Sigma}(\rho)\) whose boundary can be identified with \(\widetilde{\Lambda}\), which descends to a topological surface \(\Sigma(\rho)\) whose boundary can be identified with \(\Lambda\). The construction is similar to the Legendrian link case. First take the disjoint union of all eyes \(\coprod_\alpha E_\alpha \). At each switch (\eqref{item:switch} or~\eqref{item:crossing_b_meets_r} in Definition~\ref{def:rulings on T}), glue the incident eyes by a half-twisted strip:

\begin{tikzpicture}[ line width = 1pt, xscale = .7, yscale = .5]

    \draw[blue] (-2, 2) -- (0, 0)-- (-2, -2);
    
    \fill[blue, opacity = .4] (-2, 2) -- (0, 0)-- (-2, -2);

    \draw[red] (2.4, -2) -- (0.4,0) -- (2.4, 2);
    \filldraw [red, opacity = .4] (2.4, -2) -- (0.4,0) -- (2.4, 2);

    \draw (-.3, .3) -- (.7, -.3);
    \draw (-.3, -.3) -- (.7, .3);

\end{tikzpicture}
\qquad
\begin{tikzpicture}[line width = 1pt, xscale = .6, yscale = .8]
    \draw[blue] (-.5, .5) -- (1, -1) -- (2, 0) -- (3, -1) -- (4.5, .5);
    \draw[Blue] (1,1.4) -- (2,0.4) -- (3, 1.4);
    \draw (1.7, .7) -- (2.3, -.3);
    \draw (2.3, .7) -- (1.7, -.3);
    \fill[blue, opacity=.3] (-1, 1) -- (1, -1) -- (2, 0) -- (3, -1) -- (5, 1);
    \fill[Blue, opacity = .5] (1, 1.4) -- (2, .4) -- (3, 1.4);
\end{tikzpicture}
\qquad
\begin{tikzpicture}[line width = 1pt, xscale = .6, yscale = .8]
    \draw[red] (-.5, .5) -- (1, -1) -- (2, 0) -- (3, -1) -- (4.5, .5);
    \draw[red!70!black] (1,1.4) -- (2,0.4) -- (3, 1.4);
    \draw (1.7, .7) -- (2.3, -.3);
    \draw (2.3, .7) -- (1.7, -.3);
    \fill[red, opacity=.3] (-1, 1) -- (1, -1) -- (2, 0) -- (3, -1) -- (5, 1);
    \fill[red!70!black, opacity = .5] (1, 1.4) -- (2, .4) -- (3, 1.4);
\end{tikzpicture}
.

To understand the topology of the ruling surface more easily, we also define the \emph{skeleton} of the topological surface \(\widetilde{\Sigma}(\rho)\). The skeleton is a \(\bZ^2\)-equivariant graph  \( \widetilde{\Gamma}(\rho)\) to which  \(\widetilde{\Sigma}(\rho)\) retracts. It descends to a graph \(\Gamma(\rho)\), which we call the \emph{skeleton} of \(\Sigma(\rho)\). 
The construction is as follows:
\begin{enumerate}
    \item Each disk eye retracts to a point in the interior. Take such a point and call it the \emph{core} of the eye. 
    \item Each annular eye retracts to a line, homeomorphic to \(\bR\). Take such a line and call it the \emph{core} of the eye. 
    \item If two eyes meet at a switch, then we put an edge connecting the corresponding two cores. Note that how we draw this edge does not affect the topological type of the resulting graph. 
\end{enumerate}
\end{definition}

We will draw a ruling on the torus \(\T\) when there is no ambiguity in lifting it to \(\bR^2\). 
For a ruling $\rho$, denote by $g(\rho)$ the genus of the ruling surface $\Sigma(\rho)$, and by $r(\rho)$ the number of returns.

\begin{example}
\label{ex:o22stdruling}
     Assume $\Lambda$ is the zig-zag of a dimer model. 
     Then we can associate a ruling called the \emph{standard ruling}, as follows. Take the blue eyes to be the regions that contain black vertices, and the red eyes to be the regions that contain white vertices. The ruling surface is homeomorphic to the spectral curve, and can be realized as a Lagrangian filling of \(\Lambda\) (\cite{TWZ}). The skeleton of the ruling surface is exactly the bipartite graph.  An example is illustrated in Figure~\ref{fig:newtonpoly_bipartite_ruling}. (Technically, we should perturb the Legendrian a bit, such that the crossing points do not sit on the same vertical line.)  
\end{example}

\begin{figure}[htbp] 
    \centering
    \begin{subfigure}[t]{.24\linewidth}
    \centering
    \begin{tikzpicture}[scale = .7, line width = 1pt]
  \foreach \x in {0,1,2,3, 4}
    \foreach \y in {0,1,2}
      \fill (\x,\y) circle (2pt);
      
      \draw (0, 0) -- (2, 0) -- (4, 2) -- (2, 2) -- cycle;

\end{tikzpicture}
    \caption{The Newton polygon}
    \end{subfigure}
\begin{subfigure}[t]{.24\linewidth}
    \centering
    \begin{tikzpicture}[scale = .45, line width = 1pt]
    \draw[ color = orange] (0,0) rectangle(8, 8);
    \draw[gray] (0, 0) -- (8, 0);
    \draw[gray] (0, 2) -- (8, 2);
    \draw[gray] (0, 4) -- (8, 4); 
    \draw[gray] (0, 6) -- (8, 6);
    \draw[gray] (0, 8) -- (8, 8);

    \draw[gray] (0, 0) -- (8, 8);
    \draw[gray] (2, 0) -- (8, 6);
    \draw[gray] (4, 0) -- (8, 4);
    \draw[gray] (6, 0) -- (8, 2);

    \draw[gray] (0, 2) -- (6, 8);
    \draw[gray] (0, 4) -- (4, 8);
    \draw[gray] (0, 6) -- (2, 8);

    \draw[gray, line width = .5pt] (1, 0) -- (1, .2);
    \draw[gray, line width = .5pt] (3, 0) -- (3, .2);
    \draw[gray, line width = .5pt] (5, 0) -- (5, .2);
    \draw[gray, line width = .5pt] (7, 0) -- (7, .2);

    \draw[gray, line width = .5pt] (1, 2) -- (1, 1.8);
    \draw[gray, line width = .5pt] (3, 2) -- (3, 1.8);
    \draw[gray, line width = .5pt] (5, 2) -- (5, 1.8);
    \draw[gray, line width = .5pt] (7, 2) -- (7, 1.8);
    
    \draw[gray, line width = .5pt] (1, 4) -- (1, 4.2);
    \draw[gray, line width = .5pt] (3, 4) -- (3, 4.2);
    \draw[gray, line width = .5pt] (5, 4) -- (5, 4.2);
    \draw[gray, line width = .5pt] (7, 4) -- (7, 4.2);
    
    \draw[gray, line width = .5pt] (1, 6) -- (1, 5.8);
    \draw[gray, line width = .5pt] (3, 6) -- (3, 5.8);
    \draw[gray, line width = .5pt] (5, 6) -- (5, 5.8);
    \draw[gray, line width = .5pt] (7, 6) -- (7, 5.8);

    \draw[gray, line width = .5pt] (1, 8) -- (1, 8.2);
    \draw[gray, line width = .5pt] (3, 8) -- (3, 8.2);
    \draw[gray, line width = .5pt] (5, 8) -- (5, 8.2);
    \draw[gray, line width = .5pt] (7, 8) -- (7, 8.2);

    \draw[gray, line width = .5pt] (1, 1) -- (1.15, .85);
    
    \draw[gray, line width = .5pt] (1, 3) -- (.85, 3.15);
    
    \draw[gray, line width = .5pt] (1, 5) -- (1.15, 4.85);
    \draw[gray, line width = .5pt] (1, 7) -- (.85, 7.15);

    \draw[gray, line width = .5pt] (5, 1) -- (5.15, .85);
    \draw[gray, line width = .5pt] (5, 3) -- (4.85, 3.15);
    \draw[gray, line width = .5pt] (5, 5) -- (5.15, 4.85);
    \draw[gray, line width = .5pt] (5, 7) -- (4.85, 7.15);

    \draw[gray, line width = .5pt] (3, 1) -- (2.85, 1.15); 
    \draw[gray, line width = .5pt] (3, 3) -- (3.15, 2.85); 
    \draw[gray, line width = .5pt] (3, 5) -- (2.85, 5.15); 
    \draw[gray, line width = .5pt] (3, 7) -- (3.15, 6.85); 

    \draw[gray, line width = .5pt] (7, 1) -- (6.85, 1.15); 
    \draw[gray, line width = .5pt] (7, 3) -- (7.15, 2.85); 
    \draw[gray, line width = .5pt] (7, 5) -- (6.85, 5.15); 
    \draw[gray, line width = .5pt] (7, 7) -- (7.15, 6.85); 

    \draw (2, 0) -- (2, 8);
    \draw (6, 0) -- (6, 8);
    \draw (0, 0) -- (8, 4);
    \draw (0, 2) -- (8, 6);
    \draw (0, 4) -- (8, 8);
    \draw (4, 0) -- (8, 2);
    \draw (0, 6) -- (4, 8);

    \filldraw[fill = black, draw = black, line width = .3 pt] (2, 1) circle (5pt); 
    \filldraw[fill = black, draw = black, line width = .3 pt] (6, 1) circle (5pt); 
    \filldraw[fill = black, draw = black, line width = .3 pt] (2, 5) circle (5pt); 
    \filldraw[fill = black, draw = black, line width = .3 pt] (6, 5) circle (5pt); 

    \filldraw[fill = white, draw = black, line width = .5pt] (2, 3) circle (5pt); 
    \filldraw[fill = white, draw = black, line width = .5pt] (6, 3) circle (5pt); 
    \filldraw[fill = white, draw = black, line width = .5pt] (2, 7) circle (5pt); 
    \filldraw[fill = white, draw = black, line width = .5pt] (6, 7) circle (5pt);

\end{tikzpicture}
    \caption{The bipartite graph and the Legendrian \(\Lambda\) (\textcolor{gray}{\(\Lambda\)} drawn in gray).}
\end{subfigure}
\begin{subfigure}[t]{.24\linewidth}
    \centering
    \begin{tikzpicture}[scale = .45]
    \draw[line width = 1pt, color = orange] (0,0) rectangle(8, 8);
    \filldraw[draw = blue, line width = 1pt, fill = blue, fill opacity = .4 ] (0, 0) -- (2,0) -- (4, 2) -- ( 2, 2) -- cycle; 
    \filldraw[draw = blue, line width = 1pt, fill = blue, fill opacity = .4 ] (4, 0) -- (6,0) -- (8, 2) -- (6, 2) -- cycle; 

    \filldraw[draw = red, line width = 1pt, fill = red, fill opacity = .4 ] (0, 2) -- (2, 2) -- (4, 4) -- ( 2, 4) -- cycle; 
    \filldraw[draw = red, line width = 1pt, fill = red, fill opacity = .4 ] (4, 2) -- (6,2) -- (8, 4) -- (6, 4) -- cycle; 

    \filldraw[draw = blue, line width = 1pt, fill = blue, fill opacity = .4 ] (0, 4) -- (2, 4) -- (4, 6) -- (2, 6) -- cycle; 
    \filldraw[draw = blue, line width = 1pt, fill = blue, fill opacity = .4 ] (4, 4) -- (6, 4) -- (8, 6) -- (6, 6) -- cycle; 

    \filldraw[draw = red, line width = 1pt, fill = red, fill opacity = .4 ] (0, 6) -- (2, 6) -- (4, 8) -- (2, 8) -- cycle; 
    \filldraw[draw = red, line width = 1pt, fill = red, fill opacity = .4 ] (4, 6) -- (6, 6) -- (8, 8) -- (6, 8) -- cycle; 
    
\end{tikzpicture}
    \caption{The standard ruling has genus one.}
\end{subfigure}
\begin{subfigure}[t]{.25\linewidth}
    \centering
    \begin{tikzpicture}[scale = .45]
    \draw[line width = 1pt, color = orange] (0,0) rectangle(8, 8);
    \filldraw[draw = blue, line width = 1pt, fill = blue, fill opacity = .4 ] (0, 0) -- (2, 0) -- (8, 6) -- (6, 6) -- cycle; 
    \filldraw[draw = blue, line width = 1pt, fill = blue, fill opacity = .4 ] (4, 0) -- (6,0) -- (8, 2) -- (6, 2) -- cycle; 

    \filldraw[draw = red, line width = 1pt, fill = red, fill opacity = .4 ] (0, 2) -- (6, 2) -- (8, 4) -- ( 2, 4) -- cycle;

    \filldraw[draw = blue, line width = 1pt, fill = blue, fill opacity = .4 ] (0, 4) -- (2, 4) -- (4, 6) -- (2, 6) -- cycle;

    \filldraw[draw = red, line width = 1pt, fill = red, fill opacity = .4 ] (0, 6) -- (2, 6) -- (4, 8) -- (2, 8) -- cycle; 
    \filldraw[draw = red, line width = 1pt, fill = red, fill opacity = .4 ] (4, 6) -- (6, 6) -- (8, 8) -- (6, 8) -- cycle; 
    
\end{tikzpicture}
    \caption{A rational ruling.}
\end{subfigure}
    \caption{Examples of rulings. The orange square is understood as the fundamental domain of a torus. The Newton polygon corresponds to the line bundle \(\cO(2, 2)\) on \(\bP^1 \times \bP^1\).} 
    \label{fig:newtonpoly_bipartite_ruling}
\end{figure}

\begin{example}
\label{ex:oablocal}
    Continuing with $\cO_{\bP^1\times \bP^1}(a,b)$ as in
    Example \ref{ex:o22stdruling} (but drawn perpendicular for clarity), the following local
    models of rulings are illustrative (a more detailed example can be found in Example~\ref{ex:o23}):
\[    \begin{tikzpicture}[scale = .6]
        \filldraw[draw = blue,fill=blue,fill opacity = .4] (1,0)--(2,0)--(2,3)--(1,3)--cycle;
        \filldraw[draw = red,fill=red,fill opacity = .4] (0,1)--(3,1)--(3,2)--(0,2)--cycle; 
        \node at (1.5,-1.5) {filling a hole;};
        \node at (1.5,-2.4) {genus drops by one}; 
        \begin{scope}[shift = {(6,0)}]
        \filldraw[draw = blue, fill = blue, fill opacity = .4]
        (1,1)--(4,1)--(4,4)--(1,4)--cycle;
        \filldraw[draw = red, fill = red, fill opacity = .4]
        (0,2)--(5,2)--(5,3)--(0,3)--cycle;
        \filldraw[draw = red, fill = red, fill opacity = .4]
        (2,0)--(2,5)--(3,5)--(3,0)--cycle;
        \node at (2.5,-1.5) {an ``Aztec diamond'';};
        \node at (2.5,-2.4) {genus drops by two};         
        \end{scope}
        \end{tikzpicture} \]
            
\end{example}

\begin{example}
A more complicated example of a ruling is shown in Figure~\ref{fig:example of ruling}. See more details in Example~\ref{ex:2c}.

\begin{figure}[htbp]
    \centering
    \begin{tikzpicture}
    \begin{scope}[shift = {(-1, 1)}, scale = .8] 
        \draw[line width = 1pt] (0, 0) -- (4, 0) -- (1, 2) -- cycle; 
        \fill (0,0) circle(2pt);
        \fill (1,0) circle(2pt);
        \fill (2,0) circle(2pt);
        \fill (3,0) circle(2pt);
        \fill (4,0) circle(2pt);
        \fill (0,1) circle(2pt);
        \fill (3,1) circle(2pt);
        \fill (1,1) circle(2pt);
        \fill (2,1) circle(2pt);
        \fill (4,1) circle(2pt);
        \fill (0,2) circle(2pt);
        \fill (1,2) circle(2pt);
        \fill (2,2) circle(2pt);
        \fill (4,2) circle(2pt);
        \fill (3,2) circle(2pt);
        
    \end{scope}
\end{tikzpicture}
\qquad \qquad
\begin{tikzpicture}
    \begin{scope}[shift = {(0, 0)}, scale = .7]
        \draw[line width = 1pt, color = orange] (0,0) rectangle(6, 6);

        \draw (0, 0) -- (6, 0);
        \draw (0, 6) -- (6, 6);
        \draw (0,1.5) -- (6, 1.5);
        \draw (0, 3) -- (6, 3);
        \draw (0, 4.5) -- (6, 4.5); 
        \draw (0, 0) -- (3, 6);
        \draw (3, 0) -- (6, 6);
        \draw (0, 3) -- (4.5, 0); 
        \draw (4.5, 6) -- (6, 5); 
        \draw (0, 5) -- (6, 1); 
        \draw (0, 1) -- (1.5, 0); 
        \draw (1.5, 6) -- (6, 3); 
        \fill[pattern={Lines[angle=90,distance=1mm,line width=0.3pt]}, pattern color = red] (0, 0) -- (4.5/4, 4.5/2) -- (4.5, 0) -- cycle;
        \filldraw[fill = red!70!black, opacity=.4] (0, 1.5) -- ( 6, 1.5) -- (6, 3) -- (2.625, 5.25) --(4.5/4, 4.5/2) -- (0, 3) -- cycle;

        \filldraw[ fill = red, opacity=.4] (0, 4.5) -- (.75, 4.5) --(0, 5) -- cycle;
        \filldraw[fill = red, opacity=.4] (5.25, 4.5) -- (6, 4.5) -- (6, 5) --(5.625, 5.25); 

        \draw[line width = 2pt, red!70!black]  (0, 1.5) -- ( 6, 1.5) ;
        \draw[line width = 2pt, red!70!black] (6, 3) -- (2.625, 5.25) --(4.5/4, 4.5/2) -- (0, 3) -- cycle;
        
        \draw[line width = 2pt, red] (0, 0) -- (4.5/4, 4.5/2) -- (4.5, 0) -- cycle;

        \draw[line width = 2pt, red] (0, 4.5) -- (.75, 4.5) --(0, 5);
        \draw[line width = 2pt, red] (5.25, 4.5) -- (6, 4.5) -- (5.25, 4.5) --(5.625, 5.25) -- (6, 5);

        \fill[pattern={Lines[angle=90,distance=1mm,line width=0.3pt]}, pattern color = blue] (.75, 4.5) -- (4.125, 2.25) -- (5.25, 4.5) -- cycle;
    
        \filldraw[fill = Blue, opacity=.5] (1.5, 0) -- (3, 0) -- (4.125, 2.25) -- (3,  3) -- (0, 3) -- (0, 1) --  cycle;
        \filldraw[fill = Blue, opacity=.5] (6, 1) -- (3, 3) -- (6, 3) -- cycle;
        
        \filldraw[fill = Blue, opacity=.5] (1.5, 6) -- (2.625, 5.25) -- (3, 6) -- cycle; 

        \filldraw[ fill = blue, opacity=.4] (4.5, 6) -- (6, 6) -- (5.625, 5.25); 

        \draw[line width = 2pt, Blue] (1.5, 6) -- (2.625, 5.25) --(3, 6);
        \draw[line width = 2pt, Blue] (0, 1) -- (1.5, 0); 
        \draw[line width = 2pt, Blue] (3, 0) -- (4.125, 2.25) -- (6, 1);
        \draw[line width = 2pt, Blue] (0, 3) -- (6,3); 
        \draw[blue, line width = 2 pt] (4.5, 6) -- (6, 6) -- (5.625, 5.25) -- cycle; 
        \draw[ blue, line width = 2pt] (.75, 4.5) -- (4.125, 2.25) -- (5.25, 4.5) -- cycle;

\end{scope}
\end{tikzpicture}
\qquad \qquad
\begin{tikzpicture}
    \begin{scope}[shift = {(10, 0)}, scale = .7]
    \draw[line width = 1pt, color = orange] (0,0) rectangle(6, 6);

    \draw[blue, line width = 1pt] (0, 2.7) -- (6, 2.7); 
    \coordinate (B1) at (3.7, 3.7); 
    \coordinate (B2) at (5.4, 5.7);  

    \draw[red, line width = 1pt] (0, 1.8) -- (6, 1.8);
    \coordinate (R1) at (1.5, 1);
   
    \coordinate (R2) at (5.8, 4.8);

    \draw[line width = 1pt] (1.5, 1) -- (1.5, 1.8);
    \draw[line width = 1pt] (0, 0) -- (R1); 
    \draw[line width = 1pt] (R2) -- (B2); 
    \draw[line width = 1pt] (R1) -- (4.5, 0); 
    \draw[line width = 1pt] (R2) -- (B1); 
    \draw[line width = 1pt] (R2) -- (6, 4.7);
    \draw[line width = 1pt] (0, 4.7) -- (B1); 
    
    \draw[line width = 1pt] (B2) -- (6, 6); 
    \draw[line width = 1pt] (B1) -- (3.7, 2.7);
    \draw[line width = 1pt] (B2) -- (4.5, 6); 

    \draw[line width = 1pt] (2.3, 2.7) -- (2.3, 1.9); 
    \draw[line width = 1pt] (2.3, 1.7) -- (2.3, .85);
    \draw[line width = 1pt] (2.3, .65) -- (2.3, 0); 

    \draw[line width = 1pt] (2.3, 6) .. controls (2.3, 5) and (3, 5) .. (3, 4); 
    \draw[line width = 1pt] (3, 3.7) -- (3, 2.8); 
    \draw[line width = 1pt] (3, 2.6) -- (3, 1.8); 

     \fill[red]  (R1) circle(2pt); 
     \fill[red]  (R2) circle(2pt); 
     \fill[blue] (B2) circle(2pt); 
     \fill[blue] (B1) circle(2pt);
     
\end{scope}

    \end{tikzpicture}

    \caption{An example of a ruling (middle) and the skeleton of the corresponding ruling surface (right). The Newton polygon is drawn on the left.}
    \label{fig:example of ruling}
\end{figure}
\end{example}

\subsection{From Sheaves to Rulings}

We explain how to get a ruling from a sheaf \(\cF \in \Sh(\T, \Lambda)^\eta_0\). 

First recall a standard fact in microlocal sheaf theory:
\begin{proposition}[{\cite[Corollary 4.4.3]{Guillermou23}}]\label{prop:ruling in 1d}
A constructible sheaf \(\cE \in \Sh^b(\bR) \) admits a unique decomposition:
\begin{equation}
    \cE = \bigoplus_{\alpha} \bC_{I_\alpha}[d_\alpha] 
\end{equation}
where \( \{I_\alpha \subset \bR\}_{\alpha\in A}\) is a locally finite family of intervals (open, closed, or half-open), and \(d_\alpha\) are finite integers.

Moreover, let \(e:\bR \to S^1\) be the covering map. A constructible sheaf \(\cE \in \Sh^b     (S^1)\) admits a unique decomposition 
\begin{equation}
\cE \cong \bigoplus_{\alpha\in A} 
e_* \bC_{I_\alpha}[d_\alpha] \oplus \cL
\end{equation}
where \(\cL \in \Loc(S^1) \). Since \(\Loc (S^1) \simeq k[x^{\pm 1}]\text{-}mod \) is of global dimension \(1\), \(\cL\) also splits:
\[
\cL \cong  \bigoplus_{\beta} \cL_\beta [ d_\beta ] 
\]
where each \(\cL_\beta\) is an abelian local system on \(S^1\).
\end{proposition}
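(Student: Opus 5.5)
The plan is to treat the real line first and then obtain the circle case by reducing to it along a cut.

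\emph{The case of $\bR$.} A constructible sheaf $\cE\in\Sh^b(\bR)$ is determined by a locally finite set of points $x_i$ together with the following data: the stalks $V_i$ at the $x_i$, the stalks $W_i$ on the open intervals between consecutive points, and the restriction maps $V_i\to W_{i-1}$ and $V_i\to W_i$. This is a representation, in the derived category, of an $A$-type zigzag quiver, infinite in general. Representations of type-$A$ quivers over a field are hereditary, and each one splits (Gabriel, or the barcode/interval decomposition) into indecomposables supported on connected segments of the quiver. A connected segment corresponds exactly to an interval $I\subset\bR$, which is open, closed or half-open according to whether its endpoints are edge-vertices or point-vertices. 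Since the category is hereditary, every complex is formal, so it splits as a sum of shifted cohomology sheaves; this is where the shifts $d_\alpha$ come from. Local finiteness reduces the infinite case to the finite one: decompose over an exhausting family of compact subquivers, or use Crawley-Boevey's theorem on pointwise finite-dimensional interval modules. Uniqueness follows from Krull--Schmidt, since each $\bC_I$ has local endomorphism ring $\bC$.

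\emph{The case of $S^1$.} On the circle the quiver becomes a cyclic zigzag of type $\tilde A_n$. Its indecomposables are of two kinds: strings, which are pushforwards $e_*\bC_I$ of intervals wrapped around the circle, and bands, which are local systems given by Jordan blocks of the monodromy. The plan is to reduce to the line. First cut the circle at a generic point and pull $\cE$ back to $\bR$ through $e$. This gives a $\bZ$-equivariant sheaf, which by the first part decomposes into intervals, and these intervals are permuted by the deck transformation. Intervals of finite length that are not fixed by the deck action assemble into the summands $e_*\bC_{I_\alpha}$. The remaining part is locally constant, so it lies in $\Loc(S^1)\simeq k[x^{\pm1}]$-mod. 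That ring has global dimension $1$, so this part is formal and splits into shifted modules $\cL_\beta[d_\beta]$. Here "abelian local system" is read as indecomposable, meaning a Jordan block, or rank one if the intended meaning is rank-one local systems.

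\emph{The main obstacle.} The hardest step is making the equivariant decomposition canonical. The splitting on $\bR$ is unique only up to isomorphism, not as a choice of subobjects, so it is not clear that it can be chosen compatibly with the deck action. I would avoid this problem by working directly with the finite cyclic quiver on $S^1$, which is legitimate because there are finitely many strata, and quoting the classification of string and band modules for $\tilde A_n$ with zigzag orientation (Butler--Ringel, or Gelfand--Ponomarev). Strings are then identified with $e_*\bC_I$ and bands with indecomposable local systems. The circle case of the statement is exactly this classification together with formality, so it is the cleanest route. The paper cites \cite{Guillermou23} for the result, and I would use that as the fallback.
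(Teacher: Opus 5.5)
The paper gives no argument of its own for this proposition; it quotes \cite[Corollary 4.4.3]{Guillermou23}. Your route is a genuinely different argument, and it is sound in outline. You pass to representations of the exit-path quiver: a zigzag of type $A$ on $\bR$, an alternating cycle $\tilde A_n$ on $S^1$. Formality then comes from heredity, and you finish with Gabriel's interval classification and the string/band classification.

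What your route buys is a transparent circle case. Every vertex of the alternating cycle has exactly two arrows, so a reduced walk can never turn back. Strings are therefore exactly lifts of bounded intervals to the universal cover, and string modules are the push-downs $e_!\bC_I=e_*\bC_I$. Band modules are precisely the indecomposable local systems, with monodromy $J_m(\lambda)$, $\lambda\neq 0$. You were right to drop the $\bZ$-equivariant pullback. Besides being non-canonical, it replaces $\cL$ by constant summands $\bC_{\bR}$ upstairs, which forget the monodromy.

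Make two things explicit. First, formality is needed in the sheaf category, not only in the category of representations. So state the equivalence between sheaves constructible for a simplicial (or regular cell) stratification and representations of its face poset, as in \cite[Section 8.1]{KS90}; on $S^1$, add marked points if needed. Second, ``abelian local system'' means a local system concentrated in one degree. The last sentence of the statement is then just formality over the hereditary ring $k[x^{\pm1}]$. The rank-one reading you mention would be false, since a unipotent $2\times 2$ Jordan block is indecomposable.

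The step that is not justified by either of your suggestions is $\bR$ with infinitely many strata. This case is used later, for instance in Proposition~\ref{prop:indecomposable}.
\begin{itemize}
\item Interval decompositions over an exhausting sequence of finite segments are not canonical. This is the same obstruction you identified on the circle, so they need not be compatible, and producing a compatible system needs a real compactness argument.
\item Crawley-Boevey's theorem is about pointwise finite-dimensional modules over a totally ordered index set, and a zigzag poset is not totally ordered.
\end{itemize}
The right input is Botnan's theorem that pointwise finite-dimensional zigzag modules indexed by $\bZ$ decompose into interval modules. Alternatively, fall back on \cite{Guillermou23}, whose statement covers locally finite families.

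For $\bZ$-periodic sheaves $e^{-1}\cE$ your circle classification already suffices, since $e^{-1}e_*\bC_I=\bigoplus_{m\in\bZ}\bC_{I+m}$ and $e^{-1}$ of a local system is constant. The statement, however, is for arbitrary locally finite stratifications. Finally, uniqueness with infinitely many summands needs the Krull--Schmidt--Azumaya theorem, not plain Krull--Schmidt. Apply it to each cohomology sheaf $H^j(\cE)=\bigoplus_{d_\alpha=-j}\bC_{I_\alpha}$.
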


\begin{lemma}[{Non-characteristic deformation \cite[Proposition 2.11]{Zhou19} \cite{KS90}}]
\label{lem:non_char_def}
Let \(I\) be an open interval in \(\bR\), and \(\cF, \cG\) be sheaves on \( M\times I\) with compact support. Take \(\cF_t = \cF|_{ M \times \{t \} } \) and \(\cG_t = \cG|_{ M \times \{t\}}\). Assume that
\begin{enumerate}
\item \(\SS^\infty(\cF)\), \(\SS^\infty(\cG)\), and \( T_M^*M \times T^*I\) are pairwise disjoint;
\item
\( \SS^\infty ( \cF_t) \cap \SS^\infty(\cG_t) = \emptyset\), for each  \(t \in I\); 

\end{enumerate}
Then there is a natural isomorphism
\[
 \underline \Hom( \cF_t, \cG_t) \cong \underline \Hom (\cF_s, \cG_s), \quad \text{for \(s,t \in I\). }
\]

\end{lemma}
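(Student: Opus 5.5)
The plan is to realize all the complexes $\underline\Hom(\cF_t,\cG_t)$ as the stalks of one sheaf on $I$, and then show that this sheaf is locally constant by a microsupport estimate. Throughout, write $\pi: M\times I\to I$ for the projection and $i_t: M\times\{t\}\hookrightarrow M\times I$ for the inclusion of a fiber.

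First I form the internal hom $\cH := \mathcal{H}om(\cF,\cG)$ on $M\times I$ and set $\cK := \pi_*\cH$ on $I$. Since $\cF$ and $\cG$ have compact support, $\cH$ has compact support. Hence $\pi$ is proper on its support, $\pi_*=\pi_!$, and proper base change gives
\[
\cK_t \cong R\Gamma(M, i_t^*\cH).
\]
Next I identify $i_t^*\cH$ with $\mathcal{H}om(\cF_t,\cG_t)$. Condition (1) says that $\SS(\cF)$ and $\SS(\cG)$ avoid the conormal directions $T^*_MM\times T^*I$ to the fibers, away from the zero section. So $i_t$ is non-characteristic for both sheaves, and for $\cH$ once we know the estimate of the next step. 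By the non-characteristic restriction results of \cite{KS90} (Prop.~5.4.13 and its corollaries), $i_t^!\cG\cong i_t^*\cG[-1]$ and $i_t^*\mathcal{H}om(\cF,\cG)\cong\mathcal{H}om(i_t^*\cF,i_t^*\cG)$. Taking global sections then gives $\cK_t\cong\underline\Hom(\cF_t,\cG_t)$.

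The main step is to show that $\SS(\cK)$ is contained in the zero section of $T^*I$. Then $\cK$ is locally constant on the interval $I$, hence constant, which gives the natural isomorphisms $\cK_t\cong\cK_s$. I would proceed as follows.
\begin{enumerate}
\item For proper pushforward, \cite[Prop.~5.4.4]{KS90} gives that $\SS(\pi_*\cH)$ consists of covectors $\tau\,dt$ such that $(x,t;0,\tau)\in\SS(\cH)$ for some $x$.
\item For internal hom, \cite[Cor.~6.4.5]{KS90} gives $\SS(\cH)\subset -\SS(\cF)\,\widehat{+}\,\SS(\cG)$. This estimate holds when $\SS(\cF)\cap\SS(\cG)$ lies in the zero section, and I expect to extract that from (1) together with (2).
\item So a point $(x,t;0,\tau)$ with $\tau\neq 0$ would come from covectors $(\xi,\sigma)\in\SS(\cF)$ and $(\xi,\sigma')\in\SS(\cG)$ with the same $M$-component $\xi$. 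By (1), $\xi\neq0$. Non-characteristic restriction identifies the $\xi$-directions at $(x,t)$ with points of $\SS(\cF_t)$ and $\SS(\cG_t)$, so $[\xi]$ lies in both. This contradicts (2).
\end{enumerate}

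I expect the hard part to be exactly this microsupport bookkeeping. One has to make sure that the $\widehat{+}$ limit terms in the internal-hom estimate do not produce extra characteristic covectors. One also has to handle the sign convention that leaves $\xi$ unchanged after restriction. My first approach is to reduce both issues to the transversality statement in (1): it keeps all the relevant covectors in a compact, closed conic set where the limits can be computed directly.
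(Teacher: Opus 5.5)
The paper gives no proof of this lemma; it only cites \cite{Zhou19} and \cite{KS90}. Your architecture is the standard one: push $\mathcal{H}om(\cF,\cG)$ forward to $I$, then bound its microsupport using the proper-pushforward and internal-hom estimates. The trouble is the third item of your main step.

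Write $\rho_t(x,t;\xi,\tau)=(x;\xi)$. Non-characteristic restriction \cite[Prop.~5.4.13]{KS90} only gives the inclusion $\SS(\cF_t)\subset\rho_t\bigl(\SS(\cF)|_{M\times\{t\}}\bigr)$. Your contradiction with (2) needs the reverse inclusion: that $(x,t;\xi,\tau)\in\SS(\cF)$ with $\xi\neq0$ forces $(x;\xi)\in\SS(\cF_t)$. That is false. Take $G=\bC_{\{0\le x<|t|\}}$ on $\bR_x\times\bR_t$.
\begin{itemize}
\item The exact sequence $0\to G\to\bC_{\{x\ge0\}}\to\bC_{\{x\ge|t|\}}\to0$ shows that near the origin $\SS(G)\subset\{\xi\ge|\tau|\}$. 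So every slice $\{t=\mathrm{const}\}$ is non-characteristic for $G$.
\item $G$ is not locally constant at the origin, so $\SS(G)$ contains some $(0,0;1,\tau_1)$ with $|\tau_1|\le1$.
\item Yet $G|_{t=0}=0$, so $\SS(G|_{t=0})=\emptyset$.
\end{itemize}
Thus a non-characteristic restriction can hide singularities.

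This is not just a missing citation. With (2) exactly as stated, the estimate $\SS(\cH)\subset\SS(\cF)^a\,\widehat{+}\,\SS(\cG)$ cannot exclude characteristic covectors of $\cH$. Let $\cF=G$ and $\cG=\bC_{\{2t\le x\le5\}}$ on $\bR\times(-1,1)$; supports are proper over $I$, which is how you correctly use ``compact support''.
\begin{itemize}
\item Condition (1) holds. The fronts meet only at the origin. There $\SS(\cG)$ is spanned by $(1,-2)$, which is not in $\{\xi\ge|\tau|\}$.
\item Condition (2) holds. For $t\neq0$, the singular points of $\cF_t=\bC_{[0,|t|)}$ are $0$ and $|t|$, and those of $\cG_t=\bC_{[2t,5]}$ are $2t$ and $5$; these are all distinct. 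At $t=0$, $\cF_0=0$.
\item The estimate fails to help. We have $-(1,\tau_1)+(1,-2)=(0,-2-\tau_1)\neq0$, so your bound allows a characteristic covector of $\cH$ over the origin. You therefore cannot conclude that $\pi_*\cH$ is locally constant.
\end{itemize}
Here both sides of the lemma vanish for every $t$, so the statement survives, but not for the reason you give.

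To close the gap you have three options:
\begin{itemize}
\item Assume (2) in the stronger form $\rho_t(\SS^\infty(\cF))\cap\rho_t(\SS^\infty(\cG))=\emptyset$ for all $t$. Your step then goes through verbatim.
\item Prove $\SS(\cF_t)=\rho_t(\SS(\cF))$ under extra structure. For example, smooth Legendrian microsupport with nonvanishing microstalks that the restriction transports and that cannot cancel, as one expects for the microlocal rank-one sheaves to which the paper applies the lemma.
\item Find a different argument that handles hidden singularities.
\end{itemize}

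Two minor points.
\begin{itemize}
\item $\SS^\infty(\cF)\cap\SS^\infty(\cG)=\emptyset$ is literally part of (1). So the ordinary-sum internal-hom estimate already applies, and no $\widehat{+}$ limit terms arise.
\item Step~2 does not need non-characteristicity of $\cH$. Write $j_t:\{t\}\hookrightarrow I$. Then $i_t^!\mathcal{H}om(\cF,\cG)\cong\mathcal{H}om(\cF_t,i_t^!\cG)$ and $j_t^!\pi_*\cong R\Gamma(M,i_t^!(-))$ hold in general.
\end{itemize}
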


After fixing the direction \(v \in \bZ^2\), we have an alternative characterization of the subcategory \(\Sh(\T, \Lambda)_0^\eta \subset \Sh^c(\T)\).

\begin{lemma}\label{lem:characterization of Sh_0 via restriction on S1}
Let \(\cF \in \Sh^c(\T) \) be a constructible sheaf satisfying conditions (\ref{lemfirst}) and (\ref{lemsecond}) in Lemma~\ref{lem:characterization of Sh_0 by Hom with Loc}. Then \(\cF \in \Sh(\T, \Lambda)_0^\eta\) if and only if 
\begin{enumerate}
\setcounter{enumi}{2}
\item[(\,$3'$)] \label{lemthreeprime}
\renewcommand{\theenumi}{\arabic{enumi}'}
    for every \(S^1\)-leaf, the restriction of \(\cF\) has no  local system direct summand.
\end{enumerate}
In fact, condition (\ref{lemthird}) is equivalent to 
\begin{enumerate}
    \item[(\,$3''$)] There exists an \(S^1\)-leaf, on which the restriction of \(\cF\) has no local system direct summand. 
\end{enumerate}
\end{lemma}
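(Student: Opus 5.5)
The plan is to compute the Hom spaces in condition (\ref{lemthird}) of Lemma~\ref{lem:characterization of Sh_0 by Hom with Loc} by a non-characteristic deformation that shrinks the test local system onto a single vertical circle. Fix the direction $v$ and write $\T = S^1_h \times S^1_v$, where the vertical $S^1$-leaves $\{x\}\times S^1_v$ are transverse to $\underline\Lambda$.

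\textbf{Step 1: reduce to rank-one local systems on a leaf.} A rank-one local system $\cL$ on $\T$ is an external product $\cL_h\boxtimes\cL_v$. We would first show that
\[
\underline\Hom(\cL,\cF)\ \simeq\ \underline\Hom\bigl(\cL_v,\ \cF|_{\{x\}\times S^1_v}\bigr)\otimes(\text{a 1-dimensional term from } S^1_h),
\]
at least after passing to the relevant cohomological degrees. To do this, replace $\cL$ by the family $\cL_h|_{(x-\epsilon,x+\epsilon)}\boxtimes\cL_v$, extended by zero, and move $x$ along $S^1_h$ using Lemma~\ref{lem:non_char_def}. The singular support of the moving sheaf consists of horizontal covectors over the two vertical circles $\{x\pm\epsilon\}\times S^1_v$. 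These never meet $\Lambda$, because $v$ is nowhere tangent to $\underline\Lambda$, so no codirection of $\Lambda$ is horizontal. This gives two things at once: the restriction $\cF|_{\{x\}\times S^1_v}$ is independent of $x$ up to Hom-equivalence against local systems, and the global Homs are controlled by the restriction to a single leaf. That already yields $(3')\Leftrightarrow(3'')$, since both are independent of the leaf.

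\textbf{Step 2: analyse a single leaf.} Apply Proposition~\ref{prop:ruling in 1d} to $\cF|_{S^1}$, which splits as a sum of pushed-forward interval sheaves $e_*\bC_{I_\alpha}[d_\alpha]$ plus a local system $\bigoplus\cL_\beta[d_\beta]$. For a rank-one $\cL_v$, the interval summands contribute to $\underline\Hom(\cL_v,-)$ and $\underline\Hom(-,\cL_v)$ only through compactly supported or open-interval cohomology, which is concentrated in degrees fixed by the microlocal data. Condition (\ref{lemfirst}), that $\mu(\cF)=\eta$ is rank one in degree $-1$, pins down the shifts $d_\alpha$, so these contributions lie in the allowed range of \eqref{eq:conditions on HiHom}. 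A local-system summand $\cL_\beta[d_\beta]$, by contrast, gives a nonzero $\Hom$ against $\cL_v=\cL_\beta$ (or its dual) in degrees $-d_\beta$ and $-d_\beta+1$, for both the covariant and contravariant Homs. The two vanishing ranges in \eqref{eq:conditions on HiHom} together cover all degrees: after the $+1$ shift coming from $S^1_h$, one requires vanishing for $i\le -2$ and the other for $i\le 1$ with the dual degree. Hence any local-system summand violates (\ref{lemthird}), and conversely its absence implies (\ref{lemthird}). Combined with Step 1 this gives (\ref{lemthird}) $\Leftrightarrow(3'')\Leftrightarrow(3')$, and the lemma follows from Lemma~\ref{lem:characterization of Sh_0 by Hom with Loc}.

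\textbf{Main obstacle.} The delicate part is Step 1. Lemma~\ref{lem:non_char_def} needs compact support and the disjointness hypotheses, so one has to lift to a finite cover or cut out a strip and check that the boundary terms cancel. One also needs the K\"unneth-type identification, which expresses the Hom with $\cL_h\boxtimes\cL_v$ as the $S^1_h$-cohomology of a locally constant family of leaf Homs twisted by $\cL_h$. The first thing I would try is to set up the pushforward $\pi_*\underline{\mathcal{H}om}(\cL,\cF)$ along the projection $\pi:\T\to S^1_h$ and show it is locally constant by the non-characteristic argument. The global Hom is then the cohomology of a local system on $S^1_h$, and the degree bookkeeping, which needs care, reduces to the single-leaf computation.
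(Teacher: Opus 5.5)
Your route is genuinely different from the paper's. The paper covers $\T$ by the strips $\T_j$ and writes $\cL$ and the Hom complexes as limits over that cover; you instead push $\mathcal{H}om(\cL,\cF)$ forward along the proper projection $\pi\colon\T\to S^1_h$. That pushforward is locally constant because $\SS(\cF)$ contains no nonzero covector $\xi\,dx$. This packaging is good, and it handles $(3')\Leftrightarrow(3'')$ and $(3')\Rightarrow(3)$ cleanly. However, the K\"unneth-type identity in Step~1 is false, and your argument for $(3)\Rightarrow(3')$ in Step~2 relies on it.

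Set $\cK:=\pi_*\mathcal{H}om(p_v^{-1}\cL_v,\cF)$. Proper base change and the projection formula give $\underline{\Hom}(\cL_h\boxtimes\cL_v,\cF)\simeq R\Gamma(S^1_h,\cL_h^{\vee}\otimes\cK)$. This is the cohomology of a twisted local system on a circle, not the leaf Hom tensored with a line. It vanishes outright when the monodromy has no eigenvalue $1$. For example, take a summand $\cN\boxtimes\cB$ of $\cF$; this is compatible with (1) and (2) once you also add $\cN\boxtimes\cB[1]$. It contributes nothing to $\underline{\Hom}(\cL_h\boxtimes\cB,\cF)$ unless $\cL_h\cong\cN$. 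So a nonzero leaf-level Hom coming from a local-system summand $\cL_\beta[d_\beta]$ does not by itself give a nonzero global Hom, and ``any local-system summand violates (3)'' is not yet proved.

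The missing step is to exploit the quantifier over all $\cL$ in (3).
First, take $\cL_v$ of rank one whose monodromy is an eigenvalue of that of $\cL_\beta$; writing ``$\cL_v=\cL_\beta$'' only makes sense in rank one. Then the relevant cohomology local system is nonzero: this is $\mathcal{H}^{-d_\beta}\cK$, or $\mathcal{H}^{d_\beta}$ of the contravariant analogue $\pi_*\mathcal{H}om(\cF,p_v^{-1}\cL_v)$.
Next, choose $\cL_h$ so that the twist of this local system by $\cL_h^{\vee}$ (resp.\ $\cL_h$) has a nonzero monodromy invariant.
The exact sequence $0\to H^1(S^1_h,\mathcal{H}^{i-1})\to H^i\to H^0(S^1_h,\mathcal{H}^{i})\to 0$ then yields a nonzero global class with no shift from $S^1_h$. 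It sits in degree $-d_\beta$ for the covariant Hom and in degree $d_\beta$ for the contravariant one (not $-d_\beta$, as you wrote).

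The degree bookkeeping has no slack. The first condition in \eqref{eq:conditions on HiHom} excludes exactly $d_\beta\ge 2$, and the second excludes exactly $d_\beta\le 1$. Equivalently, with $k=-d_\beta$, these are the paper's constraints $k\ge -1$ and $k\le -2$. A genuine ``$+1$ shift'' would therefore leave $d_\beta\in\{1,2\}$ uncovered. The eigenvalue matching above is exactly what the paper achieves by propagating the local-system summands across the strips to a single $\cA\in\Loc(S^1)$ and gluing a global $\cN\boxtimes\cB$ adapted to it.
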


\begin{proof}
    Let us first show the equivalence of ($3''$) and ($3'$). If \( \cF_{\{a\} \times S^1}\) has a local system summand \(\cL\), then let \(\cL_t\) be the pushforward of \(\cL\) under the embedding \( \iota_t: \{t\} \times S^1 \hookrightarrow \T\).  Since by our assumption,  \(\pi(\Lambda)\) is nowhere tangent to the vertical direction \(v\), \(\cL_t\) and \(\cF\) satisfy the assumption of Lemma~\ref{lem:non_char_def}. Therefore  \(\underline \Hom (\iota_t^{-1} \cF, \cL ) \simeq \underline \Hom( \cF, \cL_t) \simeq \underline{\Hom} (\cF|_{\{a\}\times S^1}, \cL) \)  for any \(t\), which implies \(\cL\) is also a summand of \(\cF|_{\{t\} \times S^1}\).


     {Assume \(\cF \in \Sh(\T, \Lambda)^\eta_0 \).} 
    We fix an identification  \(\T = S^1 \times S^1\), where the vertical direction is \(v\), and then subdivide the torus into small ``strips'', as illustrated in Figure~\ref{fig:cut the torus into strips}. More precisely, let us choose a sequence of points \(a_0, a_1,\ldots,a_{n-1} \in S^1=\mathbb{R}/\mathbb{Z}\) together with a sufficiently small number \(\epsilon\in \mathbb{R}_{>0}\), such that each strip \(\T_j \coloneqq (a_j - \epsilon, a_{j+1} + \epsilon) \times S^1\) contains exactly one crossing of \(\Lambda\), and their intersections \(\T^\epsilon_j \coloneqq (a_j - \epsilon, a_j + \epsilon)\times S^1\) contain no crossings. 
    Here the subscripts are understood modulo $n$. 
    
    









\begin{figure}[htbp]
    \centering
    \begin{tikzpicture}[xscale = .8, yscale = .7,  decoration={brace,amplitude=8pt}]
    \draw[line width=1.2pt, decorate]
    (-.5, 1.7) -- (3,1.7)
    node[midway,above=6pt] {${\bf T}_{j-1}$};
    \draw[line width=1.2pt, decorate]
    (2, 1.7) -- (5.5,1.7)
    node[midway,above=6pt] {${\bf T}_{j}$};
  
    \draw (-1, 1.5) -- (6, 1.5);
    \draw (6, -3) -- (-1, -3); 
    \draw[dashed, line width = 1pt, Green] (-.5, 1.5) -- (-.5, -3);
    \draw[dashed, line width = 1pt, Green] (3, 1.5) -- (3, -3);
    \draw[dashed, line width = 1pt, Orange] (2, 1.5) -- (2, -3);
    \draw[dashed, line width = 1pt, Orange] (5.5, 1.5) -- (5.5, -3);

    \draw[line width=1.2pt, decorate, decoration={brace,mirror,amplitude=8pt}] 
    (2,-3.2) -- (3,-3.2)
    node[midway,below=7pt] {${\bf T}_{j}^\varepsilon$};
    
    \draw[blue, very thick] 
    (-.5, 0) .. controls (1.5, 0) and (0, -1.5) .. (2, -1.5) -- (3, -1.5) .. controls (4.5, -1.5) and (4, 0) .. (5.5, 0);

    \draw[blue, very thick]  (-.5, -1.5) .. controls (1.5, -1.5) and (0, 0) .. (2, 0) -- (3, 0) .. controls (4.5, 0) and (4, -1.5) .. (5.5, -1.5);
    
    \end{tikzpicture}
    \caption{Cutting the torus into small strips.}
    \label{fig:cut the torus into strips}
\end{figure}

    Suppose (\,$3''$) is not satisfied. Then we have \(\cF|_{\T_j} \simeq \cL_j \oplus \cdots \) and \(\cF|_{\T_j^\epsilon} \simeq \cL_j^\epsilon \oplus \cdots\) for each \(j\). 
    Again by the non-characteristic deformation lemma, we have \( \cL_j|_{\T_j^\epsilon} \simeq \cL_j^\epsilon\) and  \(\cL_j|_{\T^\epsilon_{j+1}} \simeq \cL_{j+1}^\epsilon\). 
    In particular, all the \(\cL_j\) can be regarded as the same local system \( \cA \in \Loc( S^1) \).
    In other words, we have \(\cL_j \simeq \dsk_{(a_j - \epsilon, a_{j+1} + \epsilon)} \boxtimes \cA \) and \( \cL_j^\epsilon \simeq \dsk_{ (a_j-\epsilon, a_j +\epsilon)} \boxtimes \cA\). 

     Assume  \(  \cA = \cA_0 [-k] \oplus \cdots \), where \(\cA_0\) is an underived ordinary local system.
    Take a rank-\(1\) local system \(\cB[-k]\) with a non-zero map \(\cA \rightarrow \cB[-k]\). 
    Globally we can glue a local system on \(\T\):
    \begin{align*}
        \cL
        &\coloneqq \lim \left( 
        \bigoplus \dsk_{[a_j-\epsilon, a_{j+1} + \epsilon ]} \boxtimes \cB \rightrightarrows
        \bigoplus \dsk_{[a_j-\epsilon, a_{j} + \epsilon ]} \boxtimes \cB \right) 
    \end{align*}
    where \(\iota_j\) and \(\iota_j^\epsilon\) are the embeddings of \(\T_j\) and \(\T_j^\epsilon\) in \(\T\).  Note that $\cL$ is a rank-$1$ local system because it has constant stalk everywhere, hence it has the form $\cL \simeq \cN \boxtimes \cB$. 
    

    Since
    \begin{align*}
    \underline{\Hom}\left(\cF, \, \cN \boxtimes \cB[-k]\right) \simeq 
    \lim
    &\left( 
    \bigoplus \underline{\Hom}
    \left({\iota_j}_* \iota_j^{-1} \cF, \,  \dsk_{[a_j-\epsilon, a_{j+1} + \epsilon ]} \boxtimes \cB[-k] \right)
    \right.\\ 
    &\rightrightarrows \bigoplus 
    \left.
    \underline{\Hom}\left({\iota_j^\epsilon}_* (\iota_j^\epsilon)^{-1} \cF, \,  \dsk_{[a_j - \epsilon, a_j+\epsilon]} \boxtimes \cB[-k] \right)  \right)
    \end{align*}
    the non-trivial map \(\cA \rightarrow \cB[-k]\) yields a non-trivial map in {\(H^0 \underline{\Hom} (\cF, \,\cN\boxtimes \cB[-k])\)}. Thus by assumption \eqref{eq:conditions on HiHom}, we must have \(k\le-2\). 

    On the other hand, we can take a rank-\(1\) local system \(\cB'[-k] \in \Loc(S^1)\) concentrated in degree \(k\), with a nontrivial element in \( H^0 \underline{\Hom} (\cB'[-k], \cA)\).
    Then consider the local system on \(\T\):
    \begin{align*}
        \cN' \boxtimes \cB'
        &\coloneqq \colim \left( \bigoplus \dsk_{(a_j-\epsilon, a_{j+1} + \epsilon)} \boxtimes \cB'
        \leftleftarrows \bigoplus \dsk_{(a_j-\epsilon, a_{j} + \epsilon )} \boxtimes \cB'  \right).
    \end{align*}
     Similarly, globally there is a non-trivial map in \(H^0 \underline{\Hom} (\cN' \boxtimes \cB'[-k], \cF) \), which implies that \(k \ge -1\). Hence we get a contradiction. 
    
    \vspace{1em}
    Now assume that \(\cF\) satisfies (\,$3'$).
    Again denote its mirror by \(E\in \Perf_\text{prop}(\cX)\). Let \(\cL\in \Loc(\T)\) be a rank-\(1\) local system in degree \(0\). We have 
    \begin{equation*}
        \cL \simeq \colim \left( \bigoplus {\iota_j}_! \iota_j^{-1}\cL \leftleftarrows \bigoplus {\iota_j^{\epsilon}}_!(\iota_j^{\epsilon})^{-1} \cL \right)
    \end{equation*}
    One way to see this equivalence is that this is a map from the colimit diagram to \(\cL\), inducing an isomorphism on each fiber.     
    Thus 
    \[
        \underline{\Hom}(\cL, \cF) \simeq \lim \left( \bigoplus \underline{\Hom} 
        \left({\iota_j}_! \iota_j^{-1}\cL, \, \cF
        \right) \rightrightarrows 
        \bigoplus \underline{\Hom} 
        \left( 
        {\iota_j^{\epsilon}}_!(\iota_j^{\epsilon})^{-1} \cL, \, \cF
        \right)
        \right).
    \]
    By Proposition~\ref{prop:ruling in 1d} and the microlocal stalk condition, we know that 
    \[\underline{\Hom} \left({\iota_j^\epsilon}_! (\iota_j^\epsilon)^{-1}\cL, \, \cF
        \right) \simeq \underline{ \Hom}\left( (\iota_j^\epsilon)^{-1} \cL, \, (\iota_j^\epsilon)^{-1}\cF \right)
    \]
    is concentrated in degree \(-1\). By the non-characteristic deformation lemma, we know that 
    \[\underline{\Hom} 
    \left({\iota_j}_! \iota_j^{-1}\cL, \, \cF
    \right) \]
    is also concentrated in degree $-1$. Therefore, all
    \( H^i \underline{\Hom} (\cL, \cF ) \) vanish for \(i \le -2\). 

    We also have
    \begin{equation*}
        \cL \simeq \lim \left( \bigoplus {\iota_j}_* \iota_j^{-1}\cL \rightrightarrows \bigoplus {\iota_j^{\epsilon}}_*(\iota_j^{\epsilon})^{-1} \cL \right)
    \end{equation*}
    By a parallel argument, we have \( H^i \underline{\Hom} (\cF, \cL) = 0 \) for \(i \le 1\).   Thus condition (3) in Lemma~\ref{lem:characterization of Sh_0 by Hom with Loc} holds. This completes the proof.

\end{proof}

Take a sheaf \(\cF \in \Sh^1_{\Lambda}(T^2)_0\).
Combining Proposition~\ref{prop:ruling in 1d} and Lemma~\ref{lem:characterization of Sh_0 via restriction on S1} with the microlocal stalk condition, we see that the restriction of \({\cF}\) to a vertical \(S^1\) decomposes as
\[
\bigoplus_{i} e_* \dsk_{U_i}[2] \oplus  \bigoplus_{j} e_* \dsk_{Z_j}[1],
\]
where \(e\) is the universal cover of \(S^1\), and  \(U_i\) and \(Z_j\) are open and closed intervals on $\bR$, respectively. In particular, the stalks of \(\cF\) are concentrated in degree \(-1\) and $-2$. All sheaves on \(\T_i^\epsilon\) decompose in a similar way. We also classify sheaves on~\( \T_i \):

\begin{proposition}\label{prop:indecomposable}
  Let \(e:\widetilde \T_i\to \T_i\) be the universal cover, and let
\(\widetilde{\Lambda}_i=e^{-1}(\Lambda_i)\). 
Denote by \(\Sh^{\le 1}(\T_i, \Lambda_i)\) the subcategory of \(\Sh(\T_i, \Lambda_i)\) consisting of sheaves that have no local system summands, and whose microstalks along \(\Lambda_i\) are either \(\dsk[1]\) or \(0\).  An indecomposable object in \(\Sh^{\le 1} (\T_i, \Lambda_i)\) has one of the following forms:

\begin{enumerate}
    \item The pushforward under \(e\) of a standard or costandard sheaf:
    \[
        e_* \dsk_{\widetilde Z} \, [n]
    \]
    where \(\widetilde Z\subset \widetilde \T_i\) is a closed or open subset, and \(n\) is an integer, such that \( e_* \dsk_{\widetilde Z} [n] \in \Sh^{\le 1}(\T_i, {\Lambda_i})\).

    \item The pushforward under \(e\)
    of one of the following sheaves in \(\Sh^1\left(\widetilde{\T_i}, \widetilde{\Lambda_i} \right ) \) 
    \begin{equation}\label{eq:indecomposable_two}
     \begin{tikzpicture}[line width = 1pt, scale = .6]
            \draw[line width = 1pt] (-2, 1) -- (2, -1);
            \draw[line width = 1pt] (-2, -1) -- (2, 1);
            \draw[line width = 1pt] (-2, 2) -- (2, 2);
            \draw[line width = 1pt] (-2, -2) -- (2, -2);
            \draw (-1, .5) -- (-.9, .7);
            \draw (1, -.5) -- (1.1, -.3);
            \draw (-1, -.5) -- (-1.1, -.3);
            \draw (1, .5) -- (.9, .7); 
            \draw (0, -2) -- (0, -2.25);
            \draw (0, 2) -- (0, 1.75); 

            \fill[red, opacity = .4] (-2, -1) -- (-2, -2) -- (2, -2) -- (2, -1) -- (0, 0); 
            \fill[blue, opacity = .4] (-2, 2) -- (2, 2) -- (2, 1) -- (0, 0) -- (-2, 1);
            \node at (0, 1) { \(k[1]\)};
            \node at (0, -1) {\(k[2]\)};
            \node at (1.5, 0) {\(0\)};
            \node at (-1.5, 0) {\(0\)}; 

            \draw[dashed, Green] (-2, 2.5) -- (-2, -2.5);
            \draw[dashed, Green] (2, 2.5) -- (2, -2.5);
            
        \end{tikzpicture}
        \qquad  
        \begin{tikzpicture}[line width = 1pt, xscale = .6, yscale = -.6]
            \draw[line width = 1pt] (-2, 1) -- (2, -1);
            \draw[line width = 1pt] (-2, -1) -- (2, 1);
            \draw[line width = 1pt] (-2, 2) -- (2, 2);
            \draw[line width = 1pt] (-2, -2) -- (2, -2);
            \draw (-1, .5) -- (-.9, .7);
            \draw (1, -.5) -- (1.1, -.3);
            \draw (-1, -.5) -- (-1.1, -.3);
            \draw (1, .5) -- (.9, .7); 
            \draw (0, -2) -- (0, -2.25);
            \draw (0, 2) -- (0, 1.75); 

            \fill[red, opacity = .4] (-2, -1) -- (-2, -2) -- (2, -2) -- (2, -1) -- (0, 0); 
            \fill[blue, opacity = .4] (-2, 2) -- (2, 2) -- (2, 1) -- (0, 0) -- (-2, 1);
            \node at (0, 1) { \(k[1]\)};
            \node at (0, -1) {\(k[2]\)};
            \node at (1.5, 0) {\(0\)};
            \node at (-1.5, 0) {\(0\)}; 

            \draw[dashed, Green] (-2, 2.5) -- (-2, -2.5);
            \draw[dashed, Green] (2, 2.5) -- (2, -2.5);
            
        \end{tikzpicture}
        \qquad
        \begin{tikzpicture}[line width = 1pt, scale = .8]
            \draw[line width = 1pt] (-2, 1) -- (2, -1);
            \draw[line width = 1pt] (-2, -1) -- (2, 1);
            
            \draw (-1, .5) -- (-.9, .7);
            \draw (1, -.5) -- (1.1, -.3);
            \draw (-1, -.5) -- (-.9, -.7);
            \draw (1, .5) -- (1.1, .3); 

            \fill[red, opacity = .4] (-2, -1) -- (-2, 1) -- (0, 0);
            \fill[blue, opacity = .4] (0, 0) -- (2, 1) -- (2, -1);
            \node at (1.5, 0) {\small \(k[1]\)};
            \node at (-1.5, 0) {\small\(k[2]\)}; 
            
            \draw[dashed, Green] (-2, 1.5) -- (-2, -1.5);
            \draw[dashed, Green] (2, 1.5) -- (2, -1.5);
            
        \end{tikzpicture}
        \qquad
        \begin{tikzpicture}[line width = 1pt, xscale = - .8, yscale = .8]
            \draw[line width = 1pt] (-2, 1) -- (2, -1);
            \draw[line width = 1pt] (-2, -1) -- (2, 1);
            
            \draw (-1, .5) -- (-.9, .7);
            \draw (1, -.5) -- (1.1, -.3);
            \draw (-1, -.5) -- (-.9, -.7);
            \draw (1, .5) -- (1.1, .3); 

            \fill[red, opacity = .4] (-2, -1) -- (-2, 1) -- (0, 0);
            \fill[blue, opacity = .4] (0, 0) -- (2, 1) -- (2, -1);
            \node at (1.5, 0) {\small \(k[1]\)};
            \node at (-1.5, 0) {\small \(k[2]\)}; 

            \draw[dashed, Green] (-2, 1.5) -- (-2, -1.5);
            \draw[dashed, Green] (2, 1.5) -- (2, -1.5);
            
        \end{tikzpicture}.
        \end{equation}
        
        \item The pushforward under \(e\) of one of the following
        \begin{equation}\label{eq:indecomposable_three}
        \begin{tikzpicture}[line width = 1pt, xscale = .7, yscale = -.7  ]
            \draw[line width = 1pt] (-2, 1) -- (2, -1);
            \draw[line width = 1pt] (-2, -1) -- (2, 1);
            \draw[line width = 1pt] (-2, 2) -- (2, 2);
            \draw[line width = 1pt] (-2, 3.5) -- (2, 3.5);
            
            \draw (-1, .5) -- (-.9, .7);
            \draw (1, -.5) -- (1.1, -.3);
            \draw (-1, -.5) -- (-1.1, -.3);
            \draw (1, .5) -- (.9, .7); 
            \draw (0, 3.5) -- (0, 3.25);
            \draw (0, 2) -- (0, 1.75); 

            
            \node at (0, 1) { \(k^2[1]\)};
            \node at (0, 2.5) {\(k[1]\)};
            \node at (1.5, 0) {\small \(k[1]\)};
            \node at (-1.5, 0) {\small \(k[1]\)}; 

            \draw[dashed, Green] (-2, 4) -- (-2, -1.5);
            \draw[dashed, Green] (2, 4) -- (2, -1.5);

            \fill[blue, opacity = .2] (-2, 3.5) -- (2, 3.5) -- (2, -1) -- (0, 0) -- (-2, -1);
            \fill[blue, opacity = .2] (-2, 1) -- (-2, 2) -- (2, 2) -- (2, 1) -- (0, 0); 
            
        \end{tikzpicture}
        \quad
        \begin{tikzpicture}[line width = 1pt, scale = .7]
            \draw[line width = 1pt] (-2, 1) -- (2, -1);
            \draw[line width = 1pt] (-2, -1) -- (2, 1);
            \draw[line width = 1pt] (-2, 2) -- (2, 2);
            \draw[line width = 1pt] (-2, 3.5) -- (2, 3.5);
            
            \draw (-1, .5) -- (-.9, .7);
            \draw (1, -.5) -- (1.1, -.3);
            \draw (-1, -.5) -- (-1.1, -.3);
            \draw (1, .5) -- (.9, .7); 
            \draw (0, 3.5) -- (0, 3.25);
            \draw (0, 2) -- (0, 1.75); 

            
            \node at (0, 1) { \(k^2[1]\)};
            \node at (0, 2.5) {\(k[1]\)};
            \node at (1.5, 0) {\small \(k[1]\)};
            \node at (-1.5, 0) {\small \(k[1]\)}; 

            \draw[dashed, Green] (-2, 4) -- (-2, -1.5);
            \draw[dashed, Green] (2, 4) -- (2, -1.5);

            \fill[blue, opacity = .2] (-2, 3.5) -- (2, 3.5) -- (2, -1) -- (0, 0) -- (-2, -1);
            \fill[blue, opacity = .2] (-2, 1) -- (-2, 2) -- (2, 2) -- (2, 1) -- (0, 0); 
            
        \end{tikzpicture}
        \qquad
        \begin{tikzpicture}[line width = 1pt, xscale = .7, yscale = -.7  ]
            \draw[line width = 1pt] (-2, 1) -- (2, -1);
            \draw[line width = 1pt] (-2, -1) -- (2, 1);
            \draw[line width = 1pt] (-2, 2) -- (2, 2);
            \draw[line width = 1pt] (-2, 3.5) -- (2, 3.5);
            
            \draw (-1, .5) -- (-1.1, .3);
            \draw (1, -.5) -- (.9, -.7);
            \draw (-1, -.5) -- (-.9, -.7);
            \draw (1, .5) -- (1.1, .3); 
            \draw (0, 3.5) -- (0, 3.75);
            \draw (0, 2) -- (0, 2.25); 

            
            \node at (0, 1) { \(k^2[2]\)};
            \node at (0, 2.8) {\(k[2]\)};
            \node at (1.5, 0) {\small \(k[2]\)};
            \node at (-1.5, 0) {\small \(k[2]\)}; 

            \draw[dashed, Green] (-2, 4) -- (-2, -1.5);
            \draw[dashed, Green] (2, 4) -- (2, -1.5);

            \fill[red, opacity = .2] (-2, 3.5) -- (2, 3.5) -- (2, -1) -- (0, 0) -- (-2, -1);
            \fill[red, opacity = .2] (-2, 1) -- (-2, 2) -- (2, 2) -- (2, 1) -- (0, 0); 
            
        \end{tikzpicture}
        \quad
        \begin{tikzpicture}[line width = 1pt, xscale = .7, yscale = .7  ]
            \draw[line width = 1pt] (-2, 1) -- (2, -1);
            \draw[line width = 1pt] (-2, -1) -- (2, 1);
            \draw[line width = 1pt] (-2, 2) -- (2, 2);
            \draw[line width = 1pt] (-2, 3.5) -- (2, 3.5);
            
            \draw (-1, .5) -- (-1.1, .3);
            \draw (1, -.5) -- (.9, -.7);
            \draw (-1, -.5) -- (-.9, -.7);
            \draw (1, .5) -- (1.1, .3); 
            \draw (0, 3.5) -- (0, 3.75);
            \draw (0, 2) -- (0, 2.25); 

            
            \node at (0, 1) { \(k^2[2]\)};
            \node at (0, 2.8) {\(k[2]\)};
            \node at (1.5, 0) {\small \(k[2]\)};
            \node at (-1.5, 0) {\small \(k[2]\)}; 

            \draw[dashed, Green] (-2, 4) -- (-2, -1.5);
            \draw[dashed, Green] (2, 4) -- (2, -1.5);

            \fill[red, opacity = .2] (-2, 3.5) -- (2, 3.5) -- (2, -1) -- (0, 0) -- (-2, -1);
            \fill[red, opacity = .2] (-2, 1) -- (-2, 2) -- (2, 2) -- (2, 1) -- (0, 0); 
            
        \end{tikzpicture}.
        \end{equation}
        In each case, the kernels of the three maps \( k^2 [1] \rightarrow k[1]\) or the images of three maps \( k[2] \rightarrow k^2 [2] \) are distinct. Hence the sheaves do not split. 

        \end{enumerate}

\end{proposition}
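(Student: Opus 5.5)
We classify indecomposables in \(\Sh^{\le 1}(\T_i,\Lambda_i)\) by passing to the cover and treating the strip as a one-parameter family of sheaves on a vertical line.

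\textbf{Reduction to the universal cover.} Since \(\T_i\cong (a_i-\epsilon,a_{i+1}+\epsilon)\times S^1\) and \(\Lambda_i\) is nowhere vertical, the first step is to show that every object of \(\Sh^{\le 1}(\T_i,\Lambda_i)\) without local system summands is \(e_*\) of an object on \(\widetilde\T_i\) with singular support in \(\widetilde\Lambda_i\). This is the two-dimensional analogue of Proposition~\ref{prop:ruling in 1d}. I would argue as follows. Restrict to a vertical circle in a crossing-free end of the strip and apply Proposition~\ref{prop:ruling in 1d}. Because there are no local system summands, the restriction is a sum of \(e_*\dsk_{I_\alpha}[d_\alpha]\). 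By Lemma~\ref{lem:non_char_def}, the decomposition persists across the crossing-free parts of \(\T_i\). At the crossing, the sheaf is determined by a representation of a finite quiver, namely the stalks in the four or so local chambers near the crossing together with the generization maps. This quiver is a local statement on a contractible neighborhood, so it lifts. Gluing then shows \(\cF\simeq e_*\widetilde\cF\), and \(\widetilde\cF\) is indecomposable if \(\cF\) is. This identification of \(\cF\) with a pushforward from the cover is the first key step.

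\textbf{Local classification on \(\widetilde\T_i\).} On \(\widetilde\T_i\cong I\times\bR\), away from the lift of the crossing, every vertical slice decomposes into interval sheaves \(\dsk_{I}[d]\) whose endpoints move along strands of \(\widetilde\Lambda_i\). Constructibility and non-characteristic deformation let us track these intervals from the left edge to the right edge, with only one crossing to pass. An indecomposable object either contains no interval with an endpoint on the two crossing strands, or it involves at most two intervals meeting there. The microstalk condition (microstalk \(\dsk[1]\) or \(0\)) bounds the number of intervals ending on each strand. In the first situation the sheaf is a single \(\dsk_{\widetilde Z}[n]\) with \(\widetilde Z\) open or closed, which gives case (1).

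At the crossing itself I would split according to Remark~\ref{rmk:crossing_types}. For opposite codirections, the local data are stalks \(A,B,C,D\) in the four sectors. The maps between them are constrained by the requirement that microstalks lie in \(\{0,\dsk[1]\}\), which forces each cone to have rank at most one. Enumerating the indecomposable configurations with this constraint, we expect only the two "switch" sheaves of \eqref{eq:indecomposable_two} that are not constant sheaves on a region (the last two pictures); all others split into standard or costandard pieces. For the same codirection, the relevant representation is over an \(A_n\)-type or \(D_4\)-type quiver, with stalks such as \(k^2\to k\) along three arrows. The \(D_4\) representation with dimension vector \((2;1,1,1)\) is indecomposable precisely when the three kernels are distinct. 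This yields \eqref{eq:indecomposable_three}, while the remaining nonsplit pieces give the first two pictures of \eqref{eq:indecomposable_two}. The classification relies on Gabriel's theorem for Dynkin quivers, which keeps the list finite, together with the degree bookkeeping: stalks lie in degrees \(-1,-2\) by the discussion after Lemma~\ref{lem:characterization of Sh_0 via restriction on S1}.

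\textbf{Main obstacle.} The hardest step is identifying the local quiver at a same-codirection crossing, including its relations, and checking that the microstalk bound excludes every other \(D_4\) indecomposable, for example the one with dimension vector \((2;1,1,1)\) placed in mixed degrees. I would first write the crossing as a composition of two elementary one-strand-at-a-time moves to obtain the quiver explicitly. I would then compute microstalks as cones of the maps \(A\to B\oplus C\to D\) and discard any indecomposable whose cone has rank at least two or sits in the wrong degree.
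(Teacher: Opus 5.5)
Your outline has two genuine gaps, and they sit exactly where the paper's proof does its real work.

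\textbf{The lift to the cover is asserted, not proved.} Restricting to the end circles does give arcs $e_*\dsk_{I_\alpha}[d_\alpha]$. But an arc can be longer than the circle, or have its two endpoints on \emph{different} lifts of the crossing strands. Then, near a single lift $U_0$ of the crossing neighbourhood, the gluing can involve two distinct translates $\dsk_I$ and $\dsk_{I+m}$ of the same arc. Choosing one lift per arc is incompatible with the gluing at $U_0$. A priori the gluing pattern upstairs could be an infinite $\bZ$-periodic chain, and then no vertically compact $\widetilde\cF$ with $e_*\widetilde\cF\simeq\cF$ exists. The remark ``the crossing data live on a contractible set, so they lift'' does not address this. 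Two things are needed:
\begin{itemize}
\item[(a)] a proof that such cross-lift configurations split into costandard pieces (Case~III of the proof, where $\Lambda_0^{\mathrm u}$ is paired with a strand of the lift below and $\Lambda_0^{\mathrm d}$ with the lift above);
\item[(b)] a mechanism for detaching a vertically compact piece. The paper extracts a compactly supported summand $\cE$ of $e^{-1}\cF$ using the collapsing maps $f,g$. It then observes that $e_!\cE\to\cF\to e_*\cE$, with $e_!\cE\simeq e_*\cE$, is a retraction.
\end{itemize}
As written, your first step presupposes the classification it is meant to enable.

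\textbf{Gabriel's theorem cannot carry the local classification.} Your objects are complexes with stalks in degrees $-1$ and $-2$. The sheaves in \eqref{eq:indecomposable_two} are non-split extensions of a degree $-1$ costandard sheaf by a degree $-2$ standard sheaf. For the horizontal switch, the class lies in $\Ext^2(\dsk_{\overline E},\dsk_W)\cong\dsk$, where $\overline E$ is the closed right sector and $W$ the open left sector. For the path algebra of a Dynkin quiver, every indecomposable object of the derived category is a shifted module. So no $A_n$- or $D_4$-type ``stalks on regions'' model can produce these objects. Moreover, the microsupport condition at the crossing point constrains the total complex, not its cohomology sheaves, which individually carry extra microsupport at the crossing.

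The missing idea is the paper's, in four steps:
\begin{itemize}
\item[(i)] split $\cE$ by the naive truncation triangle $\tau_{\le -2}\cE\to\cE\to\tau_{\ge -1}\cE$;
\item[(ii)] use microlocal rank one to get $\SS(\cE)^{\mathrm{sm}}=\SS(\cE_{\mathrm{red}})^{\mathrm{sm}}\sqcup\SS(\cE_{\mathrm{blue}})^{\mathrm{sm}}$;
\item[(iii)] classify each single-degree part, as a sum of standard or costandard sheaves or the three-kernel configuration (which you identify correctly);
\item[(iv)] decide case by case which red--blue extensions are nonzero.
\end{itemize}
Without this, ``we expect only the two switch sheaves'' and ``the remaining nonsplit pieces give the first two pictures'' are unsupported.
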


\begin{proof}
    It is clear from the descriptions that the listed objects are indecomposable.
    In the following, we assume \(\cF\in \Sh^{\le 1} (\T_i , \Lambda_i)\) is indecomposable, and show that \(\cF\) must be one of the cases listed above.

    Denote \(\widetilde \cF \coloneqq e^{-1} \cF\). It suffices to show that $\widetilde \cF$ has a summand of the prescribed form. If $\widetilde \cF$ has such a summand $\cE$, then we can consider 
    \[
        e_! \cE \longrightarrow  \cF \longrightarrow e_* \cE. 
    \]
    Note that \(e_!\cE \simeq e_*\cE\) since $\cE$ has compact support. Denote \(\widetilde \T_i^\epsilon = e^{-1} (\T_i^\epsilon) \) and $\widetilde \T_{i+1}^\epsilon = e^{-1} (\T_{i+1}^\epsilon)$.  
    One can see that the above is a retraction by restricting to $\widetilde \T_i^\epsilon$ and $\widetilde \T_{i+1}^\epsilon$. Hence $e_*\cE$
    is a summand of $\cF$, which forces $\cF = e_*\cE$ since \(\cF\) is indecomposable. 
    
    Write \(\Lambda_i = \Lambda^{\mathrm{cr}} \sqcup \Lambda^\mathrm{hor}\), where \(\Lambda^\mathrm{cr}\) is the crossing part (which has two connected components), and \(\Lambda^\mathrm{hor}\) is the horizontal part.  Choose a lift of $\Lambda^\mathrm{cr}$, which we denote by $\widetilde \Lambda^\mathrm{cr}_0$. 
    Consider the  pairing between the components in \(e^{-1}(\Lambda_i)\) induced by $\wtd \cF|_{\widetilde \T_i^\epsilon}$.

    \textbf{Case I:} the two components of $\widetilde \Lambda^\mathrm{cr}_0$ are paired with each other. 

    \textbf{Case II:} the two components of $\widetilde \Lambda^\mathrm{cr}_0$ are paired with some other horizontal components. 

    \textbf{Case III:} the two components of $\widetilde \Lambda^\mathrm{cr}_0$ are paired with components in other lifts of \(\Lambda^\mathrm{cr}\). 

    In \textbf{Case I} and \textbf{II}, take
     $\wtd \Lambda^\mathrm{cr} \coloneqq \wtd \Lambda^\mathrm{cr}_0$. In \textbf{Case III}, 
    take $\wtd \Lambda^\mathrm{cr}$ to be the union of \(\wtd \Lambda^\mathrm{cr}_0\) and the other lifts of $ \Lambda^\mathrm{cr}$ that are paired with \(\wtd \Lambda^\mathrm{cr}_0\).

    Consider two smooth proper maps from \(\wtd \T_i\) to itself, denoted by $f$ and $g$.  The map \(g\) collapses neighborhoods of all lifts of $\Lambda^{\mathrm{cr}}$ 
    except $\widetilde \Lambda^\mathrm{cr}$. The map $f$ collapses a neighborhood of $\widetilde \Lambda^{\mathrm{cr}}$. An illustration is shown in the following figure.
    \[
    \begin{tikzpicture}[xscale = 1, yscale = .5]
    \draw[dashed, Green] (0, -4) -- (0, 9);
    \draw[dashed, Green] (2, -4) -- (2, 9);

    \draw[line width = 1pt] (0, 0) -- (2, 0);
    \draw[line width = 1pt] (0, .5) -- (2, .5);
    \draw[line width = 1pt] (0, -.5) -- (2, -.5); 
    
    \draw[line width = 1pt, Purple] (0, 2) -- (2, 3);
    \draw[line width = 1pt, Purple] (0, 3) -- (2, 2);
    
    \draw[line width = 1pt] (0, -2) -- (2, -3);
    \draw[line width = 1pt] (0, -3) -- (2, -2);

    \draw[line width = 1pt] (0, 4.5) -- (2, 4.5); 
    \draw[line width = 1pt] (0, 5) -- (2, 5);
    \draw[line width = 1pt] (0, 5.5) -- (2, 5.5); 
    
    \draw[line width = 1pt] (0, 7) -- (2, 8);
    \draw[line width = 1pt] (0, 8) -- (2, 7);

    \draw[-Stealth] (3, 2.5) -- (4, 2.5); 
    \node[above] at (3.5, 2.5) { $g$ };

    \node[left] at (0, 2.5) {\textcolor{Purple}{$\widetilde \Lambda^{\mathrm{cr}}$}};

    \begin{scope}[shift = {(5, 0)}]
    \draw[dashed, Green] (0, -4) -- (0, 9);
    \draw[dashed, Green] (2, -4) -- (2, 9);

    \draw[line width = 1pt] (0, 0) -- (2, 0);
    \draw[line width = 1pt] (0, .5) -- (2, .5);
    \draw[line width = 1pt] (0, -.5) -- (2, -.5); 
    
    \draw[line width = 1pt, Purple] (0, 2) -- (2, 3);
    \draw[line width = 1pt, Purple] (0, 3) -- (2, 2);
    
    \draw[line width = 3pt] (0, -2.5) -- (2, -2.5);

    \draw[line width = 1pt] (0, 4.5) -- (2, 4.5); 
    \draw[line width = 1pt] (0, 5) -- (2, 5);
    \draw[line width = 1pt] (0, 5.5) -- (2, 5.5); 
    
    \draw[line width = 3pt] (0, 7.5) -- (2, 7.5);

    \draw[-Stealth] (3, 2.5) -- (4, 2.5); 
    \node[above] at (3.5, 2.5) { $f$ };
    \end{scope}

    \begin{scope}[shift = {(10, 0)}]
    \draw[dashed, Green] (0, -4) -- (0, 9);
    \draw[dashed, Green] (2, -4) -- (2, 9);

    \draw[line width = 1pt] (0, 0) -- (2, 0);
    \draw[line width = 1pt] (0, .5) -- (2, .5);
    \draw[line width = 1pt] (0, -.5) -- (2, -.5); 
    
    \draw[line width = 3pt, Purple] (0, 2.5) -- (2, 2.5);
    
    \draw[line width = 3pt] (0, -2.5) -- (2, -2.5);

    \draw[line width = 1pt] (0, 4.5) -- (2, 4.5); 
    \draw[line width = 1pt] (0, 5) -- (2, 5);
    \draw[line width = 1pt] (0, 5.5) -- (2, 5.5); 
    
    \draw[line width = 3pt] (0, 7.5) -- (2, 7.5);
    \end{scope}
    \end{tikzpicture}
\]  

    Now we consider \(f_* g_* \widetilde \cF\). By Proposition~\ref{prop:ruling in 1d},  it has a canonical decomposition. We take \(\wtd \cF_0\) to be the direct sum of all summands whose singular support does not intersect with the image of \(\widetilde \Lambda^\mathrm{cr}\). Then we apply the same trick: consider the maps
    \[
    f^{-1} \wtd \cF_0 \longrightarrow  g_*\wtd \cF \longrightarrow f^! \wtd \cF_0. 
    \]
    Since $f$ is locally homeomorphic near $\SS (\wtd \cF_0)$, we get \(f^! \wtd \cF_0 \simeq f^{-1} \wtd \cF_0\). By restricting to the left and right boundaries and a stalkwise check, one can see that the above is a retraction. Hence $ f^{-1} \wtd \cF_0$ is a component of $g_*\wtd \cF$. 
    Denote its complement by $ \wtd \cE_0$, so we can write $g_*\wtd \cF \simeq f^{-1} \widetilde \cF_0 \oplus \wtd \cE_0$. 
    Then by a similar argument, \( \cE \coloneqq g^{-1}\wtd \cE_0\) is a direct summand of \(\widetilde \cF\).  
    Notice that the sheaf \( \cE\) is compactly supported with all microstalks being  $\dsk[1]$.
    We only need to classify all such sheaves on $\widetilde \T_i$ in each case.

    Let us first fix some notation.
     Consider the naive \(t\)-structure on \( \Sh(\T_i, \Lambda_i) \). The stalks of \(\cE\) are concentrated in degrees \(-1\) and \(-2\). There is an exact triangle:
    \[
        \tau_{\le -2} \cE \longrightarrow \cE \longrightarrow \tau_{\ge -1} \cE \longrightarrow .
    \]
    Denote \( \cE_\mathrm{blue} \coloneqq \big(\tau_{\ge -1} \cE \big) \) and \( \cE_\mathrm{red} \coloneqq \big(\tau_{\le -2} \cE\big) \). Then \(\cE_\mathrm{blue}\) is concentrated in degree \(-1\) and \(\cE_\mathrm{red} \) is concentrated in degree \(-2\), and \(\cE\) is an extension of \(\cE_{\mathrm{blue}}\) by $\cE_{\mathrm{red}}$.

    Let \( \SS ( - )^\mathrm{sm}\) be the singular support removing the codirections at the crossing. Then 
    \begin{equation}\label{eq:decompose_SS}
        \SS(\cE)^\mathrm{sm} \subset \SS(\cE_\mathrm{red})^\mathrm{sm} \sqcup \SS(\cE_\mathrm{blue})^\mathrm{sm}
    \end{equation}
    By construction, $\cE_\mathrm{red}$ and $\cE_\mathrm{blue}$ are constructible with respect to the stratification given by $\Lambda$. In particular, if we look at a neighborhood of a point in $ \SS (\cE)^\mathrm{sm}$, the microlocal rank $1$ condition tells us that \eqref{eq:decompose_SS} is indeed an equation, i.e., $\SS(\cE)^\mathrm{sm} = \SS(\cE_\mathrm{red})^\mathrm{sm} \sqcup \SS(\cE_\mathrm{blue})^\mathrm{sm}$.

    \smallskip

    \noindent \textbf{Case I.}  $\SS(\cE)$ has the form:
    \[
      \begin{tikzpicture}[ yscale = .6]
        \draw[line width = 1pt] (0, 0) -- (2, 1);
        \draw[line width = 1pt] (0, 1) -- (2, 0);
        \draw[dashed, Green] (0, -1) -- (0, 2);
        \draw[dashed, Green] (2, -1) -- (2, 2);
    \end{tikzpicture}. 
    \]
    The crossing is of opposite codirections (see Remark~\ref{rmk:crossing_types}).
    The sheaf must be of the left two types in \eqref{eq:indecomposable_two}.


    \smallskip
    \noindent \textbf{
      Case II(A):} $\SS(\cE)$ has the form
      \[
      \begin{tikzpicture}[top aligned, yscale = .5, baseline={(current bounding box.center)}]
        \draw[line width = 1pt] (0, 0) -- (2, 0);
        \draw[line width = 1pt] (0, 1) -- (2, 1);
        \draw[line width = 1pt] (0, 3) -- (
        2, 4);
        \draw[line width = 1pt] (0, 4) -- (2, 3);
        \draw[dashed, Green] (0, -1) -- (0, 5);
        \draw[dashed, Green] (2, -1) -- (2, 5);
    \end{tikzpicture}. 
    \qquad or \qquad
    \begin{tikzpicture}[ yscale = -.5, baseline={(current bounding box.center)} ]
        \draw[line width = 1pt] (0, 0) -- (2, 0);
        \draw[line width = 1pt] (0, 1) -- (2, 1);
        \draw[line width = 1pt] (0, 3) -- (
        2, 4);
        \draw[line width = 1pt] (0, 4) -- (2, 3);
        \draw[dashed, Green] (0, -1) -- (0, 5);
        \draw[dashed, Green] (2, -1) -- (2, 5);
    \end{tikzpicture}. 
    \]
    The crossing is of the same codirection. By symmetry, we only need to consider the first case. 
    
    If \(\cE_\mathrm{blue}\) and $\cE_\mathrm{red}$ are both non-zero, they must have the form $\cE_{\mathrm{blue}} = 
    \begin{tikzpicture}[scale = .6]
        \draw[thick] (0, 1) -- (2, 2);
        \draw[thick] (0, 0) -- (2, 0);
        \fill[blue, opacity = .4] (0, 0) -- (2, 0) -- (2, 2) -- (0, 1) -- cycle;
        \node at (1, .6) {\small$k[1]$};
    \end{tikzpicture}$ and
    $\cE_\mathrm{red} =
    \begin{tikzpicture}[scale = .6]
        \draw[thick] (0, 0) -- (2, 0);
        \draw[thick] (0, 2) -- (2, 1);
        \fill[red, opacity = .4] (0, 0) -- (2, 0) -- (2, 1) -- (0,2);
        \node at (1, .6) {\small$k[2]$};
    \end{tikzpicture}
    $,
    or the other way around.
    One can easily compute that there are no extensions between them, hence \(\cE\) must split as their direct sum. 

    Suppose \(\cE \simeq \cE_{\mathrm{blue}}\) (the case \(\cE \simeq \cE_{\mathrm{red}}\) is similar). Since $\cE$ has microstalk $k[1]$ and compact support, the stalks of $\cE$ must be the same as the first picture in \eqref{eq:indecomposable_three}. Let us denote 
    the stalks in each region and the maps between them  as in \eqref{eq:stalks}. 
\begin{equation}\label{eq:stalks}
        \begin{tikzpicture}[ scale =.8, baseline={(current bounding box.center)}]
        \draw[line width = 1pt] (-1, 0) -- (0, 0) .. controls (1.5, 0) and (.5, 1) .. (2, 1) -- (3, 1); 
        \draw[line width =1pt] (-1, 1) -- (0, 1) .. controls (1.5, 1) and (.5, 0) .. (2, 0) -- (3, 0);
        \draw [line width = 1pt] (-1, -3) -- (3, -3); 
        \draw [line width = 1pt] (-1, -2) -- (3, -2);
        
        \draw (-.5, 1) -- (-.5, .8);
        \draw (-.5, 0) -- (- .5, -.2);

        \draw (2.5, 1) -- (2.5, .8);
        \draw (2.5, 0) -- (2.5, -.2);
        
        \draw (2.5, -2) -- (2.5, -1.8);
        \draw (2.5, -3) -- (2.5, -2.8);
        \draw (-.5, -2) -- (-.5, -1.8); 
        \draw (-.5, -3) -- (-.5, -2.8);

        \node[anchor = east] at (.9, .5) {\small \(W\)};
        \node[anchor = west] at (1.1, .5) {\small \(E\)};

        \node at (1, -1) {\small \(M\)};
        \node at (1, -2.5) {\small $S$};

        \draw[dashed, Green] (-1, -3.5) -- (-1, 1.5);
        \draw[dashed, Green] (3, -3.5) -- (3, 1.5);
    \end{tikzpicture}.
    \qquad
    \begin{tikzcd}
    W &  & E \\
    & M \arrow[ul, "f_{W}"] \arrow[ur, "f_{E}"'] \ar[d, "f_{S}"]& \\
    & S &
    \end{tikzcd}.
    \end{equation}
Note that by the singular support condition, $\ker(f_W) \neq \ker(f_E)$. If $\ker(f_S)$ coincides with either $\ker(f_W)$ or $\ker(f_E)$, the sheaf $\cE$ would split as a direct sum of two costandard sheaves. Suppose $\ker(f_S)$, $\ker(f_W)$, and $\ker(f_E)$ are transverse to each other. Then we get the sheaf described in $(3)$ of Proposition~\ref{prop:indecomposable}. 

\smallskip
\noindent \textbf{Case II(B) :} $\SS(\cE)$ has the form:
\[
      \begin{tikzpicture}[top aligned, yscale = .5, baseline={(current bounding box.center)}]
        \draw[line width = 1pt] (0, 6) -- (2, 6);
        \draw[line width = 1pt] (0, 1) -- (2, 1);
        \draw[line width = 1pt] (0, 3) -- (
        2, 4);
        \draw[line width = 1pt] (0, 4) -- (2, 3);
        \draw[dashed, Green] (0, 0) -- (0, 7);
        \draw[dashed, Green] (2, 0) -- (2, 7);
    \end{tikzpicture}. 
    \]
    The crossing is of opposite codirections.
    If $\cE_\mathrm{red}$ and $\cE_\mathrm{blue}$ are both non-zero, the only cases in which $\cE_\mathrm{blue}$ and $\cE_\mathrm{red}$ can have a non-trivial extension are the two left-hand cases in \eqref{eq:indecomposable_two}. 

    Suppose $\cE \simeq \cE_\mathrm{blue}$ or $\cE \simeq \cE_\mathrm{red}$. Say $\cE \simeq \cE_\mathrm{blue}$. Then by the microlocal stalk condition, it must have the form:
    \[
     \begin{tikzpicture}[ yscale = .8, xscale = 1.5, baseline=(current bounding box.center)]
        \draw[line width = 1pt] (0, 5) -- (2, 5);
        \draw[line width = 1pt] (0, 2) -- (2, 2);
        \draw[line width = 1pt] (0, 3) -- (2, 4);
        \draw[line width = 1pt] (0, 4) -- (2, 3);
        \draw[dashed, Green] (0, 1.5) -- (0, 5.5);
        \draw[dashed, Green] (2, 1.5) -- (2, 5.5);
        \node[right] at (0, 3.5) {\tiny $k^2[1]$};
        \node[left] at (2, 3.5) { $0$};
        \node at (1, 2.5) { $k[1]$ }; 
        \node at (1, 4.4) { $k[1]$ };
        \fill[blue, opacity = .3] (0, 4) -- (2, 3) -- (2, 2) -- (0, 2); 
        \fill[blue, opacity = .3] (0, 3) -- (2, 4) -- (2, 5) -- (0, 5); 
    \end{tikzpicture}
    \qquad \text{or} \qquad
    \begin{tikzpicture}[ yscale = .8, xscale = -1.5, baseline=(current bounding box.center)]
        \draw[line width = 1pt] (0, 5) -- (2, 5);
        \draw[line width = 1pt] (0, 2) -- (2, 2);
        \draw[line width = 1pt] (0, 3) -- (2, 4);
        \draw[line width = 1pt] (0, 4) -- (2, 3);
        \draw[dashed, Green] (0, 1.5) -- (0, 5.5);
        \draw[dashed, Green] (2, 1.5) -- (2, 5.5);
        \node[left] at (0, 3.5) {\tiny $k^2[1]$};
        \node[right] at (2, 3.5) { $0$};
        \node at (1, 2.5) { $k[1]$ }; 
        \node at (1, 4.4) { $k[1]$ };
        \fill[blue, opacity = .3] (0, 4) -- (2, 3) -- (2, 2) -- (0, 2); 
        \fill[blue, opacity = .3] (0, 3) -- (2, 4) -- (2, 5) -- (0, 5); 
    \end{tikzpicture}.
    \]
    which splits into a direct sum of two costandard sheaves. 

    \smallskip
    \noindent \textbf{Case III.} Recall that the decomposition of $\wtd \cF|_{\wtd \T^\epsilon_i}$ induces a pairing between the components of \(e^{-1}\Lambda|_{\wtd\T_i^\epsilon}\).
    Denote the two components of $\wtd \Lambda_0^\mathrm{cr}$ by $\wtd \Lambda_0^\mathrm{cr} = \Lambda_0^\mathrm{u} \coprod \Lambda_0^\mathrm{d}$, as in the figure below.     
    \[
      \begin{tikzpicture}[ yscale = .7, xscale = 1.5]
        \draw[line width = 1pt] (0, 0) -- (2, 1);
        \draw[line width = 1pt] (0, 1) -- (2, 0);
        \draw[line width = 1pt, dashed] (0, 3) -- (
        2, 4);
        \draw[line width = 1pt] (0, 4) -- (2, 3);

       \draw[line width = 1pt, dashed] (0, -2) -- (2, -3);
       \draw[line width = 1pt] (0, -3) -- (2, -2); 

        \draw[line width = 1.5pt, blue]  (0, -3) -- (.5, -2.75) ;

        \draw[line width = 1.5pt, blue] (.5, .75) -- (0, 1);
        
        \fill [pattern={Lines[angle=90,distance=1mm,line width=0.3pt]}, pattern color = blue] (0, -3) -- (.5, -2.75) -- (.5, .75) -- (0, 1); 

        \draw[line width = 1.5pt, Blue](0, 0) -- (.5, .25) ; 
        \draw[line width = 1.5pt, Blue]  (.5, 3.75) -- (0, 4); 
        
        \fill[Blue, opacity = .4] (0, 0) -- (.5, .25) -- (.5, 3.75) -- (0, 4); 
        
       \node[left] at (0, 1) {\small $\Lambda_0^\mathrm{u}$};
       \node[left] at (0, 0) {\small $ \Lambda_0^\mathrm{d}$};

       \node[left] at (0, 3) {\small $\Lambda_2^\mathrm{d}$};
       \node[left] at (0, 4) {\small $ \Lambda_2^\mathrm{u}$};

       \node[left] at (0, -2) {\small $\Lambda_1^\mathrm{u}$};
       \node[left] at (0, -3) {\small $ \Lambda_1^\mathrm{d}$};
        
        \draw[dashed, Green] (0, -4) -- (0, 5);
        \draw[dashed, Green] (2, -4) -- (2, 5);

        \end{tikzpicture}. 
        \]
        Since \(e_* \cE \in \Sh^{\le 1} (\T_i, \Lambda_i)\), the component $\Lambda_0^\mathrm{u}$ must be paired with some component of \(e^{-1} (  \Lambda_0^\mathrm{cr})\) with positive slope, which we denote by $\Lambda_1^\mathrm{d}$. Assume $\Lambda_1^\mathrm{d}$ is below $\Lambda_0^\mathrm{u}$ (the other case is similar). Since the sheaf $\wtd \cF$ is $\bZ$-equivariant, $\Lambda_0^\mathrm{d}$ must be paired with some component of \(e^{-1} ( \wtd \Lambda_0^\mathrm{cr})\) with negative slope, living above $\Lambda_0^\mathrm{d}$. Denote it by $\Lambda_2^\mathrm{u}$. Therefore, the singular support of $\cE$ is contained in the four components, and the situation is the same as the latter case in \textbf{Case II(B)}. In particular, $\cE$ is a direct sum of two costandard sheaves. 
        \end{proof}

\begin{corollary}\label{cor:sheaf_to_ruling}
    A sheaf $\cF \in \Sh^1(\T, \Lambda )_0$ uniquely determines a ruling on $\T$.   
\end{corollary}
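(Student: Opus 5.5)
The plan is to build the ruling strip by strip, using the cover of $\T$ by the strips $\T_j$ and their overlaps $\T_j^\epsilon$ from the proof of Lemma~\ref{lem:characterization of Sh_0 via restriction on S1}, and then to glue.

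\emph{Overlaps.} By Lemma~\ref{lem:characterization of Sh_0 via restriction on S1}, the restriction of $\cF$ to every vertical $S^1$ has no local system summand. So by Proposition~\ref{prop:ruling in 1d} and the microlocal rank one condition, $\cF|_{\T_j^\epsilon}$ decomposes uniquely as
\[
\bigoplus_i e_*\dsk_{U_i}[2]\oplus\bigoplus_j e_*\dsk_{Z_j}[1],
\]
with the $U_i$ open and the $Z_j$ closed. Each summand is the pushforward of a constant sheaf on an interval, so its two endpoints pair two components of $\widetilde\Lambda$ over the overlap. I declare the summands $\dsk_{Z}[1]$ blue (costandard, inward codirection) and the summands $\dsk_U[2]$ red (standard, outward codirection). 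This is exactly condition (1) of Definition~\ref{def:rulings on T}, and it forces an upward-pointing path to pair with a downward-pointing one. Because of the microlocal rank one condition, each component of $\widetilde\Lambda$ is an endpoint of exactly one summand.

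\emph{Strips.} On each strip $\T_j$, which contains exactly one crossing, I apply Proposition~\ref{prop:indecomposable}. The sheaf $\cF|_{\T_j}$ lies in $\Sh^{\le1}(\T_j,\Lambda_j)$ and splits into indecomposables of the listed types. By Krull--Schmidt-type uniqueness, this decomposition restricts on each end $\T_j^\epsilon$, $\T_{j+1}^\epsilon$ to the canonical decompositions above. I then read off the local picture at the crossing case by case:
\begin{itemize}
\item[(a)] The pushforwards of (co)standard sheaves $e_*\dsk_{\widetilde Z}$ give eyes passing through unchanged. These are the departures, returns and overlaps, according to whether the two crossing strands are paired with each other, with distinct eyes, or are nested; the nesting and colour constraints come from which $\widetilde Z$ are allowed in $\Sh^{\le1}$.
\item[(b)] The four sheaves in \eqref{eq:indecomposable_two} give the switches of type A: vertical for opposite codirections and horizontal for the others, matching Remark~\ref{rmk:crossing_types}.
\item[(c)] The sheaves in \eqref{eq:indecomposable_three} give the type B switches.
\end{itemize}
For each case I would check directly from the stalk pictures that the pairings on the two ends agree with the corresponding pictures in Definition~\ref{def:rulings on T}(2).

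\emph{Gluing and the remaining conditions.} Since the pairings on the overlaps are canonical, the local pictures agree on $\T_j^\epsilon$ and glue to a $\bZ^2$-equivariant decomposition of $\widetilde\Lambda$ into pairs of paths. Each path keeps its up/down direction because $v$ is never tangent to $\underline\Lambda$. For condition (3), I follow an eye from left to right. It either never terminates, in which case equivariance and compactness of $\T$ make it annular, retracting to a horizontal line. Otherwise it begins and ends at type A switches, and I would read the required orientations (blue starts with one type A corner and ends with its rotation, and red the reverse) from the stalk pictures in \eqref{eq:indecomposable_two}. For condition (4), the stalk at $\frakt$ is
\[
\bigoplus_{\text{blue eyes}}\dsk[1]\oplus\bigoplus_{\text{red eyes}}\dsk[2],
\]
so $\chi(\cF_\frakt)=d$, i.e.\ condition~(2) of Lemma~\ref{lem:characterization of Sh_0 by Hom with Loc}, becomes the balancing condition, with the sign conventions fixed accordingly. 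Uniqueness of the ruling follows from the uniqueness of the 1d decompositions in Proposition~\ref{prop:ruling in 1d}.

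The main obstacle I expect is case (a): making sure that the case analysis of standard/costandard summands through a crossing yields exactly the allowed overlaps, departures and returns and no forbidden configurations. Doing this requires the microstalk and no-local-system constraints, together with the fact that a forbidden configuration would produce an extra microstalk or a splitting. The check that disk eyes start and end at correctly oriented switches is the second delicate point, and I would handle it by tracking which corner the supported region $\widetilde Z$ opens into.
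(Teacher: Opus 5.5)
Your proposal follows the paper's route. The paper states the corollary without a separate proof, reading it off from the decomposition on vertical circles (Lemma~\ref{lem:characterization of Sh_0 via restriction on S1} with Proposition~\ref{prop:ruling in 1d}) and the strip classification of Proposition~\ref{prop:indecomposable}, glued using uniqueness of these decompositions. Two slips in your case matching need fixing. First, by Remark~\ref{rmk:crossing_types}, vertical type~A switches occur at crossings of the same codirection and horizontal ones at crossings of opposite codirections; you have this reversed. Second, in your case (a) the two crossing strands can never be paired with each other: that configuration is Case~I of Proposition~\ref{prop:indecomposable}, which gives the horizontal switches in \eqref{eq:indecomposable_two}, because a standard or costandard sheaf on a wedge with vertex at the crossing has a whole cone of codirections at the vertex in its singular support. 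So overlaps, departures and returns always involve two distinct eyes.
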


\subsection{From Ruling to Sheaf}

For a ruling \(\rho\), let \( \calM^\rho_{\triangle, \eta}\) denote the submoduli parametrizing sheaves that give rise to the ruling \(\rho\). Then there is a \textit{ruling decomposition}:
\begin{equation}\label{eq:ruling decomposition}
\left|\calM_{\triangle, \eta} \right| \cong \coprod_{\rho \in \mathfrak{R}(\Lambda) } \left|\calM^{\rho}_{\triangle, \eta} \right| 
\end{equation}
where \(\mathfrak{R}(\Lambda)\) is the set of all rulings.
As in the Legendrian link case, we expect each stratum to be
\begin{equation}
 \left|\calM^\rho_{\triangle, \eta} \right| \cong (\bC^*)^{2g(\rho)} \times \bC^{r (\rho)}.
\end{equation}
The \emph{standard ruling} obtained from the bipartite graph should give rise to an open dense stratum, isomorphic to the moduli of local systems on the bipartite graph. In particular, we can compute the Euler characteristic by counting the rulings such that \(g (\rho) = 0\).

\begin{remark}
    A ruling surface $\Sigma(\rho)$ can be viewed as an immersed Lagrangian filling of the Legendrian $\Lambda$ in $T^* \T$. Heuristically, under the sheaf-Fukaya correspondence~\cite{NZ, GPS3}, one can view $\calM^\rho_{\triangle, \eta}$ as the sheaf quantization of this immersed Lagrangian. The torus part $(\bC^*)^{2g(\rho)}$ comes from the topology of the surface, and the affine part $\bC^{r(\rho)}$ comes from the immersed points. 
\end{remark}

Denote
\[ 
\widetilde{\bbL} \coloneqq \left\{ (z_i)_{i \in \pi_0 (\Lambda ) } \in   \left|\calM (\Loc^1 (\Lambda))\right| \simeq (\bC^*)^{\pi_0(\Lambda)} \mid \prod_i z_i = 1\right\}
\] 
and its dense open subset:
\[
\bbL \coloneqq \left\{ (z_i)_{i} \in \widetilde{\mathbb{L}} \mid  \text{ for every nonempty proper subset } I \subset \pi_0 (\Lambda), \ \prod_{i\in I} z_i \neq 1 \right\}
\]

We assume $\eta$ is \emph{generic}, meaning \(\eta \in \bbL\). Let \( \Loc^1\left(\Sigma (\rho) \right)_\eta \) be the subcategory of rank-\(1\) local systems on \(\Sigma(\rho)\) whose restriction to \(\Lambda\) is \(\eta\). Note that \({ \Loc^1 ( \Sigma(\rho))_\eta }   \) is nonempty if and only if \(\Sigma(\rho)\) is connected. 
Let $\moduli{\Loc^1(\Sigma(\rho))_\eta}$ be its moduli stack. 
If we identify \(\Sigma (\rho)\) with a genus \(g\) surface with \(|\pi_0(\Lambda)|\) disks removed, then we have
\[
\moduli{\Loc^1(\Sigma(\rho))_\eta} \simeq (\bC^*)^{2g} \times B\Gm
\]


\begin{theorem}\label{thm:ruling_decomposition}
For a ruling \(\rho\), there is a map
\begin{equation}\label{eq:from M_rho to loc}
    f_\rho: \calM^\rho_{\triangle, \eta} \longrightarrow
    \calM \left( \Loc^1 ( \Sigma (\rho) )_\eta \right).
\end{equation}
The fiber is isomorphic to an affine space \(\bC^{r(\rho)}\). Here $r(\rho)$ is the number of returns of $\rho$. Hence we have 
\[
    |\calM_{\triangle,\eta}^\rho| \simeq (\bC^*)^{2 g(\rho)} \times \bC^{r(\rho)}. 
\]

\end{theorem}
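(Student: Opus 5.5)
We will follow the strategy of \cite{STZ} for Legendrian links, adapted to the strip decomposition $\T=\bigcup_j \T_j$ used in the proof of Lemma~\ref{lem:characterization of Sh_0 via restriction on S1}. A sheaf $\cF\in\calM^\rho_{\triangle,\eta}$ is recovered by gluing its restrictions $\cF|_{\T_j}$ along the overlaps $\T_j^\epsilon$. By Proposition~\ref{prop:ruling in 1d}, on each overlap the restriction splits canonically into summands $e_*\dsk_{U}[2]$ and $e_*\dsk_{Z}[1]$, one per eye crossing that vertical circle. By Proposition~\ref{prop:indecomposable}, on each $\T_j$ the restriction is a sum of the listed indecomposables, and the ruling $\rho$ (via Corollary~\ref{cor:sheaf_to_ruling}) fixes which indecomposable occurs at the crossing. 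We therefore first describe $\calM^\rho_{\triangle,\eta}$ as a space of gluing data: on each strip, an isomorphism from the standard form determined by $\rho$ to $\cF|_{\T_j}$, taken modulo automorphisms of the standard forms on strips and overlaps.

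To construct $f_\rho$, we attach to each eye (a region of $\widetilde\Sigma(\rho)$) the rank-one graded line given by the stalk of its summand. Along a strip where the eye passes without incident, the gluing gives an isomorphism of these lines. At a switch, the indecomposable sheaf of type (3) in Proposition~\ref{prop:indecomposable} identifies three kernels (or images), and hence canonically identifies the lines of the two incident eyes up to a sign. This matches the half-twisted strip in $\Sigma(\rho)$. Together these define a rank-one local system on $\Sigma(\rho)$, and its restriction to $\partial\Sigma(\rho)=\Lambda$ is the microlocal monodromy $\mu(\cF)=\eta$, because microstalks are computed from the same lines. Functoriality in families then gives the morphism of stacks $f_\rho$. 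Since $\eta$ is generic, $\Sigma(\rho)$ must be connected (otherwise the target is empty), so $\calM(\Loc^1(\Sigma(\rho))_\eta)\simeq(\bC^*)^{2g(\rho)}\times B\Gm$.

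To identify the fibre, we fix the local system and count the remaining gluing freedom crossing by crossing.
\begin{itemize}
\item At overlaps, departures and switches the automorphism groups of the local indecomposables are exactly the rescalings already accounted for by the local system, so there is no extra parameter.
\item At each return, the two eyes meeting there are nested in the same vertical column. There is a one-dimensional space of upper-triangular gluing maps, namely $\Hom$ between the two interval summands, which cannot be absorbed by automorphisms. This is the analogue of the $\bC$ factor at a return in \cite{STZ}.
\item Overlaps of opposite codirection are rigid by the Case~I analysis.
\end{itemize}
Thus the fibre is a torsor for $\bC^{r(\rho)}$, and we will check that it is in fact trivial: choose the zero extension at each return, which is consistent because the nested-pair structure matches the ruling conditions. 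We then check that every such choice satisfies the conditions of Lemma~\ref{lem:characterization of Sh_0 via restriction on S1}. Condition ($3'$) holds because each vertical circle sees only interval summands. The balancing condition is exactly condition (2).

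The main obstacle is showing that the unipotent parameters at different returns are genuinely independent and are not killed by global automorphisms. On the torus, eyes can wrap (annular eyes), so unlike the planar Legendrian case there is no leftmost starting point from which to normalise inductively. Our first approach is to cut along one overlap $\T_0^\epsilon$ and carry out the \cite{STZ}-style normalisation strip by strip from left to right. The only remaining global condition is then monodromy matching around the cut, and this involves only the diagonal (local-system) part, since the return parameters are strictly upper triangular in the filtration by eye heights. Finally, the $B\Gm$ factors on the two sides cancel, since $\End(\cF)=\bC$. This yields $|\calM^\rho_{\triangle,\eta}|\simeq(\bC^*)^{2g(\rho)}\times\bC^{r(\rho)}$.
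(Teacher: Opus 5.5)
Your local analysis matches the paper's. It uses the unique local models on $\T_j$ and $\T_j^\epsilon$ from Proposition~\ref{prop:indecomposable}, and the fact that the left restriction $\Aut(\cF_j)\to\Aut(\cF_j^\epsilon)$ is injective with unipotent quotient $\Ga$ exactly at returns (Lemma~\ref{lem:fiber_seq_of_Moduli}, Corollary~\ref{cor:fiber_of_Ki}). Hence sheaves on the cut-open cylinder, framed at $\T_0^\epsilon$, form an iterated $\Ga$-fibration ${}_0\cK_n^{fr}\cong\bC^{r(\rho)}$. You assert this local input by analogy with \cite{STZ}; the paper proves it, removing type~B switches by GKS moves and using non-characteristic deformation for the $\Hom$'s between the crossing summand and the others.

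The gap is in closing up the cut. It is not true that the only remaining datum there is diagonal monodromy matching. A sheaf on $\T$ framed at the cut is a framed cylinder sheaf together with an isomorphism from its right end to its left end. Once the local system is fixed, the diagonal part of that isomorphism is determined. Its unipotent part, however, ranges over all of $G_0^\epsilon\cong\prod_{\cE\prec\cE'}\Hom(\cE,\cE')$. The paper records this as $\fib(\cK^{fr}\to{}_0\cK_n^{fr})\cong\fib(BG_n^\epsilon\to BG_n^\epsilon\times BG_n^\epsilon)\cong G_n^\epsilon$, so $\cK^{fr}\cong\bC^{r(\rho)}\times G_0^\epsilon$. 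It then removes these parameters with a residual freedom your argument never quotients by: changing the framing at the cut, so that $\cK\cong\cK^{fr}/G_0^\epsilon$. Upper triangularity of the return parameters does not make the unipotent gluing disappear. What actually happens is a cancellation between a $G_0^\epsilon$'s worth of gluings and a $G_0^\epsilon$'s worth of framings.

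That cancellation requires $G_0^\epsilon$ to act freely, i.e.\ $\cF$ must have no unipotent automorphisms. The paper gets this from $\End(\cF)=\bC$ for generic $\eta$, which is why $\cK$ is a variety rather than a stack. You use $\End(\cF)=\bC$ only to cancel the $B\Gm$ factors, but it is needed here too, and genericity genuinely matters. Consider the toy case of two nested eyes $\cE\prec\cE'$ at the cut, with no crossing interaction and monodromies $d_1,d_2$. Changing the framing by $1+y\,e_{12}$ (with $e_{12}$ the elementary matrix) sends the off-diagonal gluing entry $x$ to $x+y(d_2/d_1-1)$. This is free and transitive only when $d_1\neq d_2$; otherwise the fibre acquires an extra $\bC$ and a $B\Ga$ stabilizer.

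Relatedly, trivialising the fibre as a $\bC^{r(\rho)}$-torsor by ``choosing zero at each return'' is not justified on the unframed moduli. The return parameters are coordinates on ${}_0\cK_n^{fr}$, and the $G_0^\epsilon$-action need not preserve them.
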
 

By a \emph{rational ruling}, we mean a ruling \(\rho\) with $g(\rho) = 0$. Define 
\[
N_\triangle^\mathrm{rul} = \# \{ \text{rational rulings} \}
\]
As a corollary, we have 
\begin{theorem} 
\label{thm:rulings}
   \( N^\mathrm{rul}_\triangle = \chi( |\calM_{\triangle, \eta}|) = N_\triangle^\Coh\). 
\end{theorem}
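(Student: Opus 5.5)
The plan is to deduce the statement from the ruling decomposition \eqref{eq:ruling decomposition} together with Theorem~\ref{thm:ruling_decomposition}, using additivity and multiplicativity of the (compactly supported) Euler characteristic. By Corollary~\ref{cor:sheaf_to_ruling}, every sheaf in $\Sh^1(\T,\Lambda)_0$ with microlocal monodromy $\eta$ determines a unique ruling. Hence the coarse space $|\calM_{\triangle,\eta}|$ is set-theoretically the disjoint union of the strata $|\calM^\rho_{\triangle,\eta}|$ over $\rho\in\mathfrak R(\Lambda)$.

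First I would check that this union is finite and is a genuine constructible stratification. Finiteness holds because $\Lambda$ has finitely many crossings in a fundamental domain, and a ruling is determined by a finite amount of local data at each crossing (the crossing type, together with the colors and the pairing of paths over $\mathfrak t$). For constructibility, I would observe that the ruling of $\cF$ is read off from the decompositions given by Proposition~\ref{prop:ruling in 1d} and Proposition~\ref{prop:indecomposable} on the strips $\T_j$ and $\T_j^\epsilon$. On each strip, the locus where a given local pairing or indecomposable type occurs is cut out by rank conditions on the maps between stalks, such as the kernel conditions on $f_W,f_E,f_S$ in \eqref{eq:stalks}. Each stratum is therefore locally closed. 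Since $\chi$ is additive on finite decompositions into locally closed pieces of a finite-type complex variety (for compactly supported Euler characteristic, which agrees with the ordinary one for complex varieties), we get $\chi(|\calM_{\triangle,\eta}|)=\sum_\rho \chi(|\calM^\rho_{\triangle,\eta}|)$.

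Next I would apply Theorem~\ref{thm:ruling_decomposition}: $|\calM^\rho_{\triangle,\eta}|\cong(\bC^*)^{2g(\rho)}\times\bC^{r(\rho)}$. This space has Euler characteristic $0$ if $g(\rho)>0$ and $1$ if $g(\rho)=0$, since $\chi(\bC^*)=0$ and $\chi(\bC)=1$. Rulings with disconnected $\Sigma(\rho)$ give empty strata, because $\Loc^1(\Sigma(\rho))_\eta$ is empty for generic $\eta\in\bbL$. I would read ``rational'' as connected of genus zero, which matches the convention implicit in $g(\rho)$. Summing gives $\chi(|\calM_{\triangle,\eta}|)=N^{\mathrm{rul}}_\triangle$.

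Finally, the CCC identification $\mathcal J_{\triangle,\xi}\cong\calM_{\triangle,\eta}$ recalled above, passed to coarse spaces, gives $\chi(|\calM_{\triangle,\eta}|)=\chi(|\mathcal J_{\triangle,\xi}|)=N^\Coh_\triangle$. The main obstacle is the constructibility step: showing that the strata are locally closed subvarieties, so that additivity applies, rather than merely a set-theoretic partition. I would handle this strip by strip, expressing each ruling type through finitely many rank equalities and inequalities on the stalk maps. If that proves delicate, a fallback is to note that all strata are affine-space bundles over tori, and to compute $\chi$ via point counts over finite fields (an $E$-polynomial argument). That argument needs only a set-theoretic partition into pieces that are constructible, which is already guaranteed because each piece is the image of a finite-type parametrization.
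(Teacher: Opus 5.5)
Your proposal is correct and follows the paper's argument. The paper states the theorem as an immediate corollary of the ruling decomposition of $|\calM_{\triangle,\eta}|$ and Theorem~\ref{thm:ruling_decomposition}: by additivity of $\chi$, each stratum $(\bC^*)^{2g(\rho)}\times\bC^{r(\rho)}$ contributes $1$ exactly when $g(\rho)=0$, and the second equality is the CCC isomorphism $\mathcal J_{\triangle,\xi}\cong\calM_{\triangle,\eta}$. Your extra checks (finiteness of the set of rulings, constructibility of the strata, and emptiness of the strata for disconnected $\Sigma(\rho)$ when $\eta$ is generic) fill in points the paper leaves implicit.
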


The remainder of this section is devoted to proving Theorem~\ref{thm:ruling_decomposition}. The idea of the proof already appeared in the proof of Lemma~\ref{lem:characterization of Sh_0 via restriction on S1}. Namely, we cut the torus \(\T\) into small ``strips'' and glue the moduli spaces of sheaves.




Fix a ruling \(\rho\). Let
\(
\Sh^1(\T_j^\epsilon,\Lambda_j^\epsilon)_\rho
\)
(resp. \(\Sh^1(\T_j,\Lambda_j)_\rho\))
denote the full subcategory of
\(\Sh(\T_j^\epsilon,\Lambda_j^\epsilon)\)
\(\bigl(\text{resp. } \Sh(\T_j,\Lambda_j)\bigr)\)
spanned by objects whose associated local ruling is \(\rho\).
By Proposition~\ref{prop:indecomposable}, each of these categories
contains a unique object. Denote it by $ \cF_j^\epsilon$ (resp. $\cF_j$). 
By Proposition~\ref{prop:ruling in 1d}, each  \(\cF_j^\epsilon\) splits into a direct sum of \( e_* \dsk_U[2] \) and \(e_* \dsk_Z[1]\).

Take:
\begin{equation*}
    \calM^\rho_j \coloneqq B \Aut(\cF_j), \qquad \calM_j^{\rho, \epsilon} \coloneqq B \Aut(\cF_j^\epsilon).
\end{equation*}
Then \(\calM_j^\rho\) is the moduli stack of  \(\Sh^1(\T_j, \Lambda_j)_\rho\), and likewise \(\calM_j^{\rho, \epsilon}\) is the moduli stack of \(\Sh^1(\T_j^\epsilon, \Lambda_j^\epsilon)_\rho\).

Let \(\Sh^1(\T, \Lambda)_\rho\) be the subcategory of \(\Sh^1(\T, \Lambda)_0\) which induces the ruling \(\rho\) (without conditions on the microlocal monodromies), and let \(\calM^\rho_\triangle\) be the corresponding moduli.  
By sheaf properties,  we have:
\begin{equation}\label{eq:gluing_of_sheaf_moduli}
    \calM_{\triangle}^\rho
    =
    \lim \left(  \prod_j \calM_j^\rho \rightrightarrows \prod_j \calM_j^{\rho,\epsilon} \right). 
\end{equation}

Now let \(\iota\) be the natural map \(\iota: \Sigma(\rho) \longrightarrow \T\), and denote \( \Sigma(\rho)_i \coloneqq \iota^{-1} (\T_i) \) and \(\Sigma(\rho)_i^\epsilon \coloneqq \iota^{-1}(\T^\epsilon_i) \). Then
\[
\calM ( \Loc^1 ( \Sigma (\rho) ) ) = \lim \left ( \prod_i \calM (\Loc^1 (\Sigma(\rho)_i) ) \rightrightarrows \prod_i \calM ( \Loc^1 (\Sigma(\rho)_i^\epsilon ) )  \right).
\]

The natural functors \(\Loc^1(\Sigma(\rho)_i) \rightarrow \Sh^1(\T_i, \Lambda_i)_\rho \) and  \(\Loc^1(\Sigma(\rho)_i^\epsilon) \rightarrow \Sh^1(\T_i^\epsilon, \Lambda_i)_\rho \) induce 
\begin{align*}
    \calM_j^\rho &\longrightarrow \calM (\Loc^1(\Sigma(\rho)_j)) \\
    \calM_j^{\rho, \epsilon} &\longrightarrow \calM (\Loc^1(\Sigma(\rho)_j^\epsilon)) 
\end{align*}
and hence the global map:
\begin{equation}
    \calM_\triangle^\rho \longrightarrow \calM(\Loc^1( \Sigma(\rho)) ). 
\end{equation}
Taking the Cartesian diagram
\[
\begin{tikzcd}
\calM_{\triangle, \eta}^\rho \arrow[r, "f_\rho"] \arrow[d] 
  &  \moduli{\Loc^1 (\Sigma(\rho))_\eta} \arrow[d] \\
\calM_\triangle^\rho \arrow[r] 
  & \moduli{\Loc^1 (\Sigma(\rho))}
\arrow[from=1-1, to=2-2, phantom, "\lrcorner", very near start]
\end{tikzcd}
\]
yields the desired map~\eqref{eq:from M_rho to loc}. 

Now we take a point \(\cL\) in \(\moduli{\Loc^1(\Sigma(\rho))_\eta} \), and consider the fiber of \(f_\rho\) in~\eqref{eq:from M_rho to loc} over \(\cL\).

Denote
\begin{align*}
    \cK_j
    &\coloneqq \fib \big( \calM_j^\rho
    \longrightarrow \moduli{\Loc^1( \Sigma (\rho)_j)} \big) \\
    \cK_j^\epsilon
    & \coloneqq \fib \big(\calM_j^{\rho, \epsilon} \longrightarrow \moduli{ \Loc^1(\Sigma(\rho)_j^\epsilon)} \big) \\
    \cK 
    &\coloneqq \fib \big(\calM_{\triangle}^\rho 
    \longrightarrow \moduli{ \Loc^1( \Sigma(\rho) ) }\big). 
\end{align*}
Note that since both $ \calM_\triangle^\rho $ and \(\moduli{\Loc^1(\Sigma(\rho))}\) have a \(\Gm\)-gerbe, \(\cK\) is indeed an algebraic variety. 

    Since limits commute with limits, we have:
\begin{equation}\label{eq:K_limit}
\cK = \lim \left( \prod_i \cK_i \rightrightarrows \prod_i \cK_i^\epsilon \right) 
\end{equation}

Let the unique object in \(\Sh^1(\T_i^\epsilon, \Lambda_i^\epsilon)_\rho \) be \( \cF_i^\epsilon = \bigoplus_j \cE_{i_j}\), where each \(\cE_{i_j}\) is isomorphic to an eye sheaf \(e_* \dsk_{U}[2]\) or \(e_* \dsk_{Z}[1]\).  We define a partial order \(\prec\) on \( \{\cE_{i_j}\} \), given by:
\begin{itemize}
    \item \(e_*\dsk_{Z}[1] \prec  e_*\dsk_{Z'}[1]\) if \( Z \supset \mathbf{d}_m ( {Z'})\) for some \(m\in \bZ\), where \(\mathbf{d}_m\) is the deck transformation \( (x, y) \mapsto (x, y+m)\) of the covering map \( (a_i, a_{i+1}) \times \bR  \rightarrow (a_i, a_{i+1}) \times S^1 \). 
    \item \(e_*\dsk_{U}[2] \prec e_*\dsk_{U'}[2]\) if \(U \subset \mathbf{d}_m(U')\) for some \(m\in \bZ\). 
    \item \(e_*\dsk_Z[1] \prec e_*\dsk_U[2] \) if \( Z\cap \mathbf{d}_m (U) \neq \emptyset \) for some \(m \in \bZ\). 
\end{itemize}
Apparently, this is a well-defined partial order, and \( \Hom (\cE_{i_j}, \cE_{i_j'} ) \neq 0\) if and only if \( \cE_{i_j} \prec \cE_{i_j'}\). Thus, as a \(\bC\)-module, we have 
\begin{equation}\label{eq:End_F_epsilon}
\End (\cF_i^\epsilon) = \bigoplus_{i_j} \End( \cE_{i_j}) \oplus \bigoplus_{\cE_{i_j}\prec  \, \cE_{i_j'} } \Hom (\cE_{i_j}, \cE_{i_j'}). 
\end{equation}
Note that each \(\End(\cE_{i_j}) \simeq \bC\). If all \( \Hom (\cE_{i_j}, \cE_{i_j'}) \simeq \bC\), then \(\End (\cF_i^\epsilon)\) is a subring of upper triangular matrices.  

      As an algebraic variety, \(\Aut (\cF_i^\epsilon) \simeq \prod_{i_j} \bC^* \times \prod_{ \cE_{i_j} \prec \cE_{i_j'} } \Hom (\cE_{i_j}, \cE_{i_j'})  \). Let 
\[ G_i^\epsilon \coloneqq \ker \left( \Aut (\cF_i^\epsilon) \rightarrow \prod_{i_j} \bC^*  \right).
\]
Then as a variety, \( G_i^\epsilon \) is isomorphic to an affine space \(\bC^{k_i} \simeq \prod_{\cE_{i_j}\prec  \, \cE_{i_j'} } \Hom (\cE_{i_j}, \cE_{i_j'})\), and we have: \( \cK_i^\epsilon \simeq B G_i^\epsilon\).




\begin{lemma}\label{lem:fiber_seq_of_Moduli}
    The fiber of \(\calM^\rho_j \rightarrow \calM^{\rho, \epsilon}_{j}\) is 
    \begin{enumerate}
        \item \(\Gm\), if the crossing in \(T_j\) is a ``vertical'' switch of type A, i.e. the ruling looks like one of the following:
       \[ 
        \begin{tikzpicture}
        \begin{scope}[rotate = 45]
            \draw[line width = 2pt, red] (-1, 0) -- (0, 0) -- (0, -1);
            \draw[line width = 2pt, blue] (0, 1) -- (0, 0) -- (1, 0); 
            \draw (.5, 0) -- (.5, .2);
            \draw (-.5, 0) -- (-.5, .2);
            \draw (0, .5) -- (.2, .5);
            \draw (0, -.5) -- (.2, -.5);
            \filldraw[blue, opacity=.4] (0, 0) -- (1, 0) -- (0, 1); 
            \filldraw[red, opacity=.4] (0, 0) --(-1, 0) -- (0, -1);
        \end{scope}
        \begin{scope}[shift = {(3, 0)}, rotate = 225]
            \draw[line width = 2pt, red] (-1, 0) -- (0, 0) -- (0, -1);
            \draw[line width = 2pt, blue] (0, 1) -- (0, 0) -- (1, 0);
            \draw (.5, 0) -- (.5, .2);
            \draw (-.5, 0) -- (-.5, .2);
            \draw (0, .5) -- (.2, .5);
            \draw (0, -.5) -- (.2, -.5);
            \filldraw[blue, opacity=.4] (0, 0) -- (1, 0) -- (0,1); 
            \filldraw[red, opacity=.4] (0, 0) -- (-1, 0) -- (0, -1);
        \end{scope}
        \end{tikzpicture}
        \]
        or if the crossing in $T_j$ is a switch of type B. 
        \item \(\Ga\), if the crossing in \(T_j\) is a return.

        \item trivial, for other cases. 
    \end{enumerate}
\end{lemma}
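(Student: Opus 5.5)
Since $\calM^\rho_j = B\Aut(\cF_j)$ and $\calM^{\rho,\epsilon}_j = B\Aut(\cF_j^\epsilon)$, the map $\calM^\rho_j\to\calM^{\rho,\epsilon}_j$ is induced by the restriction homomorphism $r:\Aut(\cF_j)\to\Aut(\cF_j|_{\T_j^\epsilon})=\Aut(\cF_j^\epsilon)$. Here $\T_j^\epsilon$ is the left collar of the strip; the same argument applies to the right collar. The fiber of $BG\to BH$ is $H/r(G)$, with a residual $B\ker r$. So the plan has two parts: show $r$ is injective, and identify the quotient $\Aut(\cF_j^\epsilon)/r(\Aut(\cF_j))$ as $\Gm$, $\Ga$, or a point, case by case.

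\textbf{Injectivity.} By the non-characteristic deformation lemma (Lemma~\ref{lem:non_char_def}), applied as in the proof of Lemma~\ref{lem:characterization of Sh_0 via restriction on S1}, restricting an endomorphism to the two collars and gluing reconstructs it. The obstruction is supported only near the crossing, and an endomorphism vanishing on both vertical sides of the strip vanishes on every stalk by constructibility. Hence $r$ is injective.

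\textbf{Decomposition and reduction.} By Proposition~\ref{prop:indecomposable}, $\cF_j$ splits as a direct sum of summands pushed forward from $\widetilde\T_j$. All of them are eye sheaves $e_*\dsk_Z[1]$ or $e_*\dsk_U[2]$, except the single summand $\cE$ at the crossing, which has one of the shapes in \eqref{eq:indecomposable_two} or \eqref{eq:indecomposable_three}, or splits into two eyes. By \eqref{eq:End_F_epsilon}, $\Aut(\cF_j^\epsilon)$ is an iterated extension of a torus $\prod\bC^*$ by the unipotent group of $\Hom$'s between $\prec$-comparable eyes. Every eye not passing through the crossing extends uniquely across the strip, together with all Hom's among such eyes, again by Lemma~\ref{lem:non_char_def}. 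The comparison therefore reduces to the two strands at the crossing, the eyes they belong to on the left, and the Hom's involving those eyes. I would check each local ruling type at a crossing from Definition~\ref{def:rulings on T}.

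\textbf{Case analysis.}
\begin{enumerate}
\item \emph{Overlaps and departures.} The eyes pass through the crossing with the same pairing and the same $\prec$-relations on both sides, so $r$ is an isomorphism and the fiber is trivial.
\item \emph{Vertical switch of type A.} On the left there are two eyes, one blue and one red. Across the crossing they fuse into the indecomposable sheaf of \eqref{eq:indecomposable_two} (third or fourth picture), which has $\Aut=\Gm$ instead of $\Gm^2$. The quotient is $\Gm$; this is the extra factor corresponding to the half-twisted band in $\Sigma(\rho)$.
\item \emph{Type B switch.} The shape is \eqref{eq:indecomposable_three}, where the three kernels (or images) are pairwise distinct. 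The two $\Gm$ scalings of the left eyes together with the $\Hom$ between them must preserve the third line. This cuts $\Gm^2\ltimes\Ga$ down to a subgroup $\Gm\ltimes\Ga$, leaving quotient $\Gm$.
\item \emph{Horizontal switch of type A.} This is the left two pictures of \eqref{eq:indecomposable_two}. The eye on the left is a single eye in the collar, so the quotient is trivial.
\item \emph{Returns.} Two nested eyes on the left have a nonzero $\Hom$, i.e.\ a $\Ga$ factor, because they are $\prec$-comparable there. After the crossing they cease to be comparable, so that $\Hom$ dies on the right. Equivalently, that $\Ga$ does not lie in the image of $r$, and the quotient is $\Ga$.
\end{enumerate}

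\textbf{Main obstacle.} The delicate part is the bookkeeping of the unipotent part. I have to verify that, apart from the single $\Ga$ in the return case, every $\Hom$ between $\prec$-comparable eyes in the collar lifts to $\cF_j$; this means the $\prec$ relations are preserved across overlaps and departures. Here the differing codirection conventions, upward versus downward, make the sign checks fiddly. I would handle this by computing $\Hom$ between the lifted eyes on $\widetilde\T_j$ directly, as sections over the region where both are nonzero, and checking in each case that the result is $\bC$ if and only if the relation holds on both collars.
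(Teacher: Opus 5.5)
Your skeleton matches the paper's. You identify the map with restriction $r\colon\Aut(\cF_j)\to\Aut(\cF_j^\epsilon)$ to the \emph{left} collar, prove $r$ injective, and compute the cokernel after isolating the summand at the crossing. Two steps, however, fail as written.

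First, injectivity. Your argument only treats endomorphisms that vanish on \emph{both} collars, which is not what $r$ sees. Moreover, ``vanishing on every stalk'' does not imply vanishing here. The components $\Hom(e_*\dsk_Z[1],e_*\dsk_U[2])$ of $\End$ are degree-shifting extension classes, and they act by zero on all stalk cohomology. The paper instead writes $\cF_j\simeq\cE\oplus\cF$, with $\cE$ the minimal summand whose singular support contains the crossing. The front projections of $\SS(\cE)$ and $\SS(\cF)$ are disjoint, so $\SS(\mathcal{H}om(\cE,\cF))$ has no horizontal codirection. Hence $\End(\cF)$, $\Hom(\cE,\cF)$ and $\Hom(\cF,\cE)$ all restrict isomorphically by Lemma~\ref{lem:non_char_def}. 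This settles at once the mixed $\Hom$'s you list as the main obstacle. Only $\Aut(\cE)\to\Aut(\cE^\epsilon)$ remains, and it is computed directly after the deck components $\Hom((\mathbf{d}_m)_*\cE_0,\cE_0)$, $m\neq 0$, are shown to vanish or restrict isomorphically.

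Second, your type B computation uses the wrong local model. At a type B switch the two same-colored eyes are interleaved on each collar, i.e.\ neither nested nor disjoint; this is the modified normality condition. Let $M\cong\dsk^2$ be the stalk beside the crossing. The left restriction is governed by the two distinct lines $\ker f_W\neq\ker f_S$, so there is no $\Hom$ between the eyes and $\Aut(\cE^\epsilon)\cong\Gm^2$. Meanwhile $\Aut(\cE)\cong\Gm$, because preserving three distinct lines forces scalars. The quotient is therefore $\Gm$.

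Your chain $\Gm^2\ltimes\Ga\supset\Gm\ltimes\Ga$ is wrong at both ends. The stabilizer in a Borel of one further transverse line is a two-dimensional torus, not $\Gm\ltimes\Ga$. So your nested model, computed correctly, gives $\Ga$. Indeed, nested on the left with $\ker f_W=\ker f_S\neq\ker f_E$ is exactly a return. The paper avoids this case altogether by cancelling type B switches with GKS moves, which create only a vertical type A switch.

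Two smaller problems. For departures, the $\prec$-relations are not preserved: a departure is precisely where a new relation, hence a new $\Hom$, appears on the right (e.g.\ interleaved $\to$ nested). The fiber is trivial only because $r$ ignores the right collar. For the same reason, your remark that the argument applies equally to the right collar is false: there, departures contribute $\Ga$ and returns contribute nothing. Finally, you have the pictures swapped: vertical type A switches are the \emph{left} two sheaves in \eqref{eq:indecomposable_two} and horizontal ones the right two, although your verbal descriptions are correct.
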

\begin{proof}
    We only need to show the map
    \begin{equation}\label{eq:map of auto}
        \Aut (\cF_{j}) \longrightarrow \Aut(\cF_j^{\epsilon})
    \end{equation}
    is always injective and the quotient has the prescribed form. 

    We can cancel a switch of type B by a sequence of moves in Figure~\ref{fig:killing a B-switch}. 
    Note that this process only creates a new vertical switch of type A, and does not introduce new departures or returns. Thus, we can assume that there are no switches of type B at the beginning.

    \begin{figure}[htbp]
    
    \centering
    \begin{tikzpicture}[scale =.6, line width = 2pt]
       \draw[Blue] (-1, 1) -- (0, 1) -- (3, 0) -- (6, 1) -- (7, 1);
        \draw[blue, line width =1pt] (-1, -1) -- (0, -1) -- (3, 0)-- (6, -1) -- (7, -1);

        \draw[Blue] (-1, -2.5) -- (7, -2.5);
        \draw[blue] (-1, -3) -- (7, -3);

        \fill[Blue, opacity = .4] ( -1, 1) -- (0, 1) -- (3, 0) -- (6, 1) -- (7, 1) -- (7, -2.5) -- (-1, -2.5); 

        \fill[pattern={Lines[angle=-60,distance=1mm,line width=0.3pt]}, pattern color = blue] (-1, -1) -- (0, -1) -- (3, 0) -- (6, -1) -- (7, -1) -- (7, -3) -- (-1, -3); 
        
        \begin{scope}[yscale = 1, shift = {(0, -1.5)}] 
        \draw (-.5, 0) -- (-.5, -.15); 
        \draw (0, 0) -- (0, -.15);
        \draw (.5, 0) -- (.5, -.15);
        \end{scope}

        \begin{scope}[yscale = -1, shift = {(0, 2)}] 
        \draw (-.5, 0) -- (-.5, -.15); 
        \draw (0, 0) -- (0, -.15);
        \draw (.5, 0) -- (.5, -.15);
        \end{scope}

        \draw (-1, -1.5) -- (7, -1.5); 
        \draw (-1, -2) -- (7, -2); 

    \begin{scope}[shift = {(10, 0)} ]
        \draw[Blue] (-1, 1) -- (0, 1) -- (3, 0) -- (6, 1) -- (7, 1);
        \draw[blue, line width =1pt] (-1, -1) -- (0, -1) -- (3, 0)-- (6, -1) -- (7, -1);

        \draw[Blue] (-1, -2.5) -- (7, -2.5);
        \draw[blue] (-1, -3) -- (7, -3);

        \fill[Blue, opacity = .4] ( -1, 1) -- (0, 1) -- (3, 0) -- (6, 1) -- (7, 1) -- (7, -2.5) -- (-1, -2.5); 

        \fill[pattern={Lines[angle=-60,distance=1mm,line width=0.3pt]}, pattern color = blue] (-1, -1) -- (0, -1) -- (3, 0) -- (6, -1) -- (7, -1) -- (7, -3) -- (-1, -3); 

        \draw (-1, -2) -- (0, -2) .. controls (1, -2) and (0, 2) .. (2.5, 2) -- (3.5, 2) .. controls (6, 2) and (5, -2) .. (6, -2) -- (7, -2); 

        \draw (-1, -1.5) -- (0, -1.5) .. controls (1, -1.5) and (0, -3.5) .. (2.5, -3.5) -- (3.5, -3.5) .. controls (6, -3.5) and (5, -1.5) .. (6, -1.5) -- (7, -1.5);
        
        \draw[line width = 1pt] (2.5, 2) -- (2.5, 2.3); 
        \draw[line width = 1pt] (3, 2) -- (3, 2.3); 
        \draw[line width = 1pt] (3.5, 2) -- (3.5, 2.3); 

        \draw[line width = 1pt] (2.5, -3.5) -- (2.5, -3.8); 
        \draw[line width = 1pt] (3, -3.5) -- (3, -3.8); 
        \draw[line width = 1pt] (3.5, -3.5) -- (3.5, -3.8); 
        
    \end{scope}

    \begin{scope}[shift = {(0, -7)} ]
        \draw[Blue] (-1, 1) -- (0, 1) -- (3, 0) -- (6, 1) -- (7, 1);
        \draw[blue, line width =1pt] (-1, -1) -- (0, -1) -- (3, 0)-- (6, -1) -- (7, -1);

        \draw[Blue] (7, -2.5) -- (5, -2.5) -- (4, -1) -- (2, -1) -- (1, -2.5)  -- (-1, -2.5);
        \draw[blue] (-1, -3) -- (7, -3);

        \fill[Blue, opacity = .4] ( -1, 1) -- (0, 1) -- (3, 0) -- (6, 1) -- (7, 1) -- (7, -2.5) -- (5, -2.5) -- (4, -1) -- (2, -1) -- (1, -2.5)  -- (-1, -2.5); 

        \fill[pattern={Lines[angle= -60,distance=1mm,line width=0.3pt]}, pattern color = blue] (-1, -1) -- (0, -1) -- (3, 0) -- (6, -1) -- (7, -1)  -- (7, -3) -- (-1, -3); 

        \draw[line width = .2 pt] (-1, -2) -- (0, -2) .. controls (1, -2) and (0, 2) .. (2.5, 2) -- (3.5, 2) .. controls (6, 2) and (5, -2) .. (6, -2) -- (7, -2); 

        \draw[line width = .2 pt]  (-1, -1.5) -- (0, -1.5) .. controls (1, -1.5) and (0, -3.5) .. (2.5, -3.5) -- (3.5, -3.5) .. controls (6, -3.5) and (5, -1.5) .. (6, -1.5) -- (7, -1.5);

        \end{scope}

        \begin{scope}[shift = {(10, -7)}]

        \draw[line width = .2 pt] (-1, -2) -- (0, -2) .. controls (1, -2) and (0, 2) .. (2.5, 2) -- (3.5, 2) .. controls (6, 2) and (5, -2) .. (6, -2) -- (7, -2); 

        \draw[line width = .2 pt]  (-1, -1.5) -- (0, -1.5) .. controls (1, -1.5) and (0, -3.5) .. (2.5, -3.5) -- (3.5, -3.5) .. controls (6, -3.5) and (5, -1.5) .. (6, -1.5) -- (7, -1.5);

        \draw[blue] (-1, -3) -- (7, -3);

        \draw[blue] (-1, -1) -- (1, -1) -- (3, 0)-- (5, -1) -- (7, -1); 

        \fill[pattern={Lines[angle=-60,distance=1mm,line width=0.3pt]}, pattern color = blue] (-1, -1) -- (1, -1) -- (3, 0) -- (5, -1) -- (7, -1) -- (7, -3) -- (-1, -3);

        \draw[Blue] (-1, -2.5) -- (1, -2.5)  -- (15/8, 9/16) -- (1, 1) -- (-1, 1); 
        \fill[Blue, opacity = .4] (1, 1) -- (-1, 1) -- (-1, -2.5) -- (1, -2.5)  -- (15/8, 9/16);

        \begin{scope}[shift = {(6, 0)}, xscale = -1]
            \draw[Blue] (-1, -2.5) -- (1, -2.5)  -- (15/8, 9/16) -- (1, 1) -- (-1, 1); 
            \fill[Blue, opacity = .4] (1, 1) -- (-1, 1) -- (-1, -2.5) -- (1, -2.5)  -- (15/8, 9/16);
        \end{scope}

        \draw[red] (15/8, 9/16) -- (2, 1) -- (4, 1) -- (33/8, 9/16) -- (3, 0) -- cycle; 
        \fill[red, opacity = .4] (15/8, 9/16) -- (2, 1) -- (4, 1) -- (33/8, 9/16) -- (3, 0); 
        
        \end{scope}
    \end{tikzpicture}

        \caption{Using GKS moves to kill a switch of type B}
        \label{fig:killing a B-switch}
    \end{figure}

    Now let us consider the map~\eqref{eq:map of auto}. Assume \(\cF_j \simeq \cE \oplus \cF\), where $\cE$ is the minimal component of $\cF_j$ whose singular support contains $\Lambda_j^\mathrm{cr}$. For each type of crossing, the corresponding $\cE$ is given in Proposition~\ref{prop:indecomposable}. 
    We have
    \[
    \End(\cF_j) \simeq \begin{pmatrix}
    \End( \cF)  & \Hom(\cF, \cE) \\
    \Hom(\cE, \cF) & \End(\cE)
    \end{pmatrix}
    \]
    Denote \( \cE^\epsilon =  \cE|_{\T^\epsilon_j} \) and \(  \cF^{\epsilon} =  \cF|_{\T^\epsilon_j} \). Note that $\cF$ has the form $\cF \simeq e_* \left(\bigoplus_a \dsk_{U_a}[2] \oplus \bigoplus_b \dsk_{Z_b}[1] \right)$, where $e$ is the universal cover and $U_a$ and $Z_b$ are open and closed sets, respectively. In particular, the restriction $\End(\cF) \rightarrow \End(\cF^\epsilon)$ is an isomorphism. Moreover, since $q(\SS(\cF)) \cap q(\SS(\cE)) = \emptyset$ (recall $q$ is the projection to the base), we have 
    \[
    \SS ( \mathcal{H}om (\cE, \cF))  \subset \SS (\cF) \cup \SS(\cE) 
    \]
    where $ \mathcal{H}om (\cE, \cF) $ is the internal hom (which is a sheaf). In particular, $\SS ( \mathcal{H}om (\cE, \cF)) $
    has no horizontal codirection. By the non-characteristic deformation lemma~\ref{lem:non_char_def}, the restriction map $\Hom (\cE, \cF) \rightarrow \Hom(\cE^\epsilon, \cF^\epsilon)$ is an isomorphism. Similarly, we have $\Hom(\cF, \cE) \xrightarrow{\sim} \Hom(\cF^\epsilon, \cE^\epsilon)$. Therefore, we only need to show the map 
    \begin{equation}\label{eq:map_aut_E}
    \Aut( \cE) \longrightarrow \Aut (\cE^\epsilon)
    \end{equation}
    is an injection, and it has the prescribed quotient. 
    Note that we can write $\cE \simeq e_* \cE_0$ and $ \cE^\epsilon \simeq e_*\cE_0^\epsilon$. Then 
    $\End(\cE) \simeq \bigoplus_{m \in \bZ} \Hom( (\mathbf{d}_m)_*\cE_0, \cE_0) $, where $\mathbf{d}_m$ is the deck transformation of the covering map. 
    If $\cE_0$ is in \textbf{Case III} in the proof of Proposition~\ref{prop:indecomposable}, a direct computation shows that \( \Hom( (\mathbf{d}_m)_* \cE_0,  \cE_0) \simeq 0 \simeq \Hom( (\mathbf{d}_m)_* \cE_0^\epsilon, \cE_0^\epsilon) \) unless \(m = 0\). For \textbf{Case I} and \textbf{Case~II}, a similar non-characteristic deformation argument shows that 
    $\Hom ( (\mathbf{d}_m)_* \cE_0, \cE_0) \rightarrow \Hom ( (\mathbf{d}_m)_* \cE_0^\epsilon, \cE_0^\epsilon) $
    is an isomorphism for \(m \neq 0\). Hence we only need to consider \( \Aut(\cE_0) \rightarrow \Aut(\cE_0^\epsilon)\). 
    Note that $\cE_0$ and $\cE_0^\epsilon$ are sheaves on a small interval times $\bR$. The desired result follows from a direct case-by-case computation. 
\end{proof}

\begin{corollary}\label{cor:fiber_of_Ki}
    The fiber of \( \cK_i \to \cK_i^\epsilon \) is \(\Ga\) for a return, and trivial otherwise. 
\end{corollary}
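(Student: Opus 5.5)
The plan is to deduce Corollary~\ref{cor:fiber_of_Ki} from Lemma~\ref{lem:fiber_seq_of_Moduli} by removing the part of the automorphism groups that is seen by the local systems on the ruling surface. We have
\[
\cK_i = \fib\bigl(\calM_i^\rho \to \moduli{\Loc^1(\Sigma(\rho)_i)}\bigr), \qquad \cK_i^\epsilon = \fib\bigl(\calM_i^{\rho,\epsilon}\to\moduli{\Loc^1(\Sigma(\rho)_i^\epsilon)}\bigr).
\]
These fit into a commutative square of fiber sequences, and the fiber of $\cK_i\to\cK_i^\epsilon$ is the fiber of the induced map between the fibers of
\[
\calM_i^\rho\to\calM_i^{\rho,\epsilon} \quad\text{and}\quad \moduli{\Loc^1(\Sigma(\rho)_i)}\to\moduli{\Loc^1(\Sigma(\rho)_i^\epsilon)}.
\]
The first of these fibers is given by Lemma~\ref{lem:fiber_seq_of_Moduli}. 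So the task reduces to computing the second fiber and then comparing the two.

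The second fiber is purely topological. $\Sigma(\rho)_i^\epsilon$ is a disjoint union of strips, one for each eye over $\T_i^\epsilon$, so $\moduli{\Loc^1(\Sigma(\rho)_i^\epsilon)}=\prod B\Gm$, with one factor per eye. The surface $\Sigma(\rho)_i$ is obtained from the strips by the local picture at the single crossing.
\begin{itemize}
\item At a switch of either type, two strips are joined by a half-twisted band, so two components merge into one. The map becomes $B\Gm\to B\Gm\times B\Gm$, the diagonal, whose fiber is $\Gm$.
\item At an overlap, departure or return, no band is glued, and each eye just continues through the crossing. The components of $\Sigma(\rho)_i$ and $\Sigma(\rho)_i^\epsilon$ are then in bijection, and the restriction map is an isomorphism on each factor, so the fiber is trivial.
\end{itemize}

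Next I would check, case by case, that the map on fibers matches the terms with the same source.
\begin{itemize}
\item For a vertical switch of type A or a switch of type B, the $\Gm$ in Lemma~\ref{lem:fiber_seq_of_Moduli} comes from $\Aut(\cE)$ of the indecomposable sheaf of Proposition~\ref{prop:indecomposable}. Restricting to $\T_i^\epsilon$, it maps to the diagonal scaling of the two eye summands. This is exactly the diagonal $\Gm$ above, so the induced map of fibers is an isomorphism and the fiber of $\cK_i\to\cK_i^\epsilon$ is trivial.
\item For a horizontal switch, Lemma~\ref{lem:fiber_seq_of_Moduli} gives trivial sheaf-side fiber while the topological fiber is $\Gm$. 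Here I would recheck which scalings extend. The two eyes come from one indecomposable piece of type~\eqref{eq:indecomposable_two}, whose automorphisms are scalars, so the restriction is a diagonal $\Gm$. The comparison again cancels, once one is careful about the gerbe factors.
\item For a return, the sheaf-side fiber is $\Ga$, coming from the extra unipotent automorphism of $\cE$, and the topological fiber is trivial. So the fiber of $\cK_i\to\cK_i^\epsilon$ is $\Ga$.
\item For departures and overlaps, both fibers are trivial.
\end{itemize}

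The main obstacle is the compatibility check: that the $\Gm$ produced by Lemma~\ref{lem:fiber_seq_of_Moduli} maps isomorphically, rather than through some finite cover or degenerate map, onto the topological $\Gm$ coming from merged components. For this I would use the explicit descriptions in Proposition~\ref{prop:indecomposable}, where the stalks in the merged region are $\dsk$ or $\dsk^2$ with three distinct kernels or images. Scaling the whole indecomposable piece $\cE$ restricts to the same scalar on both eye summands, which is the diagonal. The unipotent part of $\Aut(\cE)$ is invisible to local systems, which handles the return case.
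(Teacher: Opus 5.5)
Your reduction is the right one: the fiber of $\cK_i\to\cK_i^\epsilon$ is the fiber of the induced map between the fiber from Lemma~\ref{lem:fiber_seq_of_Moduli} and the local-system fiber. This is exactly how the corollary is meant to follow from the lemma, and your treatment of vertical type-A switches, type-B switches, overlaps, departures and returns is correct. The horizontal-switch case, however, rests on a false claim.

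Recall that $\T_i^\epsilon=(a_i-\epsilon,a_i+\epsilon)\times S^1$ is only the \emph{left} overlap of $\T_i$. So both the lemma and the map $\cK_i\to\cK_i^\epsilon$ concern restriction to the left end. By condition (3)(b) of Definition~\ref{def:rulings on T}, horizontal switches are precisely where disk eyes begin and end. At each one, one incident eye ends and the other begins; in the corresponding indecomposable (the last two sheaves in \eqref{eq:indecomposable_two}) there is one colored triangle on each side of the crossing. Hence only one of the two eyes meets $\T_i^\epsilon$. The component of $\Sigma(\rho)_i$ carrying the half-twisted band restricts to a single strip of $\Sigma(\rho)_i^\epsilon$, so the topological map is $B\Gm\to B\Gm$, an isomorphism. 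Your assertion that every switch produces the diagonal $B\Gm\to B\Gm\times B\Gm$ holds only for vertical type-A and type-B switches, where both eyes cross the left end.

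Nor can being "careful about the gerbe factors" reconcile the mismatch you found. Suppose $\Aut(\cF_i)\to\Aut(\cF_i^\epsilon)$ were an isomorphism (trivial sheaf-side fiber), while $\moduli{\Loc^1(\Sigma(\rho)_i)}\to\moduli{\Loc^1(\Sigma(\rho)_i^\epsilon)}$ were induced by a diagonal $\Gm\hookrightarrow\Gm\times\Gm$. Then by commutativity $\Aut(\cF_i^\epsilon)$ would map into a proper subtorus of $\prod_{i_j}\Gm$. That contradicts the fact that each eye summand of $\cF_i^\epsilon$ can be rescaled independently, which is the surjectivity behind $\cK_i^\epsilon\simeq BG_i^\epsilon$. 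The mismatch was a signal that the topological computation was wrong, not something to be absorbed.

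With the correction, both fibers are trivial at a horizontal switch, since $\Aut(\cE)=\Gm\to\Aut(\cE^\epsilon)=\Gm$ is an isomorphism, and this gives the trivial fiber the corollary asserts.

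A smaller imprecision concerns returns. There the $\Ga$ is not an automorphism of $\cE$. It is the Hom between the two nested eye summands of $\cF_i^\epsilon$ that fails to extend over $\T_i$. It lies in the unipotent kernel $G_i^\epsilon$, and that is why it survives in the fiber of $\cK_i\to\cK_i^\epsilon$.
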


Now we are ready to give the proof of Theorem~\ref{thm:ruling_decomposition}. 

\begin{proof}[Proof of Theorem~\ref{thm:ruling_decomposition}] 
We only need to show that $\cK \simeq \bC^{ r(\rho)}$.

Denote \({}_0\cK_{j} \coloneqq \lim \left(\prod_{i=0}^{j-1} \cK_i \rightrightarrows \prod_{i = 0}^{j} \cK_i^\epsilon \right) \).
Following \cite[Definition 2.19]{STWZ}, we consider the framed version \({}_0\cK_j^{fr}\) and  \(\cK^{fr}\)  defined by
\begin{equation*}
\begin{tikzcd}
    {}_0\cK_j^{fr} \ar[r] \ar[d] & {}_0\cK_j \ar[d]\\
    \Spec \bC \ar[r] & \cK_0^\epsilon
    \arrow[from=1-1, to=2-2, phantom, "\lrcorner", very near start]
\end{tikzcd}
\qquad
\begin{tikzcd}
    \cK^{fr} \ar[r] \ar[d] & \cK \ar[d]\\
    \Spec \bC \ar[r] & \cK_0^\epsilon
    \arrow[from=1-1, to=2-2, phantom, "\lrcorner", very near start]
\end{tikzcd}
\end{equation*}
Note that since $\cK_0^\epsilon \simeq B G_0^\epsilon$, as a variety we have \(\cK^{fr} \simeq G_0^\epsilon\times \cK\). In other words, $\cK^{\mathrm{fr}}$ is a trivial $G_0^\epsilon$ bundle over $\cK$.

For each \(j \ge 0\), we have a fiber product:
\[
\begin{tikzcd}
    {}_0\cK^{fr}_{j+1} \arrow[r] \arrow[d] \arrow[dr, phantom, "\lrcorner", very near start]  & \cK_{j} \arrow[d]\\
      {}_0\cK_{j}^{fr} \arrow[r] & \cK_{j}^\epsilon 
\end{tikzcd}
\]
Thus, we can start with \({}_0\cK_0^{fr} = \{*\}\) and construct \({}_0\cK_{n}^{fr}\) inductively. 
By Corollary~\ref{cor:fiber_of_Ki}, we have \({}_0\cK^{fr}_{n} \cong \bC^{r(\rho)}\). Then we can obtain \(\cK^{fr}\) by gluing:
\[
\begin{tikzcd}
    \cK^{fr} \arrow[r] \arrow[d] \arrow[dr, phantom, "\lrcorner", very near start]  & \cK_n^\epsilon  \arrow[d]\\
      {}_0\cK_{n}^{fr} \arrow[r] & \cK_{n}^\epsilon \times \cK_{0}^\epsilon .
\end{tikzcd}
\]
Recall that the subscript is understood modulo \(n\), hence \(\cK_n^\epsilon = \cK^\epsilon_0\), and \(\cK^\epsilon_n \longrightarrow \cK_n^\epsilon \times \cK_0^\epsilon \) is the diagonal map.  This implies 
\begin{align*} 
\fib (\cK^{fr} \rightarrow {}_0\cK_{n}^{fr}) &\cong \fib \left( \cK_n^\epsilon {\longrightarrow} \cK^\epsilon_n \times \cK^\epsilon_n \right) \\
&\cong \fib \left( B G_n^\epsilon \stackrel{\Delta}{\longrightarrow} B G_n^\epsilon \times B G_n^\epsilon \right) \\
&\cong G_n^\epsilon.
\end{align*}
Thus we get \(\cK^{fr} \cong \bC^{r(\rho)} \times G_0^\epsilon\), which implies that \(\cK \cong \cK^\mathrm{fr} / G_0^\epsilon \cong \bC^{r(\rho)}\).  
\end{proof}

We make the following observation.

\begin{proposition}\label{prop:g_plus_r}
    For any ruling $\rho$, let $g(\rho)$ be the genus of \(\Sigma(\rho)\), and let $r(\rho)$ be the number of returns. Then
    \[
    g_\triangle = g(\rho) + r(\rho), 
    \]
    where $g_\triangle = \#  (\mathrm{Int }\ \triangle \cap \bZ^2)$.  
\end{proposition}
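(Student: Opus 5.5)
The plan is to compute $\chi(\Sigma(\rho))$ combinatorially and compare it with the number of crossings of $\Lambda$. Since $\Sigma(\rho)$ is orientable with boundary identified with $\Lambda$, it has $b=|\pi_0(\Lambda)|$ boundary circles, and $b$ equals the lattice perimeter of $\triangle$. So $\chi(\Sigma(\rho)) = 2-2g(\rho)-b$. On the other side, the number of crossings of $\Lambda$ in $\T$ should be $\sum_{i<j}|\det([\gamma_i],[\gamma_j])| = 2\,\mathrm{Area}(\triangle)$. This holds when $\Lambda$ is in minimal position, e.g.\ the zigzags of a consistent dimer or the FLTZ skeleton after the small perturbation fixed above; I will assume this and note that the ruling set is defined for this fixed $\Lambda$. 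Pick's theorem then gives $\#\{\text{crossings}\} = 2g_\triangle + b - 2$. Comparing the two formulas, the proposition reduces to the identity
\[
\chi(\Sigma(\rho)) = -\#\{\text{crossings}\} + 2\,r(\rho),
\]
that is, every crossing contributes $-1$ to $\chi$ except returns, which contribute $+1$.

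To prove this identity I would compute $\chi(\Sigma(\rho))$ by sweeping left to right, using the strip decomposition $\T=\bigcup_j \T_j$ from the proof of Lemma~\ref{lem:characterization of Sh_0 via restriction on S1}. Over each $\T_j^\epsilon$ the surface $\iota^{-1}(\T_j^\epsilon)$ is a disjoint union of rectangles, one for each eye crossing the vertical circle, so it has $\chi$ equal to the number of eyes there. Over $\T_j$ it is a union of rectangles plus the local model at the one crossing. By additivity (Mayer--Vietoris for the cover by the $\Sigma(\rho)_j$ and $\Sigma(\rho)_j^\epsilon$, exactly the gluing used for \eqref{eq:gluing_of_sheaf_moduli}),
\[
\chi(\Sigma(\rho)) = \sum_j \big(\chi(\Sigma(\rho)_j) - \chi(\Sigma(\rho)_j^\epsilon)\big).
\]
So it suffices to check, for each local type of crossing in Definition~\ref{def:rulings on T}, that $\chi(\Sigma(\rho)_j) - \chi(\Sigma(\rho)_{j}^\epsilon)$ equals $-1$, or $+1$ for a return. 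Here I would go through the list in the order overlaps, departures, returns, switches of type A (vertical and horizontal, including the ones where a disk eye is born or dies), and switches of type B. For each, I compare the number of eyes on the left with the number of eyes and twisted strips in the local piece.

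For example, at a type A switch where a disk eye begins, the piece adds a new rectangle but also a half-twisted strip gluing it to the neighboring eye. Careful bookkeeping of which rectangles of $\Sigma(\rho)_j$ restrict to which on each side should give $-1$. At a departure or overlap, the two eyes swap which boundary arcs they use, which costs an extra cut in the local cell decomposition and again gives $-1$. At a return, the arcs reconnect the other way, giving $+1$.

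The main obstacle is this local case analysis. The sign conventions for which boundary paths pair at departures, returns and overlaps are subtle, and the balancing conditions interact with the intervals $\T_j^\epsilon$. If the direct count becomes unwieldy, my fallback is to verify the formula for the standard ruling of Example~\ref{ex:o22stdruling}. There $r=0$ and the skeleton is the bipartite graph $\Gamma$ with $b_1(\Gamma) = 2g_\triangle + b - 1$ (Goncharov--Kenyon), so $g=g_\triangle$. I would then show that the local modifications relating rulings (filling a hole as in Example~\ref{ex:oablocal}, trading a switch for a return, and so on) change $g$ and $r$ in opposite directions by the same amount. A third, independent consistency check comes from Theorem~\ref{thm:ruling_decomposition}: the stratum of the standard ruling is open of dimension $2g_\triangle$, and the Hirzebruch polynomial of the smooth $2g_\triangle$-dimensional space $|\calM_{\triangle,\eta}|$ should force each stratum $(\bC^*)^{2g}\times\bC^{r}$ to carry weight consistent with $g+r=g_\triangle$.
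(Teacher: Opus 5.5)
Your reduction rests on two claims that are both false, and your proposed local case analysis would not repair them. First, the number of crossings of $\Lambda$ is $\sum_{i<j}|\det(v_i,v_j)|$, and this is not $2\,\mathrm{Area}(\triangle)$. Only the \emph{signed} sum $\sum_{i<j}\det(v_i,v_j)$, with the edges in counterclockwise order, equals $2\,\mathrm{Area}(\triangle)$. For a unit-edge $(k,n)$ triangle the geodesic $\Lambda$ has $3n$ crossings while $2\,\mathrm{Area}(\triangle)=n$; this is why Proposition~\ref{prop:triangulerulingcount} divides the number of crossings by $3$. Consequently your target identity $\chi(\Sigma(\rho))=-\#\{\text{crossings}\}+2r(\rho)$ is false. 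For $\cO_{\bP^2}(1)$ the unique ruling consists of two triangular eyes joined at three switches, so $\chi=-1$ and $r=0$, yet there are $3$ crossings. The quantity that equals $2\,\mathrm{Area}(\triangle)$ is the number $C$ of \emph{same-codirection} crossings. To see this, split the edges into those pointing left and those pointing right with respect to $v$. Same-codirection crossings occur only within each group, where the determinants are nonnegative. The missing cross term $\det\bigl(\sum_{\mathrm{left}}v_i,\sum_{\mathrm{right}}v_s\bigr)$ vanishes because the edges sum to zero.

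Your guessed local contributions are also wrong. $\Sigma(\rho)$ is the disjoint union of the eyes glued \emph{only} by half-twisted strips at switches, and the number of eye pieces over a vertical slice is the same everywhere. So in your sweep the contributions are as follows:
\begin{itemize}
\item An overlap, a departure or a return contributes $0$. No eye is created or destroyed and nothing is glued. At a departure the two eyes keep their own boundary strands and pass from interleaved to nested; nothing is swapped.
\item A horizontal type A switch contributes $0$: one disk eye ends, another begins, and the two are joined by one strip.
\item A vertical type A switch or a type B switch contributes $-1$.
\end{itemize}
The sweep therefore gives $\chi(\Sigma(\rho))=-(\text{number of switches at same-codirection crossings})$, and returns are locally invisible. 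To reach $\chi=-C+2r(\rho)$ you need an idea that is absent from your proposal: the global equality $d(\rho)=r(\rho)$ between departures and returns. The paper obtains it from $\dim\End(\cF_j^\epsilon)$, which depends only on the relative position of the eyes over the slice. This dimension rises by one at each departure, drops by one at each return, and is otherwise constant, so its net change around the torus is zero. After removing type B switches by GKS moves, one then has $C=V+d+r=V+2r$ and $2-2g-b=-V$, and Pick's theorem finishes the proof.

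Your fallbacks do not substitute for this. That every ruling is connected to the standard one by moves trading genus for returns is unproven. The $\chi_{-y}$ computation cannot isolate individual strata, and it itself relies on this proposition.
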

\begin{proof}
    Again we can assume there are no switches of type B. 

    Let us fix some notation:
    \begin{itemize}
        \item $d(\rho)$: the number of departures. 
        \item $C$: the number of  same-codirection crossings. 
        \item $V(\rho)$: the number of vertical switches. 
        \item $H(\rho)$: the number of horizontal switches. 
        \item $E_d(\rho)$: the number of disk eyes. 
        \item $B = |\pi_0(\Lambda)| = | \partial \triangle \cap \bZ^2|$. 
    \end{itemize}
    
    First note that \(d(\rho) = r(\rho)\). This is because, by~\eqref{eq:End_F_epsilon}, the dimension of $\End(\cF^\epsilon_j)$ increases by $1$ when passing through a departure,  decreases by $1$ when passing through a return, and remains the same in other cases. 

    One also observes that $E_d(\rho) = H(\rho)$, because each disk eye is connected to two horizontal switches, and each horizontal switch is adjacent to two disk eyes. 

    We have 
    \[
     2 - 2 g(\rho) - B = \chi (\Sigma(\rho) ) = E_d(\rho) - (V(\rho) + H(\rho) ) = - V(\rho)
    \]
    and (by Remark~\ref{rmk:crossing_types})
    \[
    C = V(\rho) + d(\rho) + r(\rho) = V(\rho) + 2 r(\rho). 
    \]
    Hence
    \[ 
        r(\rho) + g(\rho) = \frac{C - B +2}{2}. 
    \]

    On the other hand, we claim that
    \[
     C = 2 \,\mathrm{Area} (\triangle) .
    \]
    Since
    $
    B = \# (\partial \triangle \cap \bZ^2)
    $,
    by Pick's theorem, the desired result follows from
    \[
    g_\triangle = \mathrm{Area} (\triangle) - \frac{B}{2} + 1 = \frac{C - B + 2}{2}. 
    \]

    Now we prove the claim. Let the edges of \(\triangle\) be $\{v_1, \dots, v_n\}$. By assumption, none of the \(v_i\) is vertical. We assume that \(v_1, \dots, v_k\) point to the left, and \(v_{k+1}, \dots, v_n\) point to the right. Then
    \begin{align*}
         C 
         &= \sum_{1\le i < j\le k} \det (v_i, v_j) + \sum_{ k+1\le s < t \le n} \det (v_s , v_t)\\
         &= \sum_{1\le i < j\le k} \det (v_i, v_j) + \sum_{ k+1\le s < t \le n} \det (v_s , v_t) + \det \left( \sum_{i\le k} v_i, \sum_{s\ge k+1} v_s\right)\\
         &= \sum_{1\le i <j \le n} \det(v_i, v_j) = 2 \mathrm{Area}(\triangle). 
    \end{align*}
    The last equality is the Gauss area formula.  
\end{proof}

The following proposition applies to an infinite class of examples including
linear systems with generic curves of
arbitrary genus.  See Figure \ref{fig:example of ruling} in Section
\ref{sec:rulings} for an illustration,
and Section \ref{sec:unitedgetriangles} for more discussion.

\begin{proposition}
\label{prop:triangulerulingcount}
Let \(\triangle\) be a triangle with primitive edges.  Assume that the corresponding Legendrian \(\Lambda\) is a union of geodesics coming from a dimer model.  Then $$N_\triangle^\mathrm{rul} = 2\cdot \mathrm{Area}(\triangle),$$
where area is taken in lattice units.
\end{proposition}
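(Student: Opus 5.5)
Since $\triangle$ has three primitive edges, $\Lambda$ has exactly three components, and by hypothesis they are closed geodesics $\gamma_1,\gamma_2,\gamma_3$ on $\T$ with homology classes the rotated edge vectors $v_1,v_2,v_3$ (so $v_1+v_2+v_3=0$). Two such geodesics meet in $|\det(v_i,v_j)|$ points, and all three determinants equal $2\,\mathrm{Area}(\triangle)=:A$. There are therefore $3A$ crossings in total. Here $B=3$, so by Pick $g_\triangle = (A-1)/2$, and by Proposition~\ref{prop:g_plus_r} a rational ruling is exactly one with $r(\rho)=g_\triangle$. The plan is to enumerate rulings directly. I will pick the direction $v$ so that one geodesic, say $\gamma_3$, is nearly horizontal, and analyze which crossing types can occur.

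\textbf{Step 1: classify crossings.} Using Remark~\ref{rmk:crossing_types}, sort the crossings into those of same codirection and those of opposite codirection. The proof of Proposition~\ref{prop:g_plus_r} shows there are $C=A$ same-codirection crossings. Because there are only three geodesics, they come from exactly one pair, say $\gamma_1\cap\gamma_2$, namely the pair whose vectors point the same horizontal way. The other $2A$ crossings are opposite-codirection; at these a ruling is either an overlap or a horizontal switch of type A.

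\textbf{Step 2: use the genus formula.} With $B=3$, the formula $2-2g-B=-V$ gives $V=2g+1$. For a rational ruling, $V=1$ and $r=d=(A-1)/2$. So among the $A$ crossings of $\gamma_1\cap\gamma_2$, exactly one is a vertical switch and the rest split evenly into departures and returns (no type B switches after the reduction used earlier).

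\textbf{Step 3: count.} Each eye is bounded by an up-pointing and a down-pointing path, so the eye structure is governed by how the pieces of $\gamma_1,\gamma_2$ pair with $\gamma_3$ (or with each other) between consecutive crossings. Since $\eta$ is generic, $\Sigma(\rho)$ must be connected. The aim is to show that once the unique vertical switch is placed at one of the $A$ crossings of $\gamma_1\cap\gamma_2$, the rest of the ruling is forced. Propagating left to right from that crossing, the normality and balancing conditions (Definition~\ref{def:rulings on T}) determine at each subsequent same-codirection crossing whether it is a departure or a return, and at each opposite-codirection crossing whether it is an overlap or a horizontal switch. The horizontal switches are what create the disk eyes, and their number $H=E_d$ is also forced. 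Conversely, each choice of vertical crossing should yield a valid ruling, as checked by verifying periodicity on $\T$ and the balancing condition at $\frakt$. This gives $N^{\mathrm{rul}}_\triangle = A = 2\,\mathrm{Area}(\triangle)$.

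\textbf{Main obstacle.} The hard part is the forcing/uniqueness argument in Step 3, together with the consistency check that the propagation closes up around the torus. I would first try an $SL_2(\bZ)$ normalization with $v_3=(k,0)$ and $v_1,v_2$ having vertical components $\pm1$, so that $\gamma_3$ is a single horizontal circle and $\gamma_1,\gamma_2$ are steep lines. Then the crossings of $\gamma_1$ and $\gamma_2$ with $\gamma_3$ become equally spaced, and the local rules become a cyclic combinatorial pattern. In this setting, a cyclic rotation argument should show that translating the vertical switch along $\gamma_1\cap\gamma_2$ permutes the rulings freely, so that each crossing gives exactly one ruling. As a sanity check, the case $A=1$ (the $\bP^2$ line, with $g=0$) should have exactly one ruling, and the case $A=3$ should reproduce $N_\triangle=3$ for the relevant triangle.
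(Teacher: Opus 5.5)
Your Steps 1 and 2 are correct and match the paper's bookkeeping. The $A=2\,\mathrm{Area}(\triangle)$ same-codirection crossings all lie on one pair $\gamma_1\cap\gamma_2$, and a rational ruling has $V=1$, $H=E_d$ and $r=d=(A-1)/2$. But these only count switch types. Step 3 is the entire content of the proposition, and it is not carried out.

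The mechanism you describe also would not work as stated. The local rules of Definition~\ref{def:rulings on T} leave a genuine choice at every crossing: overlap versus horizontal switch, or departure/return versus vertical switch. The rationality constraint $V=1$ is global, so nothing is ``forced'' by propagating left to right from the vertical switch. You would still have to exclude annular eyes and configurations with many disk eyes.

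The normalization you propose for making this tractable does not exist for $A>1$. If $v_1,v_2$ have vertical components $\pm1$ and $v_3=(k,0)$, then $A=|\det(v_1,v_3)|=|k|$, and primitivity of the third edge forces $|k|=1$. With $v_3$ primitive and horizontal, the other two classes have vertical components $\pm A$, as in the model $\mathrm{conv}\{(0,0),(1,0),(k,n)\}$ of Section~\ref{sec:unitedgetriangles}.

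The missing idea is a global shape argument via the skeleton. Since $\Sigma(\rho)$ is a three-punctured sphere, $\chi(\Gamma(\rho))=-1$. Each eye is bounded by straight geodesic segments that turn only at switches, so every eye meets at least three switches. If $\Gamma(\rho)$ has $k$ vertices, all of degree at least $3$, then $-1=k-\tfrac12\sum_i\deg v_i\le -k/2$, which forces $k=2$ with both vertices trivalent. In your own notation: there are $1+E_d$ switches, and $2(1+E_d)\ge 3E_d$ gives $E_d\le 2$.

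An annular eye would already contribute at least three trivalent vertices, so there are none. Hence $\rho$ consists of one blue and one red triangular disk eye sharing their three corners as switches. The two triangles' boundaries together cover each $\gamma_i$ exactly once, and the balancing condition forces equal areas. So the triangles are congruent, with edges half the length of each $\gamma_i$. Such a pair is determined by any one corner, and every crossing is the corner of exactly one such pair.

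The paper then divides the $3A$ crossings by the three corners per ruling. Your idea of indexing by the unique vertical switch is a valid alternative that gives $A$ directly, since exactly one corner lies on the same-codirection pair $\gamma_1\cap\gamma_2$. But it only works once this triangle structure has been established.
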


\begin{proof}
    In this case, \(\Lambda = \coprod_{i=1}^3 S^1 \). Hence, for a rational ruling \(\rho\), the surface \(\Sigma(\rho)\) is a \(3\)-punctured sphere, and its skeleton satisfies \(\chi (\Gamma(\rho)) = -1\).  If \(\Gamma(\rho)\) has \(k\) vertices, denoted by \(v_1, \dots, v_k\), then
\[
-1 \;=\; k - \frac{1}{2}\sum_i \deg v_i  
   \;\le\; k - \frac{3k}{2}  
   \;=\; -\frac{k}{2}, 
\]
which forces \(k = 2\), and each vertex must have degree \(3\).

Note that in this case, the boundary components of each eye are zig-zag paths of geodesics, and every eye must be adjacent to at least \(3\) switches. Therefore, if there exists an annular eye, the number of vertices of \(\Gamma\) would be at least \(3\), which leads to a contradiction.

Hence, the only possibility is that \(\rho\) has two contractible eyes, both triangular, and connected along three switches. By the balancing condition, the areas of these two triangles must be the same, and the lengths of their edges are half the lengths of the corresponding edges of \(\triangle\). Fixing a crossing, there is a unique pair of such triangles meeting along this crossing, and hence it determines a rational ruling. Conversely, every rational ruling consists of a pair of blue and red triangles meeting along \(3\) distinguished crossings. Therefore, the total number of rational rulings is the number of crossings divided by \(3\), which is equal to twice the area of the triangle.

\end{proof}

\subsection{The ruling polynomial and refined invariants.} We say a ruling $\rho$ is of genus $g$ if $\Sigma(\rho)$ has genus $g$.  
Define 
\[
N_{\triangle, g} \coloneqq \# \{ \rho \in \mathfrak{R}(\Lambda) \mid g(\rho) = g \}
\]
and the ruling polynomial
\[
R_\triangle(z) \coloneqq \sum_g N_{\triangle, g} z^{2g}
\]

Following \cite{BlockGottsche:2016:RefinedCurveCounting}, we define refined tropical counting:
\[
N_\triangle(q) = \sum_{T \in \cT_\triangle} \prod_{V\in V(T)} [m_V]_q. 
\]
Here $\cT_\triangle$ is the set of tropical trees with fixed boundary conditions as in Section~\ref{sec:tropical}, $V(T)$ is the set of vertices of a tropical tree $T$, $m_V$ is the multiplicity at the vertex $V$, and $ [n]_q = \frac{q^{n/2} - q^{-n/2}}{q^{1/2} - q^{-1/2}}$ is the quantum integer.  

\begin{conjecture}\label{conj:Ruling_refined}
    Setting $z = q^{1/2} - q^{-1/2}$, we have
    \[
    R_\triangle(z) = \N_\triangle(q). 
    \]
\end{conjecture}

    Setting $q=1$ in this conjecture recovers Theorem~\ref{thm:rulings}. 

    With programming help from ChatGPT, we were able to count rulings and tropical curves using Python. Conjecture~\ref{conj:Ruling_refined} is confirmed in many cases, including all the Newton polygons in Table~\ref{table:data}, $\cO_{\bP^1\times \bP^1}(2,4)$, $\cO_{\bP^2}(4)$, and $\cO_{\bP^2}(5)$. In particular, for $\cO_{\bP^2}(5)$, we have\footnote{In this particular case, the formula for $N_\triangle(q)$ stated in \cite[Section 5]{Blomme} appears to be incorrect. Applying the recursion relation in \cite[Theorem 3.4]{Blomme} instead yields the formula given here.}
    \begin{align*}
    R_\triangle(z) & = 25875+18165z^2+7050z^4+1750z^6 + 275z^8+25z^{10}+z^{12} \\
    & = q^6 + 13q^5 + 91 q^4 + 455 q^3 + 1745 q^2 + 5273 q + 10719\\
    & + 5273q^{-1}+1745q^{-2}+455q^{-3}
    +91q^{-4}+13q^{-5}+q^{-6} 
    = N_\triangle(q). 
    \end{align*}
These calculations provide good evidence for the conjecture.

    The refined curve counting has a motivic interpretation \cite{GottscheShende:2014:RefinedCurveCounting, Nicaise-Payne-Schroeter}.
    Following \cite{Nicaise-Payne-Schroeter}, we consider the Hirzebruch genus, denoted by \(\chi_{-y}\). For the definition we refer to \cite{Nicaise-Payne-Schroeter}. We use the fact that \(\chi_{-y}\) is additive, and 
    \[
        \chi_{-y} \left(\bC^r \times (\bC^*)^{2g}\right) = y^r(y-1)^{2g}. 
    \]

    \begin{proposition}\label{prop:chi=R}
    Setting $  z = y^{1/2} - y^{-1/2} $, we have
        \[
        \chi_{-y} ( |\calM_{\triangle, \eta}| ) = y^{g_\triangle} R_\triangle (z). 
        \]
    \end{proposition}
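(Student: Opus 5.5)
The plan is to combine the ruling decomposition \eqref{eq:ruling decomposition} with Theorem~\ref{thm:ruling_decomposition}, use additivity of $\chi_{-y}$, and then eliminate $r(\rho)$ by Proposition~\ref{prop:g_plus_r}.

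\textbf{Step 1: stratify.} By \eqref{eq:ruling decomposition} the coarse space $|\calM_{\triangle,\eta}|$ is a finite disjoint union of the locally closed strata $|\calM^\rho_{\triangle,\eta}|$, indexed by the rulings $\rho\in\mathfrak R(\Lambda)$. Finiteness is clear: $\widetilde\Lambda$ has finitely many crossings modulo $\bZ^2$, and a ruling is determined by finitely many local choices.

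\textbf{Step 2: evaluate each stratum.} Theorem~\ref{thm:ruling_decomposition} gives $|\calM^\rho_{\triangle,\eta}|\simeq(\bC^*)^{2g(\rho)}\times\bC^{r(\rho)}$. The stated formula for $\chi_{-y}$ of such a product, together with additivity, yields
\[
\chi_{-y}(|\calM_{\triangle,\eta}|)=\sum_{\rho}y^{r(\rho)}(y-1)^{2g(\rho)}.
\]
The point needing care is that additivity requires an honest stratification of the variety into locally closed subvarieties. For this I would argue that the ruling associated to a sheaf (Corollary~\ref{cor:sheaf_to_ruling}) is read off from the decompositions of restrictions to the strips $\T_j^\epsilon$. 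These decompositions are governed by ranks of maps between stalks, which are semicontinuous, so each stratum is constructible. Within the Grothendieck ring it then suffices that the strata partition the space into constructible pieces, and each is a variety isomorphic to the stated product. I expect this constructibility point to be the only real obstacle; everything else is bookkeeping.

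\textbf{Step 3: substitute.} Proposition~\ref{prop:g_plus_r} gives $r(\rho)=g_\triangle-g(\rho)$. Hence each term equals
\[
y^{g_\triangle}\,y^{-g(\rho)}(y-1)^{2g(\rho)}=y^{g_\triangle}\bigl((y-1)/y^{1/2}\bigr)^{2g(\rho)}=y^{g_\triangle}\bigl(y^{1/2}-y^{-1/2}\bigr)^{2g(\rho)}.
\]
Grouping the rulings by genus then gives
\[
\chi_{-y}(|\calM_{\triangle,\eta}|)=y^{g_\triangle}\sum_g N_{\triangle,g}\,z^{2g}=y^{g_\triangle}R_\triangle(z)
\]
with $z=y^{1/2}-y^{-1/2}$, as claimed.
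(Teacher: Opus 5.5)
Your proof is correct and is essentially the paper's argument: use additivity of $\chi_{-y}$ over the ruling decomposition, whose strata are $(\bC^*)^{2g(\rho)}\times\bC^{r(\rho)}$, and then substitute $r(\rho)=g_\triangle-g(\rho)$ from Proposition~\ref{prop:g_plus_r}. Your remark that the strata are constructible, so that additivity applies, spells out a point the paper leaves implicit; the route is otherwise the same.
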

    
    \begin{proof}
    This follows from the ruling decomposition and Proposition~\ref{prop:g_plus_r}:
    \begin{align*}
        \chi_{-y}\left( |\calM_{\triangle, \eta}| \right) &= \sum_\rho y^{r(\rho)} (y-1)^{2 g(\rho)}\\
        &=  \sum_\rho y^{g_\triangle}(y^{1/2} - y^{-1/2})^{2 g(\rho)} \\
        &= y^{g_\triangle} R_\triangle(z).
    \end{align*}
    \end{proof}

    As we have discussed before, $|\calM_{\triangle, \eta}|$ (denoted as \(\cM^{\mathrm{Sh}}\) in Section~\ref{sec:defs}) is isomorphic to the relative Jacobian $|\mathcal J_{\triangle, \xi}|$ (denoted as $\cM^\mathrm{Coh}$ in Section~\ref{sec:defs}).  
    Therefore, Conjecture~\ref{conj:Ruling_refined} is equivalent to
    \begin{equation}
    \label{eq:N=chi}
    N_{\triangle} (q) = q^{-g_\triangle} \chi_{-q} ( |\mathcal{J}_{\triangle, \xi}| ) . 
    \end{equation}
    This can be viewed as a variant of~\cite[Conjecture 1.1]{Nicaise-Payne-Schroeter}.

\begin{remark}
    In \cite{Bousseau:2019:TropicalRefined}, it is shown that the refined tropical counting corresponds to higher genus log GW invariants. More precisely, applying \cite[Theorem 6]{Bousseau:2019:TropicalRefined}, we get
    \[
    N_{\triangle}(q) =  \left(2 \sin \frac{\hbar}{2} \right )^{2 - b} \sum_{g\ge 0 } N_{\triangle, g}^{\log, \lambda}   \hbar^{2 g + b-2},
    \]
    where
    $b = | \partial \triangle \cap \bZ^2|$, $q = e^{i\hbar}$, and $N_{\triangle, g}^{\log, \lambda}$ is the genus $g$ log GW invariants with $\lambda$-class insersion. See, for example, ~\cite{Bousseau:2019:TropicalRefined} for the definitions. 
    Hence, Conjecture~\ref{conj:Ruling_refined} implies that the ruling decomposition would recover higher genus curve counting information. 
\end{remark}

\subsection{The gonality gap condition}
Take \(x = z^{-2}\) and $A_\triangle(x) = x^{g_\triangle} R_\triangle(z)$, i.e.
\[
A_\triangle(x) = \sum_{\delta \ge 0} N_{\triangle, g_\triangle - \delta} x^\delta.
\]
Realizing $\Lambda$ as the zig-zag paths of a bipartite graph, one finds that 
$N_{\triangle, g_{\triangle} - 1}$ is the number of ways to fill in a ``hole''. Hence $N_{\triangle, g_{\triangle} - 1} = \#\{\text{alternating regions}\} = 2 \operatorname{Area}(\triangle)$. In other words, we have
$A_\triangle(x) = 1 + (2 \operatorname{Area}(\triangle)) x + \cdots$.

More generally, put 
\[
    B(x)=\frac{1+\sqrt{1+4x}}2
    =1+x-x^2+2x^3-5x^4+\cdots
    =\frac{q}{q-1}
\]
and let $\gamma(\triangle)$ be the gonality of  a general irreducible curve
with Newton polygon \(\triangle\), namely the least degree of a nonconstant map to
\(\bP^1\). For example, if $\cL_\triangle = \cO_{\bP^2}(d)$, then $\gamma(\triangle) = d-1$, and if $\cL_\triangle = \cO_{\bP^1\times \bP^1}(a, b)$ then $\gamma(\triangle) = \operatorname{min}\{a, b\}$. The gonality can also be read off from the Newton polygon~\cite{CastryckCools2017}.
Computational evidence suggests the following, which has a similar form as the conifold gap condition --- see e.g. \cite{GhoshalVafa} and \cite[Section 2.2 and Equation 79]{HuangKlemmQuackenbush}. 
\begin{conjecture}
\label{conj:gap}
\[
A_\triangle(x) \equiv B(x)^{2 \operatorname{Area}(\triangle)}  \pmod{x^{\gamma(\triangle)}}. 
\]

\end{conjecture}

\begin{remark}
    For each
    edge \(E\subset \triangle\), choose a partition
    \[
    \lambda^E
    = \bigl(\lambda^E_1,\ldots,\lambda^E_{\ell(\lambda^E)}\bigr)
    \vdash \ell_{\bZ}(E),
\]
where \(\ell_{\bZ}(E)\) is the lattice length of \(E\) and
\(\ell(\lambda^E)\) is the length of the partition.  Denote
$
    \boldsymbol\lambda=(\lambda^E)_E
$.
Recall the counting defined in~\cite{Bousseau:2019:TropicalRefined}: 
\[
    N_{\triangle,\boldsymbol\lambda}(q)
    =
    \sum_{\Gamma\in\mathcal T_{\triangle,\boldsymbol\lambda}}
    \prod_{v\in V(\Gamma)}[m_v]_q,
\]
where $\cT_{\triangle,\boldsymbol\lambda}$ is the set of tropical trees with fixed weighted ends.

Set
$
    m=\sum_{E\subset \triangle}\ell(\lambda^E)
$,
$
    H=2 \operatorname{Area}(\triangle)-m+2
$,
$
    \epsilon\equiv H\pmod 2    
$ where
$\epsilon\in\{0,1\}
$,
and define \(A^{\mathrm{trop}}_{\triangle,\boldsymbol\lambda}(x)\) by
\[
    N_{\triangle,\boldsymbol\lambda}(q)
    =
    (q^{1/2}+q^{-1/2})^\epsilon z^{H-\epsilon}
    A^{\mathrm{trop}}_{\triangle,\boldsymbol\lambda}(z^{-2}).
\]
Then $A^{\mathrm{trop}}_{\triangle,\boldsymbol\lambda}$ is a polynomial, and the computational evidence also supports the following:
\begin{equation}
\label{eq:gap_weighted}
A^\mathrm{trop}_{\triangle,\boldsymbol\lambda}(x)
    \equiv
    {B(x)^{2\operatorname{Area}(\triangle)}}{(1+4x)^{-\epsilon/2}}  
    \pmod{x^{\gamma(\triangle)}}.
\end{equation} 
Assuming Conjecture~\ref{conj:Ruling_refined}, this would recover Conjecture~\ref{conj:gap}
by taking all parts of \(\lambda^E\) to be \(1\).

\end{remark}

\section{Examples}
\label{sec:examples}

We label convex lattice polygons $\Delta$ according to the Figures of \cite{DW}.

Analyzing our invariant $N_\Delta$ from localization or degeneration requires an analysis of tropical curves with fixed number of external rays.  The following combinatorial lemma will be useful.

\begin{lemma}
\label{lem:comblemma}
    Let $T$ be a tree with $b$ leaves and internal vertex set $I$.  For $i\in I$, $d_i$ for the degree of vertex $i.$  Then
    $$b = 2 + \sum_{i\in I} (d_i - 2).$$

    In particular, if all $v := |I|$ internal vertices are trivalent, then $$v = b-2.$$
    
    \begin{proof} This
    follows from the Euler characteristic $|V|-|E|=1$ as well as $2|E| = b + \sum_{i\in I} d_i.$
    \end{proof}
\end{lemma}

\subsection{Polygon \texorpdfstring{$2b$}{2b}}
\label{sec:poly2b}

We begin with an example that we will work in detail.
Subsequent examples will be treated more briefly.

Let $\Delta = \conv(\{(0,0),(2,1),(3,-1)\}$.
The associated toric surface is defined by the complete
normal fan $\Sigma$, with ray vectors $v_1 = (1,-2), v+2 = (-2,-1), v_3 = (1,3).$


The polygon and toric fan are shown in
Figure \ref{fig:2b} below.
\begin{figure}[ht]
\begin{tikzpicture}
    \foreach\i in {-1,0,...,4}{
    \foreach\j in {-2,-1,...,2}{
    \fill (\i,\j) circle (.1cm);   }}
    \draw[very thick] (0,0)--(2,1)--(3,-1)--(0,0);
        \fill (0,-3) circle (0cm);
    \node[left] at (0,0) {$P_{31}$};
    \node[above] at (2,1) {$P_{12}$};
    \node[right] at (3,-1) {$P_{23}$};
    \node at (1-.23,1/2+.15) {$C_1$};
        \node at (5/2+.2,0+.2) {$C_2$};
        \node at (3/2-.05,-1/2-.2) {$C_3$};
        
\end{tikzpicture}\qquad\qquad\qquad
\begin{tikzpicture}
\pgfmathsetmacro{\shift}{2}
    \fill (0,0) circle (.1cm);
    \draw[very thick,->] (0,0)--(1,-2);
    \node[right] at (1,-2) {$1$};
    \node[above right] at (1,3) {$3$};
    \draw[very thick,->] (0,0)--(1,3);
        \node[left] at (-2,-1) {$2$};
    \draw[very thick,->] (0,0)--(-2,-1);
        \node[right] at (0.05,0) {$5$};
        \node[above left] at (0,0) {$5$};
        \node[below] at (-0.05,-0.05) {$5$};
    \fill (0,3) circle (0cm);
\end{tikzpicture}
\caption{Polgon and fan for Figure 2b of \cite{DW}.}
\label{fig:2b}
\end{figure}

The ray vectors $v_i$ are in bijection with torus-fixed curves $C_i$. 
We write $P_{ij}$ for the fixed points $C_i\cap C_j$.
The intersection pairing of distinct curves are obtained from
the index of the lattice of adjacent ray vectors $v_i$ and $v_{i+1}$,
which is displayed in the figure above.
The intersection pairing matrix is
\[
\begin{pmatrix}
 \tfrac{1}{5} & \tfrac{1}{5} & \tfrac{1}{5} \\
 \tfrac{1}{5} & \tfrac{1}{5} & \tfrac{1}{5}  \\
 \tfrac{1}{5} & \tfrac{1}{5} & \tfrac{1}{5} \\
\end{pmatrix}
\]

The off-diagonal elements are clear from the local geometry and the index indicated in the figure.  As for the self-intersections, these are determined by
equations setting principal divisors to zero.  Each lattice vector $m\in M$ determines a character $\chi^m$ on $T$ and thus a rational function on $\bP$ and therefore principal divisor $\sum_i \langle m,v_i\rangle D_i$--- hence an equation
by setting intersections with it to zero.
Taking $m = (1,0)$ gives
$D_1 - 2D_2 + D_3= 0$.
Taking $m = (0,1)$ gives
the equation $-2D_1 - D_2 + 3D_3 = 0.$
Together, we derive $D_1 = D_2 = D_3$,
when $D_i\cdot D_j = \frac{1}{5}$ for all $i,j.$

Then our linear system has five effective representatives ---
equivalently, five equivariant global sections --- in terms of the \(C_i\). They are in correspondence
with lattice points of $\Delta$:
\[
\begin{aligned}
&(0,0):\phantom{-1} 5C_2 \\
&(1,0):\phantom{-1} C_1 + 3C_2 + C_3 \\
&(2,0):\phantom{-1} 2C_1 + C_2 + 2C_3\\
&(2,1):\phantom{-1} 5C_3 \\
&(3,-1):\phantom{1} 5C_1.
\end{aligned}
\]
The expressions are found as follows:  given a lattice point, the coefficient of \(C_i\)
in the corresponding divisor
is the order of vanishing of the corresponding sections,
given by the value of the primitive linear form defining the \(i\)-th edge at that lattice point.
(Think about the order of vanishing of $y^2$ along
the $x$ axis: it's $2$.)  In this case,
all of these curve classes are equivalent to 
$5C_1.$

To compute the torus weights of the sections,
given weights $\lambda_1,\lambda_2$
parametrizing a one-parameter (or even two-parameter!) subgroup of $T = 
N\otimes_\bZ \bC^*,$ then the section
defined by the monomial $x^ay^b$ has weight $a\lambda_1 + b\lambda_2$.
This will be useful for the localization calculation.

\vskip0.1in
\noindent{\bf Localization Calculation.}

We now describe the different types of tropical
curves in the fixed locus.  
Recall that we pull back two point classes from two
chosen toric divisors (the last point condition
is never imposed, as its image is determined from the others by the condition that the class of the
fixed points along the toric divisor $\bP^\infty$ represent
a class in the local system pulled back to $\bP^\infty$).
Let us then leave point $1$ unspecified
and impose the incidence conditions
on the marked points $2$ and $3$ by pulling back
point classes from $C_2$ and $C_3,$ respectively.
(This labeling choice makes the pictures prettier.)
We choose equivariant lifts of the these point classes
by requiring the corresponding points on $C_2$ and $C_3$ to be torus
fixed points, and we choose them both to be $P_{23},$ the point $C_2\cap C_3.$
Write $P_{ij}^\vee \in H_T^2(C_i)$ for the equivariant lift
of the Poincar\'e dual of the fixed point $P_{ij}\in C_i$, and note this is \emph{not} $i\leftrightarrow j$ symmetric.
Thus we are computing the log GW invariant
$$\int_{\cM_{0,3}^\mathrm{log}(\bP_\triangle,\beta)}\mathrm{ev_2}^*(P_{23}^\vee)\mathrm{ev_3}^*(P_{32}^\vee).$$
The toric fixed points correspond to maximal
cones of the fan, $\Sigma,$ so here the relevant cone is $\sigma_{23}.$

We are finally ready to describe the fixed locus,
using the results of Section \ref{sec:fixedlocus}.
A component of a fixed locus is labeled by
a decorated graph (as in the usual localization
calculation of Gromov-Witten theory) as well as the
``type'' of a tropical curve, in the language of
\cite{GS}.
Let's call the tropical curve $\cT.$
Note by Lemma \ref{lem:comblemma} we know
that $\cT$ must have a single trivalent
vertex and any number of bivalent vertices.

The vertices of $\cT$ are in correspondence with generic
points (components)
of the domain curve.  These can be mapped to
maximal cones,
if the component is collapsed to a fixed point (whose
cone defines the toric neighborhood), or to rays if the
component
is mapped nontrivially to a toric divisor.
Because in our example
there are only three marked points,
a fixed log stable map with image homology class $\beta$
(for us $\beta$ is the Poincar\'e dual of $c_1(L_\triangle)$)
must have exactly
one bivalent vertex, and by
stability exactly one contracted component.
Let us call $C$ the nontrivially mapped component
and $C'$ the contracted component.
If $C$ contains the marked point $2$ (which gets mapped
to $C_2$) then the marked point $3$ is on $C'$, so
the fixed map contracts $C'$ to $P_{23}$.
Therefore,
the trivalent vertex of $\cT$ carries
the cone label $\sigma_{23}$, while the bivalent
vertex (from $C$) carries the label of the ray $\bR v_2$.
If on the other hand, $C$ contains the marked
point $3$, then the bivalent vertex has
label of the ray $\bR v_3.$
We color $\cT$ blue for the first case and red for
the second and plot them in Figure \ref{fig:tropicalfixed}.


\begin{figure}[ht]
\begin{tikzpicture}
\pgfmathsetmacro{\shift}{2}
    \fill (0,0) circle (.1cm);
    \draw[very thick,->] (0,0)--(1,-2);
    \node[right] at (1,-2) {$1$};
    \node[above right] at (1,3) {$3$};
    \draw[very thick,->] (0,0)--(1,3);
        \node[left] at (-2,-1) {$2$};
    \draw[very thick,->] (0,0)--(-2,-1);
        \node[right] at (0.05,0) {$5$};
        \node[left] at (-0.05,0.1) {$5$};
        \node[below] at (-0.05,-0.05) {$5$};
    \fill (0,3) circle (0cm);
    \draw[very thick,blue,->] (-1,0)--(-3,-1);
        \draw[very thick,blue,->] (-1,0)--(0,-2);
            \draw[very thick,blue,->] (-1,0)--(0,3);
    \draw[very thick,red,->] (0,1)--(-2,0);
        \draw[very thick,red,->] (0,1)--(1,-1);
            \draw[very thick,red,->] (0,1)--(1,4);
    \draw[dashed,thick](0,0)--(-1.5,3);
    \fill[blue] (-4/5,-2/5) circle (.1cm);
    \fill[red] (1/5,3/5) circle (.1cm);
\end{tikzpicture}
\qquad
\begin{tikzpicture}
    \draw[very thick,blue] (0,0)--(2,0);
    \draw[very thick,blue] (1.5,-.5)--(1.5,2);
    \node[blue,left] at (0,0) {$C_2$};
    \fill[blue] (0.5,0) circle (.1cm);
    \fill[blue] (1.5,0.7) circle (.1cm);
    \fill[blue] (1.5,1.4) circle (.1cm);
    \node[blue,below] at (0.5,0) {$1$};
    \node[blue,left] at (1.5,0.7) {$3$};
    \node[blue,left] at (1.5,1.4) {$2$};
        \node[blue,above] at (1.5,2) {$P_{23}$};
    
\end{tikzpicture}
\qquad
\begin{tikzpicture}
    \draw[very thick,red] (0,0)--(2,0);
    \draw[very thick,red] (.5,-.5)--(.5,2);
    \node[red,right] at (2,0) {$C_3$};
    \fill[red] (1.5,0) circle (.1cm);
    \fill[red] (0.5,0.7) circle (.1cm);
    \fill[red] (0.5,1.4) circle (.1cm);
    \node[red,below] at (1.5,0) {$1$};
    \node[red,left] at (0.5,0.7) {$3$};
    \node[red,left] at (0.5,1.4) {$2$};
    \node[red,above] at (.5,2) {$P_{23}$};
    
\end{tikzpicture}
\caption{The tropical curves corresponding to the two fixed
maps, and their corresponding intersection graphs.  The
vertical components $C'$ are contracted to a fixed point and the horizontal components $C$ are mapped to the invariant curve as
indicated.}
\label{fig:tropicalfixed}
\end{figure}

The localization calculation consists of two parts:  the numerator and denominator.
\begin{itemize}
   \item The contribution of the numerator.
   The numerator is the restriction to the fixed
   locus of the point class along the divisor.
   The weight of a point class pulled back from a point $P$ equals the character of the action of the torus
   on the tangent space at $P$.  For example, a point $P_{23}$ at the intersection of $C_2$ and $C_3$ lies on $C_2$ and
   the character for the action of the torus on $T_{P_{23}}C_2$ is given by the minimal integral linear form on the fan which vanishes along
   the ray $\bR v_2$ and is non-negative on the cone $\sigma_{23}$, here $2b-a.$
   For the blue graph, the numerator is the product of the two point classes, the one just considered
   and the one pulled back from $C_3,$ which gives $b-3a$. In total we have $(2b-a)(b-3a)$ for the numerator.
   The red graph has the same numerator.
   \item The contribution of the denominator.
   The local toric geometry around the fixed point $f$
   in the moduli space of log stable maps is determined by the monoid at $f$.  This monoid is the cone described by the possible locations for the trivalent vertex of $\cT.$   For the blue graph, the bounding rays are in the directions $(-2,-1)$ and $(-1,2)$  However, the integral structure of the monoid is different, because the intersection of the blue line with ray 2 must be an integral point.  Thus we see that the allowed positions for the vertex of the tripod consist of those lattice points in the interior of the cone which can be realized as a positive integer combination of the minimal vectors along the rays.  This corresponds to a smooth toric surface with an action of a torus $\tilde T$ which is a 5 fold cover of $T$.  This torus is exactly the torus for which the action on the original surface lifts to an action on the stable map.  For this torus, there will be extra characters which give a dual basis to the basis of the lattice spanned by our primitive vectors along the rays. The primitive characters defining the two edges of our cones will be $\red{-}\frac{a-2b}{5}$ and $\red{-}\frac{2a+b}{5}$ and the denominator term is just the product of the two corresponding weights.  The total contribution is then the reciprocal $\frac{5^2}{(a-2b)(2a+b)}.$
   
   For the red graph, the denominator calculation gives
   $\frac{5^2}{(3a-b)(-2a-b).}$
   
   \item Automorphisms:
   For each graph, the corresponding map has a group of automorphisms of order 5.
   This is easy to see for the underlying stable map --- the automorphism acts trivially on the contracted component of $C$ but on the noncontracted component acts by multiplication by a 5th root of unity in coordinates where the node and the marked point are 0 and 
$\infty$ ---  but it is more delicate to compute the automorphisms of the log stable map (which in general do not agree with its ordinary stable map counterpart).  We will give some idea of what is involved in lifting the automorphism to the log setting.  We will do this for the point corresponding to the blue tropical curve in Figure \ref{fig:tropicalfixed}.  The point of the moduli space associated to this point corresponds to a morphism from a log point whose monoid $Q$ is the dual to the monoid of tropical curves of that blue type.  Such a tropical curve is determined by the location of its trivalent vertex.  This is constrained to lie between the dotted line and ray 2, it must be integral, and the intersection of the tropical curve with ray 2 must be integral.  This gives an index 5 submonoid in the set of integral points between $(-2,-1)$ and $(-1,2)$, in fact the free monoid spanned by those two vectors.  An action of $\bZ/5$ on $Q\times \bC^*$ which induces the trivial action on $Q$ amounts to a moprhism $Q \to \mu_5$.  In order for this action to be compatible with the log map, it is necessary that the kernel of this morphism contain the image of the map from $\sigma_{23}^\vee$ to $Q$ induced by the map from the sheaf of monoids on the toric surface to the sheaf of monoids at the generic point of the contracted component of $C$.  This is an index 5 submonoid, so there is essentially a unique action of $\bZ/5$ on $Q \times \bC^*$ which could be compatible with the given action of $\bZ/5$ on $C$.  In this case, that action on $Q$ does lead to a unique lift of the action to action on the log curve compatible with morphism $f$. 
   (Due to the length of the automorphism calculation --- even with this abbreviated explanation --- we simply state the result in subsequent examples.)
   
   The upshot of this calculation is that we divide each term by 5.
\end{itemize}
Adding up the contributions from both graphs , we
get:
\begin{align*}
N_\Delta &=(2b-a)(b-3a)\left[\frac{5^2}{(a-2b)(2a+b)} + \frac{5^2}{(3a-b)(-2a-b)}\right]\cdot\frac{1}{5} \\
&= \frac{5(2b-a)(b-3a)}{(a-2b)(2a+b)(3a-b)}
     \left(3a-b - (a-2b)\right) \\
&= 5
\end{align*}

\vskip0.1in
\noindent{\bf Tropical Curve Counting.}  Fixing two rays of a tropical curve at infinity requires, by Lemma \ref{lem:comblemma}, that the tropical
curve be a translate of the one in Figure \ref{fig:2b}.
The multiplicity is $5.$

\vskip0.1in
\noindent{\bf Constructible Sheaf Counting.}
The count (5) of rational rulings in this example follows
from Proposition \ref{prop:triangulerulingcount}.
We illustrate one in Figure \ref{fig:nonproper ruling} below.  The others
are all translates by the indicated
vectors.  

\begin{figure}[h]
         \begin{tikzpicture}[scale = .5]
         \foreach \i in {0,6,12}{
         \foreach \j in {0,-6}{

        \begin{scope}[shift = {(\i, \j)}]

    \draw[black] (0, 0) rectangle (6, 6); 
    \draw (0, 6) -- (6, 4);
    \draw (0, 4) -- (6, 2); 
    \draw (0, 2) -- (6, 0); 

    \draw (0, 6) -- (3, 0);
    \draw (3, 6) -- (6, 0); 
    \draw (0, 4.5) -- (3, 6);
    \draw (3, 0) -- (6, 1.5);
    \draw (0, 1.5) -- (6, 4.5);

    \fill[red, opacity = .4] (6, 0) -- (-3, 3) -- (3, 6);
    \fill[blue, opacity= .4] (0, 6) -- (3, 0) -- (9, 3);
    \end{scope}
    }}
         \foreach \i in {0,6,12}{
         \foreach \j in {0,-6}{

        \begin{scope}[shift = {(\i, \j)}]

    \fill[green,fill opacity=1] (0,0) circle (4pt);
    \fill[green,fill opacity=1] (6,0) circle (4pt);
    \fill[green,fill opacity=1] (0,6) circle (4pt);
    \fill[green,fill opacity=1] (6,6) circle (4pt);
    \fill[green,fill opacity=1] (6/5,18/5) circle (4pt);
    \fill[green,fill opacity=1] (12/5,6/5) circle (4pt);
    \fill[green,fill opacity=1] (18/5,24/5) circle (4pt);
    \fill[green,fill opacity=1] (24/5,12/5) circle (4pt);

    \end{scope}
    }}

        \end{tikzpicture}
        
        \caption{One of five rational rulings for Polygon 2b.  The
        others are translates by vectors indicated with green dots. The picture is drawn on the universal cover of $\T$, and each square is a fundamental domain.}
        \label{fig:nonproper ruling}
    \end{figure}


\subsection{Unit-edge triangles}
\label{sec:unitedgetriangles}

We have the following generalizations.
Suppose given a triangle $\Delta$ with unit-length edges.  Up to translation and $SL_2(\bZ)$, we can assume it is the convex hull of $(0,0), (1,0)$ and $(k,n)$, where $n>0.$  Then still applying
$\left(\begin{smallmatrix} 1&1\\0&1
\end{smallmatrix}\right)$ repeatedly, we can assume $0<k<n$, and the unit edge condition
gives $\gcd(k,n) = \gcd(k-1,n) = 1.$
For instance, the example of Section \ref{sec:poly2b} corresponds to $k = 4, n = 5.$  

We note the unit-edge
condition together with the $\gcd$ condition implies that $n$ must be odd.  Then Pick's theorem gives the number of interior points, which is the genus of the generic curve of the linear system, to be $\frac{1}{2}(n-1).$
This family of examples thus contains generic curves of every genus.

These examples all have dimer models that can be constructed
explicitly follows.  After isotoping the zigzags to straight-line geodesics, we can specify the models by describing the triangles surrounding black and white vertices.  For the choice above, these are formed by the projection from $\bR^2$ to $\bR^2/\bZ^2$ of the lines
$y = \frac{1}{2},$ $y = \frac{n}{k}x,$ $y = \frac{n}{k-1}x$.
Figure \ref{fig:primitiveedgetriangle}
gives a picture when $k=2,n=3$.

\begin{figure}[ht]
    \centering
    \begin{tikzpicture}

    \begin{scope}[shift = {(7, .3)}, scale = .7]
        \foreach\i in {-2, ...,2}{
    \foreach\j in {0,...,5}{
    \fill (\i,\j) circle (2pt);   }}

    \draw[very thick] (-1, 1) -- (0, 1) -- (1, 4) -- cycle; 

    \node[above] at (1, 4) {\small \( (k=2, n=3)\)}; 
    \end{scope}

\pgfmathsetmacro{\u}{4}
    \draw[thick,dotted] (0,0)--(\u,0)--(\u,\u)--(0,\u)--(0,0);
    \draw[thick] (0,\u/2)--(\u,\u/2);
    \draw[thick] (0,0)--(\u/3,\u);
    \draw[thick] (\u/3,0)--(2*\u/3,\u);
    \draw[thick] (2*\u/3,0)--(\u,\u);
    \draw[thick] (0,0)--(2*\u/3,\u);
    \draw[thick] (2*\u/3,0)--(\u,\u/2);
    \draw[thick] (0,\u/2)--(\u/3,\u);
    \draw[thick] (\u/3,0)--(\u,\u);
    \foreach\i in {0,\u/3,2*\u/3}{
        \draw[blue] (\u/6+ \i,\u/2+\u/6)--(\u/6+\i,\u/2-\u/6);
        \draw[blue] (\u/6+ \i,\u/2+\u/6)--
        (\u/3+\i,\u);
        \draw[blue] (\u/6+ \i,\u/2-\u/6)--
        (\i,0);
        \draw[blue] (\u/6+ \i,\u/2-\u/6)--
        (\u/3+\i,\u/2);
        \draw[blue] (\u/6+ \i,\u/2+\u/6)--
        (\i,\u/2);
        \fill[blue] (\u/6+ \i,\u/2+\u/6) circle (2pt);
        \filldraw[blue,fill=white] (\u/6+\i,\u/2-\u/6) circle (2pt);};
        \end{tikzpicture}
    \caption{Left:  dimer model (blue) and zigzag curves (black) for primitive edge triangle (right), when $k=2$ and $n=3$.}
    \label{fig:primitiveedgetriangle}
\end{figure}

As for tropical curves, recall that the point
conditions on the boundary determine the location of the rays of the tropical curves.  Specifically, for each marked point $p_i$ in the domain curve $C$, the incidence condition $f(p_i) \in D_\rho$ determines via the $\mathrm{Log}$ map an equivalence class in $N_\bR/(\bR\rho)$
labeling the $i$th external ray of the tropical curve, with direction $\rho.$

\vskip0.1in
\noindent{\bf Tropical Curve Counting.}
Unit-edges triangles have $b=3$, so Lemma \ref{lem:comblemma} requires that all tropical curves have just one vertex.  Further, the two point conditions determine two of the external rays of the curve, and therefore the location of the vertex and the
tropical curve itself.  We conclude there is a unique
tropical curve contributing to the Mikhalkin point
count.  The multiplicity is $\left|\det(v_1,v_2)\right|,$
where $v_1,v_2,v_3$ are the fan vectors.
In the standard presentation, this quantity
is $\bigl|\det\binom{1\,k}{0\,n}\bigr| = n.$

\vskip0.1in
\noindent{\bf Constructible Sheaf Counting.}

The count of rational rulings
for these examples is given
by Proposition \ref{prop:triangulerulingcount}
to be twice the lattice area of the triangle.
So the sheaf count for the $(k,n)$ triangle
$\Delta$, with its unit base and height $n$, also gives $n$.

\vskip0.1in
\noindent{\bf Localization Calculation.}
The calculation proceeds along the lines of the example of Section \ref{sec:poly2b}
except with three ray vectors $(n,-k),$ $(-n,k-1),$ and $(0,1).$
The numerator contribution for both graphs is $a(-(k-1)a-nb).$
Here $|\mathrm{Aut}|=n.$
In total, we get
\begin{align*}
N_\Delta &= a(-(k-1)a-nb)\left[\frac{n^2}{(-(k-1)a-nb)(ka+nb)} + \frac{n^2}{(-ka-nb)a}\right]\cdot \frac{1}{n} \\
&= \frac{na(-(k-1)a-nb)}{(-(k-1)a-nb)(ka+nb)a}\left(a+(k-1)a+nb\right) \\
&= n
\end{align*}

All methods give $n$.

\subsection{Polygon \texorpdfstring{$3b$}{3b}}
\label{sec:poly3b}

Let $\Delta = \conv(\{(0,0),(0,1),(3,0),(2,-1)\}$.
The associated toric surface is defined by the complete
normal fan $\Sigma$, with ray vectors 
$v_1 = (1,0), v_2 = (-1,-3), v_3 = (-1,1), v_4 = (1,2).$


The polygon and toric fan are shown below.
$$
\begin{tikzpicture}
    \foreach\i in {-1,0,...,4}{
    \foreach\j in {-2,-1,...,2}{
    \fill (\i,\j) circle (.1cm);   }}
    \draw[very thick] (0,0)--(2,-1)--(3,0)--(0,1)--(0,0);
        \fill (0,-3) circle (0cm);
\end{tikzpicture}\qquad\qquad\qquad
\begin{tikzpicture}
\pgfmathsetmacro{\shift}{2}
    \fill (0,0) circle (.1cm);
    \draw[very thick,->] (0,0)--(1,0);
    \node[right] at (1,0) {$1$};
    \draw[very thick,->] (0,0)--(1,2);
    \node[above right] at (1,2) {$4$};
    \draw[very thick,->] (0,0)--(-1,1);
        \node[above left] at (-1,1) {$3$};
    \draw[very thick,->] (0,0)--(-1,-3);
    \node[below left] at (-1,-3) {$2$};
        \node[above right] at (0.1,0) {$2$};
        \node[above] at (-0.05,0.1) {$3$};
        \node[left] at (0,-.1) {$4$};
        \node[below right] at (0,0) {$3$};
    \fill (0,3) circle (0cm);
\end{tikzpicture}
$$

The ray vectors $v_i$ are in correspondence with the torus-fixed curves, $C_i$.
The torus fixed points are $P_{ij} = C_i\cap C_j.$
The intersection pairings of distinct curves are obtained from
the index of the lattice of adjacent ray vectors $v_i$ and $v_{i+1}$,
which is displayed in the figure above.
The intersection pairing matrix is
\[
\begin{pmatrix}
 -\tfrac{1}{6} & \tfrac{1}{3} & 0 & \tfrac{1}{2} \\
 \tfrac{1}{3} & \tfrac{1}{12} & \tfrac{1}{4} & 0 \\
 0 & \tfrac{1}{4} & \tfrac{1}{12} & \tfrac{1}{3} \\
 \tfrac{1}{2} & 0 & \tfrac{1}{3} & -\tfrac{1}{6}
\end{pmatrix}
\]
The off-diagonal elements are clear from the local geometry and the index indicated in the figure.  As for the self-intersections, these are determined by
equations setting principal divisors to zero.  Each lattice vector $m\in M$ determines a character $\chi^m$ on $T$ and thus a rational function on $\bP$ and therefore principal divisor $\sum_i \langle m,v_i\rangle D_i$--- hence an equation
by setting intersections with it to zero.
Taking $m = (1,0)$ gives
$D_1 - D_2 - D_3 + D_4 = 0$.
Intersecting this with $D_1$, for example,
gives $D_1\cdot D_1 = \frac{1}{3}-\frac{1}{3} = -\frac{1}{6}.$
With $D_2$ gives
$D_2\cdot D_2 = \frac{1}{3}-\frac{1}{4} = \frac{1}{12}.$  With $D_3$ gives $D_3\cdot D_3 = \frac{1}{3}-\frac{1}{4} = \frac{1}{12}$ 
and with $D_4$ gives $D_4\cdot D_4 = \frac{1}{3}-\frac{1}{2}=-\frac{1}{6}.$
The element $m = (0,1)$ gives
$-3D_2 + D_3 + 2D_4 = 0.$
The above answers
are consistent with dotting this
second equation with the $D_i$.

Then our linear system has six effective representatives --- or six equivariant global sections --- in terms of the \(C_i\). They are in correspondence
with lattice points of $\Delta$:
\[
\begin{aligned}
&(0,0):\phantom{-1}3C_2 + 3C_3, \\
&(1,0):\phantom{-1} C_1 + 2C_2 + 2C_3 + C_4, \\
&(2,0):\phantom{-1} 2C_1 + C_2 + C_3 + 2C_4, \\
&(3,0):\phantom{-1} 3C_1 + 3C_4, \\
&(0,1):\phantom{-1} 4C_3 + 2C_4, \\
&(2,-1):\phantom{1} 2C_1 + 4C_2.
\end{aligned}
\]
The expressions are found as follows:  given a lattice point, the coefficient of \(C_i\) in the corresponding divisor is the value of the primitive linear form defining the \(i\)-th edge at that lattice point.

To compute the torus weights of the sections,
given weights $\lambda_1,\lambda_2$
parametrizing a one-parameter subgroup of $T = 
N\otimes_\bZ \bC^*,$ then the section
defined by the monomial $x^ay^b$ has weight $a\lambda_1 + b\lambda_2$.
This will be useful for the localization calculation.

\vskip0.1in
\noindent{\bf Localization Calculation.}

 We again write $P_{ij}^\vee \in H_T^2(C_i)$ for the equivariant lift
of the Poincar\'e dual of the fixed point $P_{ij}\in C_i$, and
consider $\int_{\cM^\mathrm{log}_{0,4}(\bP_,\beta)}\mathrm{ev^*}_2(P^\vee_{23})\cdot \mathrm{ev^*_3}(P_{34}^\vee)\cdot \mathrm{ev^*_4}(P_{41}^\vee)$.
To prep the calculation, we should compute for the numerator (no matter the graph) the torus weights at
$T_{P_{23}}C_2,$ $T_{P_{23}}C_3$, and $T_{P_{41}}C_4$.
These are computed as in previous calculations to be
$$T_{P_{23}}C_2: -3a+b\qquad  T_{P_{34}}C_3: a+b \qquad T_{P_{41}}C_4:  2a-b$$
So the numerator is the product of these
three terms.

Note that the tropical curve cannot have a single four-valent vertex, due to the point conditions on $2$ and $4$ appearing at opposite corners.  (This is not the case
for other equivariant lifts.)
In addition, since the curve class requires that there are exactly two non-contracted components, there are not enough marked points for a stable curve with three or more contracted components.  The tropical curve
therefore has two vertices.
With two domain points mapping to $P_{23},$ we know there is one contracted component mapping to $P_{23}.$  
Given the other point conditions, 
only the following combinatorial
types of maps remain.

$$
\begin{tikzpicture}
    \draw[thick] (-1.2,0)--(.2,0);
    \draw[thick] (0,-.2)--(0,1.2);
    \draw[thick] (-.2,1)--(1.2,1);
    \draw[thick] (1,-.2)--(1,1.2);
    \node[below] at (-.5,0) {$3$};
        \node[above] at (.5,1) {$4$};
        \fill[black] (-1,0) circle (.1cm);
        \fill[black] (0,.5) circle (.1cm);
        \fill[black] (1,0) circle (.1cm);
        \fill[black] (1,.5) circle (.1cm);
\end{tikzpicture}
\hskip1in
\begin{tikzpicture}
    \draw[thick] (-1.3,-.2)--(.2,.03);
    \draw[thick] (-2.2,.03)--(-.7,-.2);
    \draw[thick] (0,-.2)--(0,1.2);
    \draw[thick] (-.2,1)--(1.2,1);
    \draw[thick] (1,-.2)--(1,1.2);
    \draw[thick] (.8,0)--(2.2,0);
    \node[below] at (-1.5,0) {$1$};
    \node[below] at (-.5,0) {$2$};
        \node[above] at (.5,1) {$3$};
    \node[below] at (1.5,0) {$4$};
        \fill[black] (-2,0) circle (.1cm);
        \fill[black] (0,.5) circle (.1cm);
        \fill[black] (2,0) circle (.1cm);
        \fill[black] (1,.5) circle (.1cm);
\end{tikzpicture}
$$

Here are the corresponding combinatorial types of tropical curves.

\[
\begin{tikzpicture}[scale=.6] 
\pgfmathsetmacro{\x}{0}
\pgfmathsetmacro{\y}{.7}
\pgfmathsetmacro{\l}{1.2}
\pgfmathsetmacro{\xx}{\x + \l}
\pgfmathsetmacro{\yy}{\y + \l}
\pgfmathsetmacro{\tval}{\x/4+\y/4}
\pgfmathsetmacro{\sval}{\y-2*\x}

\pgfmathsetmacro{\px}{\x + \tval*(-1)}
\pgfmathsetmacro{\py}{\y + \tval*(-3)}
\pgfmathsetmacro{\qx}{\x + \sval}
\pgfmathsetmacro{\qy}{\y + \sval}

    \draw[thick, dotted] (0,0)--(1,3);
    \draw[very thick,->] (0,0)--(1,0);
    \node[right] at (1,0) {$1$};
    \draw[very thick,->] (0,0)--(-1,-3);
    \node[below] at (-1,-3) {$2$};
    \draw[very thick,->] (0,0)--(-1,1);
        \node[above left] at (-1,1) {$3$};
        \node[above] at (1,2) {$4$};
    \draw[very thick,->] (0,0)--(1,2);
    \fill[red] (0,3) circle (0cm);
    \fill[red] (\x-\tval,\y-3*\tval) circle (.1cm);
    \fill[red] (\x+\sval,\y+\sval) circle (.1cm);
    \draw[very thick,->,red] (\x,\y)--(\x-1,\y-3);
    \draw[very thick,->,red] (\x,\y)--(\x-1,\y+1);
    \draw[very thick,red] (\x,\y)--(\xx,\yy);
    \draw[very thick,->,red] (\xx,\yy)--(\xx+1,\yy);
    \draw[very thick,->,red] (\xx,\yy)--(\xx+1,\yy+2);
    \end{tikzpicture}
\hskip1in
\begin{tikzpicture}[scale=.6]  
\pgfmathsetmacro{\x}{-1}
\pgfmathsetmacro{\y}{-2.5}
\pgfmathsetmacro{\l}{4}
\pgfmathsetmacro{\xx}{\x}
\pgfmathsetmacro{\yy}{\y + \l}
\pgfmathsetmacro{\tval}{-\x-\y}
\pgfmathsetmacro{\sval}{-\xx + \yy/2}
\pgfmathsetmacro{\uval}{\y-3*\x} 
\pgfmathsetmacro{\vval}{-\y/2} 

\pgfmathsetmacro{\px}{\x + \tval*(-1)}
\pgfmathsetmacro{\py}{\y + \tval*(-3)}
\pgfmathsetmacro{\qx}{\x + \sval}
\pgfmathsetmacro{\qy}{\y + \sval}

    \draw[thick, dotted] (0,0)--(-1,-2);
    \draw[very thick,->] (0,0)--(1,0);
    \node[right] at (1,0) {$1$};
    \draw[very thick,->] (0,0)--(-1,-3);
    \node[below] at (-1,-3) {$2$};
    \draw[very thick,->] (0,0)--(-1.5,1.5);
        \node[above left] at (-1.5,1.5) {$3$};
        \node[above] at (1,2) {$4$};
    \draw[very thick,->] (0,0)--(1,2);
    \fill[red] (0,3) circle (0cm);
    \fill[red] (\x,\y+\tval) circle (.1cm);
    \fill[red] (\x+\uval,\y+2*\uval) circle (.1cm);
    \fill[red] (\x+\vval,\y+2*\vval) circle (.1cm);

    \fill[red] (\xx+\sval,\yy) circle (.1cm);
    \draw[very thick,->,red] (\x,\y)--(\x-1/2,\y-3/2);
    \draw[very thick,->,red] (\xx,\yy)--(\xx-1,\yy+1);
        \draw[very thick,->,red] (\xx,\yy)--(\xx+3,\yy);

    \draw[very thick,red] (\x,\y)--(\xx,\yy);
    \draw[very thick,->,red] (\x,\y)--(\x+2.5,\y+5);
    \end{tikzpicture}
\]

The denominator is determined by the linear
forms constraining the vertices.  Consider the tropical curve on the left.  The left vertex $(a_1,b_1)$ lies in the cone defined by the inequalities $a_1+b_1>0$ and $b_1-3a_1>0$ (note the dashed line).  The right vertex $(a_2,b_2)$
must satisfy $2a_2-b_2>0.$  Note the balancing
condition says $a_2-a_1=b_2-b_1$ so the cone
is three real dimensional.  The intersections
with the rays (marked by red dots) must
be lattice points, and this constrains
$b_1-3a_1\in 4\bZ$, thus also
$a_1+b_1\in 4\bZ$.
Note the internal edge has multiplicity $2$.
Requiring the segment from the right vertex to the
intersection with Ray 4 to be an integer multiple
of $(2,2)$ constrains $2a_2-b_2\in 2\bZ.$
Thus the minimal integer-valued forms defining
the monoid at this fixed point are
$$\frac{a_1+b_1}{4},\frac{b_1-3a_1}{4},
\frac{2a_2-b_2}{2},$$
and the denominator contribution is the product of these factors. 
We take the torus acting on this cone
to be lifted from the fan, so set $a = a_1 = a_2, b = b_1 = b_2.$
Then note that the linear factors all cancel with the numerator, leaving a numerical factor of 32.  Now
Dividing by the order of the automorphism group here {(4)} gives a total contribution
from this graph of 8.

Now consider the curve on the right.
The lower-left vertex $(a_1,b_1)$ must lie in the sliver
between the dotted line and Ray 2, so the defining linear forms are $b_1-3a_1$ and $2a_1-b_1$.
The upper-right vertex $(a_2,b_2)$ must satisfy $a_2+b_2>0.$  Integrality of intersections with
Rays $2$ and $4$ give $b_1\in 2\bZ$ and $b_2\in 2\bZ,$ so the integer-values linear forms determining the monoid are $$b_1-3a_1, \frac{2a_1-b_1}{2},a_2+b_2.$$
After setting $a_1=a_2=a,$ etc., we again
get cancellation with the numerator,
yielding a combinatorial factor of $2$.  {This
curve has automorphism group of order $2$,}
giving a contribution of $1$ from this graph.
Total contribution:  $9.$

\vskip0.1in
\noindent{\bf Tropical Curve Counting.}

After choosing a set of boundary conditions for the rays at infinity, we have the following
tropical curves:

\[
\begin{tikzpicture}
\pgfmathsetmacro{\t}{.5};
    \coordinate (v1) at (0,0);
    \coordinate (v2) at (\t,\t);
    \coordinate (A) at (\t+1,\t+0);
    \coordinate (B) at (-1,-3);
    \coordinate (C) at (-1.5,1.5);
    \coordinate (D) at (\t+1,\t+2);
    \draw[red,very thick] (C)--(v1)--(v2)--(A);
    \draw[red,very thick] (B)--(v1)--(v2)--(D);
    \node[above right,xshift=2mm,yshift=0mm] at (v2) {$2$};
    \node[left,xshift=-1mm] at (v1) {$4$};
        \fill[red] (v1) circle (.1cm);
        \fill[red] (v2) circle (.1cm);

\begin{scope}[shift = {(6,0)}]
    \coordinate (v1) at (0,0);
    \coordinate (v2) at (\t,\t);
    \coordinate (A) at (\t+1,\t+0);
    \coordinate (B) at (-1,-3);
    \coordinate (C) at (-1.5,1.5);
    \coordinate (D) at (\t+1,\t+2);
    \coordinate (F) at (-\t,\t);
    \coordinate (E) at (-\t,-3*\t);
    \node[left,xshift=-1mm] at (F) {$1$};
    \node[left,xshift=-1mm] at (E) {$1$};

    \draw[red,very thick] (B)--(E)--(F)--(C);
    \draw[red,very thick] (F)--(A);
    \draw[red,very thick] (E)--(D);
        \fill[red] (E) circle (.1cm);
        \fill[red] (F) circle (.1cm);

\end{scope}    
\end{tikzpicture}
\]

\noindent The contributions are the products of the vertex multiplicities shown in the figures, i.e.~$4\cdot 2$ and $1\cdot 1$, respectively, for a total of $9$.

\begin{remark}
Note the contribution of each graph matches
that from the localization calculation,
i.e. the tropical multiplicity.  We observe
this in all examples but have no general
proof.  Further,
the localization calculation depends on the
choice of linearization (which fixed points to map marked points to) while the tropical calculation
depends on boundary conditions.  These dependencies may
conspire to all line up, but we do not yet
know precisely how.
\end{remark}

\vskip0.1in
\noindent{\bf Constructible Sheaf Counting.}

The standard ruling form a bipartite graph is shown in Figure~\ref{subfig:3b_standard}. 
The rational rulings are shown in Figure~\ref{subfig:3b_B} -- \ref{subfig:3b_D}. Each picture represents a symmetric class. There are 9 in total. 

\begin{figure}[ht]
    \centering

    \begin{subfigure}[t]{.24\linewidth}
        \centering
        \includegraphics[width = \linewidth]{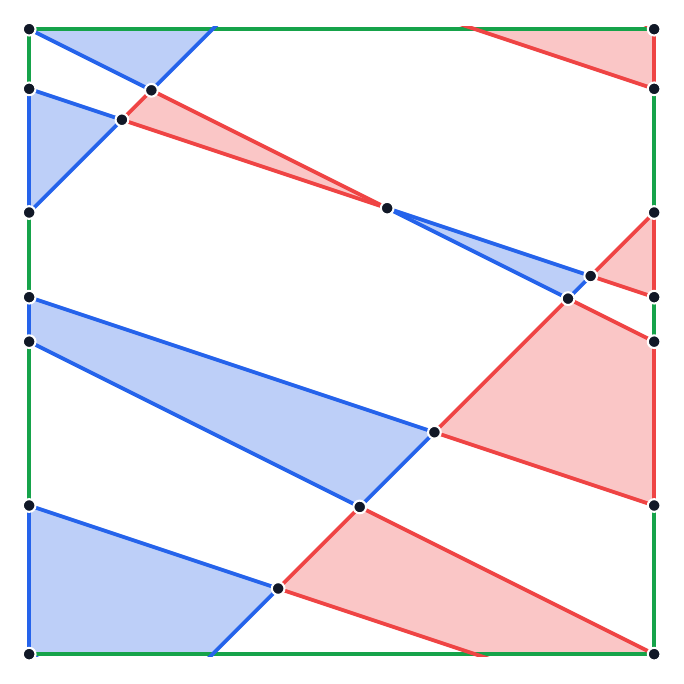}
        \caption{The standard ruling is genus two (not rational).}
        \label{subfig:3b_standard}
    \end{subfigure}
    \begin{subfigure}[t]{.24\linewidth}
        \centering
        \includegraphics[width = \linewidth]{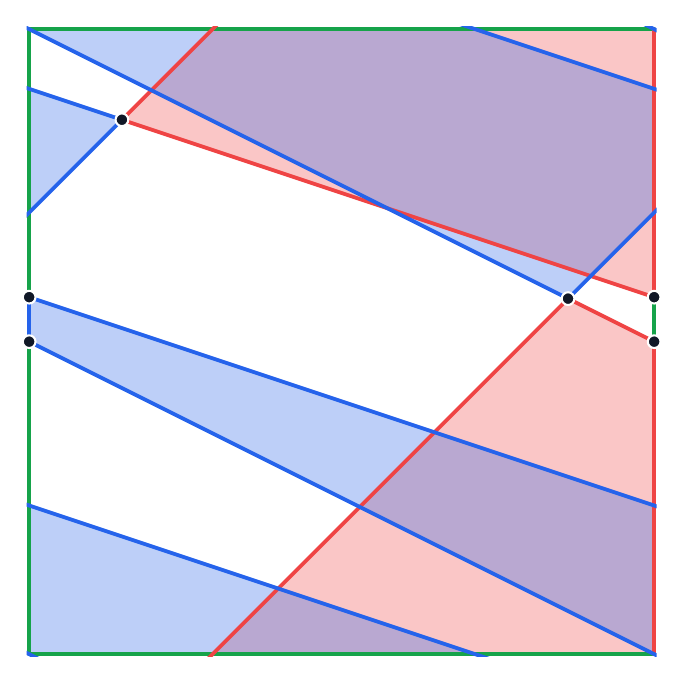}
        \caption{This rational ruling and others related by symmetry produce 6.}
        \label{subfig:3b_B}
    \end{subfigure}
    \begin{subfigure}[t]{.24\linewidth}
        \centering
        \includegraphics[width = \linewidth]{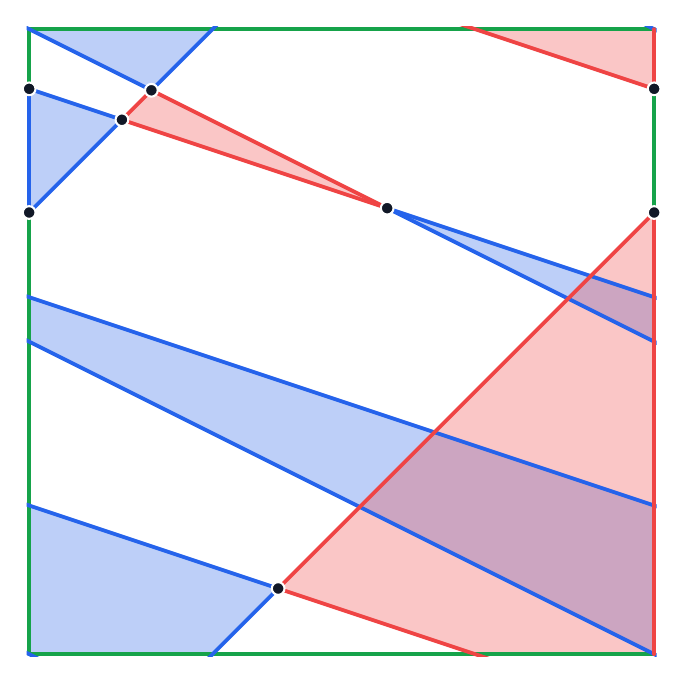}
        \caption{2 of this kind.}
        \label{subfig:3b_C}
    \end{subfigure}
    \begin{subfigure}[t]{.24\linewidth}
        \centering
        \includegraphics[width = \linewidth]{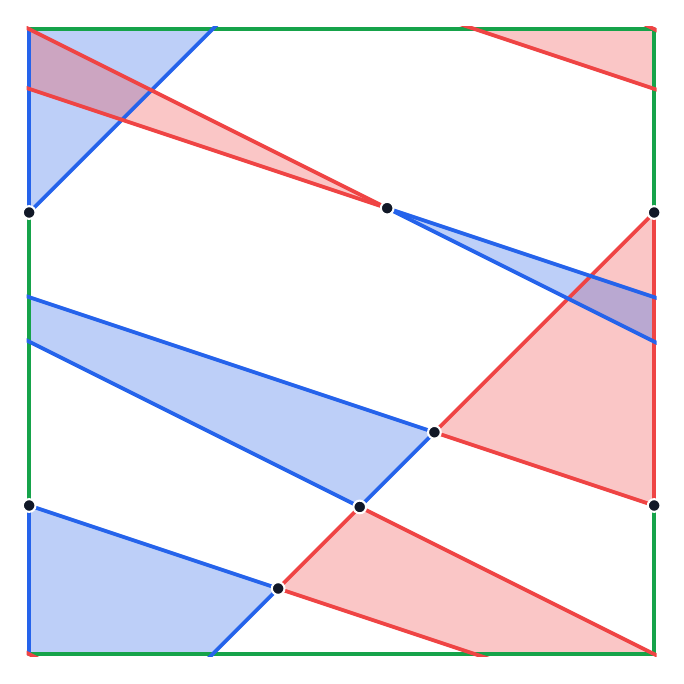}
        \caption{1 of this kind.}
        \label{subfig:3b_D}
    \end{subfigure}
    \caption{6+2+1=9 rational rulings for (3b).  Black dots are switches.}
    \label{fig:3b-rulings}
\end{figure}

\subsection{Unit-edge parallelograms}
\label{sec:unitedgeparallelograms}

Similar to Section \ref{sec:unitedgetriangles}, we can
construct unit-edge parallelograms.
Given a pair of relatively prime
numbers $k$ and $n$ with $0<k<n,$
the parallelogram with vertices $(0,0),
(1,0),(k,n),(k+1,n)$ has unit-length
edges.  Note here there we do \emph{not}
have to impose that $k-1$ and $n$ are
relatively prime.

$$
\begin{tikzpicture}
    \draw[very thick] (0,0)--(1,0)--(3,3)--(2,3)--(0,0);
    \node[above left] at (1,1.5) {$1$};
    \node[below right] at (2,1.5) {$3$};
    \node[above] at (2.5,3) {$2$};
    \node[below] at (.5,0) {$4$};
\end{tikzpicture}\qquad\qquad
\begin{tikzpicture}
\pgfmathsetmacro{\shift}{2}
    \fill (0,0) circle (.1cm);
    \draw[very thick,->] (0,0)--(0,1);
    \node[above] at (0,1) {$4$};
    \draw[very thick,->] (0,0)--(3,-2);
    \node[below right] at (3,-2) {$1$};
    \draw[very thick,->] (0,0)--(0,-1);
        \node[below] at (0,-1) {$2$};
    \draw[very thick,->] (0,0)--(-3,2);
    \node[above left] at (-3,2) {$3$};
        \node at (0.2,.2) {$n$};
        \node at (-.2,-0.2) {$n$};
        \node at (-.2,.3) {$n$};
        \node at (.2,-.3) {$n$};
    \fill (0,3) circle (0cm);
\end{tikzpicture}
$$

\vskip0.1in
\noindent{\bf Tropical Curve Counting.}
Lemma \ref{lem:comblemma} says that for the
Mikhalkin curve counting, $v = 2.$  Given
incidence conditions in $N_\bR/(\bR\rho)$ for each marked
point mapping to a curve defined by a ray $\rho$, there are three possible abstract tropical graphs,
depending on which pairs of rays meet at a vertex:
$(12)(34), (13)(24),$ and $(14)(23).$  

\begin{figure}[ht]
\begin{tikzpicture}
    
\pgfmathsetmacro{\xshift}{.8}
\pgfmathsetmacro{\yshift}{-\xshift}

\draw[very thick](0,0)--(\xshift,\yshift);
    \draw[very thick] (0,0)--(0,2);

    \draw[very thick] (0,0)--(-3,2);

    \draw[very thick] (0+\xshift,0+\yshift)--(+\xshift0,-2\yshift);
    \draw[very thick] (0+\xshift,0\yshift)--(\xshift+3,-2+\yshift);
    \fill (0,3) circle (0cm);
\end{tikzpicture}
\qquad
\begin{tikzpicture}
\pgfmathsetmacro{\shift}{2}
\pgfmathsetmacro{\yshift}{-.3}
\pgfmathsetmacro{\xshift}{3*\yshift}
\draw[very thick](0,0)--(\xshift,\yshift);
    \draw[very thick] (0,0)--(0,2);

    \draw[very thick] (0,0)--(3,-2);

    \draw[very thick] (0+\xshift,0+\yshift)--(+\xshift0,-2+\yshift);
    \draw[very thick] (0+\xshift,0\yshift)--(\xshift-3,2+\yshift);
    \fill (0,3) circle (0cm);

    \fill (0,2.7) circle (0cm);
    \fill (0,-2.7) circle (0cm);
    
\end{tikzpicture}
\caption{Tropical curves for different boundary conditions.}
\label{fig:parallelogram-bcs}
\end{figure}

However, $(13)(24)$ is impossible for a parallelogram ---
see Figure \ref{fig:parallelogram-bcs} ---
and both other configurations give the answer $n^2$.

\vskip0.1in
\noindent{\bf Constructible Sheaf Counting.}
A dimer model can be constructed as in Figure \ref{fig:parallelogram} below,
where the case $(k,n) = (3,5)$ is shown.  For the general case,
put white vertices at $\bZ^2 + (\frac{1}{4},\frac{1}{4})$, black vertices at $\bZ^2 + (\frac{3}{4},\frac{3}{4})$,
and edges along $\bZ^2$ translates of the lines $y=x$ and $y = \frac{1}{2} - x$,
then quotient by the lattice of translations generated by $(n,0)$ and $(-k,1).$

 \begin{figure}[ht]
  \centering
  \begin{tikzpicture}[scale=0.7]
  \pgfmathsetmacro{\A}{8}
  \pgfmathsetmacro{\B}{1}
  \pgfmathsetmacro{\a}{2*\A-1}
  \pgfmathsetmacro{\b}{2*\B-1}
\foreach \x in {-1,0,...,\a}{  
\draw (\x+1/2,-1/2) -- (\x + 1/2,\b + 1/2);}
\foreach \y in {-1,0,...,\b}{  
\draw (-1/2,\y+1/2) -- (\a + 1/2,\y + 1/2);}
\foreach \x in {0,1,...,\a}{
\foreach \y in {0,1,...,\b}{
\pgfmathparse{mod(\x,2)} \let\xm\pgfmathresult 
\pgfmathparse{mod(\y,2)} \let\ym\pgfmathresult 
\ifthenelse{\equal{\xm}{1.0} \AND \equal{\ym}{1.0}}
{
\draw (\x-1/2,\y-1/2) -- (\x+1/2,\y+1/2);\draw (\x-1/2,\y+1/2) -- (\x+1/2,\y-1/2);
\filldraw (\x,\y) circle (3pt);  
\draw[->,blue] (\x-1/2,\y-1/2) -- (\x,\y-1/2);
\draw[->,blue] (\x+1/2,\y-1/2) -- (\x+1/2,\y);
\draw[->,blue] (\x+1/2,\y+1/2) -- (\x,\y+1/2);
\draw[->,blue] (\x-1/2,\y+1/2) -- (\x-1/2,\y);
}{};
\ifthenelse{\equal{\xm}{0.0} \AND \equal{\ym}{0.0}}
{
\draw (\x-1/2,\y-1/2) -- (\x+1/2,\y+1/2);\draw (\x-1/2,\y+1/2) -- (\x+1/2,\y-1/2);
\draw[draw=black,fill=white] (\x,\y) circle (3pt); 
\draw[->,blue] (\x-1/2,\y-1/2) -- (\x-1/2,\y);
\draw[->,blue] (\x-1/2,\y+1/2) -- (\x,\y+1/2);
\draw[->,blue] (\x+1/2,\y+1/2) -- (\x+1/2,\y);
\draw[->,blue] (\x+1/2,\y-1/2) -- (\x,\y-1/2);
}{};

}}
\draw[ultra thick,dashed,red](6+-1/2,-1/2)--(6+9.5-.06,-1/2)--(-6+15.5,1.5)--(-6+5.5+.02,1.5)--(6+-1/2,-1/2);
   \end{tikzpicture}
  \caption{Dimer model for the unit-edge parallelogram defined by $(k,n) = (3,5).$
  Fundamental domain is dotted red parallelogram.}
  \label{fig:parallelogram}
\end{figure} 

Now we count rational rulings. Take the ``vertical'' direction to be $v = (1, 1)$.  Assume \(\rho\) is a rational ruling, and \(\Gamma(\rho)\) is its skeleton. In this case, since \(\Lambda \simeq (S^1)^{\coprod 4} \), we know
\begin{align*}
    \chi (\Gamma(\rho) ) = \chi (\Sigma(\rho)) = \chi (\Sigma_{0, 4}) = -2.
\end{align*}
On the other hand, assuming that \(\Gamma(\rho)\) has vertices \(v_1, v_2, \dots, v_m\),   we have
\begin{equation}\label{eq:ruling_eq_for_parallelogram}
    - 2 = m - \frac{1}{2} \sum_i \deg v_i = - \frac{1}{2}\sum_i (\deg v_i - 2). 
\end{equation}

First, there cannot be annular eyes. Otherwise, each of the two paths bounding such a annular eye must pass through two switches. This creates four vertices of degree \(3\) on the skeleton of this annular eye.  By~\eqref{eq:ruling_eq_for_parallelogram}, we know that those have to be all the vertices of \(\Gamma(\rho)\). But this leads to a contradiction since there has to be more than one eye.

Thus, all eyes are contractible. So we must have \( \deg v_i \ge 4\) for all \(i\). By~\eqref{eq:ruling_eq_for_parallelogram}, there can only be two rectangular eyes, one blue and one red, of the same area. Assume they meet at a switch \(p\), as drawn in Figure~\ref{fig:ruling_of_parallelogram}. Assume that the blue eye is a rectangle of size \( (a + \frac{1}{2}) \times (b + \frac{1}{2})\), where \( 0 \le a , b  \le n-1\). Since the boundaries of the two eyes have to cover \(\pi(\Lambda)\), the red eye has to be a rectangle of size \( (n - a - \frac{1}{2}, n -b - \frac{1}{2} ) \). On the other hand, the balancing condition tells us that the area of these two rectangles should be the same, which forces \( b = n - 1 -a \). So we have \(n\) choices of rational rulings fixing the crossing \(p\). Conversely, every rational ruling consists of a pair of rectangular eyes meeting along \( 4 \) points. Therefore, the total number of rational rulings should be the number of crossings times \(\frac{n}{4}\), which is \(n^2\). 

We can summarize succinctly.  Pick one black and one white node from the fundamental domain --- so, $n^2$ choices.  We will make the black node the lower-right node contained in a blue lattice rectangle and the white node the upper-right node contained in a red lattice rectangle.  Do this such that the height of the blue equals the width of the red, and \emph{vice versa}.

 \begin{figure}[htbp]
  \centering
  \begin{tikzpicture}[scale=0.7]

\begin{scope}[shift = {(.5, .5)}]
  \pgfmathsetmacro{\A}{8}
  \pgfmathsetmacro{\B}{4}
  \pgfmathsetmacro{\a}{2*\A-1}
  \pgfmathsetmacro{\b}{2*\B-1}
\foreach \x in {-1,0,...,\a}{  
\draw (\x+1/2,-5/2) -- (\x + 1/2,\b + 1/2);}
\foreach \y in {-3,-2,...,\b}{  
\draw (-1/2,\y+1/2) -- (\a + 1/2,\y + 1/2);}
\foreach \x in {0,1,...,\a}{
\foreach \y in {-2,-1,...,\b}{
\pgfmathparse{mod(\x,2)} \let\xm\pgfmathresult 
\pgfmathparse{mod(\y,2)} \let\ym\pgfmathresult 
\ifthenelse{\isodd{\x} \AND \isodd{\y}}
{
\draw (\x-1/2,\y-1/2) -- (\x+1/2,\y+1/2);\draw (\x-1/2,\y+1/2) -- (\x+1/2,\y-1/2);
\filldraw (\x,\y) circle (3pt); 
\draw[->,blue] (\x-1/2,\y-1/2) -- (\x,\y-1/2);
\draw[->,blue] (\x+1/2,\y-1/2) -- (\x+1/2,\y);
\draw[->,blue] (\x+1/2,\y+1/2) -- (\x,\y+1/2);
\draw[->,blue] (\x-1/2,\y+1/2) -- (\x-1/2,\y);
}{};
\ifthenelse{\NOT\isodd{\x} \AND \NOT\isodd{\y}}
{
\draw (\x-1/2,\y-1/2) -- (\x+1/2,\y+1/2);\draw (\x-1/2,\y+1/2) -- (\x+1/2,\y-1/2);
\draw[draw=black,fill=white] (\x,\y) circle (3pt); 
\draw[->,blue] (\x-1/2,\y-1/2) -- (\x-1/2,\y);
\draw[->,blue] (\x-1/2,\y+1/2) -- (\x,\y+1/2);
\draw[->,blue] (\x+1/2,\y+1/2) -- (\x+1/2,\y);
\draw[->,blue] (\x+1/2,\y-1/2) -- (\x,\y-1/2);
}{};

}}
\draw[ultra thick,dashed,red](6+-1/2,-1/2)--(6+9.5-.06,-1/2)--(-6+15.5,1.5)--(-6+5.5+.02,1.5)--(6+-1/2,-1/2);
\end{scope}


\fill[blue, opacity = .4] (7, 1) -- (10, 1) -- (10, 8) -- (7, 8);
\fill[red, opacity=. 4] (7, 1) -- (0, 1) -- (0, -2) -- (7, -2) -- cycle;

\fill[Green] (7, 1) circle(2pt); 
\node[above left] at (7, 1) {\Large \color{Green} \(p\) };

   \end{tikzpicture}
  \caption{A rational ruling of the unit-edge parallelogram with $(k,n) = (3,5)$.}
  \label{fig:ruling_of_parallelogram}
\end{figure}

\vskip0.1in
\noindent{\bf Localization Calculation.}

To perform a localization calculation, we choose
equivariant lifts of the point classes.
So again, we write $P_{ij}^\vee \in H_T^2(C_i)$ for the equivariant lift
of the Poincar\'e dual of the fixed point $P_{ij}$, and note this is \emph{not} $i\leftrightarrow j$ symmetric.
We consider $\int_{\overline{\cM}^\mathrm{log}_{0,4}(\bP_,\beta)}\mathrm{ev}_2(P^\vee_{23})\cdot \mathrm{ev_3}(P_{32}^\vee)\cdot \mathrm{ev_4}(P_{41}^\vee)$.
To prep the calculation, we should compute for the numerator (no matter the graph) the torus weights at
$T_{P_{23}}C_2,$ $T_{P_{32}}C_3$, and $T_{P_{41}}C_4$.
These are computed as in previous calculations to be
$$T_{P_{23}}C_2: -a\qquad  T_{P_{32}}C_3: -ka-nb \qquad T_{P_{41}}C_4:  a$$
so the product of these three terms is the numerator.
For convenience, for the localization
calculation we assume $\gcd(k-1,n)=1$, which renders
the internal edge of the tropical curves
multiplicity free and makes
some of weight and automorphism calculations simpler.


Note that the tropical curve cannot have a single four-valent vertex, due to the point conditions on $2$ and $4$ appearing at opposite corners.  (This is not the case
for other equivariant lifts.)
In addition, since the curve class requires that there are exactly two non-contracted components, there are not enough marked points for a stable curve with three or more contracted components.  The tropical curve
therefore has two vertices.
With two domain points mapping to $P_{23},$ we know there is one contracted component mapping to $P_{23}.$  
Given the other point conditions, 
only the following three combinatorial
types of maps remain.
$$
\begin{tikzpicture}
    \draw[thick] (-1.2,0)--(.2,0);
    \draw[thick] (0,-.2)--(0,1.2);
    \draw[thick] (-.2,1)--(1.2,1);
    \draw[thick] (1,-.2)--(1,1.2);
    \node[below] at (-.5,0) {$1$};
        \node[above] at (.5,1) {$2$};
        \fill[black] (-1,0) circle (.1cm);
        \fill[black] (0,.5) circle (.1cm);
        \fill[black] (1,0) circle (.1cm);
        \fill[black] (1,.5) circle (.1cm);
\end{tikzpicture}
\hskip1in
\begin{tikzpicture}
    \draw[thick] (-1.2,0.2)--(.2,-.2);
    \draw[thick] (-.2,-.2)--(1.2,0.2);
    \draw[thick] (-1,-.2)--(-1,1.2);
    \draw[thick] (1,-.2)--(1,1.2);
    \node[below] at (-.5,0) {$1$};
        \node[below] at (.5,0) {$2$};
        \fill[black] (-1,.5) circle (.1cm);
        \fill[black] (-1,1) circle (.1cm);
        \fill[black] (1,.5) circle (.1cm);
        \fill[black] (1,1) circle (.1cm);
\end{tikzpicture}
\hskip1in
\begin{tikzpicture}
    \draw[thick] (-1.2,0.2)--(.2,-.2);
    \draw[thick] (-.2,-.2)--(1.2,0.2);
    \draw[thick] (-1,-.2)--(-1,1.2);
    \draw[thick] (1,-.2)--(1,1.2);
    \node[below] at (-.5,0) {$3$};
        \node[below] at (.5,0) {$4$};
        \fill[black] (-1,.5) circle (.1cm);
        \fill[black] (-1,1) circle (.1cm);
        \fill[black] (1,.5) circle (.1cm);
        \fill[black] (1,1) circle (.1cm);
\end{tikzpicture}
$$
Here are the corresponding combinatorial types of tropical curves.
\[
\begin{tikzpicture}[scale=.6]
\pgfmathsetmacro{\xblueshift}{-.7}
\pgfmathsetmacro{\yblueshift}{1.2}
\pgfmathsetmacro{\xredshift}{-.3+3/5}
\pgfmathsetmacro{\yredshift}{-.3-1/5}
\pgfmathsetmacro{\xredshifttwo}{-.3}
\pgfmathsetmacro{\yredshifttwo}{-.3}

    \draw[thick, dotted] (-3,1)--(3,-1);
    \draw[very thick,->] (0,0)--(0,1);
    \node[above] at (0,1) {$4$};
    \draw[very thick,->] (0,0)--(3,-2);
    \node[below right] at (3,-2) {$1$};
    \draw[very thick,->] (0,0)--(0,-1);
        \node[below] at (0,-1) {$2$};
        \node[above left] at (-3,2) {$3$};
    \draw[very thick,->] (0,0)--(-3,2);
    \fill[red] (0+\xredshift,3+\yredshift) circle (0cm);
    \fill[red] (0,0+\yredshift+1*\xredshift/3) circle (.1cm);
    \fill[red] (\xredshift,-2*\xredshift/3) circle (.1cm);
    \draw[very thick,->,red] (0+\xredshift,0+\yredshift)--(0+\xredshift,1+\yredshift);
    \draw[very thick,->,red] (0+\xredshift,0+\yredshift)--(3+\xredshift,-2+\yredshift);
    \draw[red,very thick,->] (0+\xredshifttwo,0+\yredshifttwo)--(0+\xredshifttwo,-1+\yredshifttwo);
    \draw[red,very thick,->] (0+\xredshifttwo,0+\yredshifttwo)--(-3+\xredshifttwo,2+\yredshifttwo);
    \draw[red,very thick] (\xredshift,\yredshift)--(\xredshifttwo,\yredshifttwo);
\end{tikzpicture}
\qquad
\begin{tikzpicture}[scale=.6]
\pgfmathsetmacro{\xblueshift}{-.7}
\pgfmathsetmacro{\yblueshift}{1.2}
\pgfmathsetmacro{\xredshift}{-.3+15/5}
\pgfmathsetmacro{\yredshift}{-.3-5/5}
\pgfmathsetmacro{\xredshifttwo}{-.3}
\pgfmathsetmacro{\yredshifttwo}{-.3}
    \draw[thick, dotted] (-3,1)--(3,-1);

    \draw[very thick,->] (0,0)--(0,1);
    \node[above] at (0,1) {$4$};
    \draw[very thick,->] (0,0)--(3,-2);
    \node[below right] at (3,-2) {$1$};
    \draw[very thick,->] (0,0)--(0,-1);
        \node[below] at (0,-1) {$2$};
        \node[above left] at (-3,2) {$3$};
    \draw[very thick,->] (0,0)--(-3,2);
    \fill[red] (0+\xredshift,3+\yredshift) circle (0cm);
    \fill[red] (0-\xredshift-3*\yredshift,2*\xredshift/3+2*\yredshift) circle (.1cm);
    \fill[red] (0,\yredshift + \xredshift/3) circle (.1cm);
    \draw[very thick,->,red] (0+\xredshift,0+\yredshift)--(0+\xredshift,1+\yredshift);
    \draw[very thick,->,red] (0+\xredshift,0+\yredshift)--(3+\xredshift,-2+\yredshift);
    \draw[red,very thick,->] (0+\xredshifttwo,0+\yredshifttwo)--(0+\xredshifttwo,-1+\yredshifttwo);
    \draw[red,very thick,->] (0+\xredshifttwo,0+\yredshifttwo)--(-3+\xredshifttwo,2+\yredshifttwo);
    \draw[red,very thick] (\xredshift,\yredshift)--(\xredshifttwo,\yredshifttwo);
\end{tikzpicture}
\qquad
\begin{tikzpicture}[scale=.6]
\pgfmathsetmacro{\xblueshift}{-.7}
\pgfmathsetmacro{\yblueshift}{1.2}
\pgfmathsetmacro{\xredshift}{.5}
\pgfmathsetmacro{\yredshift}{.3}
\pgfmathsetmacro{\xredshifttwo}{\xredshift-3}
\pgfmathsetmacro{\yredshifttwo}{\yredshift+1}
    \draw[thick, dotted] (-3,1)--(3,-1);

    \draw[very thick,->] (0,0)--(0,1);
    \node[above] at (0,1) {$4$};
    \draw[very thick,->] (0,0)--(3,-2);
    \node[below right] at (3,-2) {$1$};
    \draw[very thick,->] (0,0)--(0,-1);
        \node[below] at (0,-1) {$2$};
        \node[above left] at (-3,2) {$3$};
    \draw[very thick,->] (0,0)--(-3,2);
    \fill[red] (0+\xredshift,3+\yredshift) circle (0cm);
    \fill[red] (0-\xredshift-3*\yredshift,2*\xredshift/3+2*\yredshift) circle (.1cm);
    \fill[red] (0,\yredshift + \xredshift/3) circle (.1cm);
    \draw[very thick,->,red] (0+\xredshift,0+\yredshift)--(0+\xredshift,1+\yredshift);
    \draw[very thick,->,red] (0+\xredshift,0+\yredshift)--(3+\xredshift,-2+\yredshift);
    \draw[red,very thick,->] (0+\xredshifttwo,0+\yredshifttwo)--(0+\xredshifttwo,-1+\yredshifttwo);
    \draw[red,very thick,->] (0+\xredshifttwo,0+\yredshifttwo)--(-3+\xredshifttwo,2+\yredshifttwo);
    \draw[red,very thick] (\xredshift,\yredshift)--(\xredshifttwo,\yredshifttwo);
\end{tikzpicture}
\]


For each of these cases, there is a 3-dimensional cone of possibilities determined by the locations of the two vertices inside their respective cones, which are subject to the constraint that the slope of the line segment between them is $\frac{1-k}{n}.$  Below are the linear functions determining these cones, where $(a_i,b_i)$ are coordinates for the vertices, $i=1,2,$ and $1$ labels the $(23)$ cone.
\[
\begin{array}{c}{-a_1}\\-(k-1)a_1-nb_1\\ a_2\\
-ka_2-nb_2\\
(1-k)(a_2-a_1)=n(b_2-b_1)
\end{array}
\hskip.2in
\begin{array}{c}{-a_1}\\-(k-1)a_1-nb_1\\ ka_2 + nb_2\\-(k-1)a_2-nb_2\\
(1-k)(a_2-a_1)=n(b_2-b_1)
\end{array}
\hskip.2in
\begin{array}{c}{ka_1-nb_1}\\(k-1)a_1+nb_1\\ a_2\\
(k-1)a_2+nb_2\\
(1-k)(a_2-a_1)=n(b_2-b_1)
\end{array}
\]

We compute the cones corresponding to the three types of tropical curves.  All three of these cones naturally live in a three dimensional subspace $N_?$ of the $N \times N$ which contains the diagonal $N$.  The embedding is by mapping a tropical curve to the location of its two trivalent vertices, and the linear equation determining the three dimensional subspace $N_?$ is the one requiring the slope of the internal edge to be exactly $\frac{1-k}{n}$.  Within this three dimensional lattice, we will see that our cones are all simplicial, defined by three linear functions. We compile the requisite linear functions below

\[
\begin{array}{c} {\frac{a_2}{n}}\\\frac{-w_2}{n}\\ \frac{-a_1}{n}
\end{array}
\hskip.2in
\begin{array}{c} \frac{w_2}{n}\\\frac{-w_2+a_2}{n}  \\ \frac{-a_1}{n}
\end{array}
\hskip.2in
\begin{array}{c} \frac{a_2}{n}\\ \frac{w_2-a_2}{n}\\ \frac{-w_1}{n}
\end{array}
\]

 Each inequality here applies to the location of a single vertex, we used the subscripts 1 and 2 to refer to the leftmost and rightmost vertices. The weight $w_i = ka_i + nb_i$.   For example, the middle entry in the above grid reflects the fact that the right hand vertex in the second tropical curve above must lie below the dotted line.   The factors of $\frac1n$ are to get the minimal linear functional which is integral on all of our vertices.  The claim here is that a 4-tuple of real numbers $(a_1,b_1,a_2,b_2)$ corresponds to the locations of the first and second vertex of an allowed tropical curve of a given type if and only if $(1-k)(a_2-a_1)=n(b_2-b_1)$ and each of the three linear functions in the corresponding column takes a positive integer value.  

 Setting $a_1=a_2$ and $b_1=b_2$, we see the contributions from the middle and right terms cancel, leaving the left term contribution $\frac{a^2w}{n^3}.$  This is the denominator.

 Combining with the numerator $a^2w$ gives a net $n^3$,
 and dividing by automorphisms {(of order $n$)} yields $n^2$. 
\appendix

\section{Heuristic Calculations}

We present here some numerical and heuristic calculations, just to give a feel for the problem.

\begin{example}[Direct Algebraic Approach]
    Here we perform in some more detail an example
    similar to Example \ref{ex:p2ex}.  Consider $\triangle = \mathrm{conv}\{(0,0),(2,0),(0,2),(2,2)\}$ so $L_\triangle = \cO_{\bP^1\times \bP^1}(2,2)$ and choose points $(\frac{1}{2},0),(-2,0),(\pm i,\infty),(0,\pm 1),(\infty,\pm i)$ along the toric boundary, so that the restricted linear system of degree (2,2) polynomials defining curves meeting these points is given by
    $f = x^2y^2 + x^2 + y^2 + \frac{3}{2}x - 1 + axy$.  If we eliminate $x$ and $y$ from the equations $f_x = f_y = f = 0,$ we find the polynomial $P(a) = a^8 + 12a^6 + 216a^4 - 3312a^2 + 20736,$ which is multiplicity-free of degree 8.  There are thus 8 nodal curves.
\end{example}

\begin{example}[Numerical Calculations]
    Ding-Wei \cite{DW} classify convex lattice polygons $\Delta$ with up to two interior lattice points, 
    up to integral unimodular affine transformation.
    For each, we have computed numerically using Mathematica the number of nodal
    curves with fixed boundary intersection in the corresponding linear systems of
genus-two curves --- see Table \ref{table:data}.  We do this by
finding singular points on the discriminant
which correspond to two distinct singular points
of the curve.  
All of these numbers have been confirmed by counts
of tropical curnve and counts and by ruling
    counts, most by hand or by our computer code.  For some we used
    computing assistance from ChatGPT 5.6 Sol.
    In addition, an \emph{ad hoc} argument for the count of Polygon 3q is given in Example \ref{ex:jac} below.

\begin{table}[ht]
    \centering
    \setlength{\tabcolsep}{3pt}
    \begin{tabular}{c|ccccccccccccccccccccc}
        \cite{DW} Fig. & a & b & c & d & e & f & g & h & i & j & k & l & m & n & o & p & q & r & s \\
        \hline
         Fig.~{2} & {9} & {5} & {16} & {30} & 48 \\
         Fig.~3 & {9} & {9} & {9} & 48 & {30} & {18} & {30} & {31} & {16} &{17} & {13} & {39} & {8} & 23 & 48 & {24} & {48} & 12 & {16}\\
         Fig.~10 & 13 & {13} & {18} & 18 & 24 & 31 & 39 & {31} & 24 & 18 & 39  & 31 & 23 & 17 & 12 & 23 &\\
         Fig.~12 & {18} & 24 & 31 & {18} & {31} \\ 
    \end{tabular}
    \vskip.1in
    \caption{For each convex lattice polytope with two interior lattice points, we compute numerically using Mathematica the number of singular points on the discriminant corresponding to two distinct critical points on the curve.  The figure numbers and letters correspond to figures and sub-figures of \cite{DW}.}
        \label{table:data}
\end{table}

\end{example}

\begin{example}[{\it Ad hoc} Calculations]
    \label{ex:jac}
    Recall the argument from \cite{YZ} for the case of K3.  We have a complete linear system and associated map $\mathrm{K3}\to \bP^g$ whose
    divisors have genus $g$, then identify the moduli of pairs of curve plus line bundle birationally with $\mathrm{Sym}^g(\mathrm{K3})$:  $g$ points determine a hyperplane in $\bP^g$ and a divsisor $C$ of genus $g$ in $\mathrm{K3}$ together with $g$ points, so an element in $\mathrm{Pic}^g(C).$  Then an Euler characteristic calculation of $\mathrm{Sym}^g(\mathrm{K3})$ can be performed explicitly.

    In the present case, we can try to apply a version of this argument, heuristically at least.  We have a linear system $\bP_\triangle \to |L_\Delta|.$
    Fixing the toric intersection imposes linear constraints on the linear system.
    Then an open set of the moduli space
    can be parametrized as $\mathrm{Sym}^g(T),$ where $T\subset \bP$ is the open torus boundary.  The true moduli space is a partial compactification.

    In the case $g=1$, the fixed-boundary condition means that in the compactification, the point in $T$ can only approach the boundary in a direction \emph{not} tangent to it.  There are an $\bA^1$ worth of directions --- so each boundary contribues $1$ to the Euler characteristic.  Numerical calculations confirm that for lattice polygons $\triangle$ with a single interior point that $N_\triangle$ is equal to the lattice length of the perimeter.

    Let us do a case with $g=2.$  Take $\triangle = \mathrm{conv}\{(0,0),(3,0),(3,2),(0,2)\}$,
    so $L_\triangle = \cO_{\bP^1\times \bP^1}(3,2).$
    Now if one point approaches the boundary, since $\chi(T)=0$, there is no contribution from the Euler characteristic unless the other point does, as well.  Since there are 10 boundary points, there are $\binom{10}{2} = 45$ points where the two points approach different boundary points.
    
    A subtlety now arises here:  if the two points have the same $x$ coordinate --- say $x = x_0$ then the same curve $C$ arises from both this pair and the pair $C\cap \{ x-x_1 = 0\}$ for any $x_1$.  Note the corresponding curve $C$ for this pair is the same, by construction, and the line bundle over $C$ is linearly equivalent, as the difference between the two sets of points is related by the rational function $\frac{x-x_1}{x-x_0}.$  The Euler characteristic for such pairs is $0,$ however, and we only need to focus on the implications of this rational equivalence for the compactification.
    For instance, two of the 45 pairs above are equivalent:  the pair of distinct points at $x=0$ and the pair at $x=\infty$.  Our running count is thus 44.

    We now consider the case where the two interior points approach the same boundary point.  A local study of the two-jet near such configurations gives rise to a set of possibilities with Euler characteristic 1.  {\it However}, this positive contribution can be canceled out by the case where two points with the same $x$ coordinate approach the same boundary point.  As a result, the 6 boundary points at $y = 0,\infty$ have no net contribution.  The four points at $x=0,\infty$ do, contributing 4. 

   Total:  44+4 = 48.  This agrees with the count for Figure 3q in Table \ref{table:data}.

\end{example}

\begin{example}[More on the case \texorpdfstring{$\cO_{\bP^1\times\bP^1}(3,2)$}{O(3,2) on P1xP1}]
\label{ex:coameba}

Consider a curve $C\subset \bP^1\times \bP^1$ defined in inhomogeneous coordinates as the zero locus of 
$$F = -1 + z^3 + w^2 + z^3w^2 + azw + bz^2w$$
thought of as section of $\cO_{\bP^1\times\bP^1}(3,2)$.  Note at $z = 0$ we have $w^2 = 1$ and at $z=\infty$ we have $w^2 = -1,$ while at $w = 0$ we have $z^3 = 1$ and at $w = \infty$ we have $z^3 = -1.$

When $a=b=0$ the polynomial is ``maximally sparse,'' meaning its nonzero exponents are the vertices of its Newton polytope.  Coamebas of maximally sparse polynomials were studied by Nisse and have support pattern the same as the mirror constructible sheaves --- see \cite[Section 7 and Figure 11]{Nisse}.  Here is a picture of the coameba when $a=b=0$, which is maximally sparse:

\[\includegraphics[scale=.3]{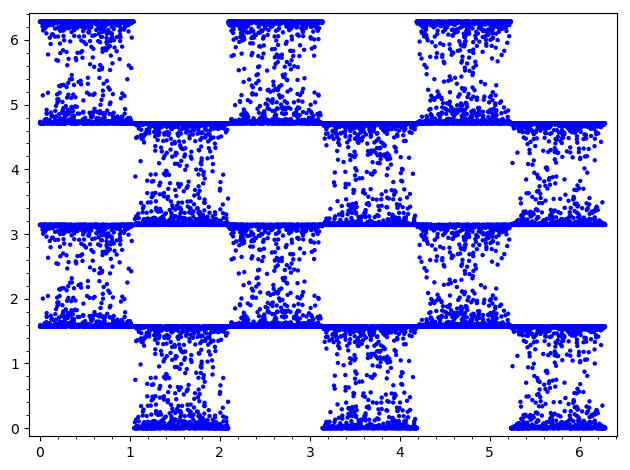}\]
For further observations on the relation between
coamebas and constructible sheaves, see Example \ref{ex:o23}.


Now let us restore $a$ and $b$.  The singular locus $S \subset \bC^2_{a,b}$ is the zero locus of the resultant
\begin{multline*}f= a^{10}b^4-a^4b^{10}-64a^{12}+164a^6b^6-64b^{12}
-6336a^8b^2+6336a^2b^8\\+141696a^4b^4+55296a^6-55296b^6-3234816a^2b^2-11943936\end{multline*}
These curves generically have single nodes (codimension one in parameter space).  We want more singular curves (two nodes), so
we investigate where
$\Delta$ itself is singular.  Let $g = f_a, h = f_b.$  We are interested in the locus of values $(a,b)$ where $f = g = h = 0.$
We can study this locus set-theoretically or scheme-theoretically.
Programming in Mathematica and solving only numerically,
one finds $96$ singular points, $48$ of which have
a unique point $(z,w)\in T$ where $C$ is singular (a cusp, then, generically),
and $48$ of which have a pair of distinct points, presumably
two nodes.  We take this as numerical evidence that
the number of nodal curves in this example is $48.$
Similar calculations produced the
results in Table \ref{table:data}.

\end{example}

\section{Further Examples}
\label{sec:furtherexs}

\subsection{Tropical curves}
\label{sec:3p}
When $\Delta = \mathrm{conv}\left((0,0),(3,0),(3,1),(0,2)\right) = 
\begin{tikzpicture}[scale=.22]
    \draw[very thick](0,0)--(3,0)--(3,1)--(0,2)--(0,0);
    \fill[blue] (1,0) circle (.25cm);
    \fill[blue] (2,0) circle (.25cm);
    \fill[blue] (3,0) circle (.25cm);
    \fill[blue] (0,1) circle (.25cm);
    \fill[blue] (1,1) circle (.25cm);
    \fill[blue] (2,1) circle (.25cm);
    \fill[blue] (3,1) circle (.25cm);
    \fill[blue] (0,2) circle (.25cm);
    \fill[blue] (0,0) circle (.25cm);
    
\end{tikzpicture}$, we computed numerically in Table \ref{table:data} (3p) that $N_\Delta = 24.$
Figure \ref{fig:tropical-curves}
illustrates the tropical count,
with curves and multiplicities shown.
Note the images of tropical trees
may have positive genus, due
to crossings:  the four-valencies are not
singularities of the domain, just the image.
\begin{figure}[htbp]
\centering

\begin{subfigure}{0.24\textwidth}
    \centering
    \includegraphics[width=\linewidth]{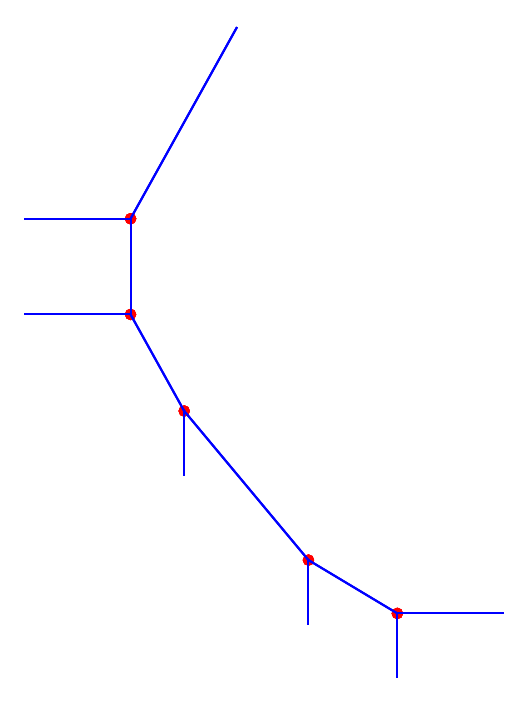}
    \caption{Multiplicity $9 = 3^2$}
\end{subfigure}
\begin{subfigure}{0.24\textwidth}
    \centering
    \includegraphics[width=\linewidth]{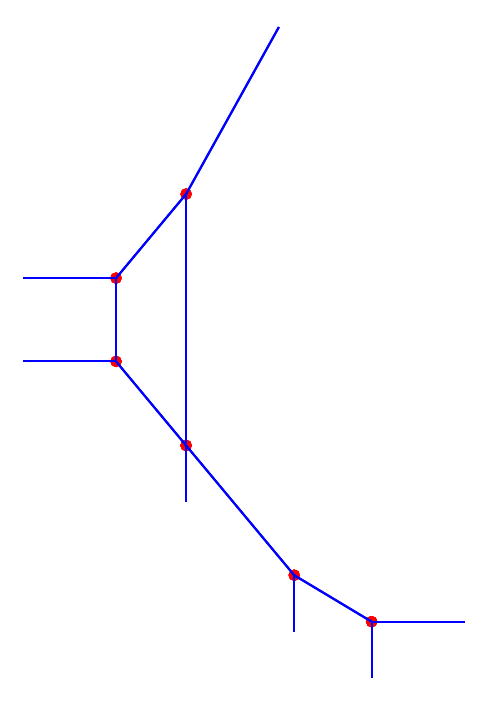}
    \caption{Multiplicity $4 = 2^2$}
\end{subfigure}
\begin{subfigure}{0.24\textwidth}
    \centering
    \includegraphics[width=\linewidth]{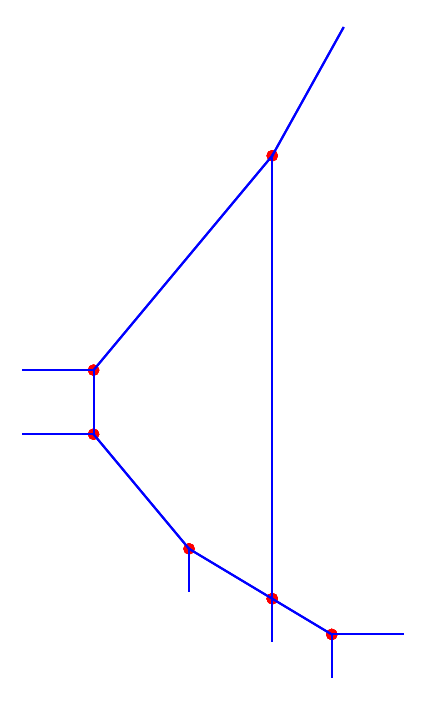}
    \caption{Multiplicity $4 = 2^2$}
\end{subfigure}
\begin{subfigure}{0.24\textwidth}
    \centering
    \includegraphics[width=\linewidth]{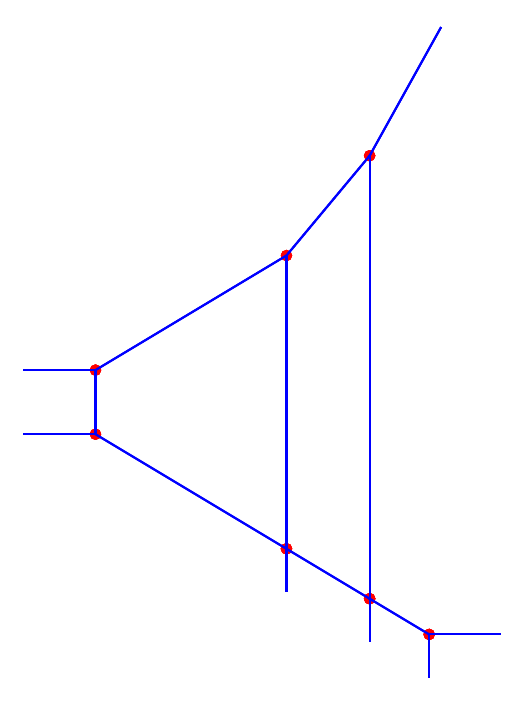}
    \caption{Multiplicity 1}
\end{subfigure}

\begin{subfigure}{0.24\textwidth}
    \centering
    \includegraphics[width=\linewidth]{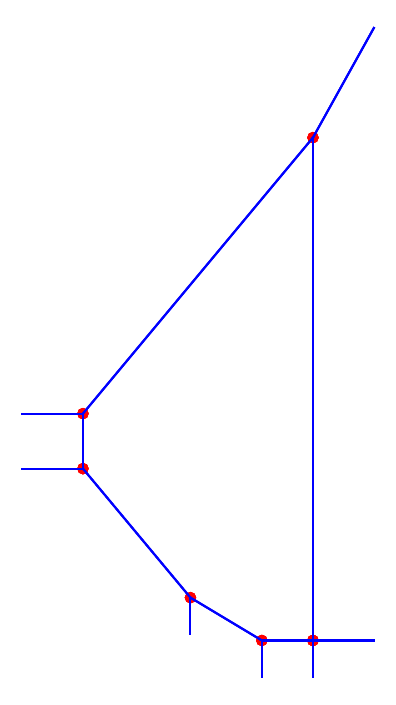}
    \caption{Multiplicity $4 = 2^2$}
\end{subfigure}
\begin{subfigure}{0.24\textwidth}
    \centering
    \includegraphics[width=\linewidth]{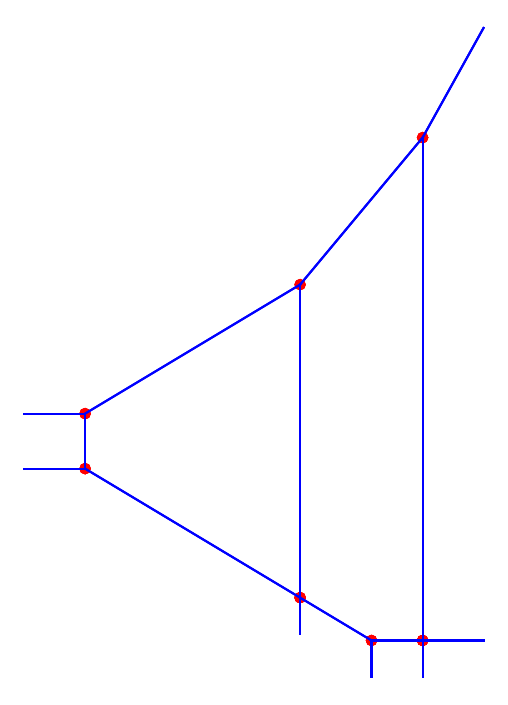}
    \caption{Multiplicity 1}
\end{subfigure}
\begin{subfigure}{0.24\textwidth}
    \centering
    \includegraphics[width=\linewidth]{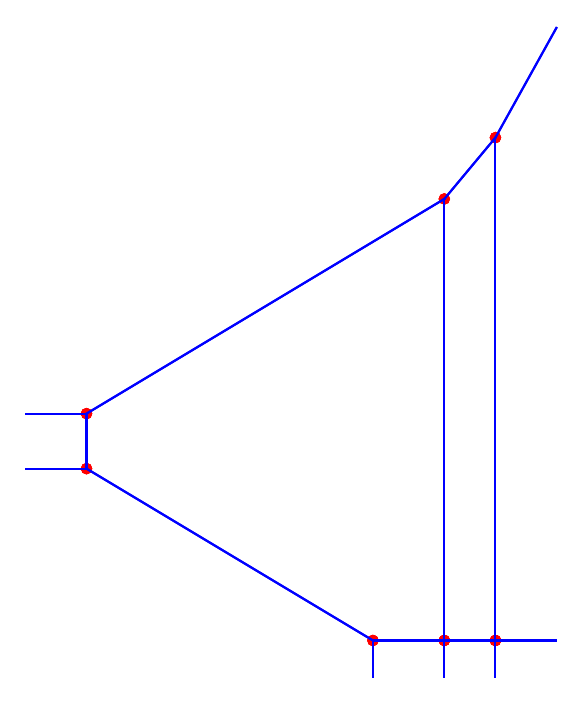}
    \caption{Multiplicity 1}
\end{subfigure}

\caption{The sum of multiplicities
is $N_\Delta = 24$.}
\label{fig:tropical-curves}
\end{figure}

The count of $q$-multiplicities is
$$3 + 3(q^\frac{1}{2}+q^{-\frac{1}{2}})^2 + (q + 1 + q^{-1})^2 = q^2 + 5q + 12 + 5q^{-1} + q^{-2} = z^{4} + 9z^2 + 24,$$
where $z = q^\frac{1}{2}-q^{-\frac{1}{2}}$.
This agrees with the counts of rulings
in different genera --- see Example \ref{ex:3p}.

\subsection{More examples of rulings}\footnote{With the programming assistance from ChatGPT 5.6 Sol, we are able to compute rulings in many examples, including all the cases in Table~\ref{table:data}. A list of rulings for various polygons, together with the code used to compute them, is available at \url{https://hu-mingyuan.github.io/ruling-lab/index.html}.  However, the examples below, as well as many others in Table~\ref{table:data}, were obtained before the AI era by hand.}

\begin{example}
\label{ex:o23}
    Consider \(\cO(2, 3) \) on \( \bP^1 \times \bP^1\), which is (3q) in Table~\ref{table:data}. The rulings are shown in Figure~\ref{fig:ruling_O23}. There are 36 rational rulings of the form \ref{subfig:filling_two_holes} (filling two non-diagonal holes), and 12 of the form \ref{subfig:diamond}, so 48 in total. 

    We remark that for some smooth and singular curves in the linear system $|\cO(2, 3)|$, the coamebas look a lot like rulings of constructible sheaves --- see Figures \ref{subfig:standardruling},\ref{subfig:filling_two_holes},\ref{subfig:diamond} and their counterparts in Figure \ref{fig:aztec}.  The relationship between rulings and coamebas remains unclear to us. 

\begin{figure}[ht]
\centering

    \begin{subfigure}[t]{.19\linewidth}
    \centering
            
    \begin{tikzpicture}[scale = .7]
    \foreach\i in {0,...,3}{
    \foreach\j in {0,...,2}{
    \fill (\i,\j) circle (2pt);   }}
    \draw[very thick] (0,0)--(3, 0)--(3,2)--(0,2)--(0,0);    

    \draw[thick, -Stealth] (1.7, 2.5) -- (2.7, 3.5);
    \node at (.8, 3) {$v = $};
    
    \end{tikzpicture}
    \caption{The Newton polygon and the ``vertical direction''}
    \end{subfigure}
    \begin{subfigure}[t]{.25 \linewidth}
    
    \centering

    \begin{tikzpicture}[scale = .6,yscale=-1]
    \draw[step=1cm, gray!50,thin] (0,0) grid (6,4);
    
    \foreach\i in {0, 2}{
    \foreach\j in {0, 2, 4}{
        \filldraw[draw = blue, line width = 1.5pt, fill = blue, fill opacity = .4] (\j, \i) rectangle ++(1, 1);
    }
    }
    \foreach\i in {1, 3}{
    \foreach\j in {1, 3, 5}{
        \filldraw[draw = red, line width = 1.5pt, fill = red, fill opacity = .4] (\j, \i) rectangle ++(1, 1);
    }
    }

    \foreach\i in {0, ..., 6}{
        \foreach\j in {0, ..., 4}{
            \filldraw[black, draw=white, line width=1pt] (\i, \j) circle (3pt);
        }
    }
    
    \end{tikzpicture}
    \caption{The standard ruling}
    \label{subfig:standardruling}
    \end{subfigure}
    \begin{subfigure}[t]{.25\linewidth}

    \centering
    \begin{tikzpicture}[scale = .6,yscale=-1]
        \draw[step=1cm, gray!50,thin] (0,0) grid (6,4);
        \filldraw[draw = blue, line width = 1.5pt, fill = blue, fill opacity = .4] (0, 0) rectangle ++ (1, 1);
        \filldraw[draw = blue, line width = 1.5pt, fill = blue, fill opacity = .4] (0, 2) rectangle ++ (3, 1);
        \filldraw[draw = blue, line width = 1.5pt, fill = blue, fill opacity = .4] (2, 0) rectangle ++ (1, 1);
        \filldraw[draw = blue, line width = 1.5pt, fill = blue, fill opacity = .4] (4, 0) rectangle ++ (1, 3);
        \filldraw[draw = red, line width = 1.5pt, fill = red, fill opacity = .4] (1, 1) rectangle ++ (1, 3);
        \filldraw[draw = red, line width = 1.5pt, fill = red, fill opacity = .4] (3, 1) rectangle ++ (3, 1);
        \filldraw[draw = red, line width = 1.5pt, fill = red, fill opacity = .4] (3, 3) rectangle ++ (1, 1);
        \filldraw[draw = red, line width = 1.5pt, fill = red, fill opacity = .4] (5, 3) rectangle ++ (1, 1);

        \filldraw[black, draw=white, line width=1pt] (1, 1) circle (3pt);
        \filldraw[black, draw=white, line width=1pt] (0, 0) circle (3pt);
        \filldraw[black, draw=white, line width=1pt] (0, 1) circle (3pt);
        \filldraw[black, draw=white, line width=1pt] (1, 0) circle (3pt);
        \filldraw[black, draw=white, line width=1pt] (0, 3) circle (3pt);
        \filldraw[black, draw=white, line width=1pt] (0, 2) circle (3pt);
        \filldraw[black, draw=white, line width=1pt] (1, 4) circle (3pt);
        \filldraw[black, draw=white, line width=1pt] (2, 1) circle (3pt);
        \filldraw[black, draw=white, line width=1pt] (3, 3) circle (3pt);
        \filldraw[black, draw=white, line width=1pt] (4, 4) circle (3pt);
        \filldraw[black, draw=white, line width=1pt] (4, 3) circle (3pt);
        \filldraw[black, draw=white, line width=1pt] (3, 1) circle (3pt);
        \filldraw[black, draw=white, line width=1pt] (4, 0) circle (3pt);
        \filldraw[black, draw=white, line width=1pt] (5, 0) circle (3pt);
        \filldraw[black, draw=white, line width=1pt] (6, 1) circle (3pt);
        \filldraw[black, draw=white, line width=1pt] (6, 2) circle (3pt);
        \filldraw[black, draw=white, line width=1pt] (3, 2) circle (3pt);
        \filldraw[black, draw=white, line width=1pt] (5, 3) circle (3pt);
        \filldraw[black, draw=white, line width=1pt] (6, 3) circle (3pt);
        \filldraw[black, draw=white, line width=1pt] (6, 4) circle (3pt);
        \filldraw[black, draw=white, line width=1pt] (5, 4) circle (3pt);
        \filldraw[black, draw=white, line width=1pt] (5, 4) circle (3pt);
        \filldraw[black, draw=white, line width=1pt] (3, 0) circle (3pt);
        \filldraw[black, draw=white, line width=1pt] (2, 0) circle (3pt);
        \filldraw[black, draw=white, line width=1pt] (2, 4) circle (3pt);
        \filldraw[black, draw=white, line width=1pt] (3, 4) circle (3pt);

    \end{tikzpicture}
    
    \caption{A rational ruling obtained by ``filling two holes''. 36 in total.}
    \label{subfig:filling_two_holes}
    \end{subfigure}
    \begin{subfigure}[t]{.25\linewidth}
    \centering
    \begin{tikzpicture}[scale = .6,yscale= - 1]
        \draw[step=1cm, gray!50,thin] (0,0) grid (6,4);
        
        \filldraw[draw = blue, line width = 1.5pt, fill = blue, fill opacity = .4] (0, 0) rectangle ++ (1, 1);
        
        \filldraw[draw = blue, line width = 1.5pt, fill = blue, fill opacity = .4] (4, 0) rectangle ++ (1, 1);
        \filldraw[draw = blue, line width = 1.5pt, fill = blue, fill opacity = .4] (2, 4) -- (2, 3) -- (0, 3) -- (0, 2) -- (3, 2) -- (3, 4); 
        \filldraw[draw = blue, line width = 1.5pt, fill = blue, fill opacity = .4] (2, 0) -- (2, 3) -- (5, 3) -- (5, 2) -- (3, 2) -- (3, 0);
        \filldraw[draw = red, line width = 1.5pt, fill = red, fill opacity = .4] (1, 1) rectangle ++ (3, 3);
        \filldraw[draw = red, line width = 1.5pt, fill = red, fill opacity = .4] (5, 1) rectangle ++ (1, 1);
        
        \filldraw[draw = red, line width = 1.5pt, fill = red, fill opacity = .4] (5, 3) rectangle ++ (1, 1);

        \filldraw[black, draw=white, line width=1pt] (1, 1) circle (3pt);
        \filldraw[black, draw=white, line width=1pt] (0, 0) circle (3pt);
        \filldraw[black, draw=white, line width=1pt] (0, 1) circle (3pt);
        \filldraw[black, draw=white, line width=1pt] (1, 0) circle (3pt);
        \filldraw[black, draw=white, line width=1pt] (0, 3) circle (3pt);
        \filldraw[black, draw=white, line width=1pt] (0, 2) circle (3pt);
        \filldraw[black, draw=white, line width=1pt] (1, 4) circle (3pt);
        \filldraw[black, draw=white, line width=1pt] (2, 3) circle (3pt);
        \filldraw[black, draw=white, line width=1pt] (3, 2) circle (3pt);
        \filldraw[black, draw=white, line width=1pt] (4, 4) circle (3pt);
        \filldraw[black, draw=white, line width=1pt] (4, 1) circle (3pt);
        \filldraw[black, draw=white, line width=1pt] (5, 1) circle (3pt);
        \filldraw[black, draw=white, line width=1pt] (4, 0) circle (3pt);
        \filldraw[black, draw=white, line width=1pt] (5, 0) circle (3pt);
        \filldraw[black, draw=white, line width=1pt] (6, 1) circle (3pt);
        \filldraw[black, draw=white, line width=1pt] (6, 2) circle (3pt);
        \filldraw[black, draw=white, line width=1pt] (5, 2) circle (3pt);
        \filldraw[black, draw=white, line width=1pt] (5, 3) circle (3pt);
        \filldraw[black, draw=white, line width=1pt] (6, 3) circle (3pt);
        \filldraw[black, draw=white, line width=1pt] (6, 4) circle (3pt);
        \filldraw[black, draw=white, line width=1pt] (5, 4) circle (3pt);
    \end{tikzpicture}
    
    \caption{A rational ruling of another type. 12 in total by symmetry.}
    \label{subfig:diamond}
    \end{subfigure}
    \caption{Rulings for $\cO(2, 3)$. The ``vertical direction'' is taken to be $v = (1, 1)$, and the black dots are the switches.}
    \label{fig:ruling_O23}
\end{figure}

\begin{figure}[ht]
    \centering
\includegraphics[scale=.25]{o32coameba.png}
\includegraphics[scale=.3]{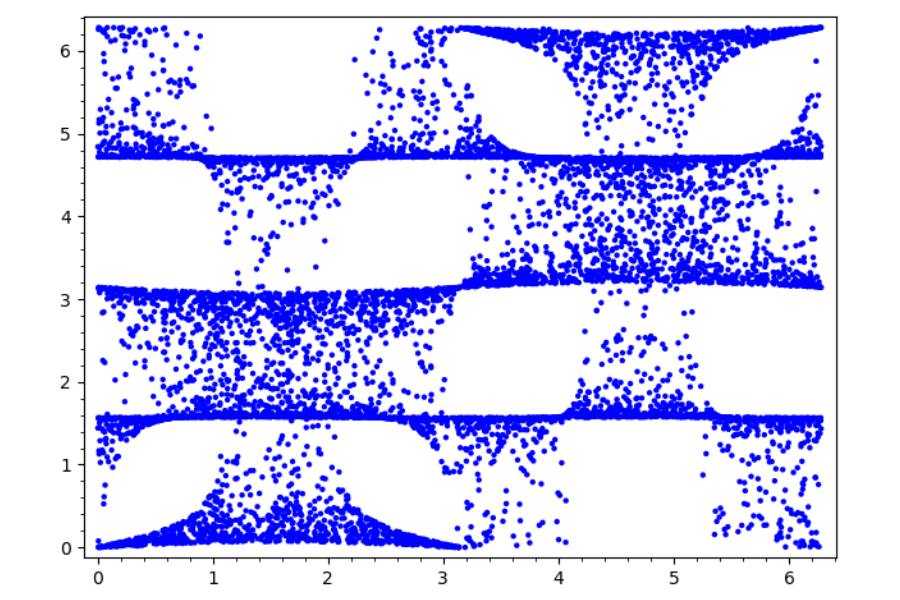}
\includegraphics[scale=.145]{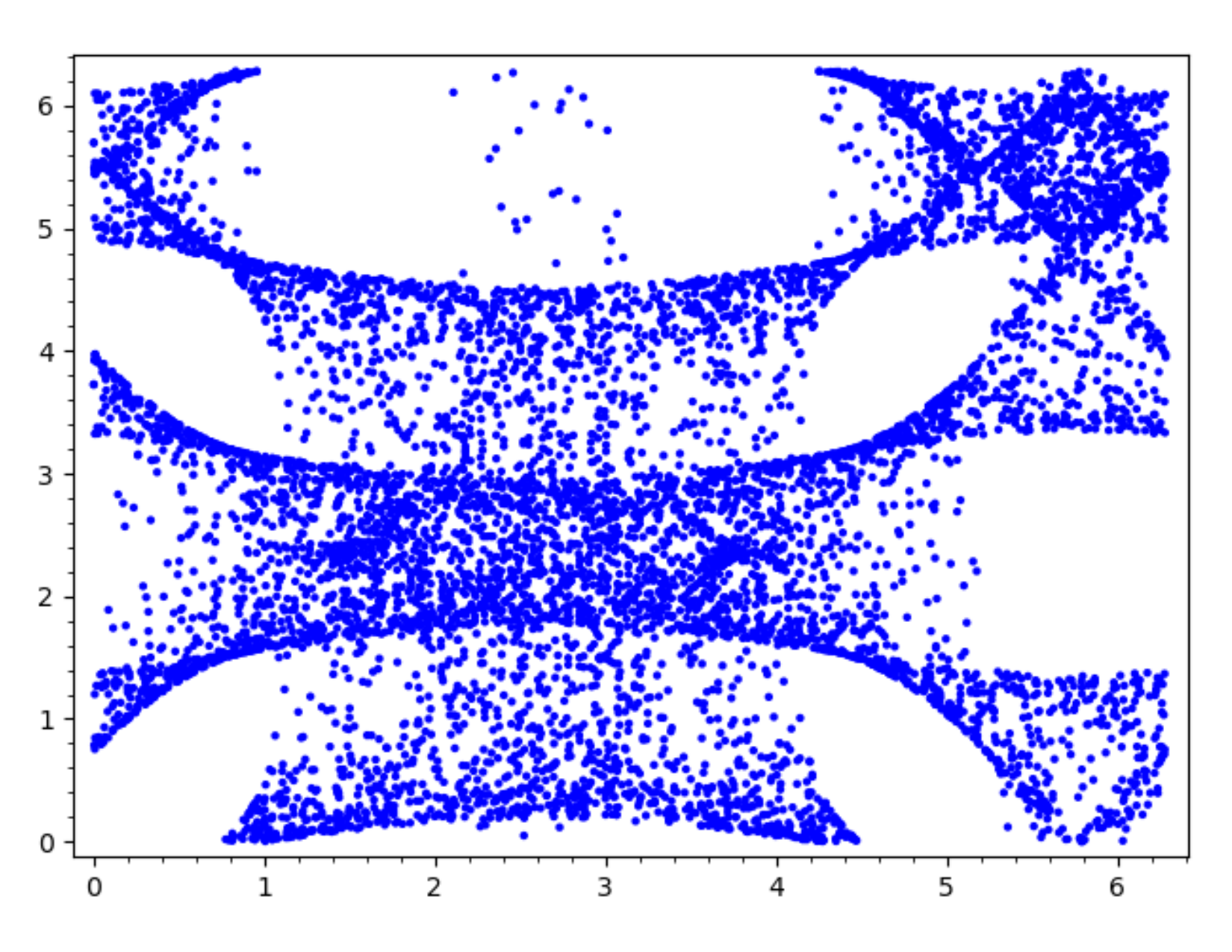}
    \caption{Coaoembas of a smooth (left) and two singular curves in $|O_{\bP^1\times \bP^1}(2, 3)|$.  The pictures look a lot like the corresponding constructible sheaves of Figure \ref{fig:ruling_O23}.  Note the Aztec
    diamond of Example \ref{ex:oablocal} at right.}
    \label{fig:aztec}
\end{figure}
\end{example}

\begin{example}
    \label{ex:3p}
    For Polygon (3p) of \cite{DW} there are 24 rational rulings, 9 genus-one rulings which ``fill one hole,'' and the unique standard ruling of genus two, giving a ruling polynomial of $z^4 + 9z^2 + 24,$ in agreement with the example
    of Section \ref{sec:3p}.
    See Figure~\ref{fig:3p_rulings}. 

    \begin{figure}[ht]
    \centering
    \begin{subfigure}[t]{.3\linewidth}
        \centering
        \begin{tikzpicture}[scale = .8]
             \foreach \i in {0,...,3}{
                \foreach \j in {0,...,2}{
                    \fill (\i,\j) circle (2pt);
                }
            }
            \draw[very thick] (0, 0) -- (3, 0) -- (3, 1) -- (0, 2) -- cycle;
            \draw[thick, -Stealth] (2.2, 2.5) -- (1.5, 3.2);
            \node at (1, 2.7) {$v=$};
        \end{tikzpicture}
        \caption{The Newton polygon and vertical direction $v = (-1, 1)$.}
    \end{subfigure}
        \begin{subfigure}[t]{.3\linewidth}
        \centering
        \includegraphics[width= .9 \linewidth]{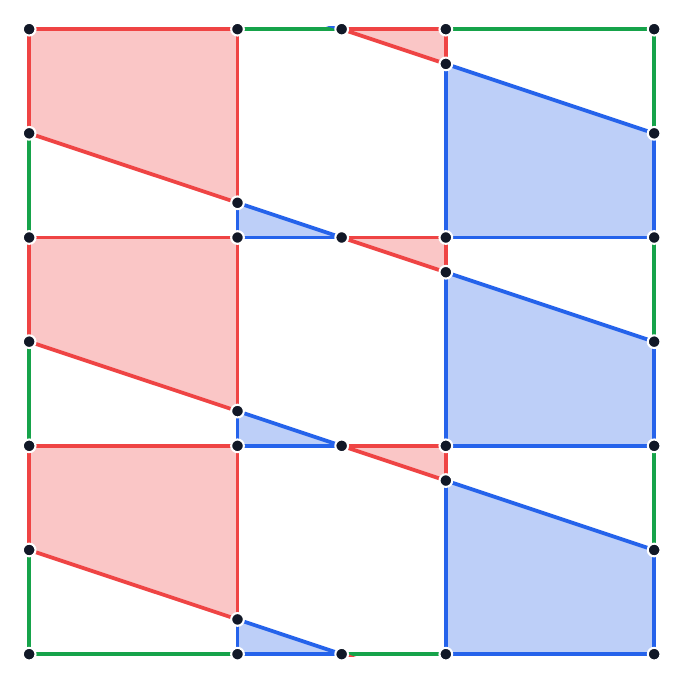}
        \caption{The standard ruling has genus two.}
        \end{subfigure}
        \begin{subfigure}[t]{.3\linewidth}
        \centering
        \includegraphics[width= .8 \linewidth]{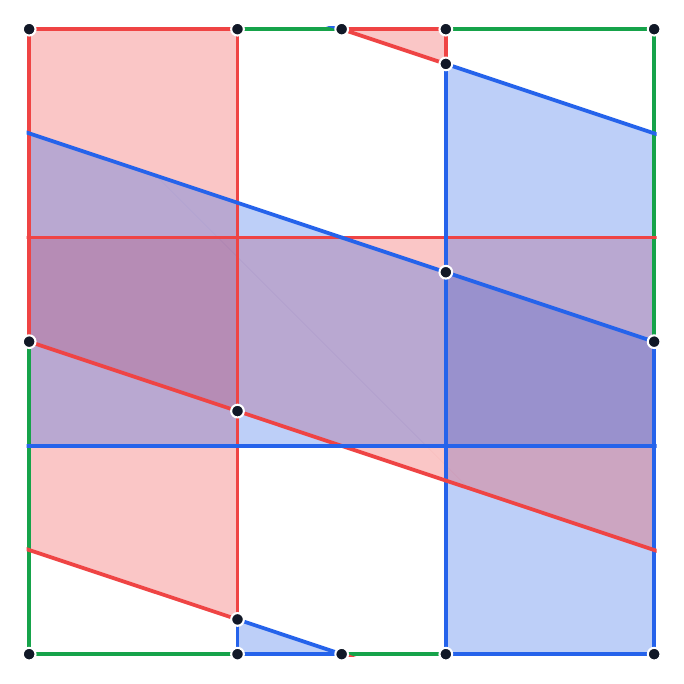}
        \caption{A rational ruling.  There are 3 of this kind, related
        by symmetry.}
        \end{subfigure}
        \begin{subfigure}[t]{.24\linewidth}
        \centering
        \includegraphics[width= \linewidth]{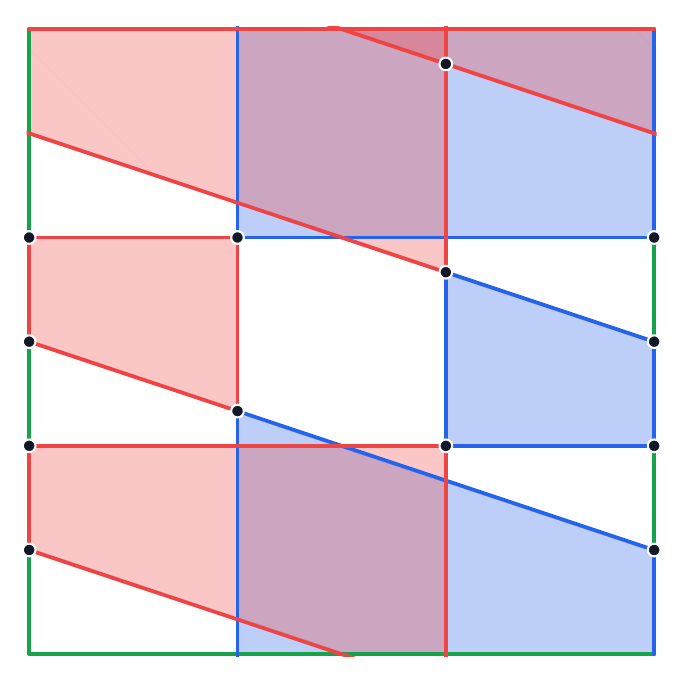}
        \caption{6 of this kind.}
        \end{subfigure}
        \begin{subfigure}[t]{.24\linewidth}
        \centering
        \includegraphics[width= \linewidth]{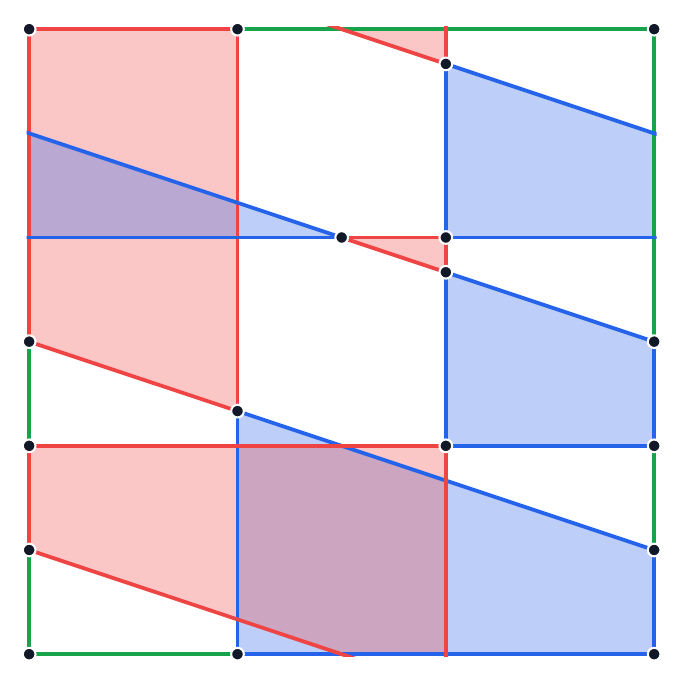}
        \caption{6 of these.}
        \end{subfigure}
        \begin{subfigure}[t]{.24\linewidth}
        \centering
        \includegraphics[width= \linewidth]{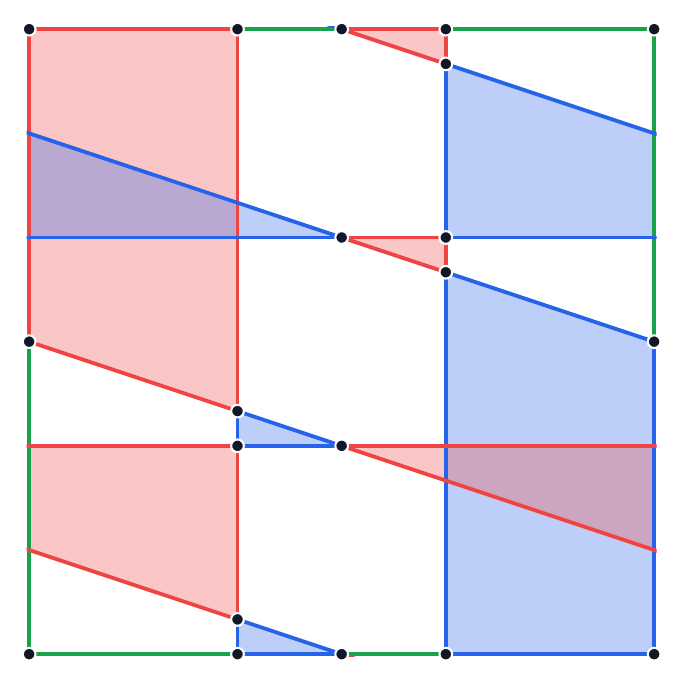}
        \caption{3 of these.}
        \end{subfigure}
        \begin{subfigure}[t]{.24\linewidth}
        \centering
        \includegraphics[width= \linewidth]{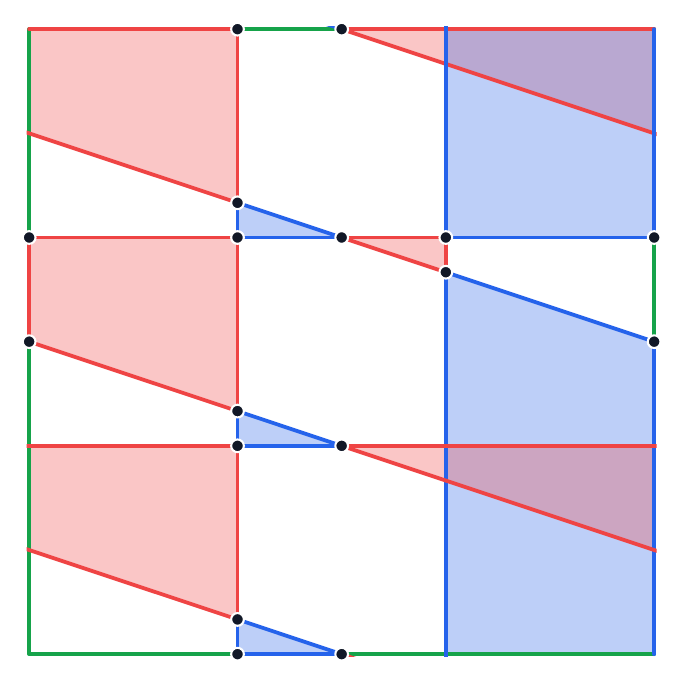}
        \caption{6 of these.}
        \end{subfigure}
    \caption{Rulings for (3p) Table~\ref{table:data}. The black dots are the switches. There are $24$ rational rulings in total.}   
    \label{fig:3p_rulings}
    \end{figure}
    
\end{example}

\begin{example}
\label{ex:2c}
Consider case (2c) in Table~\ref{table:data}. Its Newton polygon is shown in Figure~\ref{subfig:2c_NP}, and the standard ruling arising from a bipartite graph is shown in Figure~\ref{subfig:2c_standard}. There are two types of rational rulings, illustrated in Figures~\ref{subfig:2c_ration_ruling_C} and~\ref{subfig:2c_rational_ruling_D}. By symmetry, each type consists of 8 rulings, so 16 in total.

\begin{figure}[ht]
    \centering
    \begin{subfigure}[t]{.2\linewidth}
        \centering
        \begin{tikzpicture}[scale=.7]
            \foreach \i in {0,...,4}{
                \foreach \j in {0,...,2}{
                    \fill (\i,\j) circle (2pt);
                }
            }
            \draw[very thick] (0,0)--(4,0)--(1,2)--(0,0);
        \node at (1.5, 3) { $v = $};
        \draw[thick, -Stealth] (2.5, 2.5) -- (2.5, 3.5);
               
        \end{tikzpicture}
        \caption{The Newton polygon.}
        \label{subfig:2c_NP}
    \end{subfigure}
    \begin{subfigure}[t]{.25\linewidth}
        \centering
        \includegraphics[width=\linewidth]{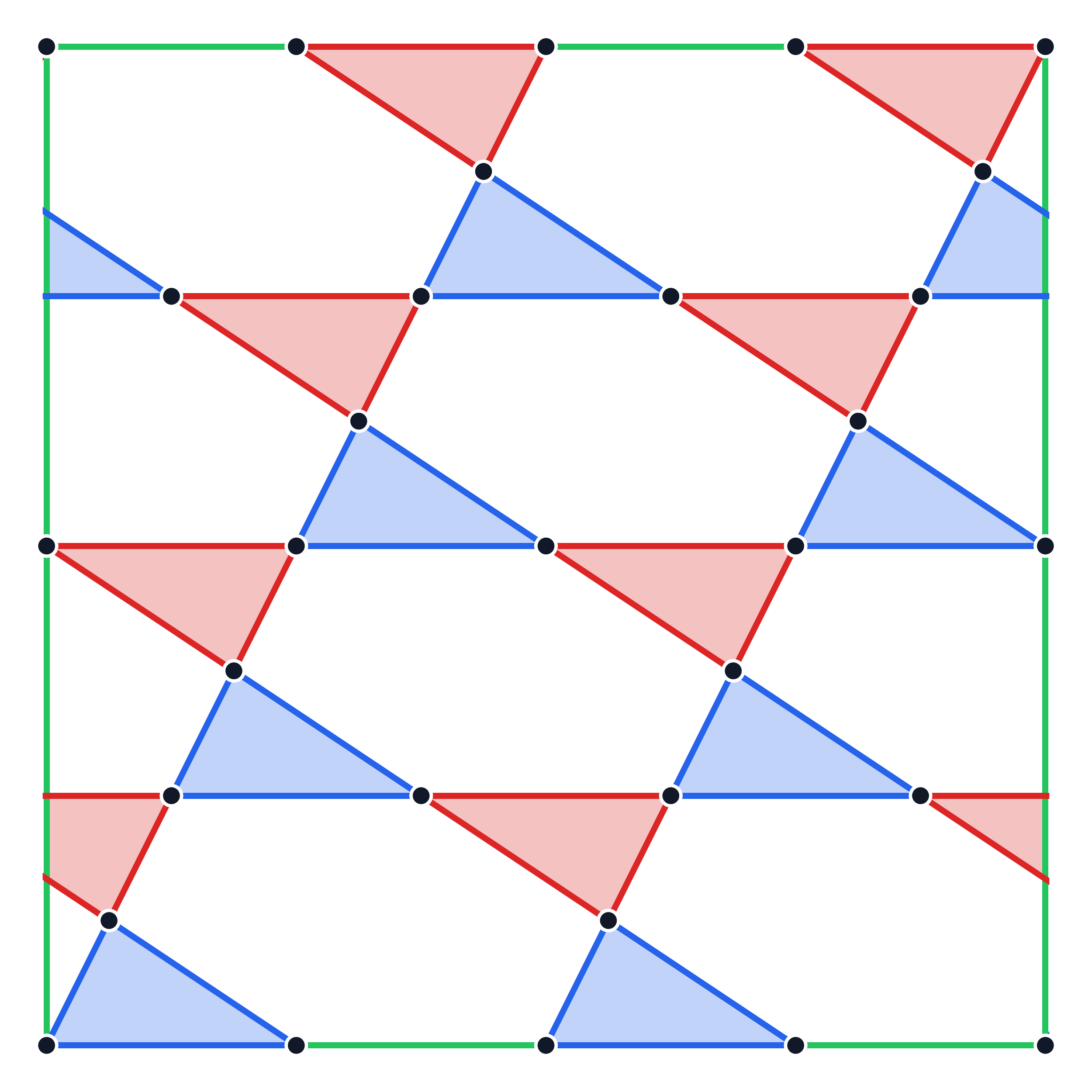}
        \caption{The standard ruling.}
        \label{subfig:2c_standard}
    \end{subfigure}
    \begin{subfigure}[t]{.25\linewidth}
        \centering
        \includegraphics[width=\linewidth]{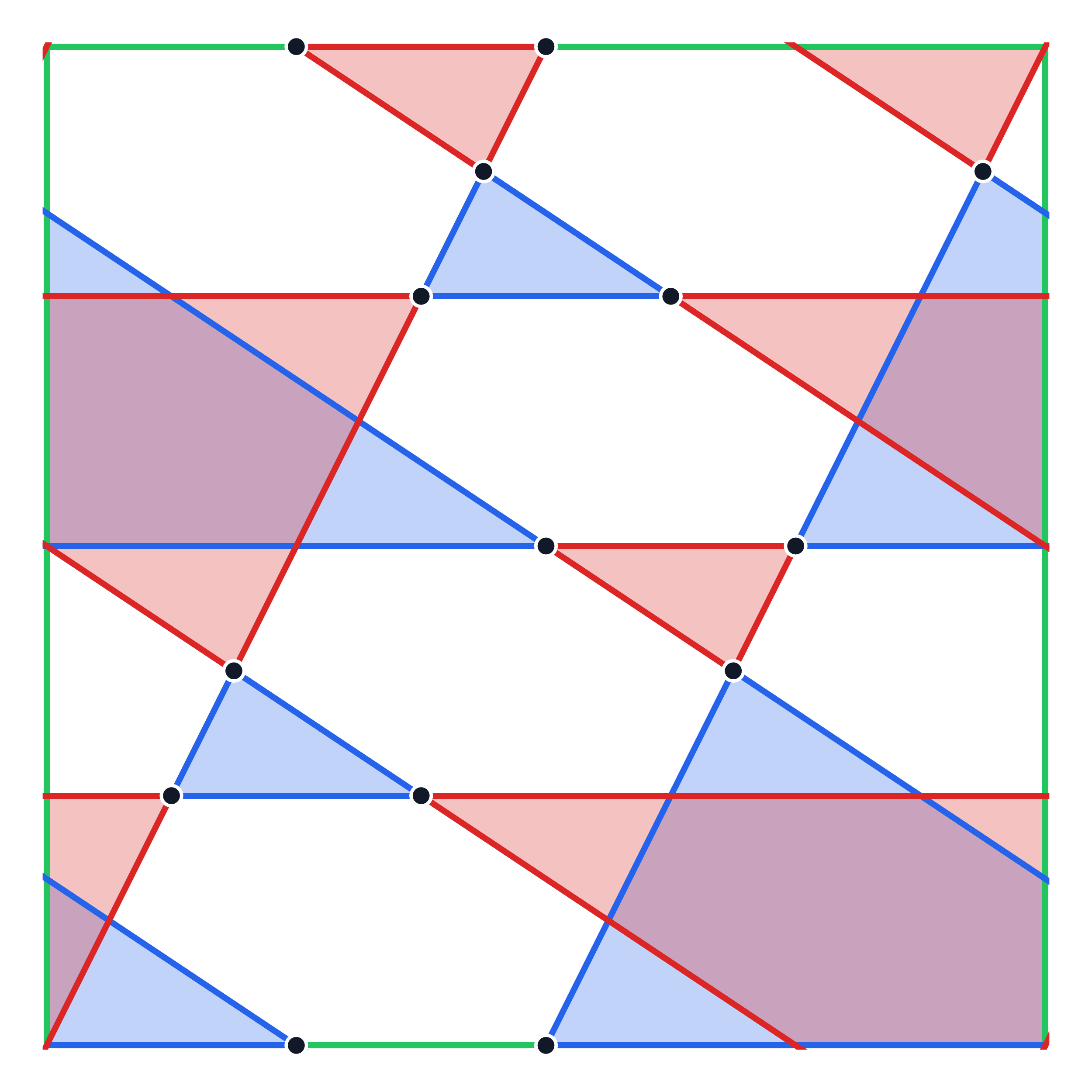}
        \caption{A type of rational ruling. 8 in total by symmetry.}
        \label{subfig:2c_ration_ruling_C}
    \end{subfigure}
    \begin{subfigure}[t]{.25\linewidth}
        \centering
        \includegraphics[width=\linewidth]{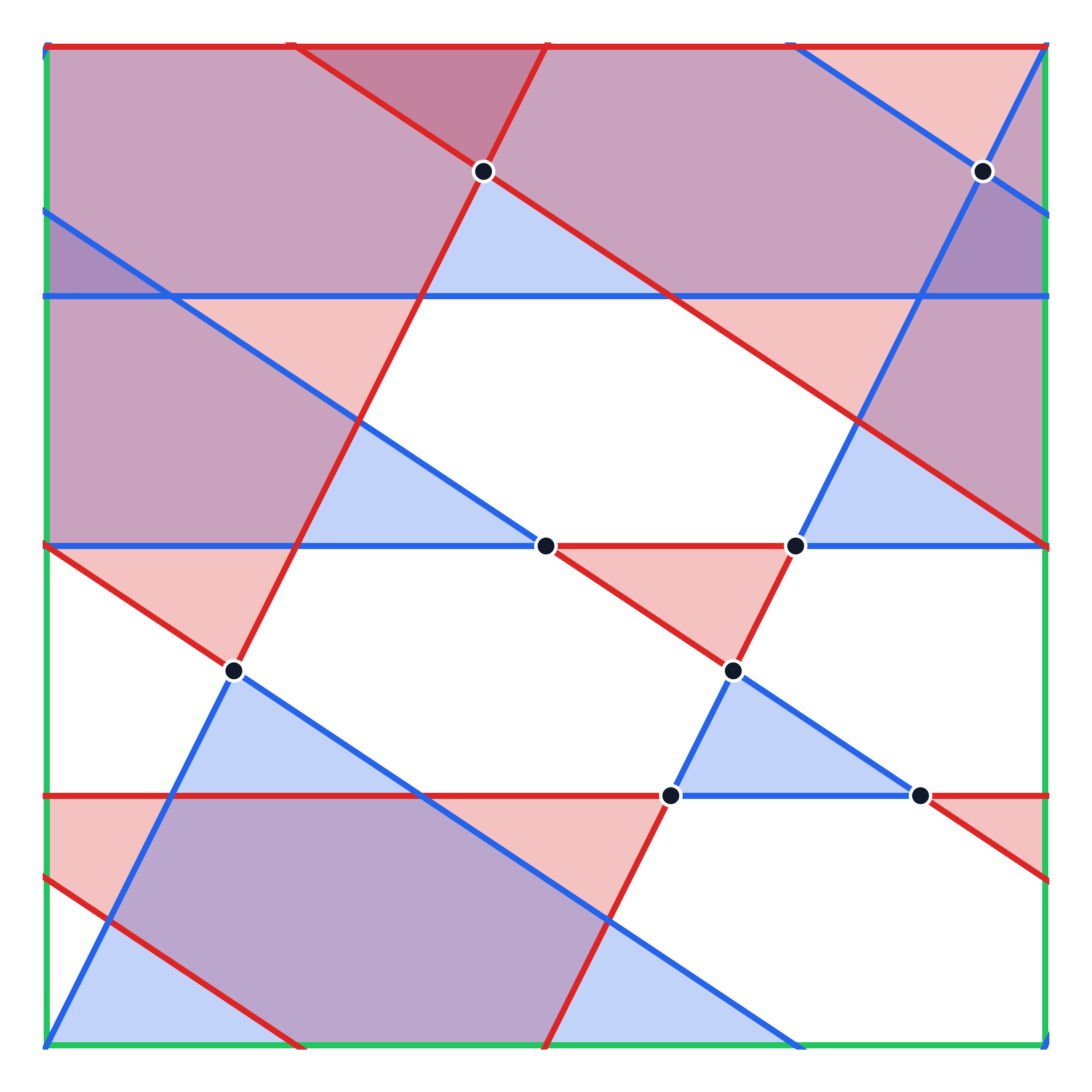}
        \caption{Another type of rational ruling. 8 in total by symmetry.}
        \label{subfig:2c_rational_ruling_D}
    \end{subfigure}
    \caption{Rulings for (2c) in Table~\ref{table:data}. The black dots are the switches. There are 16 rational rulings in total.}
\end{figure}

\end{example}

\begin{example}
Consider $\cO(4)$ on $\bP^2$. The number of interior integral points is $g_\triangle = 3$. The rational rulings are shown in Figure~\ref{fig:O4}. There are 304 in total. 

\begin{figure}[ht]
\centering
\begin{subfigure}[t]{.3\linewidth}
\centering
\begin{tikzpicture}[scale=.8]
\foreach \i in {-1,...,3}{
\foreach \j in {-1,...,3}{
\fill (\i,\j) circle (2pt);
}
}
\draw[very thick] (-1,-1)--(3,-1)--(-1,3)--cycle;
\node at (1.2,3.7) {$v=(-1,2)$};
\end{tikzpicture}
\caption{The Newton polygon}
\end{subfigure}
\begin{subfigure}[t]{.4\linewidth}
\centering
\includegraphics[width=.7\linewidth]{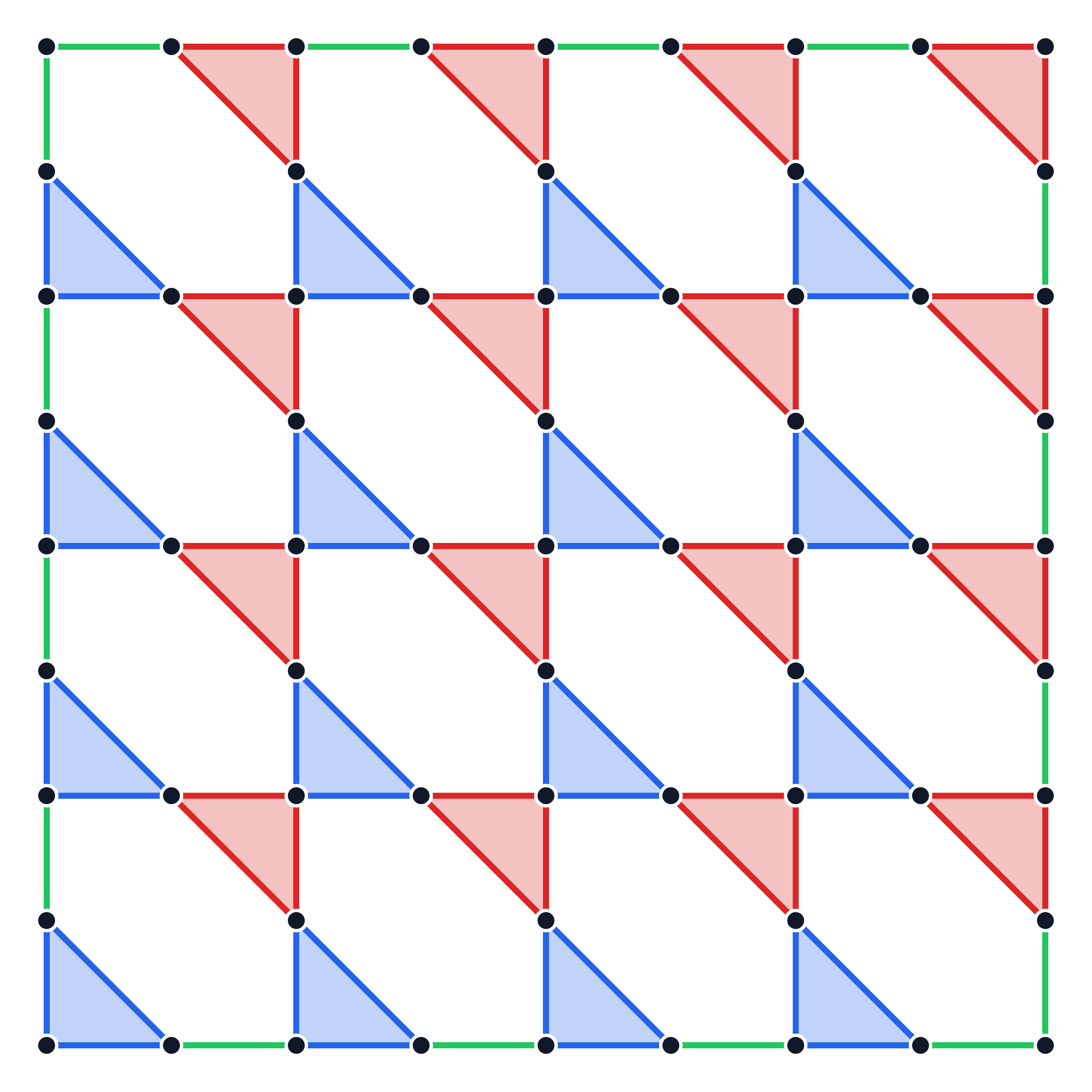}
\caption{The standard ruling}
\end{subfigure}
\begin{subfigure}[t]{.24\linewidth}
\centering
\includegraphics[width=\linewidth]{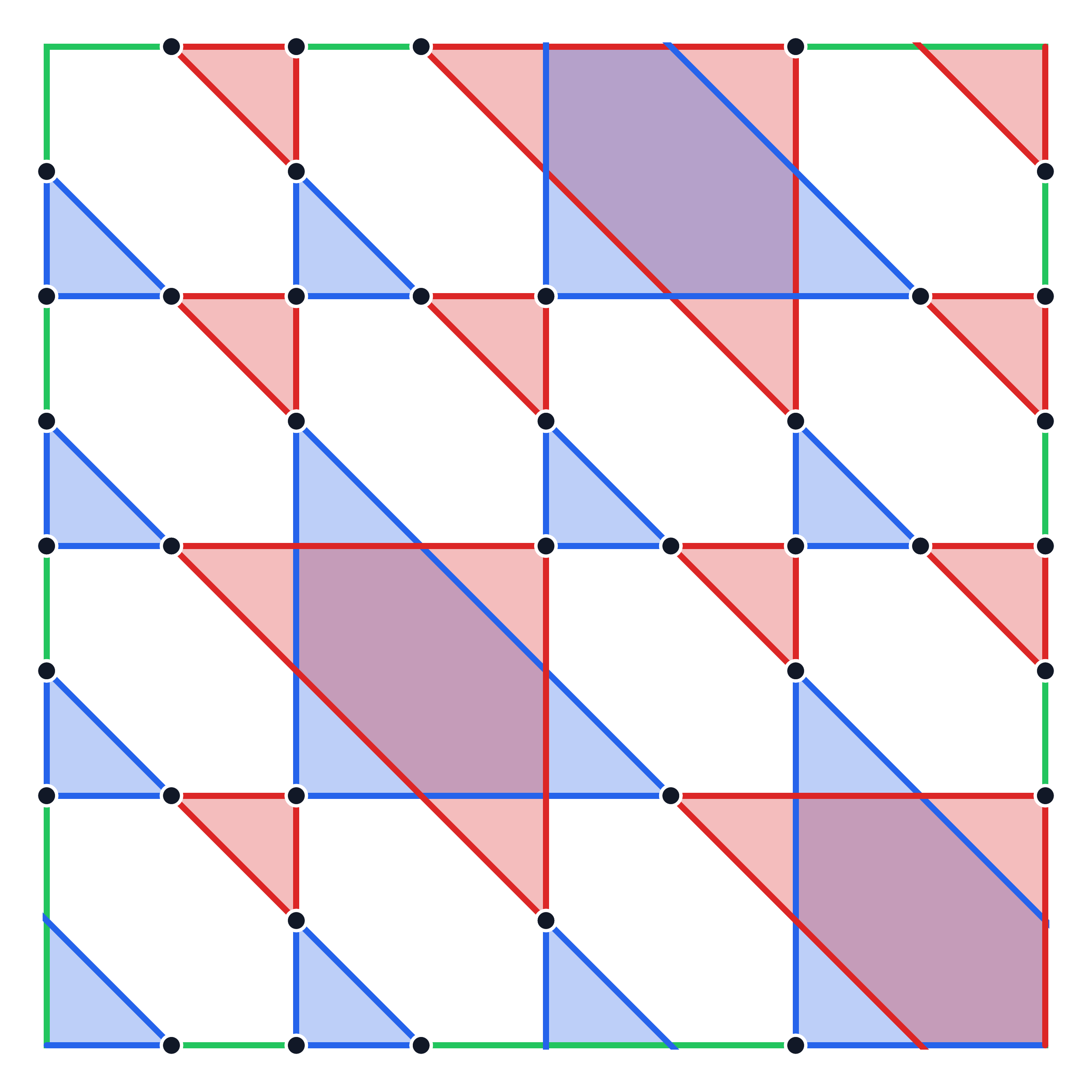}
\caption{32 in total}
\end{subfigure}
\begin{subfigure}[t]{.24\linewidth}
\centering
\includegraphics[width=\linewidth]{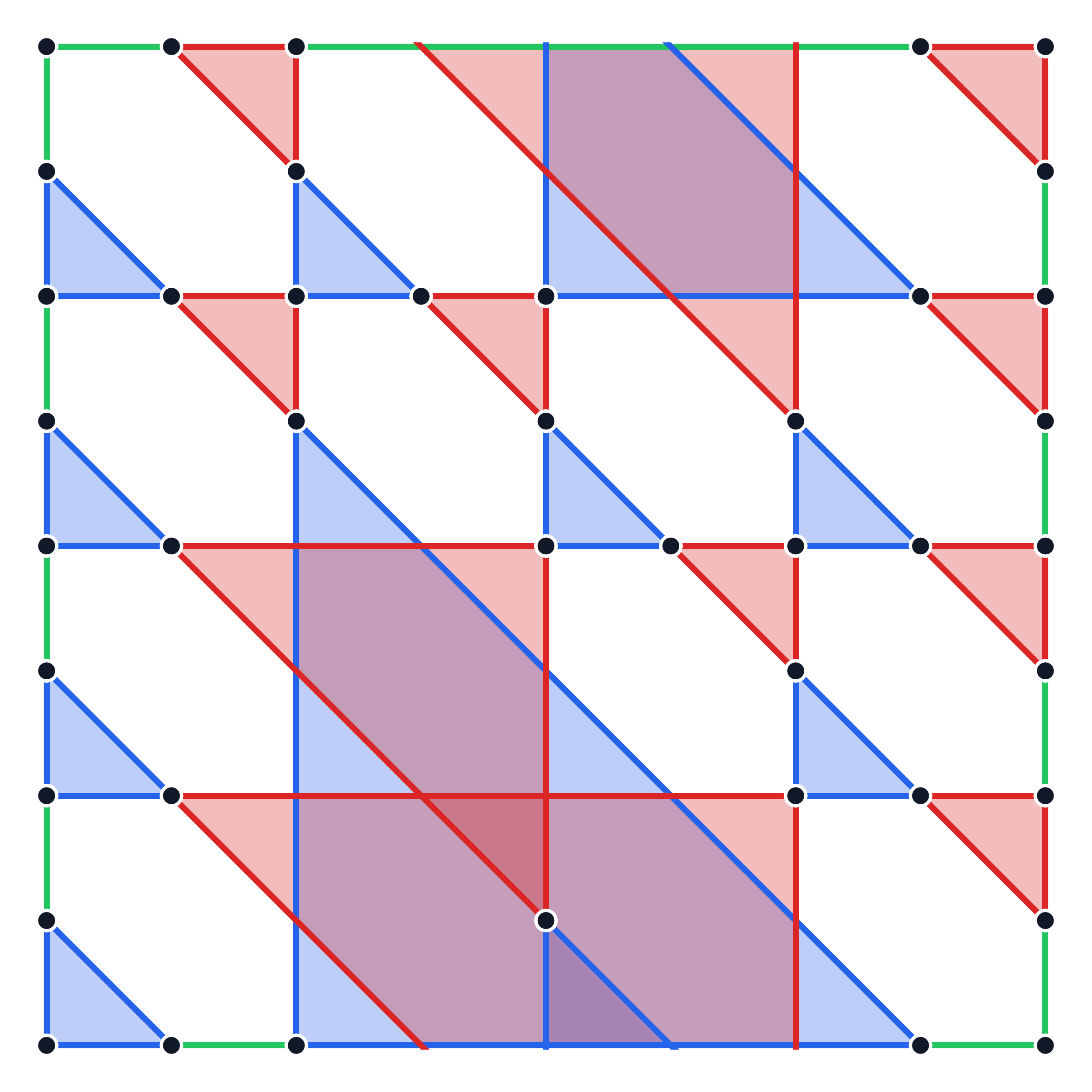}
\caption{16 in total}
\end{subfigure}
\begin{subfigure}[t]{.24\linewidth}
\centering
\includegraphics[width=\linewidth]{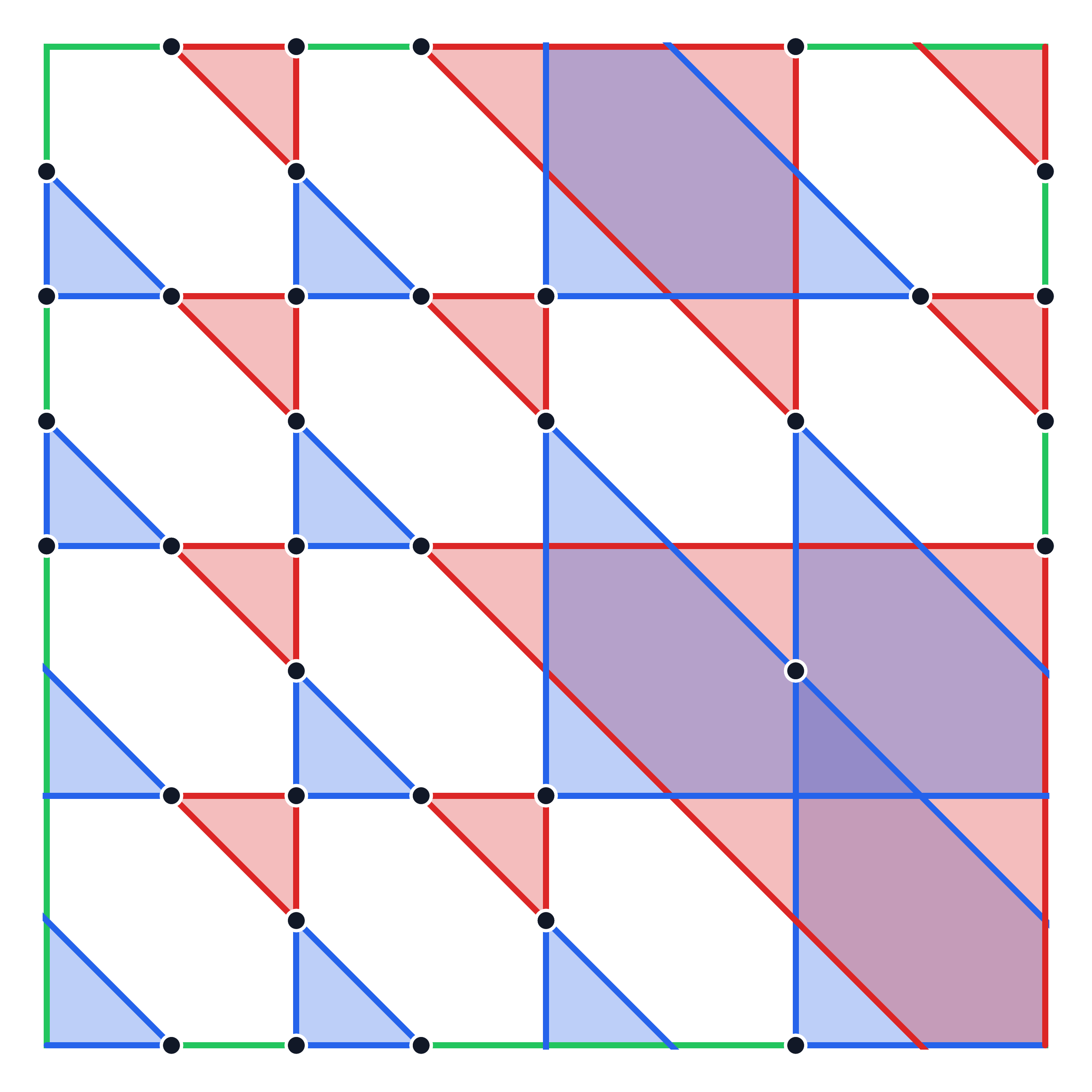}
\caption{32 in total}
\end{subfigure}
\begin{subfigure}[t]{.24\linewidth}
\centering
\includegraphics[width=\linewidth]{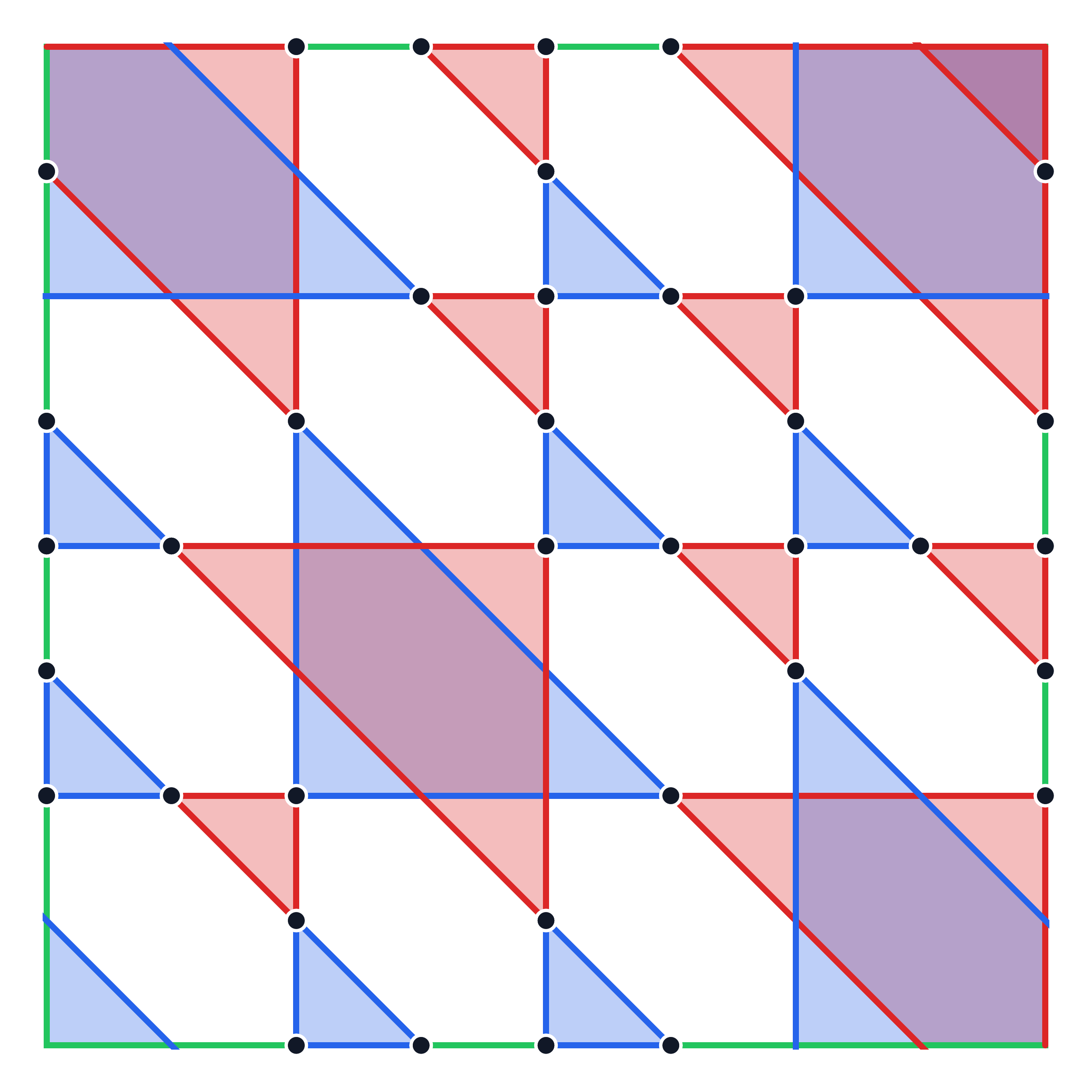}
\caption{32 in total}
\end{subfigure}
\begin{subfigure}[t]{.24\linewidth}
\centering
\includegraphics[width=\linewidth]{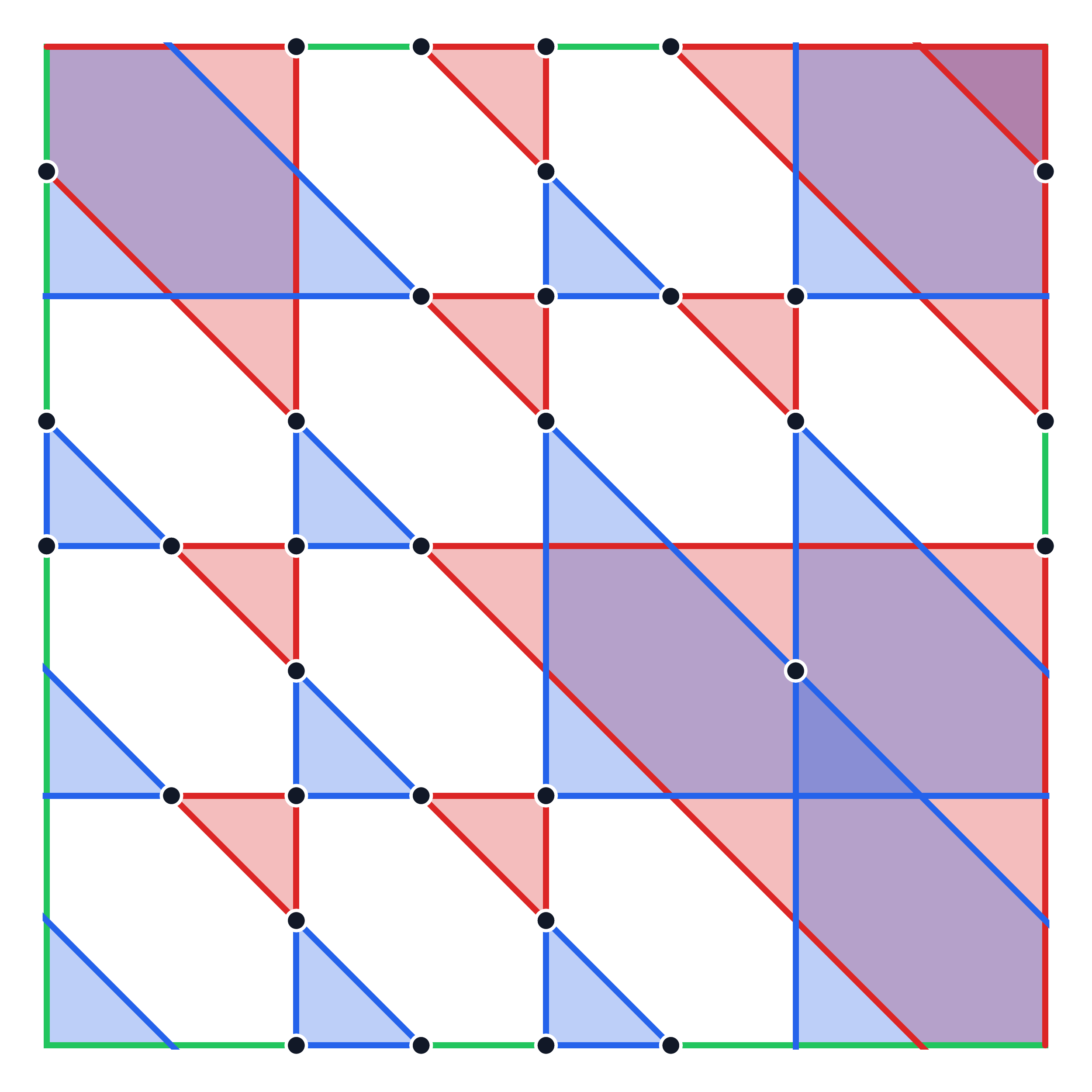}
\caption{16 in total}
\end{subfigure}
\begin{subfigure}[t]{.24\linewidth}
\centering
\includegraphics[width=\linewidth]{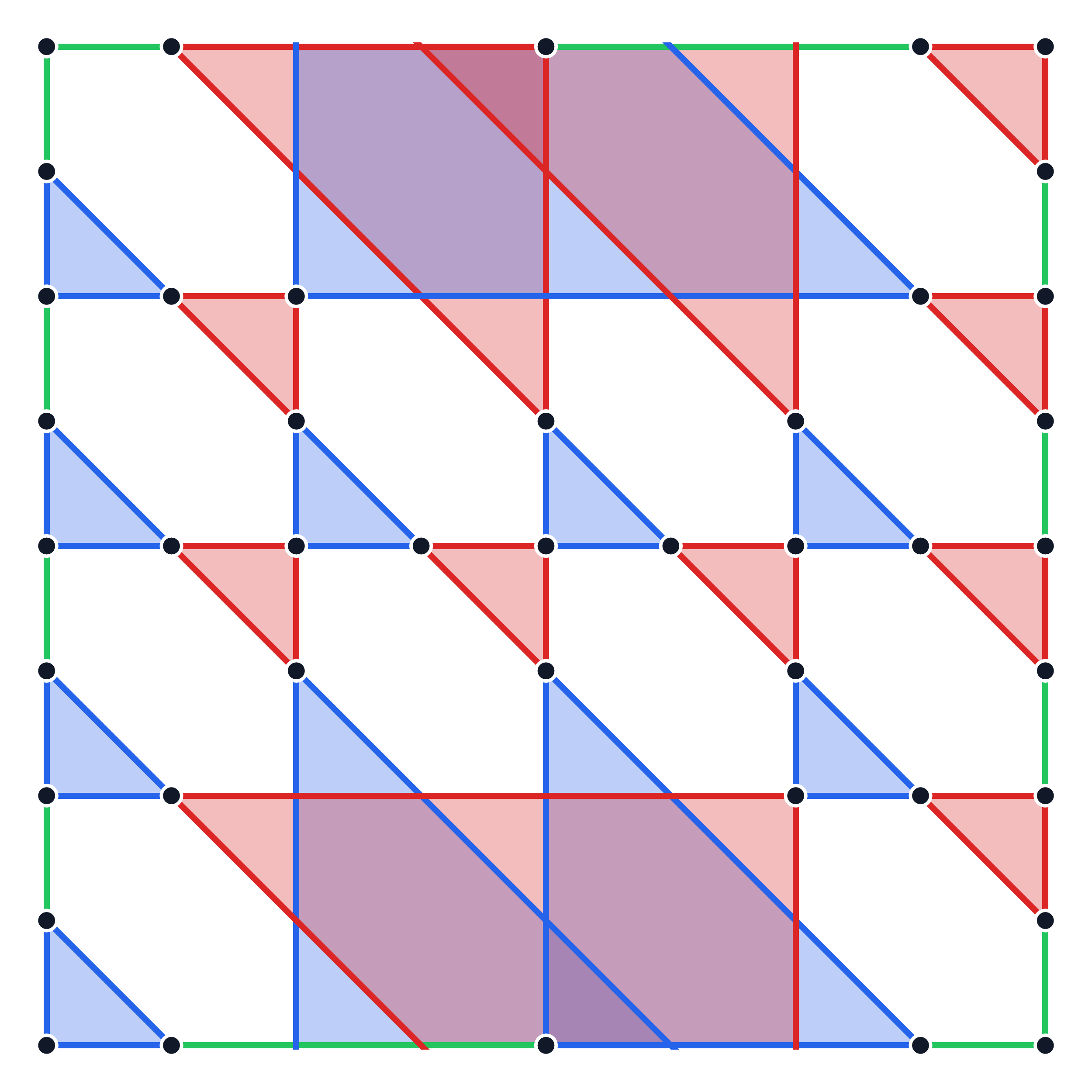}
\caption{32 in total}
\end{subfigure}
\begin{subfigure}[t]{.24\linewidth}
\centering
\includegraphics[width=\linewidth]{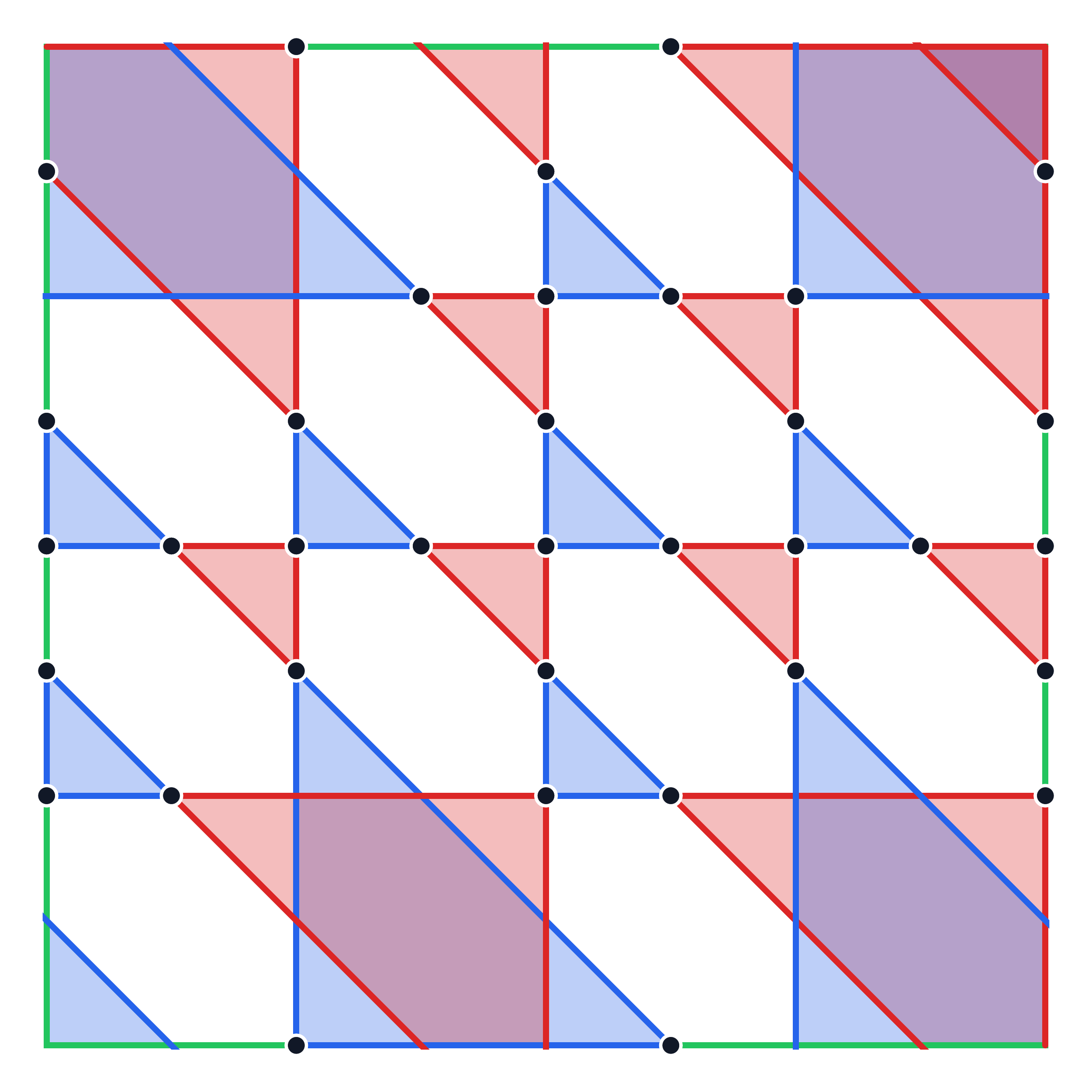}
\caption{16 in total}
\end{subfigure}
\begin{subfigure}[t]{.24\linewidth}
\centering
\includegraphics[width=\linewidth]{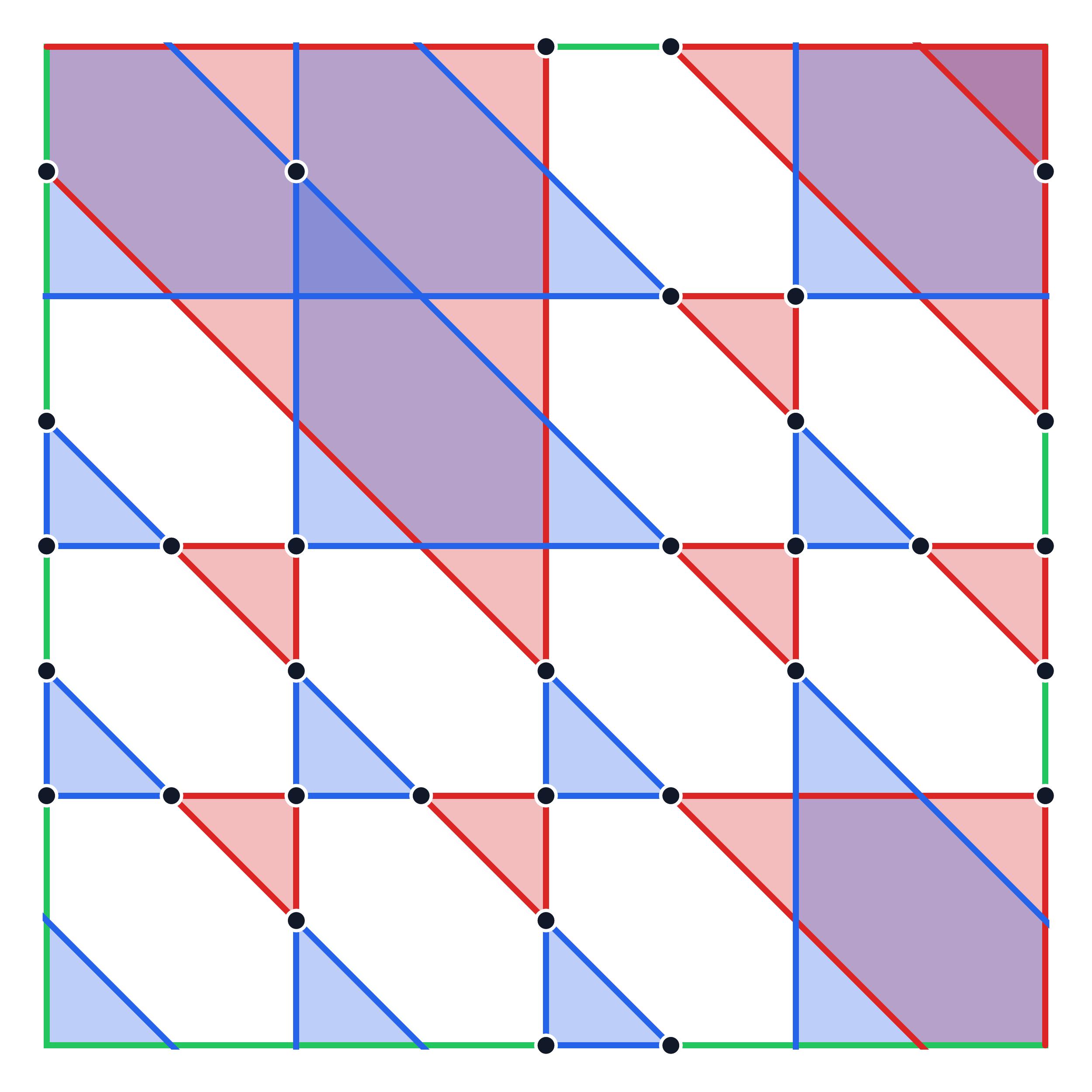}
\caption{32 in total}
\end{subfigure}
\begin{subfigure}[t]{.24\linewidth}
\centering
\includegraphics[width=\linewidth]{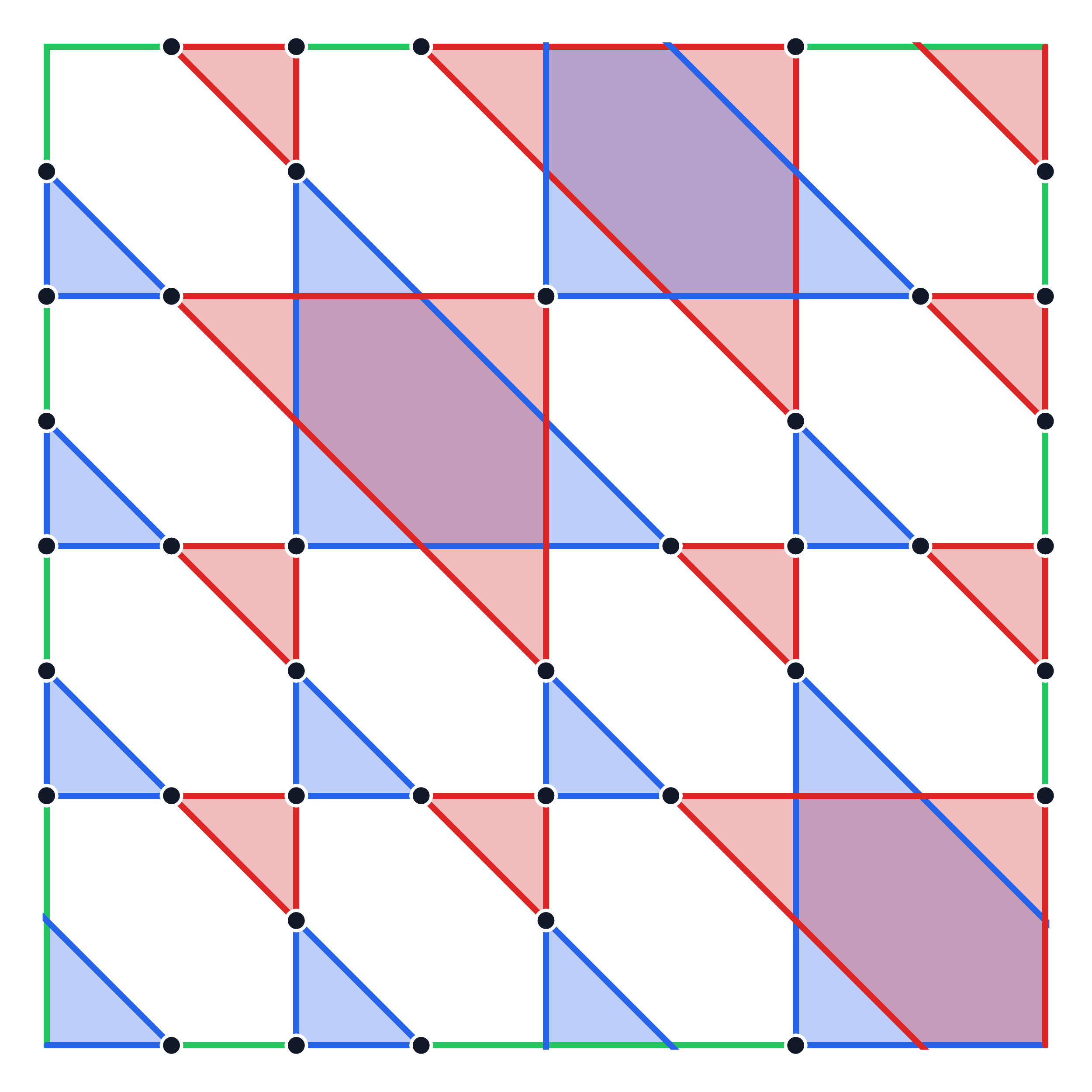}
\caption{32 in total}
\end{subfigure}
\begin{subfigure}[t]{.24\linewidth}
\centering
\includegraphics[width=\linewidth]{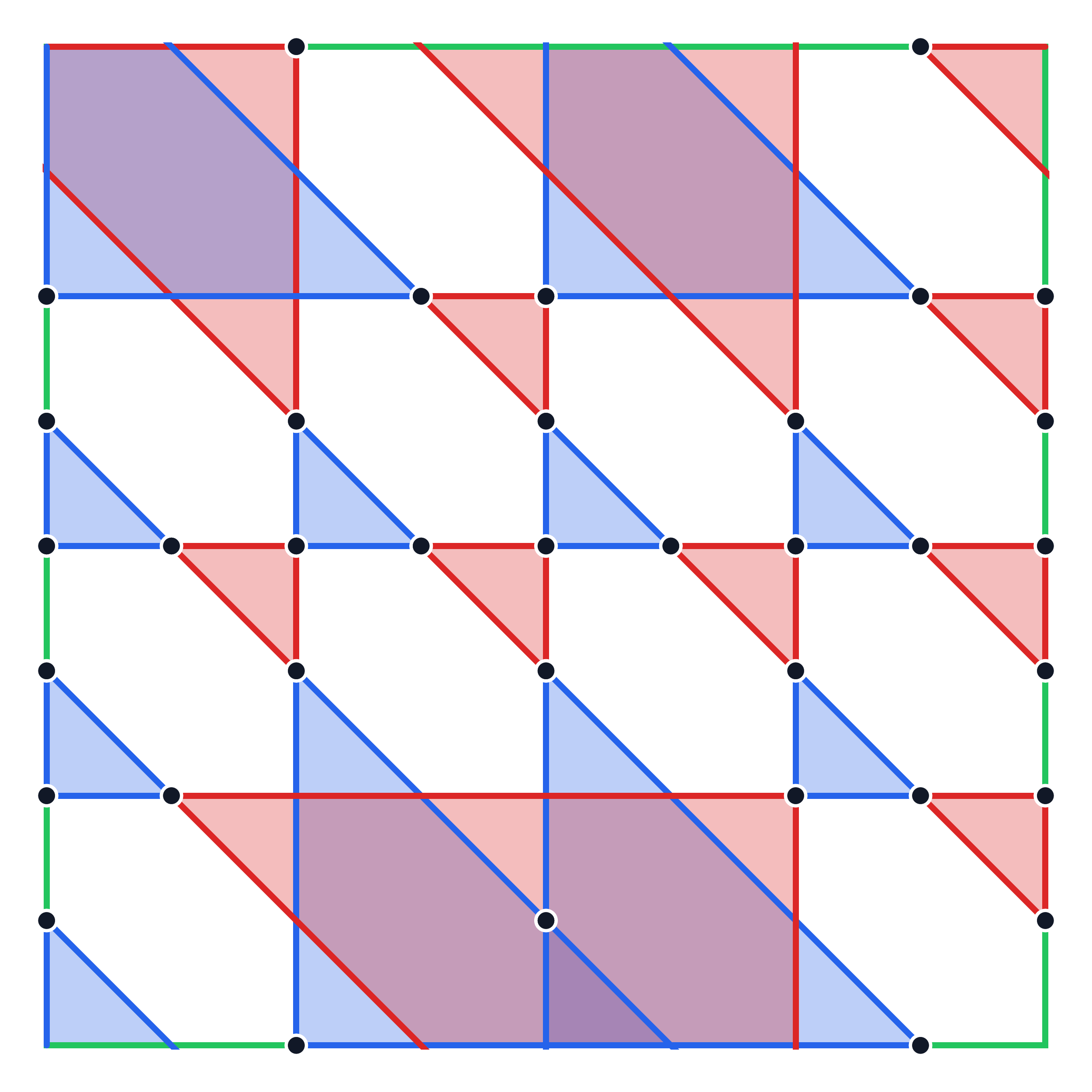}
\caption{16 in total}
\end{subfigure}
\begin{subfigure}[t]{.24\linewidth}
\centering
\includegraphics[width=\linewidth]{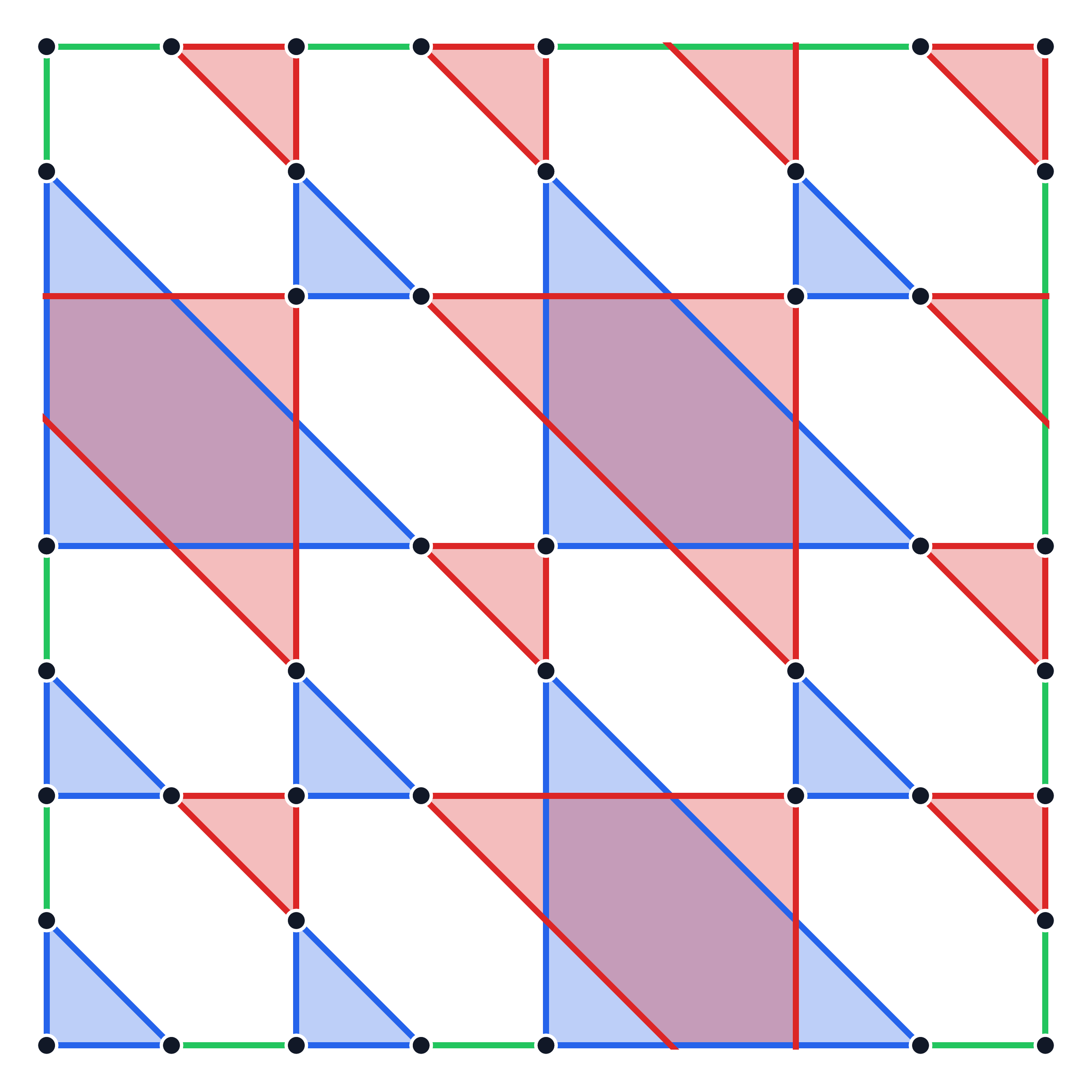}
\caption{16 in total}
\end{subfigure}
\begin{subfigure}[t]{.24\linewidth}
\centering
\includegraphics[width=\linewidth]{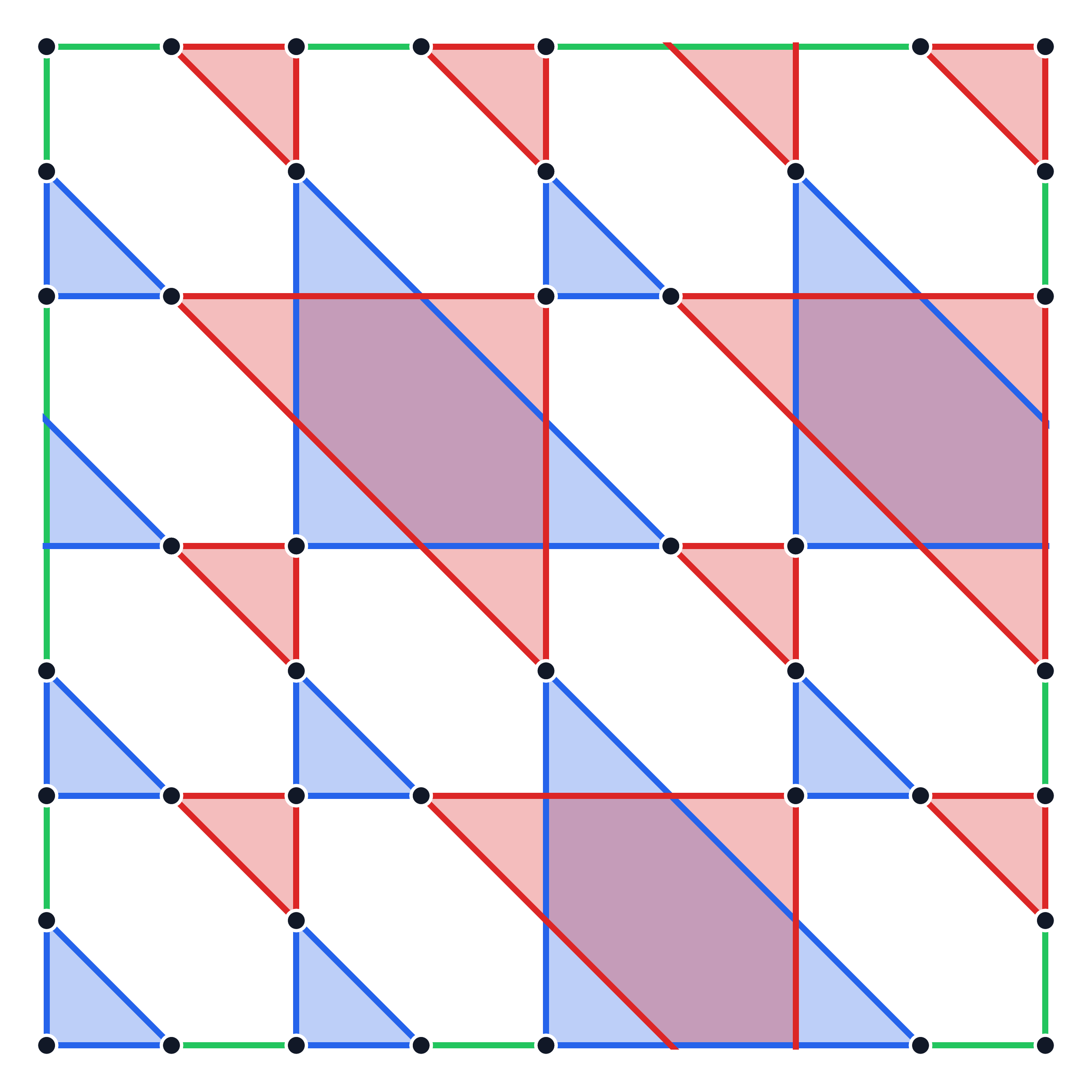}
\caption{16 in total}
\end{subfigure}
\begin{subfigure}[t]{.24\linewidth}
\centering
\includegraphics[width=\linewidth]{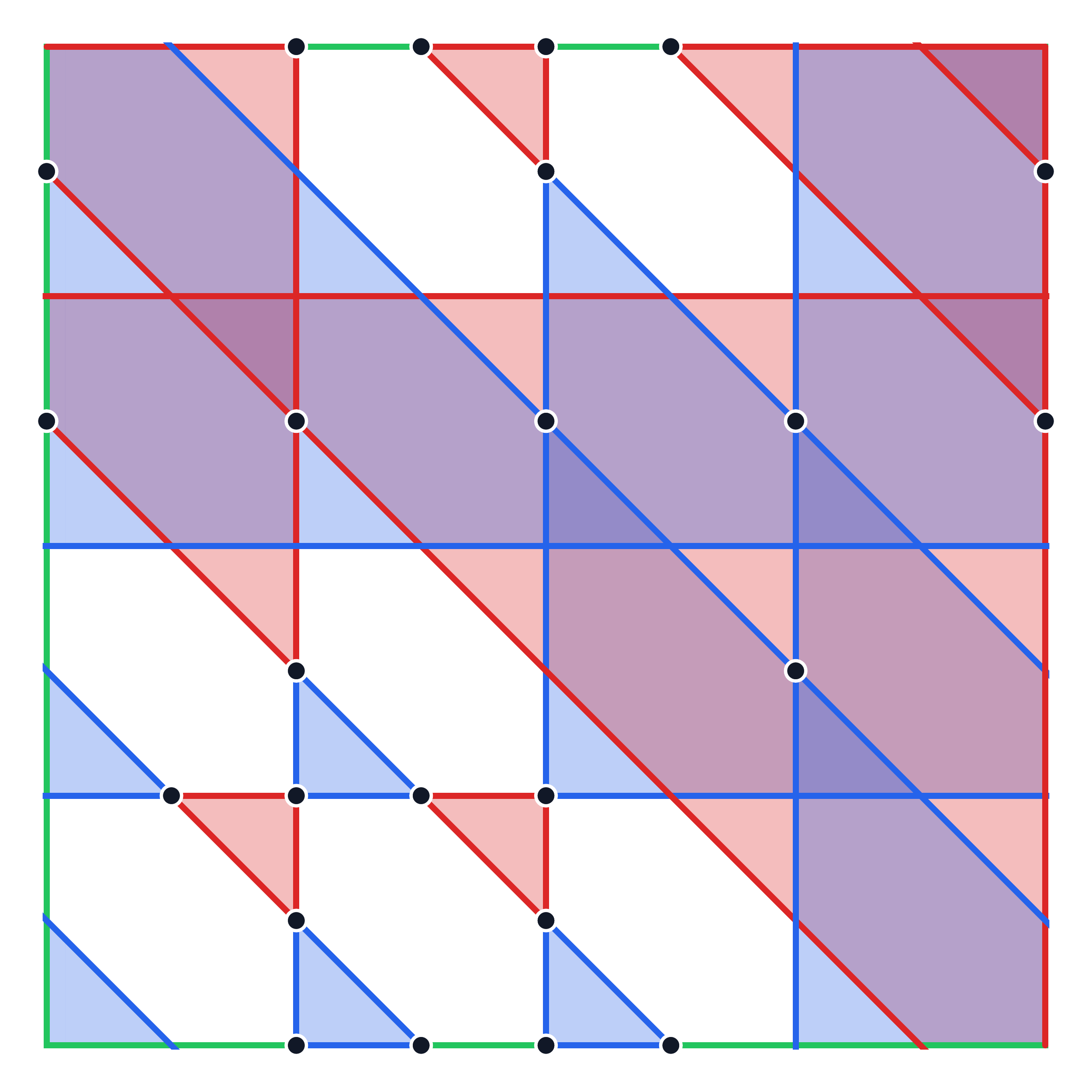}
\caption{16 in total}
\end{subfigure}

\caption{Rational rulings for $\mathcal O_{\mathbb P^2}(4)$. 304 in total.}
\label{fig:O4}

\end{figure}

\end{example}

\section{The Beauville Integrable System}
\label{sec:beauville}

In this paper, we consider a $g$-dimensional family of genus-$g$ curves $i:C\hookrightarrow \bP$ in a toric
surface $\bP$ along with their $g$-dimensional Jacobian tori. 
We have benefited from the perspective that this is an integrable system.  We prove this here, following
Beauville, Mukai and others --- see below.

Consider a linear system
of curves $i:C\hookrightarrow S$
on a complex surface, $S$.
A line bundle $L\to C$
defines a coherent sheaf $i_*L$.
The relative Jacobian thus defines a family of sheaves which fibers over the linear system of curves, with Jacobian fibers.
Mukai \cite{Mukai1984} showed that the moduli of sheaves on K3 carries an anti-symmetric two-form, and Tyurin generalized the construction to Poisson surfaces.  Bottacin  \cite{Bottacin1995} showed the
almost Poisson structure was integrable.
Beauville \cite{Beauville1991} showed that the family of
Jacobians formed an integrable
system on K3 with respect to
the Mukai-Bottacin symplectic structure.
Biswas-Gomez \cite{Biswas-Gomez} showed
that symplectic leaves for the Mukari-Bottacin Poisson structure for sheaves
of the form $i_*L$
on a Poisson surface defined by an 
anticanonical section, are obtained by fixing
the intersection of $C$ with the anticanonical divisor.
In \cite{GI} the authors
call this the Beauville integrable
system.  Beauville's proof was
just for the K3 case, so for completeness
and since it may be illustrative,
we include here a short proof that the
Jacobian fibers are isotropic for the Mukai form. 

The toric geometry is not
relevant here, so we consider
the general of $S$ a complex surface
with Poisson structure defined by
an anticanonical section $s\in \Gamma(S,K_S^{-1}).$
As above, let $i:C\hookrightarrow S$, let $L\to C$ be a line bundle, and call $\widetilde{\cM}$ the (Poisson) moduli space of sheaves $E = i_*L.$
We put $\cM\subset \widetilde{\cM}$ for the (symplectic) space 
of sheaves for which $C\cap s^{-1}(0)$
is fixed.

The pushforward map $i_*$ gives
$$H^1(C,\cO) = H^1(C,End(L)) \xrightarrow[]{\;\;i_*\;\;} H^1(S,End(E)) = T_E \widetilde{\cM}.$$
Now $T^*_E\widetilde{\cM}= Ext^1_S(E,E\otimes K_S),$
and the pairing with $T_E\widetilde{\cM}$ is given by Yoneda.
Then multiplication by $s$ gives the Poisson structure $\Pi$
on $\widetilde{\cM}$, a map
$$\Pi: T^*_E\widetilde{\cM}=Ext^1_S(E,E\otimes K) \xrightarrow[]{\;\;\cdot s\;\;} Ext^1_S(E,E) = T_E\widetilde{\cM}.$$

Recall the adjunction formula for our closed immersion, $i$:
$$Ext_S^k(i_*F,i_*G) \cong Ext^k_C(F,G)\oplus Ext^{k-1}_C(F,G\otimes N_{C/S}),$$ so when $k=1$ and $F = G = L$ we get
$T_E \widetilde{\cM} \cong H^1(C,\cO) \oplus H^0(C,N_{C/S}),$
and $i_*$ is inclusion in the first factor.
The second term represents deformations of $C$,
so for $T_E \cM$, this term is cut down to $H^0(C,K_C),$
as the space of sections vanishing on $C\cap s^{-1}(0).$  Note by Serre duality the
fiber directions are dual to the base, and
same in number.  We have a symplectic
structure on a $g$-dimensional
family of genus-$g$ curves.  It remains to show
the fibers are isotropic.

Now for $x\in T_E\cM$, the vanishing condition of $x$
along $s^{-1}(0)$ means $x/s$ has no poles,
and is therefore a $\Pi$ preimage 
in $T^*_E\cM$.  Put $\widetilde{x} = x/s
\in H^1(S,i_*\cO\otimes K^{-1}_S) =
H^1(S,i_*(K^{-1}_S\vert_C)).$
Now for $x,y\in H^1(C,\cO)\subset T_E\cM$ we want to evaluate
\[
\omega(x,y) = \omega^{-1}(\widetilde{x},\widetilde{y}) = \langle \widetilde{x},\Pi(\widetilde{y})\rangle = \int_S tr(\widetilde{x}y)
\]
where $\widetilde{x}\in H^1(S,i_*(K_S\vert_C))$
and $y\in H^1(S,i_*\cO_C)$.
But now when we take $\int_S tr(\widetilde x y)$
note the cup product $\widetilde{x}y$ lands in $H^2(S,i_*K_S\vert_C) \cong H^2(C,K_S\vert_C).$
But $C$ is a curve, so this vanishes.

We have proven the following.
\begin{proposition}
    $\cM = \{i_*(L\to C)\}$ is a complete integrable system.
\end{proposition}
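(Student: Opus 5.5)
The plan is to assemble the statement from three ingredients: a symplectic structure on $\cM$, a fibration $\cM\to|L'_\Delta|$ over a $g$-dimensional base, and the fact that its fibers (the Jacobians) are Lagrangian. For the symplectic structure I will invoke Tyurin's construction of the Mukai--Bottacin Poisson structure $\Pi$ on $\widetilde{\cM}$, given by multiplication by the anticanonical section $s$, together with Bottacin's integrability of $\Pi$. I will then use Biswas--Gomez to identify $\cM$, the locus where $C\cap s^{-1}(0)$ is fixed, as a symplectic leaf. Using the adjunction decomposition
\[
T_E\widetilde{\cM}\cong H^1(C,\cO)\oplus H^0(C,N_{C/S}),
\]
I will restrict the second summand to sections vanishing on $C\cap s^{-1}(0)$. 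Since $N_{C/S}(-C\cap s^{-1}(0))\cong K_S^{-1}|_C(-D)\cong K_C$ by adjunction, this gives
\[
T_E\cM\cong H^1(C,\cO)\oplus H^0(C,K_C),
\]
which has dimension $2g$. The projection $i_*L\mapsto C$ to the $g$-dimensional restricted linear system has differential equal to projection onto the second factor, and its fibers are Jacobians, with tangent spaces $H^1(C,\cO)$ of dimension $g$.

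The key step is isotropy of the fibers. For $x,y\in H^1(C,\cO)\subset T_E\cM$, the condition that $x$ vanishes along $s^{-1}(0)$ means $\widetilde{x}=x/s$ is holomorphic and satisfies $\Pi(\widetilde{x})=x$. Hence $\omega(x,y)=\int_S\mathrm{tr}(\widetilde{x}\,y)$. This cup product lies in $H^2(S,i_*(K_S|_C))\cong H^2(C,K_S|_C)=0$ because $C$ is a curve. Since the fibers have dimension $g$, which is half of $\dim\cM=2g$, they are Lagrangian. A Lagrangian fibration automatically has Poisson-commuting base functions: the Hamiltonian vector field of $\pi^*f$ is $\omega$-orthogonal to the fibers, so it is tangent to them, and hence $\{\pi^*f,\pi^*h\}=\omega(X_f,X_h)=0$. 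Pulling back $g$ independent coordinates on $|L'_\Delta|$ therefore gives $g$ commuting, generically independent Hamiltonians, which is complete integrability.

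I expect the main obstacle to be working over singular curves, where the compactified Jacobian is not an abelian variety and $i_*L$ may fail to be locally free on $C$, so the adjunction splitting is no longer canonical. I would first carry out the argument on the dense open set of smooth curves. There $\omega$ is nondegenerate and the fibers are Lagrangian abelian varieties, and integrability, meaning the existence of generically independent commuting Hamiltonians, only needs to be checked generically. The Poisson-commutation of the $\pi^*f$ then extends to all of $\cM$ by continuity.
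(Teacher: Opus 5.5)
Your proposal is correct and follows the paper's own argument: the Tyurin--Bottacin--Biswas--Gomez symplectic leaf, the adjunction splitting $T_E\cM\cong H^1(C,\cO)\oplus H^0(C,K_C)$, and isotropy of the Jacobian directions because $\widetilde{x}\,y$ lands in $H^2(C,K_S|_C)=0$; your Lagrangian-implies-commuting step and your reduction to smooth curves spell out what the paper leaves implicit. Two small fixes are needed. First, $K_S^{-1}|_C(-D)$ is $\cO_C$, so the correct chain is $N_{C/S}(-D)\cong N_{C/S}\otimes K_S|_C\cong K_C$, using $\cO_C(D)\cong K_S^{-1}|_C$. Second, for $x\in H^1(C,\cO)$ the lift $\widetilde{x}$ exists not because of a vanishing condition but because $s\colon K_S|_C\to\cO_C$ has cokernel $\cO_D$ with $H^1(C,\cO_D)=0$, so $H^1(C,K_S|_C)\to H^1(C,\cO_C)$ is surjective.
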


\bibliographystyle{alpha}
\bibliography{refs}

\end{document}